\newcommand*\bigcdot{\mathpalette\bigcdot@{.5}}
\newcommand*\bigcdot@[2]{\mathbin{\vcenter{\hbox{\scalebox{#2}{$\m@th#1\bullet$}}}}}
\def\zz\ignorespaces{\@ifnextchar-{}{\phantom{-}}}
\newcolumntype{C}{>{\zz}{c}}
\newtheorem{theorem}{Theorem}[section]
\newtheorem{corollary}[theorem]{Corollary}
\newtheorem{conjecture}[theorem]{Conjecture}
\newtheorem{proposition}[theorem]{Proposition}
\newtheorem{lemma}[theorem]{Lemma}
\newtheorem{claim}[theorem]{Claim}
\theoremstyle{definition}
\newtheorem{defn}[theorem]{Definition}
\newtheorem{remark}[theorem]{Remark}
\newtheorem{example}[theorem]{Example}
\newcommand{\bdry}{\partial}
\newcommand{\R}{\mathbb{R}}
\renewcommand{\L}{\mathcal{L}}
\newcommand{\calA}{\mathcal{A}}  
\newcommand{\calC}{\mathcal{C}}
\newcommand{\calt}{\mathcal{t}}
\newcommand{\calc}{\mathcal{c}}
\newcommand{\hatR}{\widehat{R}}
\newcommand{\Z}{\mathbb{Z}}
\newcommand{\HF}{\widehat{HF}}
\newcommand{\cut}{\setminus}
\newcommand{\nbhd}{\mathcal{N}}
\DeclareMathOperator{\fix}{fix}
\DeclareMathOperator{\rk}{rk}
\DeclareMathOperator{\Diff}{Diff}
\DeclareMathOperator{\Def}{Def}
\title{Asymmetric L-space knots}
\author{Kenneth L.\ Baker  \and John Luecke}
\address{Department of Mathematics, University of Miami, 
Coral Gables, FL 33146, USA}
\email{k.baker@math.miami.edu}
\address{Department of Mathematics, University of Texas, 
Austin, TX  78712, USA}
\email{luecke@math.utexas.edu}
\begin{document}


\begin{abstract}
We construct the first examples of asymmetric L-space knots in $S^3$.  More specifically, we exhibit a construction of hyperbolic knots in $S^3$ with both (i) a surgery that may be realized as a surgery on a strongly invertible link such that the result of the surgery is the double branched cover of an alternating link and (ii) trivial isometry group. In particular, this produces L-space knots in $S^3$ which are not strongly invertible.  The construction also immediately extends to produce asymmetric L-space knots in any lens space, including $S^1 \times S^2$.
\end{abstract}

\maketitle

\tableofcontents

\section{Introduction}

A knot $K$ in $S^3$ is an {\em L-space knot} if it admits a non-trivial Dehn surgery to a manifold $Y$ such that $\rk \HF{Y} = |H_1(Y, \Z)|$ \cite{OS-lens}. 
The knot $K$ is  {\em strongly-invertible} if there is an orientation preserving involution of $S^3$ whose fixed point set is a circle that intersects the knot in two points and takes the knot $K$ to itself.
Watson makes a passing comment that ``many L-space knots are strongly invertible'' preceding  \cite[Theorem 1.2]{watsonsurgicalperspective}.  In the absence of evidence of L-space knots that are not strongly invertible, this has been promoted to a question (e.g.\ \cite{motegi-twisting} and \cite{hom-problemlist}) and eventually a conjecture.

\begin{conjecture}[{e.g.\ \cite[Example 20]{lidmanmoore} and \cite[Conjecture 30]{watsonsymmetry}}]\label{conj:main}
An L-space knot in $S^3$ is strongly-invertible.
\end{conjecture}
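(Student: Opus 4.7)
The plan is to prove the conjecture by leveraging the rigid structural constraints that the L-space condition imposes on $K$ and on its surgeries. I would first collect the known properties of L-space knots: by Ni's theorem they are fibered; by Ozsv\'ath--Szab\'o their Alexander polynomial has coefficients $\pm 1$ alternating in sign and determines $\widehat{HFK}$ rigidly; and by Rasmussen every non-trivial L-space knot admits a half-line of integer L-space surgeries with slopes $n \ge 2g(K)-1$. The goal is to convert this algebraic rigidity into a geometric involution of $(S^3,K)$.

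The main strategy is to realize a suitable L-space surgery as a branched double cover. Fix an integer $n$ with $S^3_n(K) = Y$ an L-space. By Montesinos's trick, if $K$ is strongly invertible with axis $\alpha$, then $Y$ is the double branched cover of $S^3$ over a link $L$ obtained from the tangle quotient of $(S^3, K \cup \alpha)$ by a rational tangle replacement; conversely, exhibiting $Y$ as the double branched cover of $S^3$ over a link $L$ containing an unknotted arc whose preimage is $K$ would supply the desired strong involution via the covering involution. I would attempt this by combining (i) the fact that L-spaces are, at the level of $\widehat{HF}$, indistinguishable from double branched covers of quasi-alternating links, and (ii) a direct verification for each of the known infinite families of L-space knots (torus knots, Berge knots, iterated cables and satellites of L-space knots, twisted torus knots in the appropriate range), all of which manifestly admit strong involutions, together with an argument that these families exhaust all L-space knots.

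The principal obstacle, and the step I do not expect to close, is ruling out \emph{hyperbolic} L-space knots whose exteriors have trivial isometry group. By Mostow rigidity any symmetry of such a knot is realized isometrically, so if the isometry group of the exterior is trivial then no strong involution exists at all. To prove the conjecture one needs a mechanism by which the algebraic L-space hypothesis forces a $\Z/2$ isometry of the complement, and no such mechanism is currently known: the L-space condition constrains the rank of $\HF(Y)$ but does not obviously obstruct $Y$ from arising as a surgery on an asymmetric hyperbolic knot. In view of the title and abstract of this paper, I anticipate precisely this obstacle to be fatal---that the construction to follow produces hyperbolic knots whose surgery manifolds are L-spaces but whose exteriors admit no non-trivial isometries, thereby refuting rather than confirming the conjecture.
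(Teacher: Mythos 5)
The statement you were asked about is a conjecture that this paper \emph{refutes}, not proves: Theorem~\ref{thm:main} constructs hyperbolic L-space knots in $S^3$ whose complements have trivial symmetry group, and since (by Mostow--Prasad rigidity together with the Orbifold Theorem) a strongly invertible hyperbolic knot has non-trivial symmetry group, these knots are counterexamples to Conjecture~\ref{conj:main}. So no proof of the statement exists, and your final paragraph correctly identifies the precise point at which any attempted proof must die: nothing in the L-space condition forces a $\Z/2$ isometry of a hyperbolic exterior, and the paper's entire construction (lashings of a pair of pants $P$ in a genus $2$ Heegaard surface, rational tangle replacements on an almost-alternating unknot diagram yielding alternating links, hence L-space surgeries via the Montesinos trick, followed by the asymmetry argument of Theorem~\ref{thm:mainasymmetry}) is engineered exactly to realize that failure. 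Your diagnosis of the fatal obstacle matches the paper's refutation.

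That said, two intermediate steps of your proposed strategy would be gaps even independently of the counterexample. First, the claim that torus knots, Berge knots, iterated cables/satellites, and twisted torus knots ``exhaust all L-space knots'' is not a theorem and is false; there is no classification of L-space knots, so a family-by-family verification cannot close. Second, knowing that an L-space $Y$ has the same $\HF$ as the double branched cover of a quasi-alternating link does not produce an actual branched covering involution on $Y$, and even when $Y$ genuinely is a double branched cover $\Sigma_2(L)$ (as for the surgeries constructed in this paper), the covering involution need not extend over the surgery solid torus so as to preserve $K$; that extension is precisely what requires the dual knot to be equivariant, which is what fails here. The correct conclusion, which you anticipated, is that the conjecture is false.
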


We define the {\em symmetry group} of a 3-manifold, $N$, to be $\pi_0(\Diff(N))$, the group of 
isotopy classes of diffeomorphisms of $N$. Let $K$ be a knot in a 3-manifold $Y$ whose 
complement admits a complete hyperbolic metric of finite volume. We refer to such a knot as
a {\em hyperbolic knot} in $Y$. We say that $K$ is {\em asymmetric} if the symmetry group
of its complement is trivial, that is if any diffeomorphism of the the knot complement is isotopic
to the identity. 

The main result of this paper is

\begin{theorem}\label{thm:main}
There exist asymmetric hyperbolic L-space knots $K$ in $S^3$.
\end{theorem}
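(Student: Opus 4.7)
My plan is to exhibit an explicit family of knots $K_n \subset S^3$, certify an L-space surgery on each via the Montesinos trick, and then use Thurston's hyperbolic Dehn surgery theorem together with Mostow rigidity to force the symmetry group of the complement to be trivial for all but finitely many $n$.

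For the L-space surgery, I would start with a strongly invertible link $L = J \cup L' \subset S^3$ carrying a strong involution $\tau$, and choose $\tau$-invariant slopes on $L'$ so that surgery on $L'$ returns $S^3$, leaving the image of $J$ as a knot $K$. Although $L$ is $\tau$-invariant as a link, the individual component $J$ need not be, so $K$ itself is not required to be strongly invertible---this is what opens the door to an asymmetric example. By the Montesinos trick, the chosen surgery on all of $L$ descends under $\tau$ to a rational tangle replacement in the quotient tangle, yielding a link $\Lambda \subset S^3$ whose double branched cover equals the closed surgery manifold. Arranging $L$ and the slopes so that $\Lambda$ is a non-split alternating link and invoking the theorem of Ozsv\'ath--Szab\'o that double branched covers of such links are L-spaces, we conclude that $K$ is an L-space knot.

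To promote one example into an infinite hyperbolic family, I would introduce an unknotted twisting circle $c \subset S^3$ disjoint from $K$, placed so that $-1/n$ surgery on $c$ preserves the alternating property of $\Lambda$ (\ie $c$ is compatible with the branch-set tangle structure). Each resulting knot $K_n$ inherits an L-space surgery by the same construction. By Thurston's hyperbolic Dehn surgery theorem, all but finitely many $K_n$ are hyperbolic, and the complements $S^3 \setminus K_n$ converge geometrically to $S^3 \setminus (K \cup c)$.

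The main obstacle is asymmetry. By Mostow rigidity, any self-diffeomorphism of the hyperbolic manifold $S^3 \setminus K_n$ is isotopic to an isometry. If infinitely many $K_n$ admitted a nontrivial symmetry, a standard geometric-limits argument would produce a nontrivial isometry of the base link complement $S^3 \setminus (K \cup c)$ fixing the $c$-cusp and acting on its meridian by $\pm \mathrm{id}$. To rule this out, I would choose the base link so that $S^3 \setminus (K \cup c)$ admits no such isometry---verified directly on a specific small example by a SnapPy computation of the full isometry group---which then forces $K_n$ to be asymmetric for all sufficiently large $n$. The subtle point here is making sure that the strong involution $\tau$ itself does not descend to a symmetry of $K_n$'s complement; this is arranged by selecting $L$ so that $\tau$ does not preserve the component $J$ setwise, which is precisely why $K$ fails to inherit $L$'s involution. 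Combining these ingredients produces hyperbolic asymmetric L-space knots $K_n \subset S^3$ for all $n$ sufficiently large.
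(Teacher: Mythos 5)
Your high-level framework --- certify the L-space surgery by exhibiting it as the double branched cover of a non-split alternating link via the Montesinos trick, generate a hyperbolic family by twisting along an auxiliary unknot, and propagate asymmetry from a parent link complement using the shortest-geodesic/geometric-limit argument --- matches the skeleton of the paper. But there is a genuine gap at the heart of the construction, and it is exactly the point that made Conjecture~\ref{conj:main} plausible in the first place. For the surgery on \emph{all} of $L$ to descend to rational tangle replacements and produce a link $\Lambda\subset S^3$, every surgered component, including $J$, must be individually invariant under $\tau$ and meet the axis in two points. If that holds, then $\tau$ also descends through the $\tau$-equivariant surgery on $L'$ and carries the surviving component to itself, so $K$ \emph{is} strongly invertible and your target asymmetry is impossible. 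If instead you arrange $\tau(J)\neq J$ (so $\tau(J)$ is a component of $L'$), then the intermediate surgery on $L'$ is no longer equivariant, and the surgery on the swapped pair $J\cup\tau(J)$ descends not to a tangle replacement but to a Dehn surgery on a closed curve in the quotient, which changes the ambient manifold of the branch set; the claim that $\Lambda$ is an alternating link in $S^3$ then has no justification. The paper escapes this dichotomy by a different mechanism: the L-space knot is a \emph{lashing} of a pair of pants $P$, and Lemma~\ref{lem:lashingsurgery} shows by Kirby calculus that its framed surgery \emph{coincides with} the equivariant surgery on the strongly invertible link $\bdry P$ --- the knot itself is never a component of the equivariant link, and the hyperelliptic involution swaps $P$ with $Q$ rather than preserving the lashing.

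Second, even granting a repaired setup, your proof defers its entire content to an unexhibited example: ``choose the base link so that $S^3\setminus(K\cup c)$ admits no such isometry --- verified on a specific small example by SnapPy.'' Producing a link that simultaneously (i) has the equivariant surgery description with alternating quotient, (ii) returns $S^3$ after the partial surgery, and (iii) has asymmetric complement compatible with the twisting circle is precisely the theorem; it cannot be assumed. The paper's answer is Sections~\ref{sec:construction}--\ref{sec:finalcheck}: an explicit family $J^{\alpha,m}$ of $3$--bridge unknots whose branched covers carry the pair-of-pants decomposition, together with a purely topological proof of asymmetry (Theorem~\ref{thm:mainasymmetry}, via simplicity of $M$, uniqueness of the essential pair of pants $Q$, and the intersection pattern of $\gamma^n$ with $\bdry Q$) in place of a computer check, the disk-busting/annulus-busting hypotheses being verified through the tangle quotients in Lemma~\ref{lem:Qbusting}. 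Your limit argument is sound as far as it goes, but without the construction it proves nothing.
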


The inclusion of the isometry group of a hyperbolic knot complement into the symmetry group of
the knot complement is an isomorphism, see for example the paragraph preceding Theorem 6.2
in \cite{Bon}. On the other hand, the Orbifold Theorem shows that the isometry group of a
strongly invertible hyperbolic knot is non-trivial. The symmetry group of a strongly invertible
hyperbolic knot then is non-trivial. Thus Theorem~\ref{thm:main} shows that Conjecture~\ref{conj:main}
is false, by giving examples of L-space knots in $S^3$ which cannot be strongly invertible.

\begin{remark}
Let the finite group G act on a 
3-manifold $N$ which admits a finite volume hyperbolic structure with totally geodesic boundary. 
If its fixed set has dimension at least one, then the action is conjugate to an isometric action by a diffeomorphism of $N$ which is isotopic to the identity. See Theorem 6.3 of \cite{Bon}.
\end{remark}

Dunfield, Hoffman, and Licata \cite{dhl} found examples of asymmetric hyperbolic  L-space knots in lens spaces with non-trivial surgeries to other lens spaces, the double branched covers of two-bridge links.  This then allowed them to demonstrate the existence of asymmetric hyperbolic L-spaces, \cite[Theorem 1.2]{dhl}.  Similarly, Theorem~\ref{thm:main} along with \cite[Lemma 2.2]{dhl} (see also \cite{HW}) and
\cite[Proposition 2.1]{OS-lens} shows that such manifolds can be obtained by Dehn surgery on knots in $S^3$.

\begin{corollary}
There exist asymmetric hyperbolic L-spaces that are obtained as Dehn surgery on knots in $S^3$. \qed
\end{corollary}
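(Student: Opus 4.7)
The plan is to combine Theorem~\ref{thm:main} with the two cited results to produce the required examples. Let $K \subset S^3$ be an asymmetric hyperbolic L-space knot supplied by Theorem~\ref{thm:main}. I will exhibit a single surgery slope on $K$ whose filling yields a hyperbolic L-space with trivial symmetry group.

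First I would extract an infinite family of L-space slopes. By definition $K$ admits at least one L-space surgery, and \cite[Proposition~2.1]{OS-lens} asserts that the set of L-space slopes of a nontrivial L-space knot in $S^3$ forms a half-line of rationals bounded below (in fact, every rational $r \geq 2g(K)-1$ gives $S^3_r(K)$ an L-space). In particular, there are infinitely many $r \in \Q$ such that $S^3_r(K)$ is an L-space.

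Next I would show that almost every surgery is hyperbolic and asymmetric. Since $K$ has hyperbolic complement, Thurston's hyperbolic Dehn surgery theorem gives that $S^3_r(K)$ is hyperbolic for all but finitely many slopes $r$. For such $r$, any diffeomorphism of $S^3_r(K)$ is isotopic to an isometry (Mostow rigidity). The content of \cite[Lemma~2.2]{dhl} (and \cite{HW}) is that for a hyperbolic knot whose complement has trivial symmetry group, all but finitely many Dehn fillings produce a closed hyperbolic manifold whose symmetry group is also trivial: the idea is that any isometry of a filled manifold either extends to an isometry of the cusped complement (and hence is trivial by hypothesis on $K$) or else its existence forces a coincidence among finitely many geodesic lengths, eliminating all but finitely many slopes.

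Finally I would intersect the two sets. The half-line of L-space slopes meets the cofinite set of hyperbolic asymmetric slopes in infinitely many rationals; choosing any such $r$, the manifold $S^3_r(K)$ is a hyperbolic L-space obtained as Dehn surgery on a knot in $S^3$ whose symmetry group is trivial. The main work is packaged entirely in the two cited lemmas, so there is no substantive obstacle; the only care needed is to ensure the two ``cofinite'' conditions are compatible, which is immediate since one set is infinite and the other is cofinite in $\Q$.
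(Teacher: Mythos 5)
Your proposal is correct and is essentially the paper's own argument: the corollary is stated with an immediate \qed precisely because it follows from Theorem~\ref{thm:main} combined with \cite[Proposition~2.1]{OS-lens} (infinitely many L-space slopes) and \cite[Lemma~2.2]{dhl} (all but finitely many fillings of an asymmetric hyperbolic knot complement are asymmetric hyperbolic manifolds), intersecting the two sets of slopes exactly as you do.
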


The double branched cover of a non-split alternating link is an L-space \cite{OS-dbc}. When a non-trivial Dehn
surgery on a knot yields the double branched cover of a non-split alternating link, we refer to this Dehn
surgery as an {\em alternating surgery} on the knot. Thus any knot in $S^3$ admitting an alternating surgery is
an L-space knot.

 Along the lines of Conjecture~\ref{conj:main}, McCoy proposed: 
\begin{conjecture}[{\cite[Conjecture 1]{mccoy-boundsonsurgeriesbranchingoveralternating}}]
\label{conj:alternatingmccoy}
If a knot in $S^3$ has an alternating surgery, then the knot corresponds to the dealternation crossing in an almost alternating diagram of the unknot.
\end{conjecture}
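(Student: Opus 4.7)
The plan is to attempt a proof via an enhanced Montesinos trick. Let $K \subset S^3$ be a knot with an alternating surgery, so that $p/q$-surgery on $K$ yields $\Sigma(L)$ for some non-split alternating link $L$. First I would try to establish that $K$ is strongly invertible, giving an orientation-preserving involution $\tau$ of $S^3$ whose axis meets $K$ in two points. Since $\tau$ acts as $-I$ on $H_1$ of the boundary torus of $\nbhd(K)$, it preserves every surgery slope and so extends to an involution $\hat\tau$ of $\Sigma(L)$. Applying the Orbifold Theorem, together with the remark in the excerpt concerning isometric extensions in the hyperbolic setting, I would then argue that $\hat\tau$ is conjugate to the deck transformation $\iota$ of the double branched cover $\Sigma(L) \to S^3$.

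Quotienting by $\tau = \iota$ on each side of the Dehn filling equation then yields a picture in $S^3$. On the $\Sigma(L)$ side the quotient is $S^3$ together with the alternating diagram of $L$. On the $S^3$ side, the quotient is $S^3$ together with some branch locus $L'$; since the total space of this branched cover is the original $S^3$, the link $L'$ must be an unknot. The Dehn surgery descends to a rational tangle replacement at a single site, taking the unknot diagram $L'$ to the alternating diagram of $L$. A combinatorial argument, leveraging Menasco-style control on alternating projections, Tait flyping, and the fact that the replaced tangle is controlled by the surgery slope, should then force the unknot diagram $L'$ to differ from the alternating one by only a single crossing change, realizing $L'$ as almost alternating with the dealternation crossing exactly at the site of the tangle replacement; this gives the desired correspondence.

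The main obstacle, and in fact a fatal one, is the first step: establishing that $K$ is strongly invertible. This is precisely a restriction of Conjecture~\ref{conj:main} to knots admitting alternating surgeries. However, Theorem~\ref{thm:main} produces asymmetric hyperbolic L-space knots in $S^3$ whose relevant surgeries are realized on strongly invertible \emph{links} rather than on strongly invertible knots, yet still yield double branched covers of alternating links. Any knot arising as the lift of a dealternation-crossing arc in an almost alternating unknot diagram automatically inherits a strong inversion from the deck transformation $\iota$ of $S^3 \to S^3$ over the unknot, so such a knot cannot be asymmetric. Consequently the examples of Theorem~\ref{thm:main} refute Conjecture~\ref{conj:alternatingmccoy} in the same stroke as Conjecture~\ref{conj:main}, and the Montesinos-based approach outlined above cannot succeed without further hypotheses narrowing the class of knots under consideration.
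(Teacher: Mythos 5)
You have correctly recognized that this statement is a conjecture the paper \emph{refutes} rather than proves: Theorem~\ref{thm:dbcaltsurgery} exhibits asymmetric hyperbolic knots in $S^3$ with alternating surgeries, while any knot corresponding to a dealternation crossing in an almost alternating unknot diagram is strongly invertible (it inherits the strong inversion from the deck transformation of the double branched cover of the unknot) and hence cannot be asymmetric. Your closing diagnosis matches the paper's own logic exactly, and your identification of the fatal gap in the Montesinos-trick approach --- that it presupposes strong invertibility, which the paper's examples deny --- is precisely why no proof is possible.
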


The knot corresponding to a dealternation crossing is strongly invertible.
Our proof of Theorem~\ref{thm:main} is a consequence of the following theorem which shows McCoy's conjecture is also false.
\begin{theorem}\label{thm:dbcaltsurgery}
There exist asymmetric hyperbolic knots in $S^3$ with alternating surgeries.
\end{theorem}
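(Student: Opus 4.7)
The plan is to produce an explicit family of hyperbolic knots $\{K_n\}_{n \in \Z} \subset S^3$, each admitting an alternating surgery, and to verify directly that their complements have trivial symmetry group. Following the strategy hinted at in the abstract, I would first construct a hyperbolic two-component link $L = L_1 \cup L_2 \subset S^3$ with both components unknotted, equipped with an orientation-preserving involution $\iota$ of $S^3$ whose fixed set is a circle and which preserves $L$ setwise while \emph{swapping} $L_1 \leftrightarrow L_2$. This $\iota$ is a strong inversion of $L$ in the broad sense (an involution with one-dimensional fixed set preserving $L$), and any $\iota$-equivariant rational Dehn surgery on $L$ assigns the same slope to each component. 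By the Montesinos trick, for each $n$ the equivariant surgery on $L$ with slopes $(1/n, 1/n)$ is the double branched cover of a link $J_n \subset S^3$ built from the $\iota$-quotient tangle. I would engineer $L$ so that $J_n$ is a non-split alternating link.

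Because $L_1$ is unknotted, the partial Dehn filling of $L_1$ at slope $1/n$ returns $S^3$ and leaves $L_2$ as a knot $K_n \subset S^3$. The slope on $L_2$ inherited from the remaining $1/n$ surgery then provides an alternating Dehn surgery on $K_n$. Hyperbolicity of $S^3 \setminus K_n$ for all but finitely many $n$ then follows from Thurston's hyperbolic Dehn surgery theorem applied at the $L_1$-cusp, provided $S^3 \setminus L$ is hyperbolic.

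The main obstacle is to establish asymmetry of $K_n$. The strategy is to arrange $L$ so that $\pi_0(\Diff(S^3 \setminus L)) = \langle \iota \rangle \cong \Z/2$, and then to exploit the fact that $\iota$ swaps the two cusps. By Neumann--Reid-style rigidity for Dehn fillings of cusped hyperbolic manifolds, for all but finitely many $n$ the symmetry group $\pi_0(\Diff(S^3 \setminus K_n))$ injects into the stabilizer, inside $\pi_0(\Diff(S^3 \setminus L))$, of the $L_1$-cusp together with the $1/n$-slope on it. Since $\iota$ swaps the two cusps, no nontrivial element of $\pi_0(\Diff(S^3 \setminus L))$ preserves the $L_1$-cusp, so this stabilizer is trivial. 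Hence $K_n$ is asymmetric for all but finitely many $n$, and any such $K_n$ is simultaneously a hyperbolic asymmetric knot in $S^3$ with an alternating surgery.

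The creative and most delicate step is exhibiting a concrete $L$ that realizes all of these properties at once: two unknotted components swapped by a strong inversion $\iota$, equivariant $(1/n,1/n)$-surgeries descending to non-split alternating double branched covers, hyperbolicity of $S^3 \setminus L$, and symmetry group of $S^3 \setminus L$ equal to exactly $\langle \iota \rangle$. Such an $L$ is produced via a tangle construction on the $\iota$-quotient designed so that the relevant closure is alternating, after which the required hyperbolicity, exact symmetry group, and non-triviality plus asymmetry of specific $K_n$ are certified computationally (for example using SnapPy).
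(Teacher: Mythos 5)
Your outline takes a genuinely different route from the paper --- an equivariant two-component link with a component-swapping involution, rather than the paper's construction via lashings of a pair of pants in a genus~$2$ Heegaard surface --- and the rigidity machinery you invoke (hyperbolicity of all but finitely many fillings, and the fact that for large $n$ the core of the filling torus is the unique shortest geodesic, so that $\pi_0(\Diff(S^3\setminus K_n))$ injects into the stabilizer of the $L_1$-cusp and its filling slope in $\pi_0(\Diff(S^3\setminus L))$) is sound and is essentially the same device the paper uses in its Lemma on shortest geodesics. However, the proposal has a genuine gap: the theorem is an existence statement, and you never exhibit the link $L$. Every property that makes the argument work --- both components unknotted, the swap involution, $\pi_0(\Diff(S^3\setminus L))$ equal to exactly $\Z/2$, and the equivariant surgeries descending to non-split alternating links --- is deferred to an unspecified ``tangle construction \ldots certified computationally.'' Producing an object satisfying all of these constraints simultaneously is precisely the content of the theorem; the paper devotes most of its length to exactly this (the $3$-bridge unknot diagrams $J^{\alpha,m}$, the disk-busting/annulus-busting analysis of $\partial Q$ in the two handlebodies via primality of tangles, the uniqueness of the separating pair of pants $Q$, and the non-homeomorphism of its two sides).

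There is also an unaddressed inconsistency in your use of the Montesinos trick. Because $\iota$ swaps $L_1$ and $L_2$, its fixed circle is disjoint from $L$, so the quotient of $L$ is not a tangle meeting the branch locus but a knot $\bar L$ disjoint from the branch circle $\bar A$ in the quotient $S^3$. Equivariant $(1/n,1/n)$-surgery on $L$ then descends to an honest Dehn surgery on $\bar L$, and the surgered manifold is the double cover of the \emph{surgered quotient} branched over $\bar A$. For this to be the double branched cover of a link in $S^3$ you additionally need the quotient surgery to return $S^3$, i.e.\ (by Gordon--Luecke) $\bar L$ must be unknotted in the quotient with an integral-inverse slope, and $J_n$ is then the image of $\bar A$ after twisting along $\bar L$. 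This extra constraint is repairable but must be built into the construction and stated; as written, ``the $\iota$-quotient tangle'' conflates your free-swap situation with the strongly invertible one. By contrast, the paper avoids this entirely: its alternating surgeries come from rational tangle replacements along three arcs in a bridge sphere of the unknot, and its asymmetry comes not from an ambient involution but from the combinatorial rigidity of the decomposition $M=H_+\cup_Q H_-$ together with the distinct intersection numbers of $\gamma^n$ with the components of $\partial Q$.
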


In Conjecture~\ref{conj:alternating} we propose a modification of Conjecture~\ref{conj:alternatingmccoy}.

\begin{example}\label{example:simplest}
	Our simplest example is the genus $119$ knot $K$ that is the closure of the positive $12$--strand braid of length $249$
shown in Figure~\ref{fig:smallestAsymLSK-posbraid} (Left).  The result of $272$--surgery on $K$ produces the double branched cover of the $12$--crossing alternating link $J^*$ shown in Figure~\ref{fig:smallestAsymLSK-posbraid} (Right)  which is also known as the link $L12a955$.

 SnapPy and Sage confirm the complement of $K$ is hyperbolic and asymmetric and compute that its complement has volume $\sim\!10.20098$ with cusp shape $\sim\!0.41433 + 1.19820i$ and canonical triangulation comprised of $13$ tetrahedra  \cite{SnapPy, sagemath}.

\begin{figure}
	\centering
	\includegraphics[width=6in]{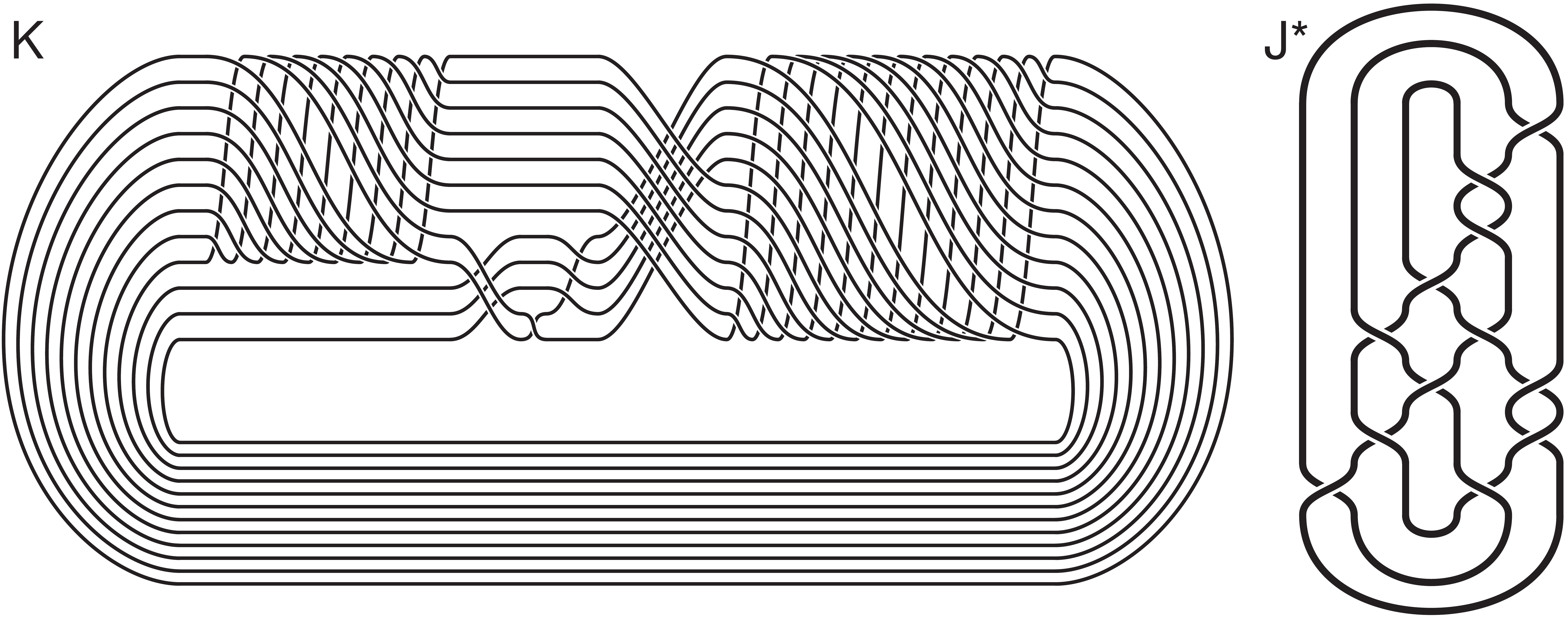}
	\caption{	The knot $K$ (Left) is the closure of a $12$--strand positive braid of length $249$. 
		It is an asymmetric hyperbolic L-space knot.  The result of $272$--surgery on $K$  produces the double branched cover of the $12$--crossing alternating link $J^*$ (Right).  }
	\label{fig:smallestAsymLSK-posbraid}
\end{figure}
\end{example}

\begin{remark}
	Boileau, Gonzalez-Acu\~{n}a, and Montesinos \cite{bgam} and Callahan \cite{callahan} have created knots in $S^3$ with no symmetry of finite order and yet admit a non-trivial Dehn surgery to double branched covers of $S^3$.  However it is not clear if non-strongly invertible L-spaces knots may arise from their constructions.
\end{remark}

Our construction giving Theorem~\ref{thm:dbcaltsurgery} easily adapts with $S^3$ replaced by any of the lens spaces, including $S^1 \times S^2$.  This adaptation is detailed in Theorem~\ref{thm:extension}.  
Following Rasmussen-Rasmussen \cite{RR}, the exterior of a knot in $S^1 \times S^2$ with a non-trivial L-space surgery is a {\em generalized solid torus}; every non-trivial surgery on such a knot is an L-space.  Thus the adaptation gives the following theorem.
\begin{theorem}
	There exist asymmetric hyperbolic generalized solid tori. \qed
\end{theorem}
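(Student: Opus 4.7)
The plan is to string together Theorem~\ref{thm:extension}, the definition of asymmetric hyperbolic knots, and the Rasmussen--Rasmussen characterization of generalized solid tori. The ingredients are already in place, so the proof is essentially a bookkeeping exercise.

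First, I would invoke Theorem~\ref{thm:extension} in the special case where the ambient lens space is $S^1 \times S^2$ to produce an asymmetric hyperbolic knot $K \subset S^1 \times S^2$ admitting a non-trivial L-space surgery (obtained, as in the $S^3$ setting, from a surgery description realizing the double branched cover of an alternating link). By construction $K$ is hyperbolic, so its exterior $E = (S^1 \times S^2) \setminus \nbhd(K)$ admits a complete finite-volume hyperbolic metric. Asymmetry of $K$ means that $\pi_0(\Diff(E))$ is trivial, equivalently (by the remark following Theorem~\ref{thm:main}) that $\Isom(E)$ is trivial.

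Next I would apply the Rasmussen--Rasmussen result \cite{RR} cited in the paragraph preceding the theorem: the exterior of any knot in $S^1 \times S^2$ that admits a non-trivial L-space surgery is a generalized solid torus, and moreover every non-trivial surgery on such a knot yields an L-space. Applied to $K$, this shows that $E$ is a generalized solid torus. Combining the hyperbolicity and asymmetry of $E$ from the previous paragraph with this fact gives an asymmetric hyperbolic generalized solid torus, which is exactly what is claimed.

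The only potential obstacle is checking that the adaptation of the construction in Theorem~\ref{thm:extension} does produce an example whose ambient manifold is $S^1 \times S^2$ (rather than only nontrivial lens spaces) and whose knot exterior really is hyperbolic with trivial isometry group; but this is precisely what Theorem~\ref{thm:extension} asserts, so once one accepts that theorem the corollary is immediate and requires no further argument.
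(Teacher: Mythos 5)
Your proposal is correct and follows the same route the paper takes: apply Theorem~\ref{thm:extension} with ambient manifold $S^1 \times S^2$ to get an asymmetric hyperbolic L-space knot there, then cite Rasmussen--Rasmussen \cite{RR} to identify its exterior as a generalized solid torus. The paper treats this as immediate (hence the \qed in the statement), and your bookkeeping matches its intended argument exactly.
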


\subsection{Heegaard genus and tunnel number}

As suggested in \cite{bakermoore}, an upper bound on the tunnel number of hyperbolic L-space knots in $S^3$ is not expected, though the familiar examples all have tunnel number $1$.  Motegi has demonstrated strongly invertible hyperbolic L-space knots with tunnel number $2$ \cite{motegi-twisting}.

 Since tunnel number one knots are all strongly invertible, the tunnel number of an asymmetric hyperbolic L-space knot must be at least $2$.  The families of knots we create in this article have bounded tunnel number.  Indeed, as shown in Proposition~\ref{prop:tnofgenus2lashing}, the lashing construction (section 2) applied to a genus $2$ Heegaard splitting produces knots with tunnel number at most $3$.   Moreover, Proposition~\ref{prop:tn2lashing} shows that the asymmetric L-space knots we construct have tunnel number $2$, and Proposition~\ref{prop:dbcgenus3} shows that the Heegaard genus of the double branched cover of the alternating link obtained by surgery is at most $3$.  As evidenced by the fact that the link $J^*$ of Example~\ref{example:simplest} is a $3$--bridge link,  our asymmetric hyperbolic L-space knots 
sometimes admit surgeries to Heegaard genus $2$ L-spaces.

\subsection{Overview of construction of asymmetric L-space knots}
Our construction begins with a family of {\em lashings} of a pair of pants $P$ embedded in a $3$--manifold $Y$. Let $H_P$ be a genus $2$ handlebody properly containing $P$ that is identified with the closed product neighborhood $P \times [-1,1]$ so that $P$ is identified with $P \times \{0\}$.  Described more explicitly in Section~\ref{sec:lashings}, a {\em $p/q$--lashing} of $P$ is a particular framed knot in $H_P$, relative to a labeling of $\bdry P$ as $\calC = C_\nu \cup C_\mu \cup C_\lambda$.  Lemma~\ref{lem:lashingsurgery} shows that this $p/q$--lashing of $P$ has the property that framed surgery on the lashing is equivalent to $(+1, -q/(p+q), -p/(p+q))$--surgery on $\calC = C_\nu \cup C_\mu \cup C_\lambda$ where the surgery coefficients are taken with respect to the framing by $P$.  Figure~\ref{fig:Kgammasugery} illustrates the surgery calculus that proves this lemma.

While every framed knot in $Y$ can be expressed as a lashing of $P$ for some pair of pants $P$, we find utility in lashings  when $\bdry P$ enjoys properties that are not apparent in the lashings of $P$.  Suppose $Y$ admits a genus $2$ Heegaard surface $\Sigma = P \cup_{\calC} Q$ expressed as the union of two pairs of pants $P$ and $Q$ such that $\bdry P$ and $\bdry Q$ are both identified as the triple of curves $\calC$.  Since each component of $\calC$ is non-separating in $\Sigma$, the hyperelliptic involution  on $\Sigma$ may be isotoped to exchange $P$ and $Q$ while fixing $\calC$ component-wise.  This involution
on $\Sigma$ extends to an involution $\iota$ on $Y$ in which $\calC$ is a strongly invertible link and yet, due to the exchanging of $P$ and $Q$ by  $\iota$, the lashings of $P$ seem unlikely to be isotopic to a knot invariant under $\iota$ --- at least in general. 

We use this set-up to construct knots proving Theorem~\ref{thm:main} as follows.  
\begin{enumerate}
	\item We find a family of $3$--bridge presentations of the unknot parameterized by an alternating $3$--braid $\alpha$ and integer $m$ so that the triple of  $(+1, -q/(p+q), -p/(p+q))$--rational tangle replacements along a triple of arcs $\calc=\calc^{\alpha,m}$ in the bridge sphere produces an alternating link for coprime integers $p$ and $q$ with $p/q>0$.  Using the Montesinos Trick, this lifts to $(+1, -q/(p+q), -p/(p+q))$--surgery on a $3$--component link $\calC=\calC^{\alpha,m}$ in $S^3$.  Being the double branched cover of a non-split alternating link, the resulting manifold is an L-space \cite{OS-dbc}.  The bridge sphere lifts to a genus $2$ Heegaard surface $\Sigma$ divided by $\calC$ into pairs of pants $P=P^{\alpha,m}$ and $Q=Q^{\alpha,m}$.  As in Lemma~\ref{lem:lashingsurgery}, the surgery on $\calC$ may be obtained by framed surgery on a $p/q$--lashing of $P$.  Hence the $p/q$--lashing is an L-space knot.
	\item The manifold $M = Y - \nbhd(P)$ is the union of two genus two handlebodies along the pair of pants $Q$.  The exterior of the $p/q$--lashing of $P$ in $Y$ may be identified as $M[\gamma]$, the result of attaching a $2$--handle to $M$ along a particular non-separating curve $\gamma$ in the genus $2$ boundary of $M$.  For ``suitable'' choices of $P$ and $Q$ in $Y$, based upon the disk-busting and annulus-busting nature of $\calC$ in the two handlebodies $M \cut Q$,  we show that $M$ is a simple $3$--manifold (Lemma~\ref{lem:Msimple}) and that $Q$ is the unique separating, incompressible, boundary incompressible pair of pants in $M$ (Lemmas~\ref{lem:Qunique} and \ref{lem:Qess}).  Then, for ``sufficiently complicated'' lashings of $P$, Theorem~\ref{thm:mainasymmetry} shows that 
	 	the complement of the lashing is a hyperbolic manifold, and any diffeomorphism of its
exterior is isotopic to the identity.  The latter is done by first ensuring any such diffeomorphism may be isotoped to restrict to a diffeomorphism of $M$,  then to one that is the identity on $Q$, and eventually to one that is the identity on $M$ and $\gamma$. 
	\item Finally, we verify that the parameters $\alpha$ and $m$ may be chosen so that  $P=P^{\alpha,m}$ and $Q=Q^{\alpha,m}$ are ``suitable'' and that there are coprime integers $p$ and $q$ with $p/q>0$ for which the $p/q$--lashing of $P$ is ``sufficiently complicated''.
\end{enumerate}

\subsection{Basic notation}
Throughout,  $A \cut B$ means the closure of $A-B$ in the path metric, $\nbhd(B)$ denotes a regular tubular neighborhood of $B$.  When $A$ and $B$ are properly embedded submanifolds, $\Delta(A,B)$ is the geometric intersection number, the minimal number of intersections of $A$ and $B$ up to proper isotopy.

If $L=L_1 \cup \dots \cup L_n$ is an $n$--component link in a $3$--manifold $Y$, $Y_L(r_1, \dots, r_n)$ denotes the result of $r_i$--Dehn surgery on the component $L_i$ for each $i=1, \dots, n$.  If $X = Y\cut L$, then this may also be denoted $X(r_1, \dots, r_n)$.  

\subsection{Acknowledgements}
We would like to thank Duncan McCoy for discussions about bandings of alternating links that produce unknots.

KLB was partially supported by grants from the Simons Foundation (\#209184 and \#523883 to Kenneth L.\ Baker).

\section{Lashings}\label{sec:lashings}
Consider an oriented pair of pants $P$ embedded in a $3$--manifold $Y$ with an oriented once-punctured torus $T$  embedded in a  closed product neighborhood $P \times [-1,1]=P\times I\subset Y$ so that $T$ has a projection to $P$ as shown in Figure~\ref{fig:TinPxIprojection}. We identify $P$ with $P \times \{0\}$.

A basis of curves $\mu,\lambda$ in $T$ are also shown, oriented so that $\mu \cdot \lambda = +1$.  An unoriented, non-trivial, simple closed curve $\tau$ in $T$ has {\em slope} $p/q$ if it is homologous to $p[\mu]+q[\lambda]$ for some orientation.
A curve in $T$ of slope $p/q$ with respect to this basis is a knot $K=K(p/q)$ in $P\times I \subset Y$. 
  We say the knot $K$ with framing given by $T$ is a {\em $p/q$--lashing} of $P$.

\begin{figure}
	\centering
	\includegraphics[width=2in]{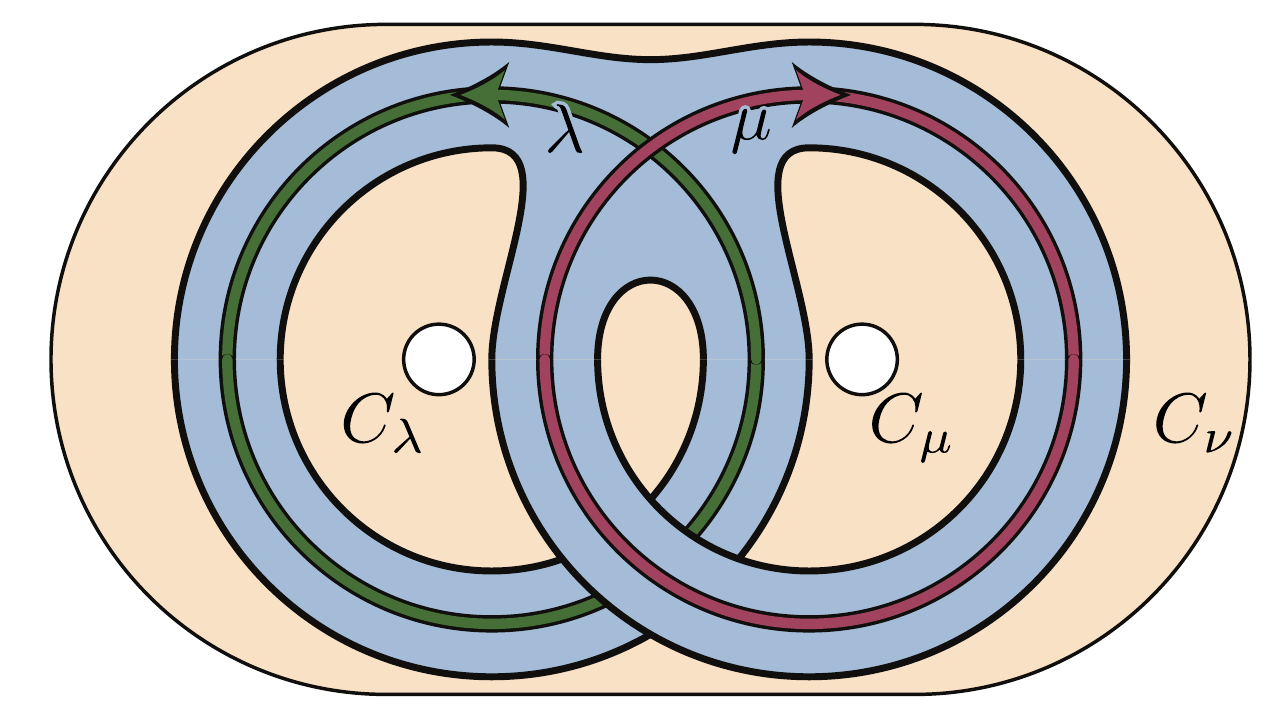}
	\caption{A projection to $P$ of the once-punctured torus $T \subset P\times I$ with its basis of curves $\mu,\lambda$ along with the three boundary curves $C_\nu, C_\mu, C_\lambda$.}
	\label{fig:TinPxIprojection}
\end{figure}

\begin{figure}
	\centering
	\includegraphics[height=1.5in]{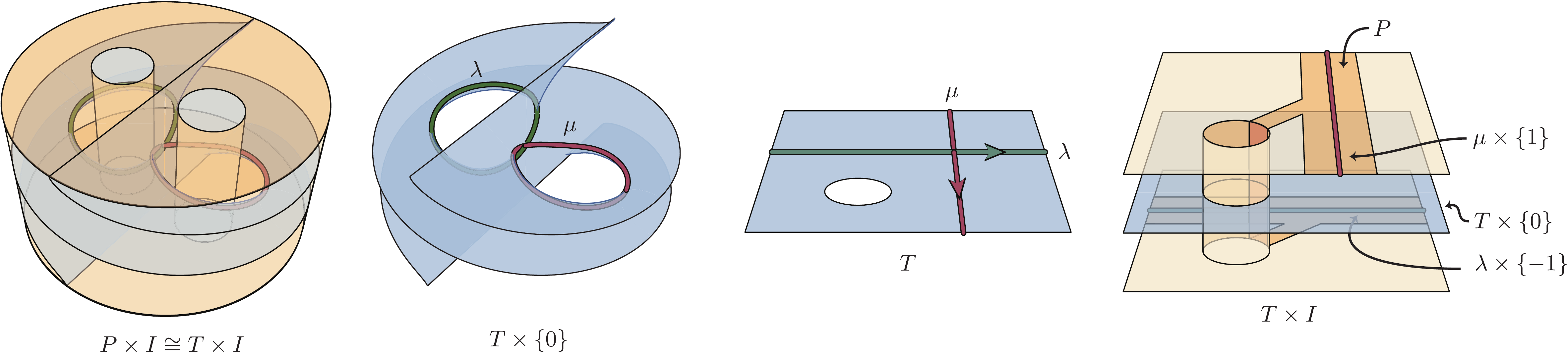}
	\caption{
	(Left two) The handlebody $P\times I (\cong T \times I)$ containing the properly embedded once-punctured torus $T=T\times \{0\}$, along with this configuration of $T\times \{0\}$ isolated and the curves $\lambda, \mu$ labeled.  (Right two) The once-punctured torus $T$ and its curves $\lambda,\mu$ in a more standard form where opposite sides of the rectangle are identifed. Adjacent is a view of the handlebody $T \times I (\cong P \times I)$ with the pair of pants $P = P\times \{1\}$ marked on its boundary. (
	This figure is essentially Figures 1 and 2 from \cite{BBL-handlebody} except that $\mu$ and $\lambda$ are swapped.  Equivalently, one may change the orientation of $T$ in \cite[Figure 2]{BBL-handlebody}.)
	}	
	\label{fig:TinPxI}
\end{figure}

Note that the three boundary components of $P$ are each isotopic in $P \times I$ to one of the lashings $\nu = K(-1/1)$, $\mu = K(1/0)$, $\lambda = K(0/1)$. We call them $C_\nu, C_\mu, C_\lambda$ accordingly and collectively $\calC = \bdry P (=\bdry P \times \{0\})$.

It will be helpful to have another way to view a lashing inside of $P \times I$.
Observe that $T$ may be isotoped in $P\times I$ to be  properly embedded so that $P \times I \cong T \times I$ where the boundary of $T\times \{0\}$  crosses $\bdry P \times \{0\}$ in only two points of $C_\nu$.  This is shown in Figure~\ref{fig:TinPxI}.  Figure~\ref{fig:TinPxI} then also represents $P \times \{1\}$ in $T \times I$. In particular, in $T \times I$, the curves $C_\mu$ and $C_\lambda$ are respectively isotopic in $T\times \{1\}$ and $T \times\{-1\}$  to $\mu \times \{1\}$ and $\lambda \times \{-1\}$.  
As shown in Figure~\ref{fig:nuinTxI}, the curve $C_\nu$ however is isotopic in $\bdry(T \times I)$ to a union of the arcs $-\mu' \times \{1\}$ and $\lambda' \times \{-1\}$ and two arcs in $\bdry T \times I$. (For an essential simple closed curve $\alpha$ in $T$, we let $\alpha'$ denote the essential properly embedded arc in $T$ that is disjoint from $\alpha$.)

\begin{remark}
 Lashings, though without this name, were previously utilized in \cite{bowman} and \cite{BBL-handlebody} to create knots in handlebodies with cosmetic surgeries and bridge number greater than $1$.
Note that we have reversed the roles of $\mu, \lambda$ from the convention in \cite{BBL-handlebody}. In 
particular, our $K(p/q)$ would be $K(-q/p)$ in \cite{BBL-handlebody}.
\end{remark}

\begin{lemma}\label{lem:lashingsurgery}
	Let $K$ be the $p/q$--lashing of the pair of pants $P$.  Then $T$--framed surgery on $K$ is equivalent to $(+1, -q/(p+q), -p/(p+q))$--surgery on the link $C_\nu \cup C_\mu \cup C_\lambda = \bdry P$  with respect to the framing by $P$. 
\end{lemma}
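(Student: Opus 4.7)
The plan is a direct surgery calculus argument, supported inside the handlebody $H_P = P \times I$ where both surgeries take place; since the ambient $3$--manifold $Y$ enters only as a passive bystander, it suffices to compare the two modifications of $H_P$ relative to $\partial H_P$. The target is a sequence of Kirby moves (handleslides and Rolfsen twists) transforming the three-component surgery on $\calC$ into the single-component $T$--framed surgery on $K(p/q)$, as illustrated in Figure~\ref{fig:Kgammasugery}.

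Starting from the three-component surgery $(+1, -q/(p+q), -p/(p+q))$ on $(C_\nu, C_\mu, C_\lambda)$, the key geometric input is the identification $H_P \cong T \times I$, under which each boundary curve of $P$ is isotopic in $H_P$ to a curve on $T$ of the indicated slope: $C_\nu \simeq K(-1/1)$, $C_\mu \simeq K(1/0)$, $C_\lambda \simeq K(0/1)$. Under these isotopies the $P$--framing on each $C_\bullet$ differs from the corresponding $T$--framing by a computable integer correction (for instance, on $C_\nu$ the discrepancy is $+1$, so that $+1$--surgery on $C_\nu$ with $P$--framing agrees with $0$--surgery with $T$--framing). After rewriting each coefficient in the $T$--framing and moving the three curves onto $T$ by isotopy, the surgery becomes a coordinated operation on three curves all living on $T$. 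I would then perform handleslides along annuli in $T$ to combine the three components into a single knot. Because $K(p/q)$ represents $p[\mu] + q[\lambda]$ on $T$, the correct combination of slides produces a single knot isotopic to $K(p/q)$, and since the slides are tracked by annuli on $T$, the resulting framing is precisely the $T$--framing.

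The main obstacle I expect is the arithmetic bookkeeping: tracking how slopes shift under the framing conversions and under each handleslide. The specific coefficients $+1$, $-q/(p+q)$, $-p/(p+q)$ are calibrated so that the $P$--to--$T$ framing corrections and the slope changes from the handleslides conspire to yield exactly the $T$--framed surgery on $K(p/q)$; the slopes $-q/(p+q)$ and $-p/(p+q)$ in particular must interact correctly with the once-punctured torus structure so that the final combined knot has $T$--framing rather than some shifted variant. Figure~\ref{fig:Kgammasugery} provides the pictorial verification of this calibration.
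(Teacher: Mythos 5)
Your overall framing --- that everything happens inside $H_P \cong T\times I$ and that the two surgery descriptions should be related by Kirby calculus, as in Figure~\ref{fig:Kgammasugery} --- is the right setting, but the mechanism you describe for carrying out the calculus does not work, and the actual computation is entirely deferred to the figure rather than reconstructed. The central step you propose is to ``perform handleslides along annuli in $T$ to combine the three components into a single knot.'' Handleslides never change the number of components of a surgery link; passing from the three-component link $\calC$ to the one-component $K(p/q)$ requires component-removing moves (blow-downs, slam-dunks, or the merge move of Figure~\ref{fig:kirbycalcmoves}), and you do not identify which of these apply or why the configuration permits them. Moreover the coefficients $-q/(p+q)$ and $-p/(p+q)$ are non-integral, so before any slides one must convert them to integral data via continued fraction expansions / Rolfsen twists --- this is exactly where the arithmetic lives, and your proposal never engages with it. Finally, the identification of the resulting single curve with $K(p/q)$ ``because it represents $p[\mu]+q[\lambda]$'' is only a homological statement; it cannot pin down the isotopy class of the curve in $T$, let alone its framing, which is the whole point of the lemma.

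For contrast, the paper runs the computation in the opposite direction and with different inputs. It first chooses a continued fraction expansion $p/q=[r_n,\dots,r_1]$ with $n$ odd and uses Lemma~\ref{lem:dehntwists} to write $K(p/q)=\phi_\lambda^{r_n}\circ\cdots\circ\phi_\lambda^{r_1}(\mu)$. Realizing each Dehn twist by $(\mp 1/r_i)$--surgery on a pair of push-offs of $\mu$ or $\lambda$ yields a surgery presentation of the $T$--framed surgery on $K$ as a $(2n+1)$--component nested chain around a $0$--framed central $\mu$; this chain is then collapsed to the three-component link $\bdry P$ with coefficients $(+1,-q/(p+q),-p/(p+q))$ by slides over the $0$--framed $\mu$, slam-dunks, mergers, and blow-ups (Figure~\ref{fig:Kgammasugery}). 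If you want to argue in your direction (from $\calC$ to $K$), you would still need the continued fraction expansion to integralize the coefficients and an explicit sequence of component-removing moves; as written, the proposal is missing the idea that actually produces the reduction.
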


Before proving the lemma, we first establish continued fraction conventions and recall surgery realizations of Dehn twists that facilitate a surgery description of a lashing.  From this surgery description,  Figure~\ref{fig:Kgammasugery}  illustrates the relevant surgery calculus from which the lemma follows.

We use the {\em continued fraction} convention
\[ [ r_n, r_{n-1}, \dots, r_2, r_1 ] = 
-\frac{1}{r_n - \frac{1}{r_{n-1} - \frac{1}{\dots-\frac{1}{r_2-\frac{1}{r_1}}}}}\]
used in \cite{KirbyMelvin-Dedekindsums}.   This corresponds to the continued fraction $[0, r_n,  \dots,  r_1]$ used in \cite{Saveliev-InvtsHomologyS3} and to the continued fraction $-[r_n,  \dots,  r_1] = [-r_n, \dots, -r_1]$ used in \cite{Baker-largevolumeBergeknots}.

If $L$ is a simple closed curve in an oriented surface $S$, then $\phi_L$ is a {\em positive Dehn twist} of $S$ along $L$.   If $S$ is embedded in an oriented $3$--manifold, let $L_+$ and $L_-$ denote the push-offs of $L$ to the positive and negative side of $S$, so that they cobound an annulus $\hatR$ intersecting $S$ in the curve $L$.  If $K$ is a simple closed curve in $S$, then $\phi_L^n(K)$ may be obtained as the image of $K$ after $(-1/n, 1/n)$--Dehn surgery on the link $L_+ \cup L_-$, as framed by $\hatR$.  

\begin{lemma}[{E.g.\ \cite[Lemma 2.1]{Baker-largevolumeBergeknots}}]
\label{lem:dehntwists}
	If $p/q = [r_n, \dots, r_1]$ with $n$ odd, then 
	\[ K(p/q) = \phi_\lambda^{r_n}  \circ \cdots \circ \phi_\mu^{r_{2}} \circ \phi_\lambda^{r_1}(\mu) \]
	where  $K(p/q)$ is the essential simple closed curve in $T$ homologous to $p\mu+q\lambda$ for some orientation.
\end{lemma}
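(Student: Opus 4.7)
The plan is to reduce the lemma to a matrix calculation in $SL_2(\Z)$. An essential simple closed curve in the oriented once-punctured torus $T$ is determined up to isotopy by its unoriented slope, equivalently by its homology class in $H_1(T;\Z)\cong\Z^2$ taken up to sign. Thus it suffices to check that the integer vector representing the homology class of $\phi_\lambda^{r_n}\circ\phi_\mu^{r_{n-1}}\circ\cdots\circ\phi_\mu^{r_2}\circ\phi_\lambda^{r_1}(\mu)$ has slope $[r_n,\ldots,r_1]$.

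In the basis $(\mu,\lambda)$ the Dehn twists $\phi_\lambda$ and $\phi_\mu$ act on $H_1(T;\Z)$ by elementary matrices in $SL_2(\Z)$. With the sign convention appropriate to the statement, one may take $L=\begin{pmatrix}1&0\\-1&1\end{pmatrix}$ for $\phi_\lambda$ and $M=\begin{pmatrix}1&1\\0&1\end{pmatrix}$ for $\phi_\mu$, with $k$-th powers $L^k=\begin{pmatrix}1&0\\-k&1\end{pmatrix}$, $M^k=\begin{pmatrix}1&k\\0&1\end{pmatrix}$. The composition from the lemma is then the product $L^{r_n}M^{r_{n-1}}\cdots M^{r_2}L^{r_1}$ acting on the column vector $\mu=(1,0)^T$, and I need only track the slope $a/b$ of the resulting integer vector $(a,b)^T$.

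I would proceed by induction on odd $n$, using the continued fraction recursion $[r_n,\ldots,r_1]=-1/\bigl(r_n+[r_{n-1},\ldots,r_1]\bigr)$ read off directly from the stated definition. The base case $n=1$ is immediate: $L^{r_1}(1,0)^T=(1,-r_1)^T$ has slope $-1/r_1=[r_1]$. For the inductive step, assume $L^{r_{n-2}}\cdots L^{r_1}(1,0)^T=(a,b)^T$ has slope $a/b=[r_{n-2},\ldots,r_1]$. A short $2\times 2$ multiplication gives $L^{r_n}M^{r_{n-1}}(a,b)^T=\bigl(a+r_{n-1}b,\,b-r_n(a+r_{n-1}b)\bigr)^T$, whose slope is $(a+r_{n-1}b)/(b-r_na-r_nr_{n-1}b)$. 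Applying the recursion twice---first $[r_{n-1},\ldots,r_1]=-b/(r_{n-1}b+a)$, then $[r_n,\ldots,r_1]=-1/(r_n+[r_{n-1},\ldots,r_1])$---reproduces exactly the same rational number, completing the induction.

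The main obstacle is bookkeeping the signs, since three conventions must be aligned: the chosen direction of a positive Dehn twist (\ie\ the signs of the off-diagonal entries of $L$ and $M$), the minus signs built into the continued fraction formula used in the paper, and the ``for some orientation'' flexibility in the definition of slope, which absorbs any residual overall sign in the final integer vector. Once these are coordinated at the outset, as in the assignments above, the remaining argument is the short, direct $SL_2(\Z)$ computation sketched here.
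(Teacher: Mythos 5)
Your proposal is correct and follows essentially the same route as the paper: the same matrices $\begin{pmatrix}1&0\\-1&1\end{pmatrix}$ and $\begin{pmatrix}1&1\\0&1\end{pmatrix}$ for $\phi_\lambda$ and $\phi_\mu$, the same base case $L^{r_1}(1,0)^T=(1,-r_1)^T$ of slope $[r_1]$, and the same two-step inductive computation yielding slope $(a+r_{n-1}b)/\bigl(b-r_n(a+r_{n-1}b)\bigr)=[r_n,\ldots,r_1]$. The only cosmetic difference is that you package the continued-fraction identity as the recursion $[r_n,\ldots,r_1]=-1/\bigl(r_n+[r_{n-1},\ldots,r_1]\bigr)$ applied twice, where the paper writes out the nested fraction directly.
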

\begin{proof}
With changes of notation and continued fraction convention, this is \cite[Lemma 2.1]{Baker-largevolumeBergeknots}.  However, to be explicit about our parametrizations, we sketch the proof here.

With respect to the oriented basis $\langle \mu, \lambda \rangle$ for $H_1(T)$, we have that $\phi_\mu = \begin{bmatrix}1 & 1 \\ 0 &1\end{bmatrix}$ and $\phi_\lambda = \begin{bmatrix}1 & 0 \\ -1 & 1\end{bmatrix}$.  Then $\phi_\lambda^{r_1}(\mu) = \mu  - r_1 \lambda$ and its slope is $-1/r_1 = [r_1]$.
Now say $a/b = [r_{n-2}, \dots, r_1]$ with $n$ odd.  Then  
\[ \phi_\lambda^{r_n} \circ \phi_\mu^{r_{n-1}} (a \mu + b \lambda) = (a+r_{n-1}b)\mu + (-r_{n}(a+r_{n-1}b)+b)\lambda\]
has slope 
\[ \frac{a+r_{n-1}b}{-r_{n}(a+r_{n-1}b)+b}=  - \frac{1}{r_{n}-\frac{1}{r_{n-1}+\tfrac{a}{b}}} = - \frac{1}{r_{n}-\frac{1}{r_{n-1}+\left(-\frac{1}{r_{n-2}  - \frac{1}{\dots-\frac{1}{r_2-\frac{1}{r_1}}}}\right)}} = [r_n, r_{n-1}, r_{n-2}, \dots, r_2, r_1].\]
Hence induction gives the desired result.
\end{proof}

\begin{figure}
	\centering
	\includegraphics[width=6in]{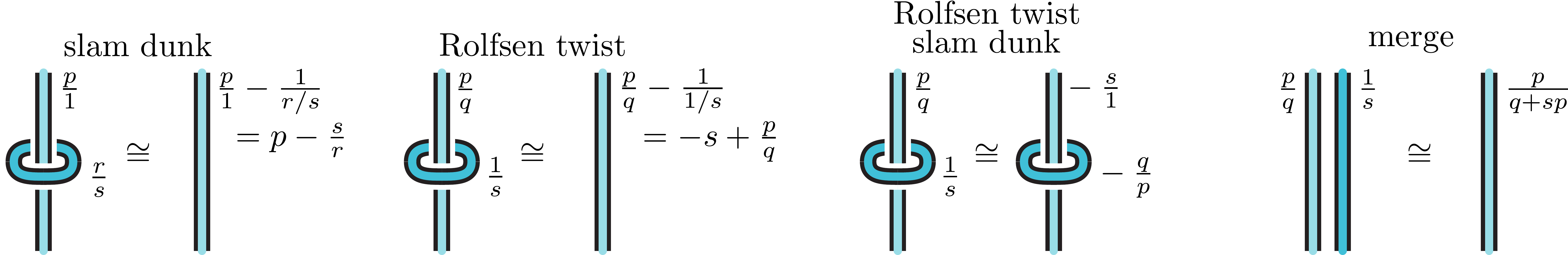}
	\caption{A few basic moves on surgery diagrams.  The merge move amalgamates two surgery curves cobounding an annulus along their $0$--framings.  }
	\label{fig:kirbycalcmoves}
\end{figure}

\begin{figure}
	\centering
	\includegraphics[width=6.5in]{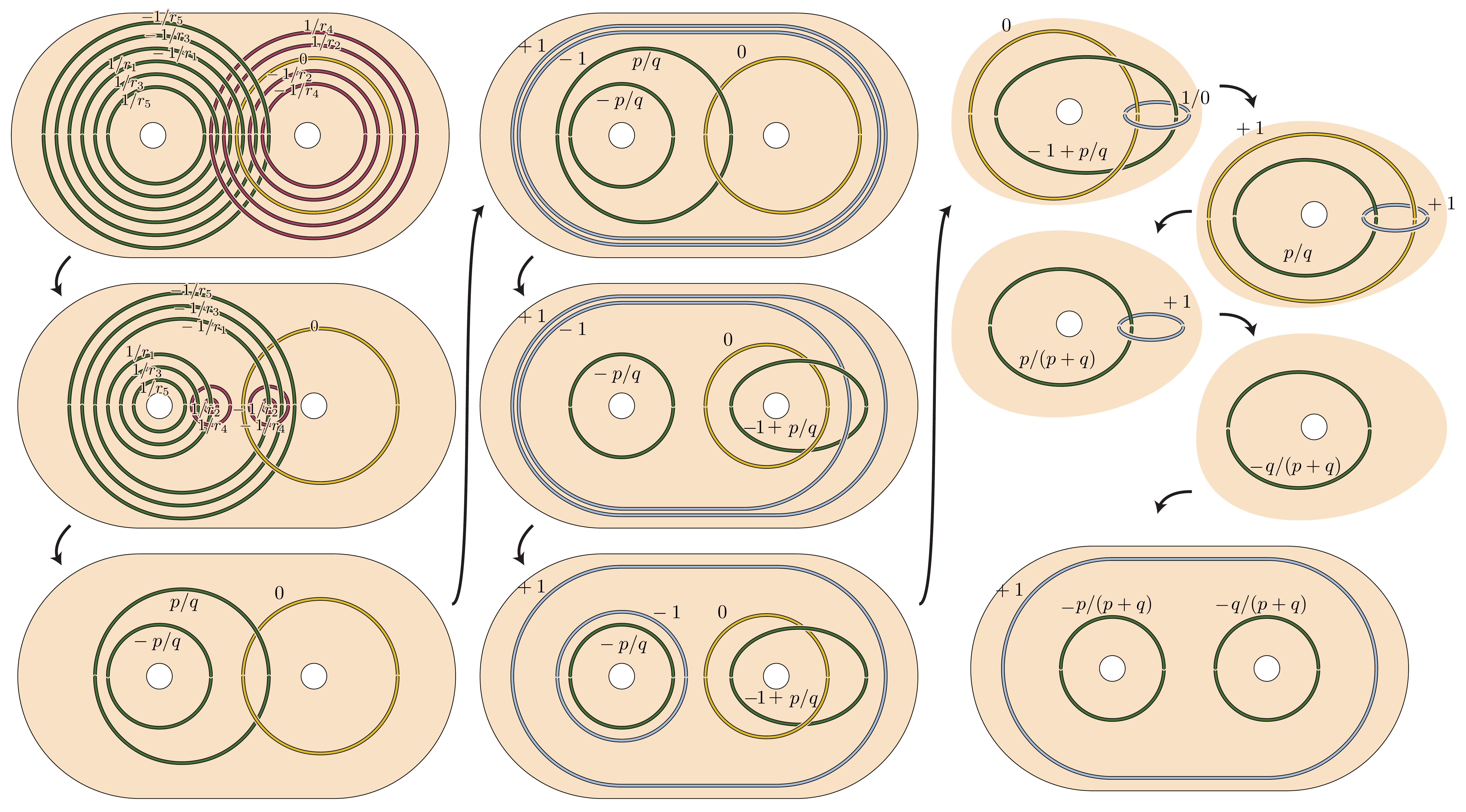}
	\caption{
		A sequence of equivalences of surgery diagrams.  In the top-left figure, the surgery coefficients on the two sets of concentric circles from outside to inside are $-1/r_5, -1/r_3, -1/r_1, 1/r_1, 1/r_3, 1/r_5$ and $1/r_4, 1/r_2, 0, -1/r_2, -1/r_4$. }
	\label{fig:Kgammasugery}
\end{figure}

\begin{proof}[Proof of Lemma~\ref{lem:lashingsurgery}]

	Take a continued fraction expansion $p/q = [r_n, \dots, r_1]$ with $n$ odd (as may always be done since $[r_n, \dots, r_1] = [r_n, \dots, r_1+1,1]$).
	Since the $p/q$--lashing $K=K(p/q)$ is an essential simple closed curve in $T$ with slope $p/q$, it may be expressed as the sequence of Dehn twists $\phi_\lambda^{r_n}  \circ \cdots \circ \phi_\mu^{r_{2}} \circ \phi_\lambda^{r_1}(\mu)$ of $\mu$ in $T$ along $\lambda$ and $\mu$ alternately  by Lemma~\ref{lem:dehntwists}. 
	 Then, as done in \cite[Section 3.1]{Baker-largevolumeBergeknots}, using the surgery realization of Dehn twists we may obtain a link in $P \times I$ formed around $\mu$ from nested pairs of push-offs of $\lambda$ and $\mu$ alternately on which the sequence of surgeries 
	\[-1/r_n, -1/r_{n-1}, \dots, -1/r_2, -1/r_1, \bigcdot, 1/r_1, 1/r_2, \dots, 1/r_{n-1}, 1/r_n\]
	in order from above $T$ down to below takes the central $\mu$ to $K$.    These surgeries are all framed with respect to $T$ which coincides with the blackboard framing of $\lambda$ and $\mu$ in Figure~\ref{fig:TinPxIprojection} (because $T$ is ``flat'' in this projection). The single $\bigcdot$ indicates that no surgery is being done on the central $\mu$.  This gives a surgery presentation of $K$.	For the $T$--framed surgery on $K$, we then replace the $\bigcdot$ with $0$ in the surgery sequence.  For $n=5$, a diagram in $P$ of the link with its surgery coefficients is shown in the upper left of Figure~\ref{fig:Kgammasugery}. 
	
	Now we perform a sequence of equivalences of Dehn surgery diagrams, transforming the surgery on the large nested link in $P\times I$ to a surgery on a link isotopic in $P \times I$ to $\bdry P$.  It begins with sliding the push-offs of $\mu$ across the $0$--framed central $\mu$ followed by slam-dunks and mergers (see Figure~\ref{fig:kirbycalcmoves} for terminology) to produce a $3$--component link that is close to, but not quite, what we want.  Further applications of blow-ups, mergers, and slides produce the desired link. This is explicitly illustrated in Figure~\ref{fig:Kgammasugery} for the situation with $n=5$.   The general case follows directly.
\end{proof}

\section{A construction of L-space knots via lashings and the Montesinos Trick}\label{sec:construction}
The center diagram of Figure~\ref{fig:almostalternating} gives a family of  diagrams of the unknot $J=J^{\alpha,m}$ in a $3$--bridge position indexed by the $3$--braid $\alpha$ and integer $m$.  On the right are the diagrammatic conventions for the braid $\alpha$
 and the numbered twist boxes.  The diagram on the left more clearly exhibits $J$ as an unknot since the braiding above and below the horizontal line are inverses except for an extra twist at the bottom and the offset capping of the six strands.  The diagram in the center is obtained from an isotopy that flips the $\alpha^{-1}$ braid box in the left figure along its vertical axis and rearranges the strands below the horizontal line.  Note that if $m\geq0$ and $a_i\geq 0$ for all $i$, then the center diagram is an {\em almost-alternating} unknot; that is, switching the circled crossing makes the diagram alternating.  

Figure~\ref{fig:almostalternating} further shows a triple of arcs $\calc = \calc^{\alpha,m}$ in the $3$--bridge sphere of $J$.
Naming the arcs of $\calc$ in Figure~\ref{fig:almostalternating} as $c_\nu, c_\mu, c_\lambda$ from left to right, Figure~\ref{fig:replacetoalternating} shows the result of  the $(+1, -q/(p+q), -p/(p+q))$--rational tangle replacements on them followed by 
an isotopy eliminating two pairs of crossings.  
We refer to the link resulting from this tangle replacement as $J^{\alpha,m}_{\calc}(+1, -q/(p+q), -p/(p+q))$. 
Observe that if $p/q = [r_n, \dots, r_2, r_1]$ 
then 
\[
\frac{-p}{p+q} = -\frac{1}{1+ q/p} =-\frac{1}{1-r_n - \frac{1}{-r_{n-1} -\frac{1}{\dots -\frac{1}{-r_2-\frac{1}{-r_1}}}}} = [1- r_n, -r_{n-1}, \dots, -r_2, -r_1] 
\]
and
\[
\frac{-q}{p+q} = -\frac{1}{1 + p/q} = -\frac{1}{1-\frac{1}{r_n - \frac{1}{\dots -\frac{1}{r_2-\frac{1}{r_1}}}}}= [1, r_n, \dots, r_2, r_1] =  [1,r_n, \dots, r_2,r_1+1,1]
\]
so that the corresponding rational tangles may indeed be expressed as shown in Figure~\ref{fig:replacetoalternating}.
 When $m\geq0$ and $a_i\geq 0$ for all $i$ and  $p/q>0$    so that the continued fraction expansion $p/q=[-b_n, b_{n-1}, \dots, b_2, -b_1]$ with $n$ odd has $b_j\geq 0$ for all $j$, this resulting reduced diagram is alternating. That is, $J^{\alpha,m}_{\calc}(+1, -q/(p+q), -p/(p+q))$ is an alternating link.

\begin{figure}
	\centering
	\includegraphics[width=6.5in]{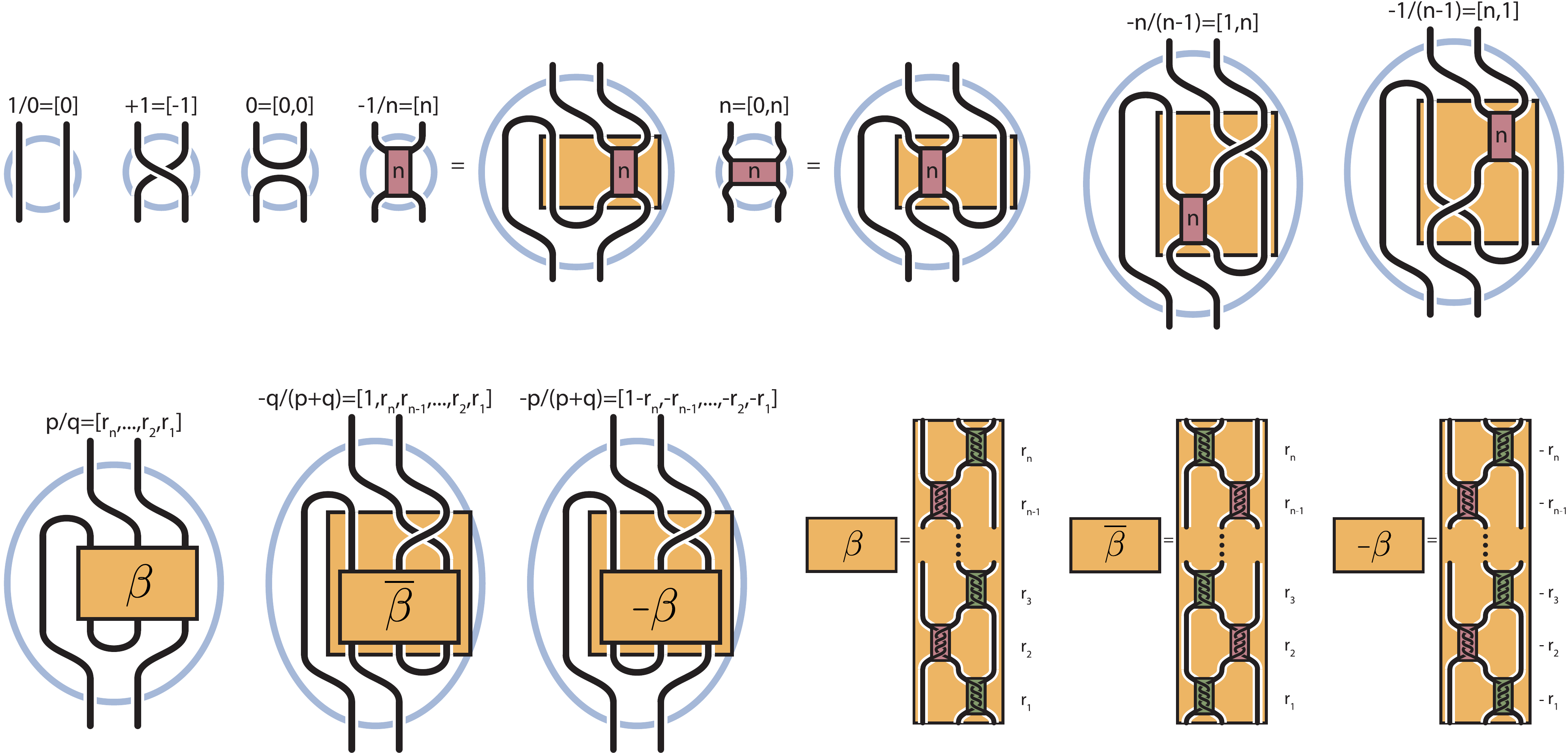}
	\caption{The top row shows how the convention for continued fractions and the corresponding rational tangles. (A twist box labelled $n$ means $n$ vertical right-handed twists when
$n>0$ and $|n|$ vertical left-handed twists when $n<0$.) The bottom row indicates the general forms for the related rational tangles of slopes $p/q$, $-p/(p+q)$, and $-q/(p+q)$ where $p/q = [r_n, \dots, r_2, r_1]$ for $n$ odd.  As illustrated, when $r_i \leq 0$ for $i$ odd and $r_i \geq 0$ for $i$ even, these tangles are alternating.}  
	\label{fig:tangleconventions}
\end{figure}

\begin{figure}
	\centering
	\includegraphics[width=5in]{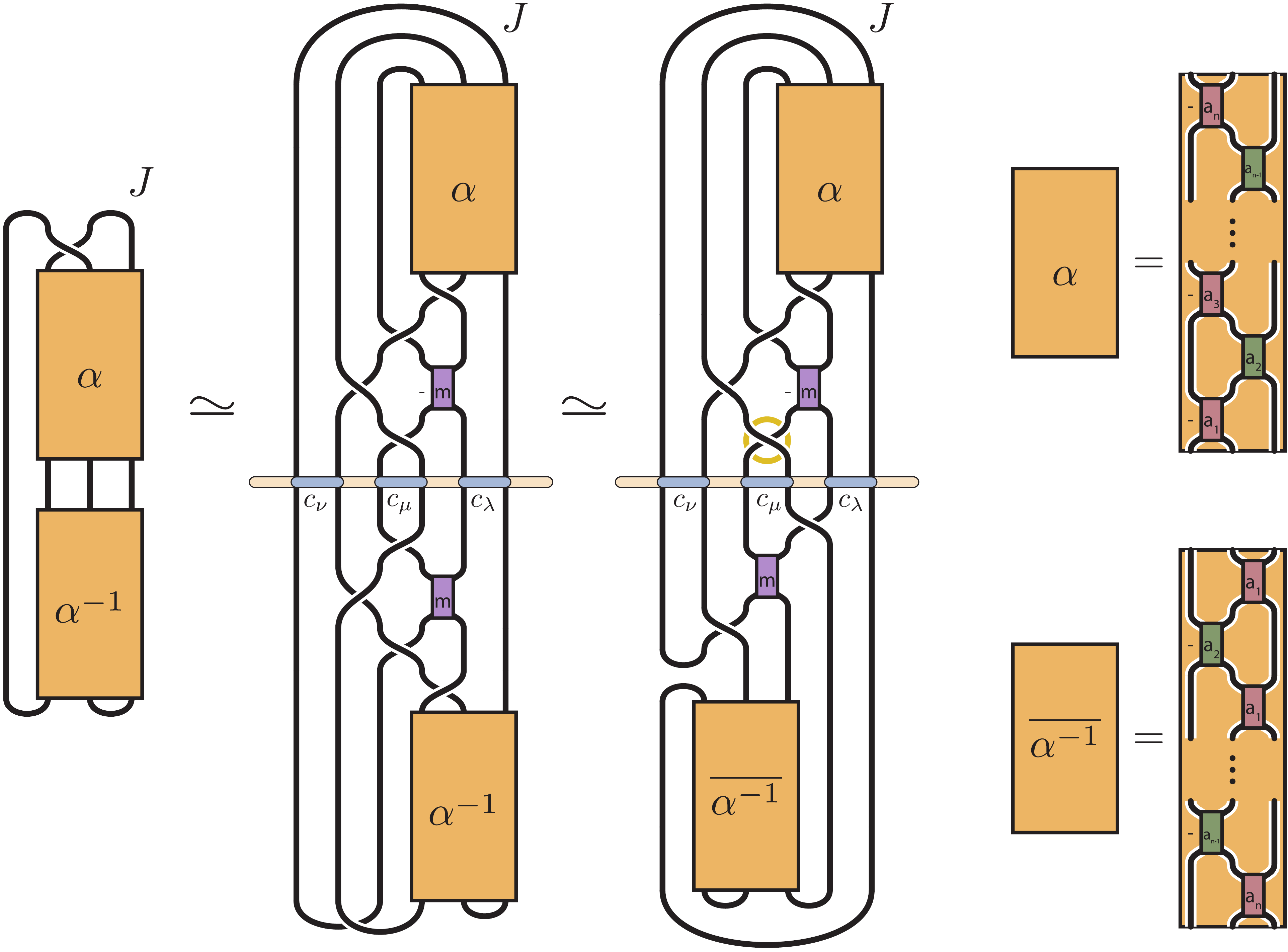}
	\caption{A diagram of the unknot $J$ in a $3$--bridge position so that it is almost-alternating when $m\geq0$ and $a_i\geq0$ for $i=1,2,\dots,n$ is shown in the middle. The dealternating crossing is circled.} 
	\label{fig:almostalternating}
\end{figure}

\begin{figure}
	\centering
	\includegraphics[width=6in]{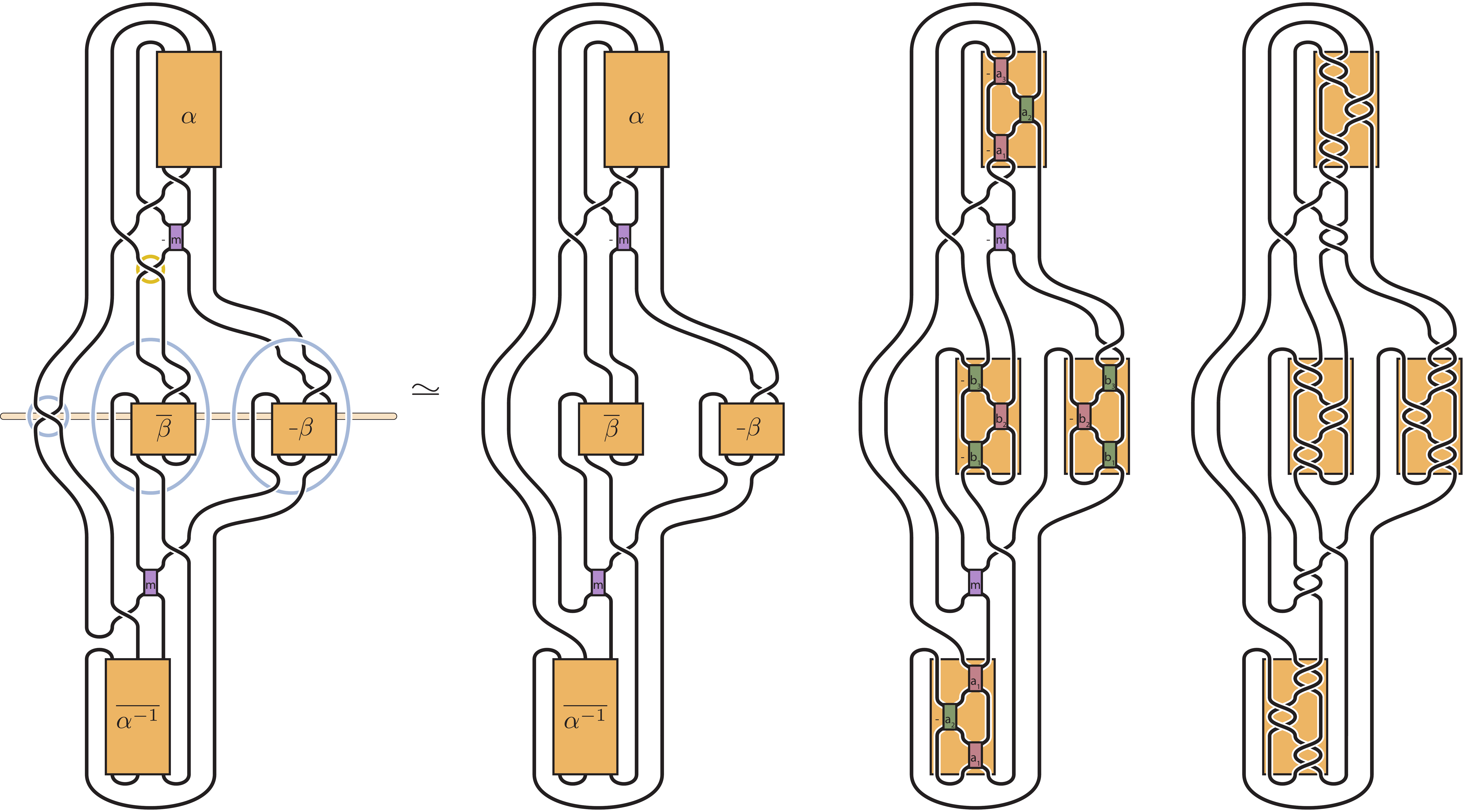}
	\caption{The rational tangle replacements are made along $\calc$ and then the diagram is simplified to be alternating.  Examples with $\alpha$ and $\beta$ of length $3$ and further with $m=a_1=a_2=a_3=b_1=b_2=b_3=2$ are shown. }
	\label{fig:replacetoalternating}
\end{figure}

\begin{remark} If $\delta$ is a $3$--braid, then we use $\overline{\delta}$ to denote the $3$--braid obtained
by rotating $\delta$ through the page along a vertical axis. We denote by $-\delta$ the $3$--braid 
obtained by changing all crossings of $\delta$, and by $\delta^{-1}$ the inverse in the braid group. 
\end{remark}

\begin{remark} The parameterization of the $3$--braid $\beta$ and the $3$--braid $\alpha$ are 
different because of the assignment of a rational tangle to $\beta$ and for efficiency in using $\alpha$
in the 
representation of the $3$--bridge knot of Figure~\ref{fig:almostalternating}. If $\beta$ corresponds
to $b_n,b_{n-2}, \dots, b_1$ and $\alpha$ corresponds to $a_n, a_{n-2}, \dots, a_1$ where $b_i=a_i$,
then $\alpha=\overline{\beta}$ as a $3$--braid.
\end{remark}

Since $J$ is the unknot in $S^3$, the double branched cover of $J$ is again $S^3$.   In this cover, the $3$--bridge sphere lifts to a genus $2$ Heegaard surface  $\Sigma$ for $S^3$ and the triple of arcs $\calc=\calc^{\alpha,m}$ lifts to a triple of curves $\calC = \calC^{\alpha,m}$ that divide $\Sigma$ into two pairs of pants.  Label the components of $\calC$ so that $C_*$ is the lift of $c_*$ for $*=\nu,\mu,\lambda$.
Let $P=P^{\alpha,m}$ be one of the pairs of pants in $\Sigma$ bounded by $\calC$ (identified with the pair of pants $P$ in Figure~\ref{fig:TinPxIprojection} by an orientation preserving homeomorphism so that the labeling of $\bdry P$ by $\calC$ agrees)  and set $Q = Q^{\alpha,m}$ to be the other one.  

\begin{theorem}\label{thm:lspaceknot}
For each $3$--braid $\alpha$ that is a product of positive powers of $\sigma_1^{-1}$ and $\sigma_2$, non-negative integer $m$, and non-negative  coprime integers $p$ and $q$, the $p/q$--lashing of $P^{\alpha,m}$ admits a longitudinal alternating surgery.  Consequently, the lashing is an L-space knot.
\end{theorem}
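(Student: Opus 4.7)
The plan is to chain together three ingredients: the Montesinos trick applied to the $3$--bridge diagram of $J^{\alpha,m}$, Lemma \ref{lem:lashingsurgery} identifying surgery on $\calC$ with $T$--framed surgery on the lashing, and the Ozsv\'ath--Szab\'o theorem \cite{OS-dbc} that the double branched cover of a non-split alternating link is an L-space.

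First I would check the diagrammatic input. Under the hypotheses---$\alpha$ a positive product of $\sigma_1^{-1}$ and $\sigma_2$, so each $a_i \geq 0$, together with $m \geq 0$ and coprime $p,q \geq 0$ with $p/q > 0$---the continued-fraction identities displayed just above the theorem put the two non-trivial rational tangles into the alternating form of Figure~\ref{fig:tangleconventions}, and the sign conventions for the $a_i$ and $m$ align with the almost-alternating diagram of Figure~\ref{fig:almostalternating}. Thus the reduced diagram of Figure~\ref{fig:replacetoalternating} represents the link
\[
L := J^{\alpha,m}_{\calc}(+1,\,-q/(p+q),\,-p/(p+q))
\]
as a connected alternating diagram. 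Since each twist box in the diagram carries crossings of a single sign and the diagram is manifestly connected, it is also reduced (there are no nugatory crossings), so by Menasco's theorem $L$ is a non-split alternating link. Consequently \cite{OS-dbc} gives that $\Sigma_2(L)$ is an L-space.

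Next, I would apply the Montesinos trick to the $3$--bridge presentation of the unknot $J = J^{\alpha,m}$. Because $J$ is unknotted, $\Sigma_2(J) = S^3$; the bridge sphere lifts to the genus $2$ Heegaard surface $\Sigma = P \cup_{\calC} Q$; and the rational-tangle replacements of slopes $(+1,-q/(p+q),-p/(p+q))$ along the arcs $\calc$ lift to Dehn surgery of those same slopes on the link $\calC$, framed by the bridge sphere---equivalently by $\Sigma$, equivalently by $P$ since $\calC \subset \partial P$. Therefore
\[
\Sigma_2(L) \;\cong\; S^3_{\calC}\bigl(+1,\,-q/(p+q),\,-p/(p+q)\bigr),
\]
and Lemma~\ref{lem:lashingsurgery} identifies the right-hand side with the $T$--framed surgery on the $p/q$--lashing $K$ of $P$. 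Finally, the $T$--framing on $K$ is the integer Seifert longitude: $T$ is a once-punctured torus in $S^3$ with $\partial T = K$, hence a Seifert surface for $K$. So $K$ admits a longitudinal surgery whose result is the L-space $\Sigma_2(L)$, establishing both conclusions of the theorem.

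The main obstacle will be the diagrammatic sign bookkeeping: verifying that the continued-fraction slots of the tangles of slopes $-p/(p+q)$ and $-q/(p+q)$ interleave with the $a_i$--slots and the $m$--slot of the unknot diagram to yield a globally alternating reduced diagram, rather than one with locally alternating regions that clash at the seams. Once that is in hand, the rest of the argument is a direct chain of identifications invoking the Montesinos trick, Lemma~\ref{lem:lashingsurgery}, and \cite{OS-dbc}.
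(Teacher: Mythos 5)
Your proof follows the paper's argument essentially verbatim: identify the $T$--framed surgery on the lashing with $(+1,-q/(p+q),-p/(p+q))$--surgery on $\calC$ via Lemma~\ref{lem:lashingsurgery}, recognize that surgery as the double branched cover of $J^{\alpha,m}_{\calc}(+1,-q/(p+q),-p/(p+q))$ via the Montesinos trick, observe that under the stated sign hypotheses this link has a connected alternating diagram and hence is non-split by Menasco, and invoke \cite{OS-dbc}.

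One claim in your write-up is false and should be deleted: you assert that $T$ is a Seifert surface for $K$ with $\partial T = K$, so that the $T$--framing is the Seifert longitude. In fact $K$ is an essential simple closed curve in the \emph{interior} of the once-punctured torus $T$ ($\partial T$ is a different curve entirely), and the $T$--framing is the surface framing of $K$ in $T$, which with respect to the Seifert longitude is typically a large positive integer (e.g.\ $272$ for the knot of Example~\ref{example:simplest}). Indeed, if the framing were the $0$--framing, the surgered manifold would have infinite first homology and could not be an L-space in the sense used in this paper, contradicting the conclusion you are proving. The adjective ``longitudinal'' requires only that the surgery slope meet the meridian once, which holds for any surface framing; that is all that should be said here.
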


\begin{proof}
Let $K=K^{\alpha,m}(p/q)$ be the framed knot that is the $p/q$--lashing of $P^{\alpha,m}$ in $S^3$, and let $Z$ be the result of the framed surgery on $K$.
By Lemma~\ref{lem:lashingsurgery}, $Z$ is also obtained by $(+1, -q/(p+q), -p/(p+q))$--surgery on the link $C_\nu \cup C_\mu \cup C_\lambda = \calC =  \bdry P^{\alpha,m}$ with respect to the framing by $P^{\alpha,m}$.   By construction, the link $\calC$ arises as the double branched cover of the arcs $\calc$ and the framing of $\calC$ by $P^{\alpha,m}$ is the lift of the framing of $\calc$ by the bridge sphere.   Hence, by the Montesinos Trick, $(+1, -q/(p+q), -p/(p+q))$--surgery on $\calC$ is the double branched cover of $J^* = J^{\alpha,m}_\calc(+1, -q/(p+q), -p/(p+q))$, the result of the $(+1, -q/(p+q), -p/(p+q))$ rational tangle replacement on the arcs $\calc=\calc^{\alpha,m}$ on the unknot $J=J^{\alpha,m}$.  The result $J^*$ of this rational tangle replacement is shown in Figure~\ref{fig:replacetoalternating} and thus its double branched cover is the manifold $Z$. With the specified constraints on $p/q$, $\alpha$, and $m$, this resulting link $J^*$ has a connected alternating diagram, and therefore the link is non-split \cite{menasco}.  Thus $Z$ is an L-space \cite{OS-dbc}.  Since $K$ has a non-trivial surgery to the L-space $Z$, it is an L-space knot by definition.
\end{proof}

\section{A construction of asymmetric hyperbolic manifolds via lashings}
\subsection{Exteriors of lashings and basic twist families}\label{sec:twistfamily}
Consider a lashing $K$ of a pair of pants $P$ embedded in a $3$--manifold $Y$.  Let $H_P$ be the genus $2$ handlebody $P \times I$ and set $M = Y \cut H_P$ to be the exterior of this handlebody.  Since the lashing $K$ of $P$ is a core curve of $H_P$ (by \cite[Lemma~2.7]{BBL-handlebody} for example), 
there is a non-separating curve $\gamma$ in $\bdry H_P = \bdry M$  that bounds a non-separating disk $D_K$ in $H_P$ such that $H_P \cut D_K$ is a solid torus neighborhood of $K$.  
In particular, with the homeomorphism $H_P \cong T \times [-1, +1]$ and $K \subset T \times \{0\}$
(see Figure~\ref{fig:TinPxI}), the disk $D_K$ is a product disk $K' \times [-1,1]$ where $K'$ is a properly 
embedded, essential arc in $T$ that is disjoint from the curve $K$.
Hence the exterior of $K$,  $X_K = Y \cut \nbhd(K)$,  may be obtained as $M[\gamma]$, the result of attaching a $2$--handle to $M$ along $\gamma$:  $X_K = M[\gamma]$.

Let $T$ be the oriented once-punctured torus in $H_P$ that contains the lashings of $P$.  Let $L$ be another lashing of $P$ so that $K$ and $L$ transversally intersect once as simple closed curves in $T$.
 Let $L_+$ and $L_-$ be push-offs of $L$ to the positive and negative side of $T$ so that they cobound an annulus $\hatR$ in $H_P$ which intersects $T$ in the curve $L$ and which $K$ intersects once.  
 
 By construction, the link $\L = L_+ \cup L_- \cup K$ is contained in $H_P$, and $H_P$ may be identified with the closure of $\nbhd(K \cup \hatR)$. Observe that the link's exterior $X_\L = Y \cut \nbhd(\L)$  contains the pair of pants $R = \hatR \cap X_\L$  and we may further identify $X_\L \cut \nbhd(R)$ as the manifold $M$.
This gives a decomposition of $\bdry M$ as the union of two pairs of pants $R'$ and $R''$ that are push-offs  to each side of $R$ in $X_\L$ that are joined by the three annuli  of $\calA_{\L}=\bdry X_\L \cut \nbhd(R)$.  

Note that the core curves of the annuli of $\calA_{\L}$ are each non-separating in $\bdry M$ and they are collectively isotopic in $X_\L$ to $R \cap \bdry X_\L$, a longitude of each $L_+$ and $L_-$ and a meridian of $K$. In particular, the core curves of $\calA_{\L}$ are the curves $L \times \{1\}$, $L \times \{-1\}$, and $\bdry D_L$.  Here $D_L = L' \times [-1,1]$ where $L'$ is a properly embedded, essential arc in $T$ that is disjoint from the curve $L$.  Because $L$ intersects $K$ once, $L'$ may be taken to intersect $K$ once too; hence $\bdry D_L$ is isotopic in $X_\L$ to a meridian of $K$.

 \begin{defn}\label{twistfamily} [Basic twist family]
For each integer $n$, the knot $K^n = \phi_L^n(K)$ resulting from $n$ Dehn twists of $K$ along $L$ in $T$ is also a lashing of $P$.
We say  a collection of lashings such as $\{K^n\}$ is a {\em basic twist family} of lashings of $P$, cf.\ \cite[Definition 2.6]{BBL-handlebody}.
The lashing $K^n$ may be obtained as the image of $K$ upon $(-1/n, 1/n)$--Dehn surgery on the link $L_+ \cup L_-$ framed with respect to $\hatR$, and hence the exterior of $K^n$, $X^n=Y\cut \nbhd(K^n)$ may be obtained as $X_\L(-1/n, 1/n, \emptyset)$.  We let $L_+^n$ and $L_-^n$ be the core curves of the two fillings in $X^n$. Since the  link $L_+ \cup L_-$ is isotopic into $\bdry M = \bdry H_P$  with the framing given by $\hatR$,   the exterior $X^n$ 
may be regarded as $M[\gamma^n]$ where $\gamma^n$ is the result of Dehn twisting $\gamma$ along $L_+$ and $L_-$ in $\bdry M$.
\end{defn}

\begin{remark}
When $\bdry M=\bdry H_P$ is viewed from outside $H_P$,  $\gamma^n = \phi_{L_-}^{-n}(\phi_{L_+}^{n}(\gamma))$; when $\bdry M$ is viewed from outside $M$,  $\gamma^n= \phi_{L_-}^{n}(\phi_{L_+}^{-n}(\gamma))$.
\end{remark}

\begin{lemma}\label{lem:gammawithlargen}
For suitably large $n$, $\gamma^n$ minimally intersects each component of $\bdry P$  
 a distinct non-zero number of times. Furthermore, $\gamma^n$ intersects some 
component of $\bdry P$ an odd number of times.
\end{lemma}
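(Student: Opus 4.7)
The plan is to analyze $\Delta(\gamma^n, C_*)$ for $* \in \{\nu, \mu, \lambda\}$ as a function of $n$ using the standard Dehn-twist intersection asymptotic
$$\Delta(\phi_\alpha^k(\beta), \delta) = |k| \cdot \Delta(\alpha,\beta) \cdot \Delta(\alpha,\delta) + O(1),$$
where the error is bounded in terms of $\Delta(\beta, \delta)$.

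First I would record the relevant intersection data on $\bdry H_P$. In the product structure $H_P \cong T \times [-1, +1]$ of Figure~\ref{fig:TinPxI}, the push-offs $L_\pm = L \times \{\pm 1\}$ lie on the two once-punctured-torus faces $T \times \{\pm 1\}$ of $\bdry H_P$ and are therefore \emph{disjoint} on $\bdry H_P$. The curve $C_\mu$ is isotopic into $T \times \{+1\}$ as a copy of $\mu$, $C_\lambda$ into $T \times \{-1\}$ as a copy of $\lambda$, and $C_\nu$ traverses both with arcs corresponding to the dual arcs $\mu'$ and $\lambda'$. Writing $L$ of slope $p'/q'$ (coprime, with $|pq' - qp'| = 1$ since $\Delta(K, L) = 1$), and using that $\mu'$ closes up through the puncture to a curve isotopic to $\mu$ on the closed torus (and likewise $\lambda'$ to $\lambda$), one computes
$$\Delta(L_+, C_\mu) = \Delta(L_+, C_\nu) = |q'|, \quad \Delta(L_-, C_\lambda) = \Delta(L_-, C_\nu) = |p'|, \quad \Delta(L_-, C_\mu) = \Delta(L_+, C_\lambda) = 0,$$
together with $\Delta(\gamma, L_\pm) = \Delta(L, K') = 1$ (the arc $K'$ closes up parallel to $K$ and $\Delta(L, K) = 1$).

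Applying the twist asymptotic to each of the two twists in $\gamma^n = \phi_{L_-}^{-n}\phi_{L_+}^n(\gamma)$, where the disjointness of $L_+$ and $L_-$ ensures the twists act in disjoint annular neighborhoods so the linear-in-$n$ contributions add without cancellation, yields
$$\Delta(\gamma^n, C_\mu) = |n| \cdot |q'| + O(1), \quad \Delta(\gamma^n, C_\lambda) = |n| \cdot |p'| + O(1), \quad \Delta(\gamma^n, C_\nu) = |n| \cdot (|p'| + |q'|) + O(1).$$
Choosing $L$ with $|p'|, |q'| > 0$ and $|p'| \ne |q'|$, which is always possible since the set of lashings $L$ with $\Delta(K, L) = 1$ is infinite, the three leading coefficients $|q'|, |p'|, |p'|+|q'|$ are pairwise distinct positive integers. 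Hence for all sufficiently large $n$ the three values $\Delta(\gamma^n, C_*)$ are pairwise distinct and non-zero.

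For odd parity, I use that the parity of a geometric intersection equals that of the corresponding algebraic intersection. Working mod $2$ in $H_1(\bdry H_P; \Z/2)$, and using $L_+ \cdot L_- = 0$, one obtains
$$\Delta(\gamma^n, C_*) \equiv \Delta(\gamma, C_*) + n \cdot (\gamma \cdot L_+)(L_+ \cdot C_*) + n \cdot (\gamma \cdot L_-)(L_- \cdot C_*) \pmod 2,$$
where $\cdot$ denotes algebraic intersection. Since $\gcd(p', q') = 1$, at least one of $|p'|, |q'|$ is odd; say $|q'|$ is odd, then in the $C_\mu$ formula the coefficient of $n$ is $\pm q' \equiv 1 \pmod 2$, so $\Delta(\gamma^n, C_\mu) \pmod 2$ alternates with $n$ and we may pick large $n$ of the appropriate parity. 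The main obstacle is verifying the leading-order asymptotic for $\Delta(\gamma^n, C_\nu)$, where both twists contribute simultaneously; the disjointness of $L_+$ and $L_-$ on $\bdry H_P$, a direct consequence of the product structure on $H_P$, is the essential geometric input ensuring the two contributions combine additively without cancellation.
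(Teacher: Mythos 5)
Your intersection data on $\bdry H_P$ is correct and matches the paper's (the paper computes $\Delta(C_\mu,\gamma^n)=|q+sn|$, $\Delta(C_\lambda,\gamma^n)=|p+rn|$, and $\Delta(C_\nu,\gamma^n)=|p+rn|+|q+sn|$ or $|p+rn|+|q+sn|-2$, whose leading terms agree with yours). But there is a genuine gap: the lemma is a statement about a \emph{given} basic twist family, so the twisting curve $L$ (your $p'/q'$, the paper's $r/s$) is fixed data, not something you get to choose. Your argument only yields distinctness of $\Delta(\gamma^n,C_\mu)$ and $\Delta(\gamma^n,C_\lambda)$ when $|p'|\neq|q'|$, because with only a leading-order asymptotic plus an uncontrolled $O(1)$ error, equal leading coefficients leave open the possibility that the two counts coincide for infinitely many $n$. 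The excluded case $|p'|=|q'|=1$ is not marginal: it is exactly the case used in Theorem~\ref{thm:asymlspaceknots} (there $(p',q')=(1,1)$ is a standard choice satisfying $p',q'\geq 1$ and $|pq'-p'q|=1$), so a proof of the lemma that silently replaces $L$ breaks the paper's main application. The paper avoids this by computing the intersection numbers \emph{exactly} rather than asymptotically: $|p+rn|$ and $|q+sn|$ are distinct for all but finitely many $n$ precisely because $(p,q)$ and $(r,s)$ are distinct primitive classes (e.g.\ when $r=s$ one has $p\neq q$ since $rq-ps=\pm1$), with no error term to absorb. To repair your argument you would need either the exact values or at least an explicit bound on the $O(1)$ term sharp enough to separate the two sequences when the slopes of the leading terms agree.

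A secondary point: your parity argument is correct but both weaker and more roundabout than necessary. You obtain an odd intersection only for $n$ in one residue class mod $2$, so your two conclusions hold simultaneously only for infinitely many large $n$ rather than for all sufficiently large $n$; this still suffices for the way the lemma is invoked, but the paper gets the stronger statement immediately from the observation that $|p+rn|$ and $|q+sn|$ are the coordinates of the primitive class $[K^n]\in H_1(T)$, hence coprime, hence not both even --- for every $n$. Finally, your treatment of $\Delta(\gamma^n,C_\nu)$ via additivity of twists along the disjoint multicurve $L_+\cup L_-$ is fine in spirit (this is the standard multicurve twist--intersection inequality), but note that the paper instead exhibits explicit minimal-position representatives and checks the no-bigon criterion directly, which is what produces the exact formula (including the $-2$ correction when $p+rn$ and $q+sn$ have opposite signs) that your asymptotic cannot see.
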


\begin{proof}
We use the following criterion to determine minimal intersection number:
Let $\alpha,\beta$ be two non-trivial simple closed curves which are not isotopic in a surface $F$.
Then $\alpha$ and $\beta$ realize the minimal intersection number under isotopy if and only if 
there is no disk $D \subset F$ such that  $D \cap (\alpha \cup \beta)=a \cup b$ where $a$ is a
sub-arc of $\alpha$, $b$ is a subarc of $\beta$, and $a \cap b=\partial a=\partial b$ (i.e. $D$ is a
bigon guiding an isotopy of $\alpha$ that reduces its intersection with $\beta$). 

We denote the minimal intersection number between $\alpha$ and $\beta$ in $F$ by 
$\Delta_F(\alpha, \beta)$. Note that the above critierion implies that if $\alpha$ and $\beta$
intersect coherently (i.e. upon orienting, all intersections are the same sign), then $\alpha$
and $\beta$ intersect minimally. 

Recall that $H_P=P \times [-1,1]=T \times [-1,1]$.

Any pair of an essential simple closed curve $\tau$ and an essential properly embedded arc $\alpha'$ in $T$ may be isotoped to intersect coherently, so that $|\tau \cap \alpha'| = \Delta_T(\tau, \alpha)$ where $\alpha$ is the essential simple closed curve in $T$ that is disjoint from $\alpha'$.
 Then for the product disk  $D_{\alpha}=\alpha' \times [-1,1]$,  we have $\Delta_{\bdry H_P}(\tau \times \{1\}, \bdry D_\alpha) = \Delta_{\bdry H_P}(\tau \times \{-1\}, \bdry D_\alpha) =  \Delta_T(\tau, \alpha)$.   
Recall that $\bdry Q = \bdry P$ is the triple of curves $C_\mu, C_\lambda, C_\nu$.
Since $C_\mu = \mu \times \{1\}$, $C_\lambda = \lambda \times \{-1\}$ (Figure~\ref{fig:TinPxI}), 
and $\gamma^n$ is the boundary of $D_{K^n}=(K^n)' \times [-1,1]$, we therefore have that $\Delta_{\bdry H_P}(C_\mu ,\gamma^n) = \Delta_T(\mu, K^n)$ and  $\Delta_{\bdry H_P}(C_\lambda, \gamma^n) = \Delta_T(\lambda, K^n)$.

\begin{figure}
	\centering
	\includegraphics[height=1.5in]{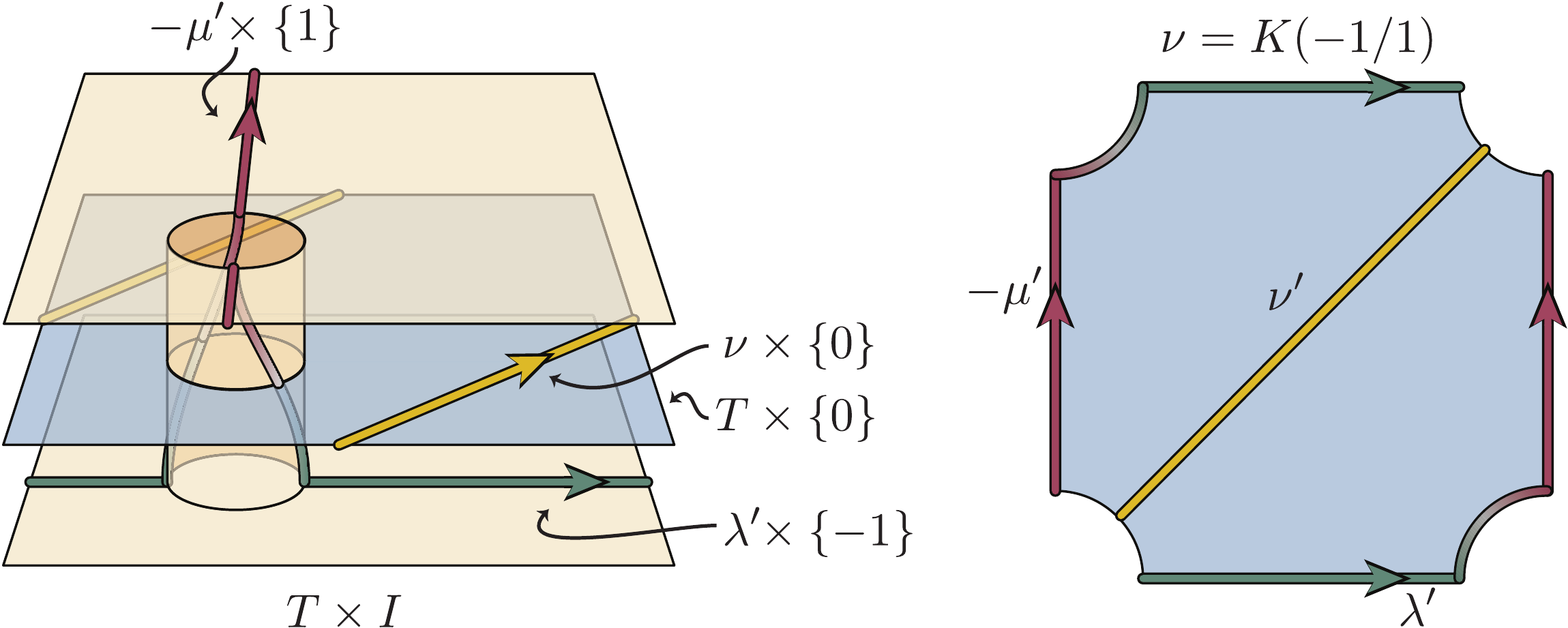}
	\caption{(Left) The oriented curve $C_\nu$ is a union of the arcs $-\mu' \times \{1\}$ and $\lambda' \times \{-1\}$ and two arcs in $\bdry T \times I$. It is isotopic to $\nu = \nu \times \{0\}$.  (Right) The projection of the curve $C_\nu$ to $T$ is isotopic to the curve $\nu = K(-1/1)$, and disjoint from the arc $\nu'$.}
	\label{fig:nuinTxI}
\end{figure}

\begin{figure}
	\centering
	\includegraphics[height=1.5in]{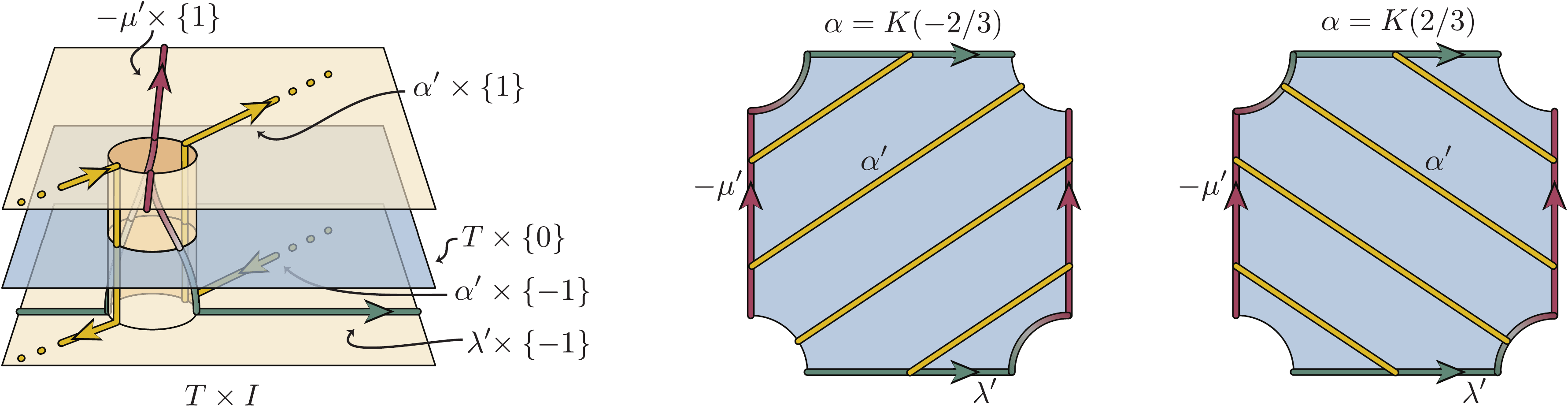}
	\caption{ (Left) The curve $\bdry D_\alpha$  is the union of the two arcs $\alpha' \times \{\pm 1\}$ and two vertical arcs in $\bdry T \times I$ since $D_\alpha = \alpha' \times I$. In the cylinder $\bdry T \times I$, the curves $C_\nu$ and $\bdry D_\alpha$ are either disjoint or intersect twice.  With $\alpha$ of negative slope as shown, they are disjoint.
		(Center, Right)	
	The projection of $C_\nu$ and $\bdry D_\alpha$ to $T$ where $\alpha = K(a_1/a_2)$ is illustrated for negative slopes with $a_1/a_2=-2/3$ and for positive slopes with $a_1/a_2 = 2/3$.}
	\label{fig:Cnu123}
\end{figure}

The curve $C_\nu$, however, is the union of arcs $\mu' \times \{1\}$, $\lambda' \times \{-1\}$ and a pair of arcs in $(\bdry T) \times [-1,1]$ so that $C_\nu$ projects to an embedded curve in $T = T \times \{0\}$ that is isotopic to $\nu$ as illustrated in Figure~\ref{fig:nuinTxI}. Figure~\ref{fig:Cnu123} indicates  minimal intersection
presentations between $C_\nu$ and $\bdry D_{\alpha}$ using this projection. 
If $[\alpha]=a_1 [\mu] + a_2 [\lambda]$
in $H_1(T)$  so that $\alpha = K(a_1/a_2)$, then Figure~\ref{fig:Cnu123} (Right) corresponds to the minimal presentation when $a_1,a_2$ are
of the same sign, and Figure~\ref{fig:Cnu123} (Center) corresponds to when $a_1,a_2$ are of opposite sign. 
Note that $\bdry D_\alpha$ contains two 
copies of $\alpha'$ oriented 
oppositely. One checks that there are no bigons of intersection between $C_\nu$ and 
$\partial D_{\alpha}$, thereby verifying the minimal intersection. In Figure~\ref{fig:Cnu123} (Right), this follows
because all intersections are of the same sign on $\bdry H_P$. In Figure~\ref{fig:Cnu123} (Center), one checks
directly that the criterion holds. Figure~\ref{fig:Cnu123} (Right) then shows that 
$\Delta_{\bdry H_P}(C_\nu, \bdry D_\alpha)=|a_1|+|a_2|$ when $a_1,a_2$  are of the same sign.
Figure~\ref{fig:Cnu123} (Center) shows that 
$\Delta_{\bdry H_P}(C_\nu, \bdry D_\alpha)=|a_1|+|a_2|-2$ when $a_1,a_2$  are of opposite sign.

As curves in $T$, we may choose orientations on $K$ and $L$ so that $L \cdot K = +1$ with $[K]=p[\mu]+q[\lambda]$ and $[L] = r[\mu] + s[\lambda]$  where $rq-ps=1$.  Then $[K^n] = [K]+n[L] = (p+rn)[\mu] + (q+sn)[\lambda]$.  Hence 
\begin{itemize}
\item $\Delta_{\bdry H_P}(C_\mu ,\gamma^n) = \Delta_T(\mu, K^n) = |q+sn|$,
\item $\Delta_{\bdry H_P}(C_\lambda, \gamma^n) = \Delta_T(\lambda, K^n) = |p+rn|$, 
\item $\Delta_{\bdry H_P}(C_\nu, \gamma^n) = |p+rn|+|q+sn|$ when $q+sn, p+rn$ are of the same sign, and
\item $\Delta_{\bdry H_P}(C_\nu, \gamma^n) = |p+rn|+|q+sn|-2$ when $q+sn, p+rn$ are of opposite sign.
\end{itemize}

The fact that $(p,q),(r,s)$ represent different slopes of $T$ guarantees that $|p +rn| \neq |q+sn|$
and $|p +rn|+|q+sn|-2 > \max(|p +rn|,|q+sn|)$ for all but finitely many $n$,
thereby verifying 
that $\gamma^n$ intersects each component of $\bdry P$ a different number of times.
Finally, since  $[K^n] =  (p+rn)[\mu] + (q+sn)[\lambda]$ is the homology class of a simple closed curve in $T$, the integers $|p+rn|$ and $|q+sn|$ must be relatively prime, and hence one must be odd.
\end{proof}

\begin{lemma}
	\label{lem:bdryQintersectcalA}
	Unless $L$ is isotopic to $\mu$ or $\lambda$  in $T$, each component of $\bdry Q$ is intersected by some core curve of the annuli $\calA_{\L}$.
\end{lemma}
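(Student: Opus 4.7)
The plan is to show that, under the hypothesis $L \not\sim \mu, \lambda$ in $T$, each of the three components of $\bdry Q \subset \bdry M = \bdry H_P$ has nonzero algebraic intersection number with one of the core curves $L^+ = L \times \{1\}$ or $L^- = L \times \{-1\}$ of $\calA_{\L}$, which forces positive geometric intersection as well. Since $Q$ and $P$ share boundary $\calC$ in the genus $2$ Heegaard surface $\Sigma$, after isotopy in $\bdry H_P$ the components of $\bdry Q$ are identified with $C_\mu, C_\lambda, C_\nu$. Writing $[L] = r[\mu] + s[\lambda]$ in $H_1(T)$, the hypothesis that $L$ is isotopic to neither $\mu$ nor $\lambda$ translates to $r \neq 0$ and $s \neq 0$.

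First I would handle $C_\mu$ and $C_\lambda$ directly. Since $C_\mu = \mu \times \{1\}$ and $L^+$ both lie in the subsurface $T \times \{1\} \subset \bdry H_P$, their algebraic intersection number in $\bdry H_P$ coincides with the algebraic intersection of $\mu$ and $L$ in $T$, namely $\pm s$. Hence $L^+$ essentially intersects $C_\mu$. Symmetrically, $L^-$ essentially intersects $C_\lambda$, via the algebraic intersection $\pm r$ in $T \times \{-1\}$.

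For the remaining component $C_\nu$, which does not lie in a single face of $\bdry H_P$, I would appeal to the homological relation $[C_\mu] + [C_\lambda] + [C_\nu] = 0$ in $H_1(\bdry H_P)$ (with appropriate orientations) arising from the fact that $\bdry Q = C_\mu \cup C_\lambda \cup C_\nu$ bounds the subsurface $Q \subset \bdry H_P$. Pairing with $L^+$, and noting that $L^+ \cdot C_\lambda = 0$ since $L^+ \subset T \times \{1\}$ is disjoint from $C_\lambda \subset T \times \{-1\}$, I conclude $L^+ \cdot C_\nu = -\,L^+ \cdot C_\mu = \pm s \neq 0$. So $L^+$ essentially intersects $C_\nu$ as well.

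The argument is essentially homological; the only mild subtlety is confirming that the algebraic intersection number of two curves sitting in a subsurface $T \times \{\pm 1\}$ agrees whether computed in the subsurface or in $\bdry H_P$, which follows from the intersections all being contained in the subsurface and the orientations being consistent. Note that the core curve $\bdry D_L$ of the third annulus of $\calA_{\L}$ is not needed: $L^+$ alone already meets $C_\mu$ and $C_\nu$, and $L^-$ meets $C_\lambda$.
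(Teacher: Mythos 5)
Your proposal is correct, and for the one genuinely delicate component, $C_\nu$, it takes a different route from the paper. For $C_\mu$ and $C_\lambda$ you and the paper do the same thing: both curves live in the faces $T\times\{\pm 1\}$ together with $L\times\{\pm 1\}$, so the intersection count reduces to $\mu\cdot L=\pm s$ and $\lambda\cdot L=\mp r$ in $T$. For $C_\nu$, which is not contained in a single face, the paper computes the minimal \emph{geometric} intersection number directly, using the explicit description of $C_\nu$ as $(-\mu'\times\{1\})\cup(\lambda'\times\{-1\})\cup(\text{two arcs in }\bdry T\times I)$ and the bigon criterion set up in the proof of Lemma~\ref{lem:gammawithlargen}; this yields $\Delta(C_\nu,L\times\{1\})=|s|$ and $\Delta(C_\nu,L\times\{-1\})=|r|$. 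You instead use that $[C_\mu]+[C_\lambda]+[C_\nu]=0$ in $H_1(\bdry H_P)$ and that $L\times\{1\}$ is disjoint from $C_\lambda$, so $L\times\{1\}\cdot C_\nu=\mp s\neq 0$; since nonzero algebraic intersection is isotopy-invariant, this is exactly the conclusion needed for the application in Proposition~\ref{prop:uniquepop} and Theorem~\ref{thm:mainasymmetry}. Your argument is shorter and avoids the figure-chasing, at the cost of giving only a lower bound $|s|$ on one pairing rather than the exact geometric counts (which the paper needs anyway in Lemma~\ref{lem:gammawithlargen}, but not here). One small correction: the relation $[C_\mu]+[C_\lambda]+[C_\nu]=0$ does not come from $Q$ being a subsurface of $\bdry H_P$ --- $Q$ is properly embedded in $M$, not contained in $\bdry M$, and boundaries of properly embedded surfaces need not be null-homologous in $\bdry M$. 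The correct justification is that $\calC$ separates $\bdry H_P=\bdry(P\times I)$ into the two pairs of pants $P_+=P\times\{1\}$ and $P_-=P\times\{-1\}$, and $\calC=\bdry P_+$ bounds there. With that fix the argument is complete.
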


\begin{proof}
	Recall that the core curves of $\calA_{\L}$ are the curves $L \times \{1\}$, $L \times \{-1\}$, and $\bdry D_L$ while $\bdry Q$ is the triple of curves $C_\nu, C_\mu, C_\lambda$  which are isotopic in $\bdry H_P$ to a union of the arcs $-\mu' \times \{1\}$ and $\lambda' \times \{-1\}$ and two arcs in $\bdry T \times I$ (as shown in Figure~\ref{fig:nuinTxI}), $\mu \times \{1\}$, and $\lambda \times \{+1\}$ respectively.  Continuing as in the proof of Lemma~\ref{lem:gammawithlargen}, if $[L]=r[\mu]+s[\lambda]$ in $T$, then we may calculate:
	\begin{itemize}
		\item $\Delta_{\bdry H_P}(C_\mu, L\times\{1\}) = \Delta_{\bdry H_P}(\mu\times\{1\}, L\times\{1\})  = \Delta_T(\mu,L) = |s|$,
		\item $\Delta_{\bdry H_P}(C_\lambda, L\times\{-1\}) = \Delta_{\bdry H_P}(\lambda\times\{-1\}, L\times\{-1\})  = \Delta_T(\lambda,L) = |r|$,
		\item $\Delta_{\bdry H_P}(C_\nu,L\times\{1\}) = \Delta_T(\mu', L) = \Delta_T(\mu,L) = |s|$, and
		\item $\Delta_{\bdry H_P}(C_\nu,L\times\{-1\}) = \Delta_T(\lambda', L) = \Delta_T(\lambda,L) = |r|$.
\end{itemize}
Hence we have the desired conclusion as long as both $r$ and $s$ are non-zero,  i.e.\ as long as $L$ is not $\mu$ or $\lambda$.
\end{proof}

\subsection{Hyperbolicity of $\L$ and uniqueness of $R$, given the simplicity of $M$}

For this section, we continue to assume that we have a basic twist family $\{K^n\}$ corresponding
to a link $\L$ in $H_P$.  

\begin{lemma}\label{lem:parallelinM}
If $F'$ is a $\bdry$--parallel disk or annulus in $M$ intersecting each component of $\calA_{\L}$ in a collection of spanning arcs, then each collection has an even number of arcs.   
\end{lemma}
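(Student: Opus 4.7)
The plan is to use a mod-$2$ homological argument on the closed genus-$2$ surface $\bdry M$. Recall from Section~\ref{sec:twistfamily} that $\bdry M$ decomposes as $R' \cup R'' \cup \calA_\L$, and each of the three annuli $A \in \calA_\L$ (a subsurface of one of the tori in $\bdry X_\L$) has an essential core curve $\kappa_A$ which remains essential in $\bdry M$.

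The first step is to observe that since $F'$ is $\bdry$-parallel in $M$, it cobounds a product region in $M$ with a subsurface $\bdry_F \subset \bdry M$ satisfying $\bdry \bdry_F = \bdry F'$. Here $\bdry_F$ is a disk if $F'$ is a disk, and an annulus if $F'$ is an annulus; in either case $\bdry_F$ provides a $2$-chain in $\bdry M$ whose boundary is $\bdry F'$, so
\[
[\bdry F'] = 0 \text{ in } H_1(\bdry M; \Z/2).
\]
Hence $[\bdry F'] \cdot [\kappa_A] = 0$ in $\Z/2$ for each component $A$ of $\calA_\L$.

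The second step is to convert this algebraic statement into the desired parity. The hypothesis is that, for each $A \in \calA_\L$, the intersection $F' \cap A$ consists of exactly $k = k(A)$ spanning arcs of $A$ and nothing else. Since $\kappa_A$ lies in the interior of $A$, we have $\bdry F' \cap \kappa_A = (\bdry F' \cap A) \cap \kappa_A$, and after a small isotopy each spanning arc crosses $\kappa_A$ transversely in a single point. Therefore the geometric intersection number $|\bdry F' \cap \kappa_A|$ equals $k$, and it agrees in parity with the algebraic intersection $[\bdry F'] \cdot [\kappa_A] = 0$. Thus $k$ is even, as desired.

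I expect no substantive obstacle: the main thing to confirm is simply that $\bdry$-parallelism of a disk or annulus in $M$ always produces a disk or annulus $\bdry_F$ on $\bdry M$ with $\bdry \bdry_F = \bdry F'$, which is built into the definition. The argument is purely a parity count coming from the fact that $\bdry F'$ bounds in the $2$-manifold $\bdry M$ and each spanning arc contributes a single transverse crossing with $\kappa_A$.
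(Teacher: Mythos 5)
Your argument is correct and is exactly the paper's proof: the paper likewise represents each component of $\calA_{\L}$ by its core curve, notes that $\bdry$--parallelism forces $\bdry F'$ to bound a subsurface of $\bdry M$, and concludes by elementary mod $2$ intersection theory. You have simply written out the details that the paper leaves implicit.
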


\begin{proof}
Regarding each component of $\calA_{\L}$ as represented by a core curve, we may then regard $\bdry F'$ as transverse to $\calA_{\L}$.  Since $F'$ is $\bdry$--parallel, $\bdry F'$ bounds a surface in $\bdry M$.  Elementary mod $2$ intersection theory then gives the result.
\end{proof}

\begin{defn}[Simple $3$--manifold]
A {\em simple} $3$--manifold is irreducible, $\bdry$--irreducible, atoroidal, and acylindrical. 
In particular, a manifold is simple if and only if every $2$--sphere bounds a ball, every properly
embedded disk is boundary parallel,
and every properly embedded, incompressible annulus or torus is boundary parallel.
\end{defn}

A link in a 3-manifold is hyperbolic if its complement admits a complete hyperbolic metric of finite volume (and this complement is said to be hyperbolic). 

\begin{proposition}\label{prop:hypL}
If $M$ is simple, then $\L$ is hyperbolic.
\end{proposition}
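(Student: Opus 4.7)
The plan is to apply Thurston's hyperbolization theorem: since $X_\L$ is a compact orientable $3$-manifold whose boundary is a non-empty disjoint union of tori (three tori, one for each component of $\L$), it is hyperbolic if and only if it is simple and not Seifert fibered. The strategy is to use the pair of pants $R \subset X_\L$ as a Haken decomposing surface. First I would show that $R$ is essential in $X_\L$; then any essential surface or compressing disk in $X_\L$ can be reduced by cut-and-paste to one in $M$ or in $\nbhd(R) \cong R \times I$, where simplicity of $M$ and the product structure on $\nbhd(R)$ dispose of the remaining cases.

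To see that $R$ is essential, note that by the product description of Section~\ref{sec:twistfamily} (see Figures~\ref{fig:TinPxI} and~\ref{fig:nuinTxI}), the three core curves of $\calA_\L$ are identified with $L \times \{+1\}$, $L \times \{-1\}$, and $\bdry D_L$ on $\bdry M = \bdry H_P$. These are essential in the genus-$2$ surface $\bdry M$ (since $L$ is essential in $T$ and $\bdry D_L$ is a meridian of $H_P$) and are pairwise non-parallel, so $R'$ and $R''$ are essential pairs of pants on $\bdry M$. Any compressing disk for $R$ in $X_\L$, by innermost-circle reduction using the fact that $R$ is incompressible in the product $\nbhd(R) \cong R \times I$, may be pushed into $M$, where it becomes a compressing disk for $\bdry M$, contradicting the $\bdry$-irreducibility of $M$ given by simplicity. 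A $\bdry$-compression of $R$ is ruled out similarly: pushing into $M$ yields an essential annulus in $M$ joining components of $R' \cup R''$, contradicting the acylindricity of $M$.

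With $R$ essential, each of the four simplicity conditions for $X_\L$ follows by a standard cut-and-paste argument. Given an essential sphere, compressing disk for $\bdry X_\L$, essential torus, or essential annulus $F \subset X_\L$, I would isotope $F$ to meet $R$ transversely with minimal intersection; innermost circles of $F \cap R$ are removed via incompressibility of $R$, and outermost essential arcs of $F \cap R$ (necessarily peripheral in the pair of pants $R$) via $\bdry$-incompressibility of $R$. After these isotopies, $F$ lies entirely in $M$ or in $\nbhd(R)$, and in either case one argues $F$ is inessential in $X_\L$, using simplicity of $M$ and the product structure on $\nbhd(R)$; Lemma~\ref{lem:parallelinM} is invoked to control how $\bdry$-parallel pieces in $M$ meet the annuli $\calA_\L$. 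Finally, Seifert-fiberedness is excluded because any Seifert fibration on a compact orientable $3$-manifold with three torus boundary components gives rise to essential vertical annuli between boundary tori, contradicting the acylindricity just established.

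The main obstacle is the acylindricity step. An essential annulus in $X_\L$ can intersect $R$ in essential peripheral arcs, and the resulting pieces in $M$ are disks or annuli whose boundaries lie partly on $R' \cup R''$ and partly on $\calA_\L$. Even after each such piece is shown to be $\bdry$-parallel in $M$ by simplicity, one must verify that they cannot re-glue with vertical annular pieces in $\nbhd(R)$ to produce an essential annulus in $X_\L$; this is where the precise geometric information about how the curves $\bdry R'$, $\bdry R''$, and the annuli $\calA_\L$ fit together in $\bdry M$, together with the parity constraint of Lemma~\ref{lem:parallelinM}, is decisive.
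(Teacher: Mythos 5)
Your overall strategy is the same as the paper's: isotope a putative essential sphere, disk, annulus, or torus $F$ to meet $R$ minimally, kill inessential circles and $\bdry$-parallel arcs of $F\cap R$ by innermost/outermost arguments using the simplicity of $M$ and the parity constraint of Lemma~\ref{lem:parallelinM}, and conclude. The sphere and disk cases are handled correctly by your outline. But you have left a genuine gap exactly where you flag ``the main obstacle,'' and the parity lemma does not close it. For an essential annulus $F$ meeting $R$ in essential (spanning) arcs, each component of $F\cut R$ is a rectangle in $M$ meeting $\calA_{\L}$ in exactly two spanning arcs; if both arcs lie in the \emph{same} component of $\calA_{\L}$ the count is even and Lemma~\ref{lem:parallelinM} gives no contradiction. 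The paper closes this by a different observation: simplicity forces the rectangle to be $\bdry$-parallel in $M$, and the disk of parallelism in $\bdry M$ then forces the arcs of $F\cap R$ to be $\bdry$-parallel in $R$, contradicting their essentiality. Your plan to ``verify that the pieces cannot re-glue'' is precisely the missing argument, not a consequence of what you have established.

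A second facet of the same gap: when $F$ is an annulus (or torus), $F\cap R$ may consist of circles that are \emph{essential in $F$} and merely peripheral in $R$. These do not bound disks, so ``innermost-circle reduction via incompressibility of $R$'' does not remove them, and $F$ cannot then be pushed entirely into $M$ or $\nbhd(R)$. The paper handles this with a separate move: an outermost such circle in $F$ cuts off a subannulus $F'$ which, glued to a subannulus of $R$ and pushed off $R$, is a properly embedded annulus in $M$; simplicity makes it $\bdry$-parallel, and the parallelism reduces $|F\cap R|$, contradicting minimality. The torus case likewise needs the specific argument that the resulting $\bdry$-parallel annuli in $M$ must be parallel \emph{across} a component of $\calA_{\L}$ (by minimality), forcing $F$ itself to be a $\bdry$-parallel torus. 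Finally, a minor point: a $\bdry$-compression of $R$ pushed to one side is a \emph{disk} in $M$ meeting $\calA_{\L}$ in a single spanning arc (ruled out by simplicity plus the parity of Lemma~\ref{lem:parallelinM}), not an essential annulus; acylindricity is not the relevant hypothesis there.
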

\begin{proof}
By Geometrization for Haken manifolds \cite{Thurston-hakengeom}, to show $\L$ is hyperbolic it is sufficient to show that if $F$ is a properly embedded sphere, disk, annulus, or torus in $X_\L$ then $F$ is not essential --- i.e.\ $F$ is a sphere that bounds a ball or $F$ is a disk, annulus, or torus
which is either compressible or boundary parallel. 

So assume $F$ is an essential properly embedded sphere, disk, annulus, or torus in $X_\L$, isotoped to intersect $R$ minimally.   
If $F\cap R = \emptyset$, then $(F, \bdry F) \subset (M, \calA_{\L})$.  Since $M$ is simple (by hypothesis), $F$ must then be $\bdry$--parallel in $M$ and hence must be a disk or annulus.  However since any disk or annulus in $\bdry M$ bounded by curves in $\calA_{\L}$ must themselves be contained in $\calA_{\L}$, $F$ would have to be $\bdry$--parallel in $X_\L$, a contradiction.  
Therefore $F \cap R \neq \emptyset$.
Since $F$ is essential, any simple closed curve of $F \cap R$ must be $\bdry$--parallel in $R$ and any arc of $F \cap R$ must either separate two components of $\bdry R$ or connect two components of $\bdry R$.

If $F \cap R$ contains simple closed curves that bound disks in $F$, then an innermost such bounds a disk in $F$ with interior disjoint from $R$.  Since this curve must be $\bdry$--parallel in $R$ and the boundary components of $R$ are all essential curves in $\bdry M$, then $M$ is boundary reducible.  But then $M$ is not simple, a contradiction.

If $F \cap R$ contains arcs that are $\bdry$--parallel in $F$, then an outermost such bounds a disk in $F$ with interior disjoint from $R$.  Thus this disk is properly embedded in $M$ and its boundary is a curve in $\bdry M$ that intersects the three annuli $\calA_{\L}$ of $\bdry X_\L \cut R$  in a single spanning arc.  Since the disk must be $\bdry$--parallel in $M$ because $M$ is simple, this contradicts Lemma~\ref{lem:parallelinM}.

The previous two paragraphs show that $F$ cannot be a sphere or a disk.  Hence $F$ is either an annulus or a torus.

If $F$ is an annulus, then  $F \cap R$ is either a collection of spanning arcs in $F$ or a collection of essential simple closed curves.  In the former case, each component of $F \cut R$ is a properly embedded disk in $M$ that  crosses $\calA_{\L}$ twice.  Yet this implies that the arcs of $F \cap R$ must be $\bdry$--parallel in $R$, a contradiction.  In the latter case, an outermost component in $F$ cuts off a subannulus $F'$ in $F$ with interior disjoint from $R$ and a subannulus $R'$ of $R$.  Since together the annulus $F' \cup R'$  may be nudged off $R$ to be a properly embedded annulus in $M$, due to the simplicity of $M$ the annulus $F' \cup R'$ must be $\bdry$--parallel.  This parallelism then guides an isotopy of $F$ through $R'$ that reduces $|F \cap R|$, contradicting the assumed minimality.

If $F$ is a torus, then the components of $F \cut R$ are all annuli.  Since these annuli are all properly embedded in $M$ and $M$ is simple, they all must be $\bdry$--parallel.   Hence their boundary components are all isotopic in $R$ to the same component of $\bdry R$.  Due to the minimality of $|F \cap R|$ they cannot be $\bdry$--parallel into one side of $R$ but rather must be $\bdry$--parallel across a component of $\calA_{\L}$. Thus any component $F \cut R$ is an annulus that runs between opposite sides of $R$ and may be joined together by a subannulus of $R$ (or just a single curve of $F \cap R$) to form a torus that is $\bdry$--parallel in $X_\L$.  Since $F$ is an embedded closed compact surface,  $F \cap R$ must in fact be a single curve and hence $F$ itself is a $\bdry$--parallel torus.  Yet this means $F$ is not essential.
\end{proof}

\begin{proposition}\label{prop:uniquepop}
Assume $M$ is simple.  
If $R'$ is a properly embedded pair of pants in $X_\L$ with a component of $\bdry R'$ in each component of $\bdry X_\L$, then either 
\begin{enumerate}
\item there is a properly embedded, pair of pants in $M$ that is incompressible and not $\bdry$--parallel whose boundary is the set of core curves of $\calA_{\L}$, or
\item $R'$ is isotopic to $R$.
\end{enumerate}
\end{proposition}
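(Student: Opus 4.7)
My plan is a standard intersection-minimization argument that uses simplicity of $M$ at each step. First I would verify that $R$ is incompressible in $X_\L$: a compressing disk for $R$ would have boundary an essential, hence boundary-parallel, curve in the pair of pants $R$, and pushed slightly into $M$ this produces a disk whose boundary is a core curve of some annulus in $\calA_\L$, essential on $\bdry M$, contradicting $\bdry$-irreducibility of $M$. I would then isotope $R'$ to minimize $|R' \cap R|$.

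An innermost closed curve of $R' \cap R$ in $R'$ bounds a disk in $R'$ whose boundary, by incompressibility of $R$, is inessential in $R$; irreducibility of $X_\L$ (also from simplicity of $M$) then yields an isotopy reducing $|R' \cap R|$, contradicting minimality. So $R' \cap R$ consists only of arcs. The central step is to rule these out. Take an outermost arc $\alpha$ of $R' \cap R$ in $R'$, cutting off a disk $\Delta \subset R'$ with $\bdry \Delta = \alpha \cup \beta$ and $\beta \subset \bdry X_\L$. Minimality forces $\alpha$ to be essential in $R$; regarded as a disk in $M$, $\Delta$ has boundary on one copy $R_+$ of $R$ in $\bdry M$ together with $\calA_\L$. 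Since $M$ is simple, $\Delta$ is $\bdry$-parallel in $M$, so $\bdry \Delta$ bounds a disk $D'$ on $\bdry M$; irreducibility of $M$ then makes $\Delta \cup D'$ bound a ball, across which $R'$ can be isotoped to eliminate $\alpha$, contradicting minimality. I expect the main obstacle to be this last argument: one must carefully analyze how $D'$ sits in the decomposition $\bdry M = R_+ \cup R_- \cup \calA_\L$ to confirm that $D'$ can be chosen on the same side of $R$ as $\Delta$, so that the proposed isotopy genuinely decreases $|R' \cap R|$.

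With $R' \cap R = \emptyset$, $R' \subset M$ is a pair of pants with one boundary curve on each annulus of $\calA_\L$, each either isotopic to the core or inessential in its annulus. An inessential component $\beta$ bounds a disk in $\bdry M$, which caps $R'$ off to an annulus in $M$ spanning two distinct annuli of $\calA_\L$; acylindricity of $M$ would force this annulus to be $\bdry$-parallel, but the cores of distinct annuli of $\calA_\L$ are pairwise non-isotopic on $\bdry M$ (they form part of a pants decomposition of the genus two surface $\bdry M$), a contradiction. Hence $\bdry R'$ consists of core curves of $\calA_\L$, matching $\bdry R$. If $R'$ is incompressible and not $\bdry$-parallel in $M$, then $R'$ itself realizes case (1). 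Otherwise, $\bdry$-irreducibility of $M$ rules out compressibility of $R'$ (a compression disk would split off a disk with essential boundary on $\bdry M$), so $R'$ must be $\bdry$-parallel in $M$, hence isotopic to one of the two copies of $R$ on $\bdry M$ and therefore isotopic to $R$ in $X_\L$, yielding case (2).
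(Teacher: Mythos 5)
Your reduction to the case $R'\cap R=\emptyset$ has a genuine gap at its ``central step.'' The outermost-arc argument only applies to arcs of $R'\cap R$ that are inessential ($\bdry$--parallel) in the pair of pants $R'$: only such an arc cuts off a bigon disk $\Delta\subset R'$ with $\bdry\Delta=\alpha\cup\beta$, $\beta\subset\bdry X_\L$. After minimizing $|R'\cap R|$, the intersection can consist entirely of arcs that are essential in \emph{both} $R$ and $R'$ --- for instance a single arc joining two distinct boundary components of $R'$ (cutting $R'$ into an annulus), or three pairwise non-parallel essential arcs (cutting $R'$ into two hexagons). In these configurations no component of $R'\cut R$ is a bigon bounded by one intersection arc and one boundary arc, so there is nothing for your argument to act on, and no isotopy is produced. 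The same issue occurs for closed curves of $R'\cap R$ that are essential in $R'$ (isotopic to $\bdry R'$): your innermost-disk step only removes curves bounding disks in $R'$, and curves essential in both surfaces require a separate annulus-exchange argument using acylindricity of $M$. These essential configurations are exactly the content of the proposition; dismissing them is where the dichotomy between conclusions (1) and (2) is actually earned.

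The paper excludes the essential-arc configurations by a parity argument rather than an isotopy: each component of $R'\cut R$ is a properly embedded disk or annulus in the simple manifold $M$, hence $\bdry$--parallel, and Lemma~\ref{lem:parallelinM} forces such a surface to meet each annulus of $\calA_{\L}$ in an \emph{even} number of spanning arcs. Since each component of $\bdry R'$ lies in its own torus of $\bdry X_\L$, the arcs of $(\bdry R')\cut R$ are spanning arcs of $\calA_{\L}$, and a check of the six possible systems of essential, pairwise non-parallel arcs in a pair of pants (Figure~\ref{fig:RprimecutR}) shows some component of $R'\cut R$ always meets some component of $\bdry R'$ an odd number of times --- a contradiction. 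You would need to supply this (or an equivalent) argument. Your treatment of the disjoint case, by contrast, is essentially the paper's: compressibility is excluded because the cores of $\calA_{\L}$ are non-separating in $\bdry M$, and $\bdry$--parallelism gives isotopy to $R$; the discussion of inessential boundary components is a reasonable extra precaution.
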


\begin{proof}
The pair of pants $R'$ may be isotoped to intersect $R$ transversally and minimally. Then $R' \cut R$ is a collection of properly embedded surfaces in $M = X_\L\cut \nbhd(R)$.

Assume $R \cap R' =\emptyset$. Thus $R'$ is a properly embedded, pair of pants in $M$ whose boundary is the core curves of $\calA_{\L}$. 
If $R'$ is neither compressible nor $\bdry$--parallel, then we have our first conclusion.
If $R'$ is compressible in $M$, then due to the simplicity of $M$ some component of $\bdry R'$ bounds a disk in $\bdry M$; but this is contrary to the cores of $\calA_{\L}$ being non-separating curves in $\bdry M$.  If $R'$ is $\bdry$--parallel, then $R'$ is isotopic to $R$ giving our second conclusion.

So assume $R \cap R' \neq \emptyset$.
Following the arguments of Proposition~\ref{prop:hypL} with $R'$ in the stead of $F$, $R \cap R'$ is a non-empty set of arcs, essential in each $R$ and $R'$ and with no two parallel. (Use the argument for when $F$ is an annulus to show $R \cap R'$ contains no simple closed curves that are essential in both $R$ and $R'$.)  Consequently, $R \cap R'$ is  one, two, or three essential arcs in $R'$ where at most one is separating. The possible configurations are illustrated in Figure~\ref{fig:RprimecutR}.  In order to not violate Lemma~\ref{lem:parallelinM} each component of $R' \cut R$ must be incident to each component of $\bdry R'$ an even number of times.  (This is because each component of $\bdry R'$ is in its own component of $\bdry X_\L$ and each component of $(\bdry R')\cut R$ is a spanning arc of $\calA_{\L}$.)  An examination of Figure~\ref{fig:RprimecutR} shows this does not occur.
\end{proof}

\begin{figure}
	\centering
	\includegraphics[width=3in]{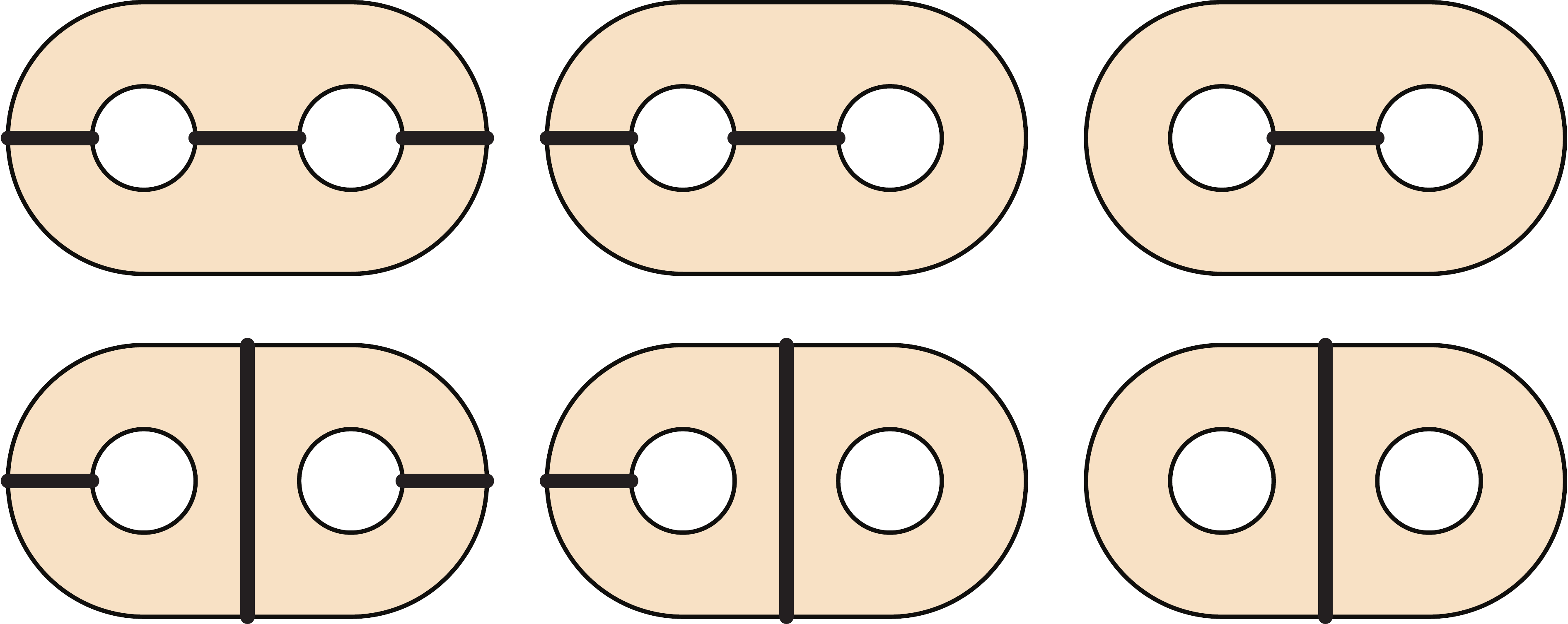}
	\caption{Enumerated are the six possible configurations, up to homeomorphism, of the non-empty set of arcs $R \cap R'$ in the pair of pants $R'$ so that the arcs are essential in $R'$ and no two are parallel.}
	\label{fig:RprimecutR}
\end{figure}

\subsection{Twisting $K$, hyperbolicity and short geodesics.}

Let $\{K^n\}$ be a {\em basic twist family} of lashings of $P$, where $K^n$ is the $n$--fold Dehn twist in
$T$ of $K$ along $L$, see Definition~\ref{twistfamily}.  Recall that $\L$ is the link $L_+ \cup L_- \cup K$, and let  $\mathring{X}_K,\mathring{X}_\L$ denote the complements of $K,\L$ in $Y$. The complement, $\mathring{X}^n$, of the lashing $K^n$ in $Y$ may then be obtained as the $(-1/n, 1/n)$-Dehn surgery on the link $(L_+,L_-)$ in $\mathring{X}_K$. Let $L_{+}^n,L_{-}^n$ be the closed curves in 
$\mathring{X}^n$ gotten
by taking the cores of the attached solid tori in that Dehn surgery. 

\begin{lemma} \label{lem:shortestgeodesics}
	Assume $\mathring{X}_\L$ is hyperbolic. For large $n$, $\mathring{X}^n$ is hyperbolic and $L_+^n$ and $L_-^n$ are the two shortest geodesics in $\mathring{X}^n$.
\end{lemma}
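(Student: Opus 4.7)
The plan is to invoke Thurston's hyperbolic Dehn surgery theorem together with the asymptotic behavior of the lengths of the cores of filled cusps. Since $\mathring{X}_\L$ is hyperbolic with three cusps and $\mathring{X}^n$ is obtained by $(-1/n, 1/n)$-Dehn filling along the cusps corresponding to $L_+$ and $L_-$, and the filling slopes have normalized length tending to infinity as $|n| \to \infty$, Thurston's theorem guarantees that $\mathring{X}^n$ admits a complete finite-volume hyperbolic structure for all sufficiently large $n$. Moreover, as $n \to \infty$, the hyperbolic structures $\mathring{X}^n$ converge geometrically (in the sense of pointed Gromov-Hausdorff convergence, based at a fixed point in the thick part) to $\mathring{X}_\L$.

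The second ingredient is the well-known consequence of Thurston's theorem (made quantitative by the Neumann-Zagier asymptotics) that the geodesic cores $L_+^n$ and $L_-^n$ of the filled solid tori satisfy $\operatorname{length}(L_+^n), \operatorname{length}(L_-^n) \to 0$ as $n \to \infty$. Thus both of these geodesics lie in the Margulis thin part of $\mathring{X}^n$ for $n$ sufficiently large.

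Meanwhile, any closed geodesic in a complete finite-volume hyperbolic $3$-manifold has length bounded below by (twice) the Margulis constant unless it lies in a Margulis tube. For large $n$ the thin part of $\mathring{X}^n$ consists only of the cusp neighborhood of $K$ together with two Margulis tubes about $L_+^n$ and $L_-^n$; this uses the geometric convergence $\mathring{X}^n \to \mathring{X}_\L$, under which the thin parts of $\mathring{X}^n$ away from the filled cusps converge to the thin parts of $\mathring{X}_\L$ (which consist only of the three cusp neighborhoods, containing no closed geodesics). In particular, any closed geodesic $\gamma \subset \mathring{X}^n$ distinct from $L_+^n, L_-^n$ has length uniformly bounded below by the Margulis constant $\epsilon_M$, independently of $n$.

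Combining these two points, fix $n$ so large that $\operatorname{length}(L_\pm^n) < \epsilon_M/2$; then $L_+^n$ and $L_-^n$ are shorter than every other closed geodesic in $\mathring{X}^n$, proving that they are the two shortest geodesics. The main subtlety to be careful with is justifying that closed geodesics in $\mathring{X}^n$ not homotopic to $L_\pm^n$ cannot enter the thin part, which requires using the geometric convergence to rule out the appearance of new short geodesics disjoint from $L_+^n \cup L_-^n$; this is precisely what the hyperbolic Dehn surgery theorem provides, since such geodesics would correspond to geodesics in $\mathring{X}_\L$ of comparable length, of which there are only finitely many and all of definite length.
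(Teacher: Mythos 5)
Your proposal is correct and follows essentially the same route as the paper: Thurston's hyperbolic Dehn surgery theorem for hyperbolicity and geometric convergence of $\mathring{X}^n$ to $\mathring{X}_\L$, Neumann--Zagier asymptotics to see that the lengths of the cores $L_\pm^n$ tend to $0$, and the structure of the thin part under geometric convergence (the paper cites Benedetti--Petronio, Theorems E.5.1 and E.2.4 and Proposition D.3.11, for exactly the points you argue via the Margulis lemma) to conclude that no other geodesic can be short, so the two cores are the two shortest geodesics.
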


\begin{proof}
	This is a well-known strong version of Thurston's Hyperbolic Dehn Surgery Theorem and follows from 
the proof thereof by Benedetti and Petronio, see \cite[Theorem E.5.1]{BP}.  
We apply this to our sequence of manifolds $\{\mathring{X}^n\}$. Their proof shows that
	for large $n$, there is a sequence of structures $\{z_n\}$ in $\Def(\mathring{X}_\L)$ (the incomplete hyperbolic structures on $\mathring{X}_\L$ in a neighborhood of the complete structure, $z_0$) that complete to $\mathring{X}^n$, and 
	$z_n \to z_0$ in $\Def(\mathring{X}_\L)$. By Proposition E.6.29, the corresponding complete hyperbolic structures
	$\mathring{X}^n$ converge to the complete structure on $\mathring{X}_\L$ in the geometric topology. Furthermore,
	by  Neumann-Zagier \cite{NZ}, the cores of the attached solid tori, $L_+^n$ and $L_-^n$, are geodesics whose lengths go to $0$ in $\mathring{X}^n$ as $n \to \infty$.

Because $\{\mathring{X}^n\}$ converges in the geometric topology to the complete structure on $\mathring{X}_\L$, 
\cite[Theorem E.2.4]{BP} says that, for small enough $\epsilon>0$ and large enough $n$,  
the $\epsilon$--thin part of $\mathring{X}^n$ must  be the tubular neighborhoods of two simple geodesics
along with a cusp neighborhood. 
By \cite[Proposition D.3.11]{BP}, the core curves are the unique geodesics in these tubular neighborhoods.
For large enough $n$, the geodesics $L_+^n$ and $L_-^n$ have lengths less than $\epsilon$ and
consequently lie in the  $\epsilon$--thin part of $X^n$. Thus $L_+^n$ and $L_-^n$ must be 
the core curves of these
tubes and the shortest geodesics in $\mathring{X}^n$.
\end{proof}

\subsection{Asymmetric hyperbolic lashings}
\begin{theorem}\label{thm:mainasymmetry}
Consider a connected, closed, compact, oriented $3$--manifold $Y$ that contains an embedded  genus $2$ Heegaard surface decomposed into two pairs of pants $P$ and $Q$ such that $P \cap Q = \bdry P = \bdry Q$.  Assume the following:
\begin{enumerate}
\item \label{item:simpleM} The manifold $M = Y \cut H_P$ is simple.
\item \label{item:essentialQ} The pair of pants $Q$ is a properly embedded, incompressible, boundary incompressible, separating surface in $M$, dividing $M$ into handlebodies $H_+$ and $H_-$.
\item \label{item:nothomeo} The pairs $(H_+, Q)$ and $(H_-,Q)$ are not homeomorphic.
\item \label{item:uniqueQ} Any properly embedded pair of pants in $M$ is either compressible, $\bdry$--parallel, isotopic to $Q$, or non-separating and can be properly isotoped in $M$ to be disjoint from some component of $\bdry Q$. 
\end{enumerate}
Let $\{K^n\}$ be a basic twist family of lashings of $P$ in which the twisting curve $L$ is neither $\mu$ nor $\lambda$.  Then for sufficiently large $n$,  $K^n$ is a hyperbolic knot with asymmetric complement.
\end{theorem}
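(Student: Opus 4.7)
The plan is to first establish that $X^n$ is hyperbolic for $n$ large, then follow the three-stage reduction from the overview to show any self-diffeomorphism $f$ of $X^n$ is isotopic to the identity: (I) isotope $f$ to restrict to a self-diffeomorphism of $M$; (II) further isotope so that it preserves $Q$ and the attendant structural data; (III) conclude $f$ is isotopic to the identity on $M$ and on $\gamma^n$. Hyperbolicity follows by combining Hypothesis~(\ref{item:simpleM}) (simplicity of $M$) with Proposition~\ref{prop:hypL} (hyperbolicity of $\L$), and then with Lemma~\ref{lem:shortestgeodesics} (for large $n$, $X^n$ is hyperbolic and $L_+^n, L_-^n$ are its two shortest geodesics). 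By Mostow rigidity, $f$ is isotopic to an isometry of $X^n$, which must permute the two shortest geodesics, so after isotopy $f$ preserves $L_+^n \cup L_-^n$ setwise and restricts to a self-diffeomorphism $\bar f$ of the link exterior $X_\L$.

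For Stage~(I), I would apply Proposition~\ref{prop:uniquepop} to $\bar f(R)$: either $\bar f(R)\simeq R$ in $X_\L$, or there is an essential pair of pants in $M$ whose boundary is the triple of core curves of $\calA_\L$. The second alternative is ruled out by combining Lemma~\ref{lem:bdryQintersectcalA} (applicable since $L\neq \mu,\lambda$) with Hypothesis~(\ref{item:uniqueQ}): such a pair of pants cannot be isotoped disjoint from any $C_*$, yet its boundary is not isotopic to $\bdry Q$ (so it is not isotopic to $Q$), and it is essential, so no option of Hypothesis~(\ref{item:uniqueQ}) is available. Hence $\bar f(R)\simeq R$, and after further isotopy $\bar f(R)=R$, so $f$ restricts to a self-diffeomorphism $\psi$ of $M$.

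For Stage~(II), Hypothesis~(\ref{item:uniqueQ}) identifies $Q$ as the unique essential separating pair of pants in $M$ up to isotopy, so after isotopy $\psi(Q)=Q$; Hypothesis~(\ref{item:nothomeo}) then forbids $\psi$ from exchanging $H_+$ and $H_-$, so $\psi(H_\pm)=H_\pm$. Because $f$ is a self-diffeomorphism of $X^n = M[\gamma^n]$, the attaching curve $\gamma^n\subset\bdry M$ is preserved by $\psi|_{\bdry M}$ up to isotopy. By Lemma~\ref{lem:gammawithlargen} (for $n$ large), the three intersection numbers $\Delta_{\bdry M}(\gamma^n, C_*)$ are pairwise distinct with at least one odd, forcing $\psi$ to fix each of $C_\nu, C_\mu, C_\lambda$ individually and to preserve rather than exchange the two pairs of pants into which $\calC$ divides $\bdry M$.

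Stage~(III) is the heart of the matter: I must argue that the preserved data $(Q, H_\pm, \calC, \gamma^n)$ on $M$ forces $\psi$ to be isotopic to $\text{id}_M$, and then that the resulting isotopy extends through the 2-handle on $\gamma^n$ to give $f\simeq \text{id}_{X^n}$. The plan is to use the simplicity of $M$ to obtain a hyperbolic structure with totally geodesic boundary, and by Bonahon's result cited in the introduction to identify $\pi_0\Diff(M)$ with the isometry group of $M$; the surviving combinatorial data should then pin down this isometry as the identity. I expect this stage to be the main obstacle: rigorously confirming that the data surviving Stages~(I)--(II) suffices to trivialize $\pi_0\Diff(M)$ will require careful analysis of how $\psi$ acts on each handlebody $H_\pm$, presumably exploiting the disk- and annulus-busting properties of $\calC$ in the two handlebodies of $M\setminus Q$ that are mentioned in the construction overview.
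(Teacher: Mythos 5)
Your Stages (I) and (II) track the paper's proof closely and correctly: hyperbolicity via Proposition~\ref{prop:hypL} and Lemma~\ref{lem:shortestgeodesics}, passage to an isometry preserving the two shortest geodesics, the use of Lemma~\ref{lem:bdryQintersectcalA} together with hypothesis~(\ref{item:uniqueQ}) to rule out the first alternative of Proposition~\ref{prop:uniquepop}, and the use of hypotheses (\ref{item:uniqueQ}) and (\ref{item:nothomeo}) to make $Q$ and its sides invariant. (One small point you assert rather than prove: that $\psi|_{\bdry M}$ preserves the isotopy class of $\gamma^n$. The paper justifies this by viewing $X^n=M\cup W$ with $W$ a compression body and noting $\gamma^n$ is the unique isotopy class of boundary of a non-separating boundary-reducing disk of $W$.)

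The genuine gap is Stage (III), which you yourself flag as the main obstacle and for which your proposed route does not work as stated. Identifying $\pi_0\Diff(M)$ with $\Isom(M)$ via a geodesic-boundary hyperbolic structure does not finish the argument: $\Isom(M)$ need not be trivial, and nothing you have preserved so far obviously forces the particular isometry class of $\psi$ to be trivial; moreover, even if $\psi\simeq\mathrm{id}_M$, you would still need the isotopy to respect $\gamma^n$ in order to extend it over the attached $2$--handle, so triviality in $\pi_0\Diff(M)$ alone does not give triviality in $\pi_0\Diff(X^n)$. The paper instead argues combinatorially on $\bdry M$ and then inside the handlebodies. First, after realizing $\bdry Q$ and $\gamma^n$ as geodesics for an auxiliary hyperbolic metric on $\bdry M$ and applying \cite[Lemma 2.6]{Casson-Bleiler}, the map $h|_{\bdry M}$ becomes an automorphism of the graph $\bdry Q\cup\gamma^n$; the pairwise distinct intersection numbers fix each component of $\bdry Q$ setwise, and the odd intersection number from Lemma~\ref{lem:gammawithlargen} is used precisely to show the vertices of this graph are fixed (ruling out a nontrivial rotation along $\gamma^n$ in the orientation-reversing case), after which the Alexander trick makes $h|_{\bdry M}=\mathrm{id}$ since the graph fills $\bdry M$. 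Second, with $h$ the identity on $\bdry M$ and preserving $(H_\pm,Q)$, Oertel's theorem \cite[Theorem~1.11]{oertelhandlebodies} expresses $h|_{\bdry H_\pm}$ as twists along boundaries of disks and essential annuli in the handlebodies; since $Q$ is a pair of pants these curves are isotopic to components of $\bdry Q$, so nonempty collections would produce essential disks or annuli with boundary in $\bdry M$, contradicting simplicity of $M$. Hence $h$ is the identity on $\bdry M\cup Q$, thus isotopic to the identity on each handlebody and on $M$, and finally the isotopy is extended over the non-separating disk bounded by $\gamma^n$ in the compression body. Without some version of these three steps your proof is incomplete.
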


Observe that $\calC = \bdry Q = \bdry P$ also decomposes $\bdry M$ into two pairs of pants $P_+ \subset \bdry H_+$ and $P_- \subset \bdry H_-$ that are both isotopic to $P$ in $H_P$.
Schematically, the three pairs of pants $P_+,P_-,Q$ decompose $Y$ as in Figure~\ref{fig:manifolddecomposition}.  

\begin{remark}
	Our proof of Theorem~\ref{thm:mainasymmetry} uses the work of Oertel on homeomorphisms of handlebodies \cite{oertelhandlebodies}.  Nonetheless, we expect the theorem to continue to hold when  the submanifolds $H_+$ and $H_-$ are not necessarily handlebodies so that the genus $2$ surface $P \cup Q$ is not necessarily a Heegaard surface.
\end{remark}

\begin{figure}
\includegraphics[width=6in]{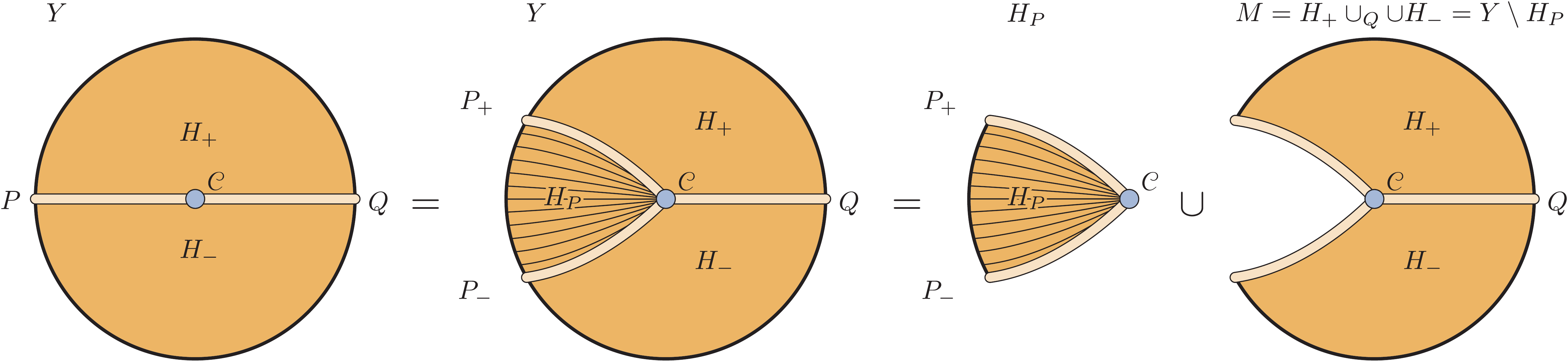}
\caption{The schematic decomposition of $Y$ along  the three pairs of pants $P_+,P_-,Q$.}
\label{fig:manifolddecomposition}
\end{figure}

\begin{proof}[Proof of \ref{thm:mainasymmetry}]
Let $K=K^0$ and $L$ be the pair of lashings such that the twist family $\{K^n\}$ is obtained by twisting $K$ along $L$, and use the notation of Section~\ref{sec:twistfamily}. 
In particular, $K$ with two push-offs of $L$ form the link $\L=K \cup L_+ \cup L_-$ in $Y$ which has exterior $X_\L = Y \cut \nbhd(\L)$ and complement $\mathring{X}_\L=Y-\L$.
We denote the exterior of $K^n$ in $Y$ as $X^n=Y \cut \nbhd(K^n)$, and the complement of $K^n$
in $Y$ as $\mathring{X}^n=Y-K^n$.

Given that $M$ is simple by (\ref{item:simpleM}), Proposition~\ref{prop:hypL} implies that $\mathring{X}_\L$ is hyperbolic.  Then for $n \gg 0$, $\mathring{X}^n$, the interior of $X^n = M[\gamma^n]$, is hyperbolic and the core curves of the fillings  $L_+^n$ and $L_-^n$ are the two shortest simple geodesics in $\mathring{X}^n$ by Lemma~\ref{lem:shortestgeodesics}.  Also, for suitably large $n$, $\gamma^n$ minimally intersects each component of $\bdry Q=\bdry P$ a distinct non-zero number of times and at least one component an odd number of times by Lemma~\ref{lem:gammawithlargen}.

For economy of notation, set $K=K^n$ where we choose $n$ large enough that 
\begin{enumerate}
\item[(a)] $\mathring{X}_K =\mathring{X}^n$ is hyperbolic, 
\item[(b)] the curves $L_+ = L_+^n$ and $L_- = L_-^n$ are the two shortest geodesics in $\mathring{X}_K$, and 
\item[(c)] $\gamma=\gamma^n$ minimally intersects each component of $\bdry Q$ a distinct non-zero number of times. It intersects at least one component of $\bdry Q$ an odd number of times. 
\end{enumerate}
We show that $K$ is asymmetric, verifying the Theorem.

Let $h$ be a diffeomorphism of $\mathring{X}_K$. Since $\mathring{X}_K$ is a (complete, finite volume) hyperbolic manifold by (a), any element of the mapping class group is uniquely represented by an isometry
(see the discussion preceding Theorem 6.2 of \cite{Bon}).   So we may begin by taking $h$ to be an isometry of $\mathring{X}_K$. Our goal is then to show that $h$ is isotopic to the identity diffeomorphism, implying that the isometry group of $\mathring{X}_K$ must be trivial and the hyperbolic knot $K$ is asymmetric.

Given that $h$ is an isometry, since $L_+$ and $L_-$ are the two shortest geodesics in $\mathring{X}_K$ by (c), $h(L_+ \cup L_-)=L_+ \cup L_-$. Therefore $h$ restricts to a diffeomorphism $h_\L$ on the link exterior $X_{\L}$ that preserves the original boundary torus $T_K$ and the pair of new boundary tori $T_+ \cup T_-$ obtained by drilling the geodesics $L_+ \cup L_-$ (for example, the
proof of Lemma~\ref{lem:shortestgeodesics} shows that the thick part of the complement, which is preserved by an isometry, is the exterior $X_\L$). We will now think of $h$ as a diffeomorphism on $X_K$ and show that it is isotopic to the identity. This implies that the original diffeomorphism
on $\mathring{X}_K$ is isotopic to the identity.

By Lemma~\ref{lem:bdryQintersectcalA}, each component of $\bdry Q = \bdry P$ is intersected by some core curve of $\calA_{\L}$.
Hence, with hypothesis (\ref{item:uniqueQ}) of the Theorem,
Proposition~\ref{prop:uniquepop} then implies that $h_\L(R)$ is isotopic to $R$, keeping $\bdry h_\L(R)$ in $\bdry X_\L$.  Hence the diffeomorphism $h_\L$ may be isotoped so that $R$ is invariant.

Since $\hatR \cap \nbhd(L_+ \cup L_-)$ is just a pair of annuli, one each in the solid tori $\nbhd(L_+)$ and $\nbhd(L_-)$ from the boundary to the core, the isotopy of $h_\L(R)$ to $R$ extends across $\nbhd(L_+ \cup L_-)$ to give an isotopy of $h_\L(\hatR \cap X_K)$ to $\hatR \cap X_K$.  Hence the diffeomorphism $h$ may be isotoped so that this punctured annulus $\hatR \cap X_K$ is invariant.
  A further isotopy of $h$ ensures that a regular neighborhood  $\nbhd(\hatR \cap X_K)$ is invariant under $h$.  
  Since the closure of $\nbhd(\hatR \cup K)$ is the handlebody $H_P$, $h$ now restricts to a diffeomorphism $h\vert_M$ of the submanifold $M = Y\cut \nbhd(\hatR \cup K) = Y \cut H_P$.

Because $Q$ is an incompressible, boundary incompressible pair of pants in $M$ by hypothesis (\ref{item:essentialQ}), its image $h\vert_M(Q)$ must be as well. Similarly, since $Q$ is separating in $M$, so too must be $h\vert_M(Q)$. So by hypothesis
(\ref{item:uniqueQ}), $h\vert_M(Q)$ must be isotopic to $Q$ in $M$.  Therefore $h$ may be isotoped to preserve the pair $(M,Q)$.   Then by hypothesis (\ref{item:nothomeo}), $h\vert_M$ must preserve the sides of $Q$.  That is, $h$ may be isotoped so that there is an $h$--invariant product neighborhood $Q \times I$ of $Q$ in $M$ (with $\bdry Q \times I \subset \bdry M$) so that $h$ restricted to $Q \times I$ acts as identity on the $I$ factor.

We may view $X_K = M[\gamma]$ as $M$ in union with a compression body $W= (\partial M \times I) \cup (2-\hbox
{handle})$ where the $2$-handle is attached along the non-separating curve $\gamma$ 
(since $W$ is the exterior of a core curve of the genus $2$ handlebody $H_P$).  Thus
$\gamma$ is the unique isotopy class in $\partial W$ of the boundary of a non-separating, boundary reducing disk of $W$ (cf.\ \cite[Lemma 2.8]{BBL-handlebody}). Thus $h(\gamma)$ is isotopic to $\gamma$
in $\partial W$ and hence in the component $\partial M$ of $\bdry W$.

Place a hyperbolic structure on $\partial M$. Isotope $\partial Q, \gamma$ to geodesics. 
By Lemma 2.6 of \cite{Casson-Bleiler} (applied to $C_1=h(\partial Q), C_2=h(\gamma)$), 
and the uniqueness of geodesic representatives, 
$h$ can be isotoped so that $h(\partial Q)=\partial Q$ and $h(\gamma)=\gamma$.
Thus $h\vert_{\partial M}$ is a graph homeomorphism of the graph
$\bdry Q \cup \gamma$ to itself.

\begin{claim}
$h\vert_{\partial M}$ fixes the vertices of the graph $\bdry Q \cup \gamma$.
\end{claim}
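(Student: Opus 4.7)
My plan is to rule out any nontrivial action of $h|_{\partial M}$ on the vertex set $V(\partial Q\cup\gamma)=V(\gamma)$ in three stages: first, show that $h$ fixes each component of $\partial Q$ setwise; second, rule out the possibility that $h|_\gamma$ reverses orientation (this is the main obstacle); and third, show that in the orientation-preserving case the induced cyclic shift is trivial.

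The first stage is immediate. Since $h(\partial Q)=\partial Q$ and $h(\gamma)=\gamma$, the induced permutation of $\{C_\nu,C_\mu,C_\lambda\}$ preserves each geometric intersection number $k_*:=\Delta_{\partial M}(C_*,\gamma)$. Because these are pairwise distinct by condition (c) in the current proof, the permutation is trivial and $h(C_*)=C_*$ for every $*\in\{\nu,\mu,\lambda\}$.

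The second stage uses both the side-preservation $h(P_+)=P_+$, $h(P_-)=P_-$ (established one paragraph earlier from hypothesis~(\ref{item:nothomeo})) and the odd-intersection condition in (c). Orient $\gamma$ and enumerate its vertices cyclically as $p_1,\ldots,p_N$, where $N:=k_\nu+k_\mu+k_\lambda$, and let $a_i$ be the arc of $\gamma$ from $p_i$ to $p_{i+1}$. Since $\partial Q$ separates $\partial M$ into $P_+$ and $P_-$, the arcs $a_i$ alternate between the two sides, so $N$ is even. Suppose, toward a contradiction, that $h|_\gamma$ reverses orientation. Then, up to isotopy rel vertices, it acts on the cyclic index set $\mathbb{Z}/N$ as a reflection $p_i\mapsto p_{c-i}$ for some integer $c$. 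Since $h(a_i)$ coincides as a point-set with $a_{c-i-1}$, side-preservation forces $i$ and $c-i-1$ to have the same parity, so $c$ is odd. But then the fixed-vertex equation $2i\equiv c\pmod{N}$ has no solution (since $N$ is even and $c$ is odd), so the involution $h|_\gamma$ is fixed-point-free on $V(\gamma)$ and its orbits all have size $2$. By the first stage each orbit lies inside a single $C_*$, forcing every $k_*$ to be even and contradicting condition (c). Hence $h|_\gamma$ preserves orientation.

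The third stage is a short counting argument. As an orientation-preserving circle homeomorphism preserving $N$ marked points, $h|_\gamma$ acts on $V(\gamma)$ as a cyclic shift $p_i\mapsto p_{i+j}$. Labelling each vertex by $\ell_i\in\{\nu,\mu,\lambda\}$ with $p_i\in C_{\ell_i}$, the first stage yields $\ell_{i+j}=\ell_i$, so the cyclic label word $(\ell_1,\ldots,\ell_N)$ is invariant under the shift by $d:=\gcd(j,N)$, and hence $N/d$ divides each $k_*$. But $k_\mu=|q+sn|$ and $k_\lambda=|p+rn|$ are coprime by Lemma~\ref{lem:gammawithlargen} --- the class $(p+rn)[\mu]+(q+sn)[\lambda]\in H_1(T)$ of the lashing $K=K^n$ is primitive --- so $N/d=1$, the shift is trivial, and every vertex of $\partial Q\cup\gamma$ is fixed by $h$.
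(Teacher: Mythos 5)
Your proof is correct, and it takes a genuinely different route from the paper's. Both arguments open identically: the pairwise-distinct intersection numbers force $h$ to fix each component of $\bdry Q$, and the $h$--invariant product structure on $Q\times I$ gives side-preservation. But the paper then splits into cases according to whether $h\vert_{\bdry M}$ preserves or reverses the orientation of the \emph{surface}, while you split according to whether $h\vert_{\gamma}$ preserves or reverses the orientation of the \emph{circle} $\gamma$. In the paper's orientation-reversing case the fixed vertex comes from the fact that a reflection of an odd-cardinality cyclically ordered set (the vertices on the component of $\bdry Q$ met an odd number of times) has a fixed point, which is then propagated to all vertices by the rotation along $\gamma$; its orientation-preserving case is only sketched (``one can see that\dots'') via the arcs of $\gamma-\bdry Q$. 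You instead kill the orientation-reversing-on-$\gamma$ possibility with a parity argument --- side-alternation forces the induced reflection $i\mapsto c-i$ to have $c$ odd, hence to act freely on the even-cardinality vertex set, which would make every $\Delta(C_*,\gamma)$ even and contradict (c) --- and you make the remaining rotation case completely explicit by importing one extra fact from the proof of Lemma~\ref{lem:gammawithlargen}: $\Delta(C_\mu,\gamma)$ and $\Delta(C_\lambda,\gamma)$ are coprime because $[K^n]$ is a primitive class in $H_1(T)$. That coprimality is stronger than the literal statement of hypothesis (c) in the theorem's proof, but it is certainly available in the construction, and in exchange your treatment of the rotation case is fully explicit and uniform where the paper's is terse. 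Both proofs are sound.
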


\begin{proof}
 Since geodesic representatives intersect minimally in their isotopy classes, $\gamma$ intersects each component of $\bdry Q$ a distinct non-zero number of times by (c). Thus $h$ takes each component of $\bdry Q$ to itself. As $h$ restricted to $Q \times I$ is the identity on the $I$--factor, $h\vert_{\partial M}$ preserves
the sides of each component of $\bdry Q$.

First assume that $h\vert_{\partial M}$ reverses orientation. Then as
$h\vert_{\partial M}$ preserves sides of $\bdry Q$, 
it must take each component of $\bdry Q$ to itself reversing orientation.  
Furthermore, as $\gamma$
is transverse to each component of $\bdry Q$ and is invariant under $h\vert_{\partial M}$, 
$h\vert_{\partial M}$ takes $\gamma$ to itself preserving orientation. 
Thus $h\vert_{\partial M}$ acts as a rotation along $\gamma$ on the vertices $\gamma \cup
\bdry Q$. In particular, if $h\vert_{\partial M}$ fixes one vertex, it fixes all. But by (c) above,
$\gamma$ intersects some component of $\bdry Q$ an odd number of times. Since 
$h\vert_{\partial M}$ takes this component to itself reversing orientation, it must fix some point of
$\gamma \cap \bdry Q$. This proves the claim.

So assume $h\vert_{\partial M}$ is 
orientation-preserving. Since it preserves the sides of each component of $\bdry Q$, 
$h\vert_{\partial M}$ is 
orientation-preserving on each component of $\partial Q$.  Thus $h\vert_{\partial M}$
rotates the vertices of $\gamma \cup \partial Q$ along each component of $\bdry Q$ (possibly trivially).  Using the facts that $h\vert_{\partial M}$ takes arcs of  
$\gamma - \bdry Q$ to arcs of  $\gamma - \bdry Q$ connecting the same components of 
$\bdry Q$ and that $\gamma$ intersects each component of $\partial Q$,
one can see that $h$ restricted to $\partial Q \cup \gamma$ fixes vertices along
each component of $\bdry Q$.
\end{proof}

Since $h\vert_{\partial M}$ fixes vertices, it can be isotoped to be the identity on the full graph. 
As the complementary regions of this graph in $\bdry M$ are all disks ($\gamma$ intersects each
component of $\bdry Q$),
the Alexander isotopy trick then allows
us to extend this to an isotopy of all of  $h\vert_{\partial M}$ to the identity.

We next claim that, after a further isotopy of $h$ with support in the interior of $M$, $h$ is the identity on $Q$. 
Recall that the pair of pants $Q$ divides the manifold $M$ into the two genus $2$ handlebodies $H_+$ and $H_-$.  Since  $h(Q)=Q$ and $h$ preserves the sides of $Q$, $h$ restricts to a diffeomorphism of each of these handlebodies.  Moreover, since $h\vert_{\bdry M}$ is the identity, $h\vert_{\bdry H_+}$ and $h\vert_{\bdry H_-}$ are each a composition of Dehn twists along a collection of disjoint curves in $Q$.  By \cite[Theorem~1.11]{oertelhandlebodies} and its proof, for each $H_+$ and $H_-$, this collection of curves is the boundary of a collection of disjoint  meridional disks and  incompressible, non-boundary parallel annuli (so that twists along the disks and annuli produce the diffeomorphism of the handlebody).  However, since $Q$ is a pair of pants, each of these curves are isotopic to a component of $\bdry Q$, and so these disks and annuli may be isotoped in their respective handlebodies to have boundary in $\bdry M$.  Yet unless these collections of disks and annuli are empty, this now contradicts that $M$ is simple.  Hence we may further isotop $h$ to also be the identity on $Q$.

Since $h$ is the identity on $\bdry M \cup Q$, it must be isotopic to the identity on each handlebody $H_+$ and $H_-$, and hence $M$.

Finally, since $\gamma$ bounds a non-separating disk $D$ in the compression-body $W=X_K \cut M$, 
we may further isotope $h$ in the interior of $W$ so that $D$ is invariant under $h$. 
Because $h$ is the identity on $\gamma = \bdry D$, it may be isotoped to be the identity on all of $D$.  Therefore, $h$ may be further isotoped in the compression-body to be the identity on a collar of $\bdry M \cup D$, and thus to be the identity on $X_K$.  Hence the diffeomorphism $h$ is isotopic to the identity.
\end{proof}

\section{The pair of pants $Q$ in the submanifold $M=H_+ \cup_Q H_-$}\label{sec:popQ}

We continue with the notation set in the statement of Theorem~\ref{thm:mainasymmetry} and proceed to develop conditions that ensure the hypotheses of the Theorem are met.  In particular, in this section we only need that $M$ is a $3$--manifold with genus $2$ boundary obtained as the union of two genus $2$ handlebodies $H_+$ and $H_-$ glued together along a pair of pants $Q$.
Then Lemmas~\ref{lem:Qess}, \ref{lem:Msimple}, 
\ref{lem:Qunique} respectively demonstrate that the requirements   (\ref{item:essentialQ}),  (\ref{item:simpleM}), 
 (\ref{item:uniqueQ}) on $M$ and $Q$ of Theorem~\ref{thm:mainasymmetry} are implied by conditions on the disk-busting and annulus-busting nature of $\bdry Q$ in the two handlebodies $H_+$ and $H_-$.  For our application in the special case that $M$ is the exterior of the pair of pants $P=P^{\alpha,m} \subset S^3$ (as set in Section~\ref{sec:construction}), these conditions are then checked in Section~\ref{sec:finalcheck}.

\begin{defn}[Disk-busting and annulus-busting]
Let $\calC$ be a collection of simple closed curves embedded in the boundary of an orientable $3$--manifold $H$. We say $\calC$ is {\em $k$--disk-busting (in $H$)} for a positive integer $k$ if any properly embedded disk in $H$ that $\calC$ intersects fewer than $k$ times is $\bdry$--parallel in $H$.   When $k=1$, we simply say $\calC$ is {\em disk-busting}.   
Similarly, we say $\calC$ is {\em annulus-busting} if any properly embedded annulus in $H$ that is disjoint from $\calC$ is either compressible or $\bdry$--parallel in $H$.
\end{defn}

\begin{lemma}\label{lem:Qess}
	Assume $\bdry Q$ is $3$--disk-busting in each $H_+$ and $H_-$.
	Then $Q$ is incompressible and boundary incompressible in $M$. 
\end{lemma}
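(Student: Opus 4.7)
The plan is to prove incompressibility and boundary incompressibility of $Q$ in $M$ separately, each by contradiction, and each reduced to a statement about the side $H_+$ alone (the argument for $H_-$ is symmetric). The crucial auxiliary observation is that each component of $\bdry Q$ is non-separating in the genus $2$ surface $\bdry H_+ = Q \cup_{\bdry Q} P_+$: cutting along a single component leaves $Q$ and $P_+$ joined across the other two components, so the complement is connected. In particular, no component of $\bdry Q$ bounds a disk in $\bdry H_+$.

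For incompressibility, suppose $D$ is a compressing disk for $Q$ in $M$. Since $Q$ separates $M$, I may take $D$ to be a properly embedded disk in $H_+$ with $\bdry D$ essential in $Q$. In the pair of pants $Q$, every essential simple closed curve is parallel to a component of $\bdry Q$, so $\bdry D$ is isotopic in $\bdry H_+$ to such a non-separating curve and hence does not bound a disk in $\bdry H_+$. Therefore $D$ is not $\bdry$--parallel in $H_+$. But $|D \cap \bdry Q| = 0 < 3$, so the $3$--disk-busting hypothesis forces $D$ to be $\bdry$--parallel, a contradiction.

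For boundary incompressibility, suppose $E \subset H_+$ is a $\bdry$--compressing disk for $Q$ with $\bdry E = \alpha \cup \beta$, where $\alpha \subset Q$ is an essential arc (so $\bdry \alpha \subset \bdry Q$) and $\beta \subset P_+$ is an arc sharing endpoints with $\alpha$. Then $E$ is a properly embedded disk in $H_+$ with $|\bdry E \cap \bdry Q| = |\bdry \alpha| = 2 < 3$, so by the $3$--disk-busting hypothesis $E$ is $\bdry$--parallel via a disk $E' \subset \bdry H_+$ with $\bdry E' = \bdry E$. I then analyze $E' \cap \bdry Q$ inside $E'$: an innermost simple closed curve of intersection would be a component of $\bdry Q$ bounding a disk in $\bdry H_+$, contradicting non-separatingness, so all components of intersection are arcs; their endpoints must lie in the two points $\bdry \alpha \subset \bdry E'$, so there is at most one such arc. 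If there are no arcs, then $E'$ lies entirely in $Q$ or entirely in $P_+$, contradicting that $\bdry E'$ meets both. If there is a single arc $\gamma$, it cuts $E'$ into two sub-disks, one bounded by $\alpha \cup \gamma$ and lying in $Q$; this exhibits $\alpha$ as parallel in $Q$ to the subarc $\gamma$ of $\bdry Q$, contradicting essentiality of $\alpha$.

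The main obstacle is the intersection analysis in the boundary incompressibility step, where the full $3$--disk-busting hypothesis is required: the inequality $2 < 3$ is precisely what produces the parallelism disk $E'$ from which the bigon contradiction is extracted. Once the non-separating property of the components of $\bdry Q$ in $\bdry H_\pm$ is in hand, the remaining topological bookkeeping is routine.
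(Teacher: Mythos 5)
Your proof is correct and follows essentially the same route as the paper's: any compressing or $\bdry$--compressing disk for $Q$ lies in one handlebody, meets $\bdry Q$ at most twice, and cannot be $\bdry$--parallel, contradicting $3$--disk-busting. The paper compresses this into two sentences (asserting without detail that such disks "must be meridional"), whereas you supply the justification — the non-separating property of the components of $\bdry Q$ and the bigon analysis of $E' \cap \bdry Q$ — which is exactly the content being left implicit.
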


\begin{proof}
	Since $Q$ separates $M$ into $H_+$ and $H_-$, any compressing disk or boundary compressing disk for $Q$ would lie in either $H_+$ or $H_-$.  However, since such disks must be meridional in the handlebodies and must meet $\bdry Q$ either $0$ or $2$ times at most, $\bdry Q$ could not be $3$--disk-busting.  Thus $Q$ is incompressible and $\bdry$--incompressible.  
\end{proof}

\begin{lemma}\label{lem:Msimple}
	Assume $\bdry Q$ is $6$--disk-busting and annulus-busting in each $H_+$ and $H_-$.
	Then $M$ is simple.    
\end{lemma}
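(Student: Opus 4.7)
The plan is to verify simplicity of $M$ by contradiction: for each essential surface $F$ in $M$ of type sphere, disk, annulus, or torus, isotope $F$ to minimize $|F\cap Q|$, and then use the $6$--disk-busting and annulus-busting hypotheses to derive either a further reduction of $|F\cap Q|$ (contradicting minimality) or a proof that $F$ was not essential in the first place. Throughout I would exploit the fact from Lemma~\ref{lem:Qess} that $Q$ is incompressible and $\bdry$--incompressible in $M$. A useful preliminary observation, immediate from the decomposition $\bdry H_\pm = Q\cup_{\bdry Q}P_\pm$ of the closed genus two surface $\bdry H_\pm$ into two pairs of pants, is that each component of $\bdry Q$ is non-separating, and hence essential, in $\bdry H_\pm$.

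For the sphere case, an innermost simple closed curve of $F\cap Q$ on $F$ bounds a disk $D\subset H_\pm$ whose boundary on $Q$ is either inessential in the pair of pants $Q$ (leading to an intersection-reducing isotopy) or parallel to a component of $\bdry Q$; in the latter case $D$ is disjoint from $\bdry Q$ yet has essential boundary in $\bdry H_\pm$, contradicting $6$--disk-busting. Thus $F\cap Q=\emptyset$, $F\subset H_\pm$, and $F$ bounds a ball by irreducibility of the handlebody. For the disk case, the same closed-curve argument applies, and an outermost arc of $F\cap Q$ on $F$ either exhibits a $\bdry$--compression of $Q$ (contradicting $\bdry$--incompressibility) or is $\bdry$--parallel in $Q$, so that the associated bigon isotopy reduces $|F\cap Q|$. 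Hence $F\cap Q=\emptyset$ and $F\subset H_\pm$ with $\bdry F\subset P_\pm$ disjoint from $\bdry Q$; $6$--disk-busting then forces $F$ to be $\bdry$--parallel in $H_\pm$, and the parallelism disk in $\bdry H_\pm$ cannot cross $\bdry Q$ (else some component of $\bdry Q$ would be inessential in $\bdry H_\pm$), so $F$ is $\bdry$--parallel in $M$, contradicting essentiality.

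For the torus and annulus cases, after the above arguments every component of $F\cap Q$ is essential in both $F$ and $Q$. The pieces of $F\setminus Q$ are planar surfaces in $H_\pm$. Any disk piece would meet $\bdry Q$ in at most two points, forcing it to be $\bdry$--parallel by $6$--disk-busting and producing an intersection-reducing isotopy. The remaining pieces are annuli with both boundary circles on $Q$ (hence disjoint from $\bdry Q$), and to each I would apply annulus-busting: a compressing disk for such an annulus also compresses $F$, contradicting $F$ essential; while a $\bdry$--parallel annulus whose parallelism annulus lies in $Q$ yields an isotopy of $F$ across $Q$ that reduces $|F\cap Q|$. In the end $F\cap Q=\emptyset$, $F$ lies inside a single handlebody, and since handlebodies contain no essential tori and annulus-busting rules out essential annuli in $H_\pm$ disjoint from $\bdry Q$, this completes the argument.

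The main obstacle I anticipate is handling a $\bdry$--parallel annular piece of $F\setminus Q$ in $H_\pm$ whose parallelism annulus $A'\subset\bdry H_\pm$ meets $\bdry Q$ nontrivially; in this situation an isotopy of $F$ through the $\bdry$--parallelism does not obviously reduce $|F\cap Q|$, and a direct reduction is not available. Ruling this out requires a careful analysis of how the $\bdry Q$--components sit inside $A'$---using that each is essential in $\bdry H_\pm$ and that any simple closed curve in $A'$ is parallel to its boundary---together with mod--$2$ intersection considerations in the spirit of Lemma~\ref{lem:parallelinM}. This is where the $6$--disk-busting and annulus-busting hypotheses must interact most delicately.
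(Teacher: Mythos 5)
Your overall strategy --- minimize $|F\cap Q|$ over essential spheres, disks, annuli, and tori, then play the $6$--disk-busting and annulus-busting hypotheses against the pieces of $F\cut Q$ --- is the same as the paper's, and your sphere and disk cases go through (modulo a small slip: the rectangle pieces of an annulus $F$ cut by spanning arcs of $F\cap Q$ meet $\bdry Q$ in exactly four points, not ``at most two''; since $4<6$ the $6$--disk-busting conclusion still applies). But the obstacle you flag at the end is a genuine gap, and it is not a corner case: it occurs unavoidably. When $F$ is an essential annulus and $F\cap Q$ consists of closed curves, the two outermost pieces of $F\cut Q$ are annuli with one boundary circle on $P_\pm\subset\bdry M$ and one on $Q$; for such a piece the parallelism annulus $A'\subset\bdry H_\pm$ supplied by annulus-busting must run from $P_\pm$ into $Q$ and therefore always contains components of $\bdry Q$. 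So the ``parallelism annulus lies in $Q$'' alternative you rely on for the intersection-reducing isotopy simply never holds for these pieces, and your argument stalls exactly where you predicted.

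The paper's proof sidesteps this with a different move that your plan is missing: instead of isotoping $F$ across parallelisms of its pieces, it works on the $Q$ side. Take a curve of $F\cap Q$ that is outermost in $Q$, cutting off a subannulus $A\subset Q$ with a component of $\bdry Q$, and surger $F$ along $A$ to obtain two annuli $F'$ and $F''$, each meeting $Q$ strictly fewer times than $F$. By the minimality in the choice of $F$ these cannot be essential; any compression of $F'$ or $F''$ would yield one of $F$, so both are $\bdry$--parallel. Their solid tori of parallelism $V'$ and $V''$ are disjoint or nested, and in either case gluing back along the dual annulus $A^*\subset\bdry M$ ($V'\cup\nbhd(A^*)\cup V''$ or $V''\cut(\nbhd(A^*)\cup V')$) exhibits a $\bdry$--parallelism of $F$ itself --- contradiction. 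For the torus case the paper uses a separate, simpler observation you would also need: the annular pieces of $F\cut Q$ have both boundaries on $Q$ parallel to components of $\bdry Q$, and since no two components of $\bdry Q$ are parallel in the genus two surface $\bdry H_\pm$, a $\bdry$--parallel such piece must be parallel into $Q$; hence all pieces are, $F$ is isotopic into $\nbhd(Q)$, and a closed surface in the handlebody $Q\times I$ is compressible. Without the surgery-and-reassembly argument (or an equivalent substitute) your proof of the annulus case does not close, so as written the proposal is incomplete.
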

Consequently, $M$ is a hyperbolic $3$--manifold with geodesic boundary.

\begin{proof}
Among $2$--spheres that do not bound $3$--balls and properly embedded incompressible, non--$\bdry$--parallel
disks, annuli, or tori that are transverse to $Q$, choose $F$ to be one that intersects $Q$ minimally.  Note that $Q$ is incompressible and boundary incompressible by Lemma~\ref{lem:Qess}. We may assume no simple closed 
curve component of $F \cap Q$ is trivial in $Q$, as otherwise surgery of $F$ along the disk bounded by
an innermost  such curve will
produce  a new essential surface intersecting $Q$ fewer times. Thus any simple closed curve of $F \cap Q$ must be isotopic in $Q$ to a component of $\bdry Q$. Similarly no arc component of $F \cap Q$ can
be boundary parallel in $Q$, else surgery would find a new essential surface intersecting $Q$ fewer
times. No simple closed curve of $F \cap Q$ can bound a disk in $F$, for an innermost such would
be a compressing disk for $Q$. No arc component of $F \cap Q$ can be boundary parallel in $F$
as such would give rise to a boundary compression of $Q$. 

\medskip
	{\bf $\boldsymbol{F}$ is a sphere.}
	If $F$ is a sphere, then since every simple closed curve in $F$  bounds a disk, $F$ must be disjoint from $Q$.  But then the sphere $F$ is contained in a handlebody.  Since handlebodies are irreducible, $F$ must bound a ball, a contradiction.

	\medskip
	{\bf $\boldsymbol{F}$ is a disk.}
	If $F$ is a disk, since every simple closed curve in $F$ bounds a disk and every arc in $F$ is $\bdry$--parallel, then $F$ must be disjoint from $Q$.  Thus $\bdry F$ is contained in one of the pairs of pants $P_+$ or $P_-$ and  is therefore isotopic to a component of $\bdry P_+$ or $\bdry P_-$.  Since $\bdry P_+ =\bdry P_- = \bdry Q$, $\bdry F$ must be isotopic to a component of $\bdry Q$.  Hence $F$ would be a compressing disk for $Q$, a contradiction.

	\medskip
	{\bf $\boldsymbol{F}$ is an annulus.}
	If $F$ is an annulus, then $F \cap Q$ consists of either only spanning arcs of $F$ or only curves isotopic into $\bdry F$. Because $Q$ is annulus-busting, $F \cap Q$ is non-empty.   
	
Since $Q$ is separating, if there is one arc of $F \cap Q$ then there must be another and  $Q$ chops $F$ into rectangles contained in either $H_+$ or $H_-$.  Since these rectangles are disks in $H_+$ or $H_-$ that cross $\bdry Q$ exactly $4$ times, they must be $\bdry$--parallel because $Q$ is $6$--disk-busting. 
Such a boundary parallelism guides an isotopy of $F$ that reduces the number of intersections of
$F$ with $Q$, a contradiction. 

	If $F \cap Q$ is a collection of simple closed curves, they are all $\bdry$--parallel in each $F$ and $Q$.  Let $\gamma$ be one that is outermost in $Q$, cutting off a subannulus $A \subset Q$ with a component of $\bdry Q$.  Let $F'$ and $F''$ be the two annuli formed by surgering $F$ along $A$. Let $A^*$ be the ``dual'' annulus in $\bdry M$ so that surgering $F' \cup F''$ along $A^*$ recovers $F$.  
	Since each $F'$ and $F''$ intersect $Q$ fewer times than $F$, they must be boundary parallel.  Any compression of $F'$ or $F''$ would give a compression of $F$.  Let $V'$ and $V''$ be the two solid tori in $X_P$ through which $F'$ and $F''$ are $\bdry$--parallel; they are either disjoint or nested.  If $V'$ and $V''$ are disjoint, then $A$ is also disjoint from them and $V' \cup \nbhd(A^*) \cup V''$ is a solid torus giving a $\bdry$--parallelism of $F$.  If, say, $V'$ is contained in $V''$, then $V'' \cut (\nbhd(A^*) \cup V')$ is a solid torus giving a $\bdry$--parallelism of $F$.

	\medskip
	{\bf $\boldsymbol{F}$ is a torus.}
	If $F$ is a torus, then because handlebodies contain no embedded closed incompressible surfaces, $F \cap Q$ is non-empty.  Since $Q\cap F$ is a collection of simple closed curves, they must all be essential in $F$ and hence parallel in $F$.  Thus $Q$ chops $F$ into annuli in $H_+$ and $H_-$ with boundary disjoint from $\bdry Q$. Since $Q$ is annulus-busting in each $H_+$ and $H_-$, these annuli must be boundary parallel or compressible. These annuli must be incompressible since otherwise they would induce a compression of $Q$, contrary to Lemma~\ref{lem:Qess}. Hence they are $\bdry$--parallel.  Since no two components of $\bdry Q$ are parallel in $\bdry H_+$ or $\bdry H_-$, these annuli must all be parallel into $Q$.  Hence $F$ is isotopic into $\nbhd(Q)$ and is therefore compressible, a contradiction.
\end{proof}

\begin{lemma}\label{lem:Qunique}
	Assume $\bdry Q$ is $6$--disk-busting and annulus-busting in each $H_+$ and $H_-$, and  $8$--disk-busting in either $H_+$ or $H_-$.

	Then any properly embedded pair of pants in $M$ is either  compressible, $\bdry$--parallel, isotopic to $Q$, or non-separating and can be isotoped to be disjoint from some component of $\bdry Q$.
\end{lemma}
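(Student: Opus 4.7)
Assume $Q'$ is a properly embedded pair of pants in $M$. We may suppose $Q'$ is incompressible and not $\bdry$-parallel; the goal is then to show either $Q'$ is isotopic to $Q$, or $Q'$ is non-separating and admits a proper isotopy making it disjoint from some component of $\bdry Q$. Isotope $Q'$ transversally to $Q$ with $|Q' \cap Q|$ minimal. Standard innermost-disk/outermost-arc arguments, together with the incompressibility and boundary-incompressibility of $Q$ (Lemma~\ref{lem:Qess}) and the incompressibility of $Q'$, make every component of $Q' \cap Q$ essential in both $Q$ and $Q'$.

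The first step is to show $Q' \cap Q = \emptyset$. An outermost arc of $Q' \cap Q$ in $Q'$ cuts off a disk $D$ in some $H_\pm$ with $|\bdry D \cap \bdry Q|=2$; $6$--disk-busting forces $D$ to be $\bdry$-parallel via a disk $D^* \subset \bdry H_\pm$. Since the components of $\bdry Q$ are essential in the genus $2$ surface $\bdry H_\pm$, the $1$-submanifold $D^* \cap \bdry Q$ contains no inessential closed-curve components, so it is a single arc separating $D^*$ into sub-disks in $Q$ and in $P_\pm$; the $Q$-subdisk exhibits the original arc as $\bdry$-parallel in $Q$, contradicting essentiality. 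With arcs ruled out (in particular $\bdry Q' \cap \bdry Q = \emptyset$), an outermost closed curve of $Q' \cap Q$ in $Q'$ cuts off an annulus $A \subset H_\pm$ disjoint from $\bdry Q$. Annulus-busting makes $A$ compressible (forcing a compression of $Q'$, contradiction) or $\bdry$-parallel via some $A^* \subset \bdry H_\pm$. An analogous analysis shows $A^* \cap \bdry Q$ is a collection of essential parallel closed curves in $A^*$, and pushing $A$ through the region of parallelism produces an isotopy of $Q'$ strictly reducing $|Q' \cap Q|$, contradicting minimality.

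Therefore $Q' \subset H_\epsilon$ for some $\epsilon \in \{+,-\}$, with $\bdry Q' \subset P_\epsilon$; each component of $\bdry Q'$ is essential in $P_\epsilon$ (else $Q'$ compresses), hence isotopic in $P_\epsilon$ to a component of $\bdry Q$. Handlebody theory applied to the incompressible pair of pants $Q'$ in $H_\epsilon$, with the $8$--disk-busting hypothesis on the appropriate side used to control any $\bdry$-compressing disks and to force termination at a $\bdry$-parallel configuration, yields a pair of pants $F^* \subset \bdry H_\epsilon$ to which $Q'$ is $\bdry$-parallel in $H_\epsilon$, with $\bdry F^*$ isotopic to $\bdry Q'$ in $\bdry H_\epsilon$. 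Now case on how many distinct isotopy classes of $\bdry Q$ are represented among the components of $\bdry Q'$ in $P_\epsilon$. If all three are distinct, then $\bdry F^* \sim \bdry Q$ in $\bdry H_\epsilon$, so $F^*$ is isotopic to $P_\epsilon$ or to $Q$, giving $Q'$ $\bdry$-parallel (excluded) or $Q' \sim Q$. Otherwise some component $c_\star$ of $\bdry Q$ is unrepresented, so $\bdry Q'$ may be simultaneously isotoped within $P_\epsilon$ disjoint from $c_\star$; extending this via a collar of $\bdry M$, $Q'$ becomes disjoint from $c_\star$. To see that $Q'$ is non-separating in this latter case, note that $[c_\nu], [c_\mu], [c_\lambda] \in H_1(\bdry M;\Z/2)$ are pairwise distinct and non-zero (since a pants decomposition of a surface contains no two parallel curves and no separating curve), and the only vanishing $\Z/2$-sum of a subset is the full sum $[c_\nu]+[c_\mu]+[c_\lambda]=0$ coming from the fact that $\bdry Q$ bounds $P_\epsilon$; consequently $[\bdry Q'] = \sum_i [\partial_i] \in H_1(\bdry M;\Z/2)$ is non-zero whenever at least one $[c_?]$ is missing, so $\bdry Q'$ does not separate $\bdry M$ and $Q'$ does not separate $M$.

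I expect the main obstacle to be the classification of $Q'$ inside $H_\epsilon$: making rigorous that an incompressible (but possibly $\bdry$-compressible) pair of pants there, with its boundary essential in $P_\epsilon$, is forced to be $\bdry$-parallel to a pair of pants in $\bdry H_\epsilon$. The stronger $8$--disk-busting hypothesis on one side is precisely the leverage needed to control the possible $\bdry$-compressing disks and rule out the non-$\bdry$-parallel incompressible configurations that would otherwise obstruct the case analysis.
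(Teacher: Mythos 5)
There is a genuine gap, and it sits at the heart of your plan: you claim that after minimizing, $Q'\cap Q=\emptyset$, but this cannot be arranged. The configuration you fail to eliminate is a \emph{single} essential arc of $Q'\cap Q$. Since $Q$ separates $M$, such an arc must separate $Q'$, so it has both endpoints on one component of $\bdry Q'$ and cuts $Q'$ into two annuli, one properly embedded in each of $H_+$ and $H_-$; each annulus has one boundary circle in $P_\pm$ and one boundary circle crossing $\bdry Q$ twice. No disk is produced, so neither $6$-- nor $8$--disk-busting applies, and annulus-busting does not apply either because these annuli meet $\bdry Q$. Your outermost-arc argument is also internally inconsistent: having reduced to arcs essential in the pair of pants $Q'$, no arc cuts off a disk in $Q'$ at all, and the disks that do arise from \emph{pairs} of parallel arcs (rectangles) meet $\bdry Q$ four times, not twice, while three pairwise non-parallel arcs cut $Q'$ into two hexagons meeting $\bdry Q$ six times --- this last configuration is precisely where the $8$--disk-busting hypothesis is needed, not (as you suggest) to control boundary-compressions in the disjoint case. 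Crucially, the single-arc case is not a contradiction: it is a legitimate outcome, and the paper's proof devotes its final section to showing that in this case the annulus $F_+=Q'\cap H_+$ is non-separating with a boundary circle isotopic in $P_+$ to a component of $\bdry Q$, whence $Q'$ is non-separating and disjoint from that component --- one of the allowed conclusions of the lemma. By asserting $Q'\cap Q=\emptyset$ you silently discard this entire branch.

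Two further points. First, you never establish that $Q'$ is not $\bdry$--compressible in $M$; the paper proves this at the outset (a $\bdry$--compression yields annuli which, by the simplicity of $M$ from Lemma~\ref{lem:Msimple}, are $\bdry$--parallel, forcing $Q'$ to be compressible or $\bdry$--parallel), and this fact is used repeatedly, e.g.\ to rule out arcs of $Q'\cap Q$ inside outermost sub-annuli of $Q$ and in the disjoint, separating case. Second, the step you flag as the ``main obstacle'' --- that a disjoint, separating $Q'$ with $\bdry Q'=\bdry Q$ is parallel to $Q$ --- is resolved in the paper not by $8$--disk-busting but by taking a meridional disk of $H_\epsilon$ minimizing intersection with $Q'$, extracting a $\bdry$--compressing disk whose boundary arc lies in $Q$ (possible exactly because $Q'$ is not $\bdry$--compressible in $M$), and then applying annulus-busting to the resulting annuli; your homological criterion for non-separation in the remaining subcase is fine, but the architecture of the proof needs the arc case restored and analyzed rather than excluded.
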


\begin{proof}
	Assume there is a properly embedded, incompressible,  non--$\bdry$--parallel pair of pants in $M$ that is not isotopic to $Q$.
	Among such surfaces that are transverse to $Q$ but not isotopic to $Q$, choose $F$ to be one that intersects $Q$ minimally.   As at the beginning of the proof of Lemma~\ref{lem:Msimple},  any simple closed curve of $F \cap Q$ must be isotopic in $Q$ to a component of $\bdry Q$ and isotopic in $F$ to a component of $\bdry F$ because both $Q$ and $F$ are incompressible.   Similarly, no arc of $Q \cap F$ is $\bdry$--parallel in $F$ because $Q$ is $\bdry$--incompressible.   
	
	First we show that $F$ cannot be $\bdry$--compressible.
	Assume $F$ admits a $\bdry$--compression along a disk $\delta$.  Compressing $F$ along $\delta$ produces one or two annuli, properly embedded in $X_P$. Let $\delta^*$ be the arc in $\bdry M$ dual to $\delta$ so that surgering these annuli along $\delta^*$ recovers $F$. These annuli must be incompressible since they share a boundary component with the incompressible surface $F$.  Therefore by Lemma~\ref{lem:Msimple} these annuli must be $\bdry$--parallel. Using the arrangements of the $\bdry$--parallelisms and $\delta^*$, it follows that $F$ must either compress or be $\bdry$--parallel, contrary to assumption.   (If $F$ surgers along $\delta$ to give one annulus $F'$ with $\bdry$--parallelism through the solid torus $V'$, then either $\delta^*$ is  inside $V'$ and so $F$ is compressible or $\delta^*$ is outside $V'$ and $F$ is $\bdry$--parallel.  If $F$ surgers along $\delta$ to give two annuli $F'$ and $F''$ with $\bdry$--parallelism through the solid tori $V'$ and $V''$ joined by $\delta^*$, then either $V'$ and $V''$ are disjoint so that $F$ is $\bdry$--parallel or $V'$ and $V''$ are nested with $F'$ and $F''$ parallel so that $F$ compresses.) 
	
	Thus any component of $F \cap Q$ is either an arc that is essential in both $F$ and $Q$ or a simple closed curve that is $\bdry$--parallel in both $F$ and $Q$.
	\medskip

	{\bf $\boldsymbol{F\cap Q}$ is empty.}
	If $F \cap Q$ is empty, then $F$ is contained in either $H_+$ or $H_-$, say $H_+$.  Then $\bdry F$ is contained in $P_+$. Since $F$ is incompressible, each component of $\bdry F$ is isotopic to some component of $\bdry P_+ = \bdry Q$. 	
	
	If $F$ is non-separating, then we have the final conclusion. (Though note that because handlebodies do not contain closed non-separating surfaces, at least two components of $\bdry F$ are isotopic in $P_+$.)
	
	If $F$ is separating in $M$, it is separating in $H_+$.  Then $\bdry F$ is separating in $\bdry H_+$ and so $\bdry F$ is isotopic to $\bdry Q$. If not, then two of the isotopy classes of $\bdry Q$ contain an even number of components of $\bdry F$ and the remaining contains an odd number of components of $\bdry F$.  Since each each component of $\bdry Q$ is non-separating in $\bdry H_+$, there is a loop in $\bdry H_+$ intersecting this third component of $\bdry Q$ an odd number of times.  Therefore this loop must intersect $\bdry F$ an odd number of times, a contradiction.
	
	So now isotop $F$ in $H_+$ so that $\bdry F = \bdry Q$. Then let $D$ be a meridional disk of $H_+$ intersecting $F$ minimally.  Because $F$ is not $\bdry$--compressible in $M$, an arc of $F \cap D$ that is outermost in $D$ must cut off a $\bdry$--compressing disk $\delta$ for $F$ in $H_+$ so that $\bdry \delta$ is the union of an essential arc in $F$ and an essential arc in $Q$.  The $\bdry$--compression of $F$ along $\delta$ produces one or two annuli  properly embedded in $H_+$ whose boundaries cobound annuli in $Q$.  These annuli must be incompressible since $F$ is incompressible in $H_+$.  Then, since $\bdry Q$ is annulus-busting in $H_+$, these annuli must be $\bdry$--parallel.   Thus they are $\bdry$--parallel into $Q$.  Such parallelisms taken together with $\delta$ give an isotopy of $F$ to $Q$.

	\medskip
	
	{\bf $\boldsymbol{F\cap Q}$ contains a simple closed curve.}
	If there is a simple closed curve of $F \cap Q$, let $\gamma$ be one that is outermost in $Q$, cutting off a subannulus $A \subset Q$ with a component of $\bdry Q$.  Any arc of $F \cap Q$ in $A$ would bound a disk in $A$ giving a $\bdry$--compression of $F$, so the interior of $A$ is disjoint from $F$. Let $F'$ and $F''$ be the annulus and pair of pants respectively formed by surgering $F$ along $A$. Let $A^*$ be the ``dual'' annulus in $\bdry M$ so that surgering $F' \cup F''$ along $A^*$ recovers $F$.  Both $F'$ and $F''$ must be incompressible since their boundary components are all isotopic to boundary components of incompressible surfaces.  By Lemma~\ref{lem:Msimple} the annulus $F'$ must then be $\bdry$--parallel.   Since $F''$ intersects $Q$ fewer times than $F$, by the assumed minimality of $|F \cap Q|$ the pair of pants $F''$ must be $\bdry$--parallel (if $F''$ were isotopic to $Q$ then $F$ would be also). Let $V'$ and $V''$ be the solid torus and genus $2$ handlebody of parallelisms for $F'$ and $F''$; since $F'$ and $F''$ are disjoint, they are either disjoint or nested.   If $V'$ and $V''$ are disjoint, then $A$ is also disjoint from them and $V' \cup \nbhd(A^*) \cup V''$ is a handlebody giving a $\bdry$--parallelism of $F$.  If $V'$ is contained in $V''$, then $V'' \cut (\nbhd(A^*) \cup V')$ is a handlebody giving a $\bdry$--parallelism of $F$.  Since $F''$ is incompressible, $V''$ cannot be contained in $V'$.  Thus $F \cap Q$ contains no simple closed curves.

	\medskip

	{\bf $\boldsymbol{F\cap Q}$ contains only arcs.}
	Since $F \cap Q$ is non-empty but contains no simple closed curves, $F \cap Q$ may contain only arcs that are essential in each $F$ and $Q$.  In particular, this means that no disk component of $F \cut Q$ can be $\bdry$--parallel in either $H_+$ or $H_-$.  Since $Q$ is separating,  either $F \cap Q$  contains a pair of arcs that are parallel in $F$ cutting off a rectangle component of $F \cut Q$, $F \cap Q$ is a set of three non-isotopic arcs chopping $F$ into two hexagonal disks, or $F \cap Q$ is a single arc chopping $F$ into two annuli.  In the first case, since $\bdry Q$ is $6$--disk busting in each $H_+$ and $H_-$, any such rectangle must be $\bdry$--parallel, a contradiction.  In the second case, since neither of the two hexagonal disks $F \cap H_+$ and $F \cap H_-$ is $\bdry$--parallel, $\bdry Q$ cannot be $8$--disk-busting in either $H_+$ or $H_-$, a contradiction.  Therefore $Q$ cuts $F$ along a single arc into two annuli, one in each $H_+$ and $H_-$. 
	
	Consider the annulus $F_+ = F \cap H_+$.  One component of $\bdry F_+$ is disjoint from $Q$ and thus isotopic in $P_+$ to a component of $\bdry Q$.  The other component of $\bdry F_+$ intersects $Q$ in an essential arc. Therefore if $F_+$ were $\bdry$--parallel in $H_+$, this second component of $\bdry F_+$ must intersect $P_+$ in a $\bdry$--parallel arc so that $F_+$ is isotopic to a component of $Q \cut F$ in $Q$.  Yet then $F$ could be isotoped to be disjoint from $Q$, contrary to assumption. Hence $F_+$ is not $\bdry$--parallel in $H_+$.

Assume $F_+$ is separating in $H_+$.  Because $F_+$ is a properly embedded,  incompressible, separating annulus in a genus $2$ handlebody $H_+$, the two components of $\bdry F_+$ must be isotopic in $\bdry H_+$ (e.g.\ see \cite[Section~6.3]{BGL-obtaininggenus2}).  Thus, since one component of $\bdry F_+$ is disjoint from $Q$, the other may be isotoped in $\bdry H_+$ to also be disjoint from $Q$.  Since $\bdry Q$ is annulus-busting, $F_+$ must be $\bdry$--parallel.  However we have already concluded that this cannot be the case. 

So $F_+$ must be non-separating in $H_+$. Since a component of $\bdry F_+$ is in $P_+$ and isotopic to a component of $\bdry Q$, $F_+$ is disjoint from that component of $\bdry Q$. Hence $F$ is non-separating and disjoint from that component of $\bdry Q$, giving the final conclusion.
\end{proof}

\section{Busting disks and annuli in genus $2$ handlebodies}

\subsection{Basics of disk-busting and primitive curves}

\begin{lemma}[{E.g.\ \cite[Lemma~5.3]{BBL-handlebody}}]
	\label{lem:2diskbusting}
	Let $\calC$ be a collection of curves in a genus $g$ boundary component of a $3$--manifold $H$.  If $\calC$ is disk-busting, then either $g=1$ or $\calC$ is $2$--disk-busting.
\end{lemma}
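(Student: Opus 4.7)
My plan is a proof by contradiction: assuming $g \neq 1$ and $\calC$ disk-busting, I suppose there is a properly embedded disk $D \subset H$ with $|D \cap \calC| = 1$ and build from it a disk $\tilde D \subset H$, disjoint from $\calC$, whose boundary does not bound a disk in $\bdry H$. Since any $\bdry$-parallel disk has boundary bounding a disk in $\bdry H$, this contradicts disk-busting applied to $\tilde D$. The case $g = 0$ is vacuous (two disjoint simple closed curves on $S^2$ meet in an even number of points), so I focus on $g \geq 2$. Let $p$ be the lone intersection point and $\alpha \subset \calC$ the component through $p$; since $\alpha$ is closed and meets the embedded curve $\bdry D$ exactly once, $\bdry D$ is non-separating on $\bdry H$.

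The construction is a band sum. Take two parallel push-offs $D_1, D_2$ of $D$ with $p_i = \bdry D_i \cap \alpha$, and let $A \subset \bdry H$ be the thin annulus between $\bdry D_1$ and $\bdry D_2$. Choose a thin rectangular band $b \subset \bdry H \setminus \interior{A}$ whose core is the long sub-arc $\alpha_{\mathrm{long}}$ of $\alpha$ from $p_1$ to $p_2$ (the complement in $\alpha$ of the short arc inside $A$), attached to $\bdry D_1, \bdry D_2$ along small arcs $I_1, I_2$ containing $p_1, p_2$. An Euler characteristic computation yields $\chi(\tilde D) = \chi(D_1 \cup b \cup D_2) = 1$, which forces $\tilde D$ to be a properly embedded disk. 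Its boundary $\bdry \tilde D$ avoids $\calC$: the points $p_1, p_2$ sit inside the attachment arcs $I_1, I_2$, the two long sides of $b$ are parallel offsets of $\alpha_{\mathrm{long}}$ and hence disjoint from $\alpha$, and by taking $b$ thin enough we ensure $b$ also misses the other components of $\calC$, which are already disjoint from $\alpha$.

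The contradiction comes from the topology of $\bdry \tilde D$ on $\bdry H$. By construction $\bdry \tilde D$ is isotopic to the boundary $\bdry N_0$ of a regular neighborhood $N_0 \subset \bdry H$ of the figure-eight graph $\bdry D \cup \alpha$. Because $\bdry D$ and $\alpha$ are simple closed curves meeting transversely in one point, a local count at the crossing shows $\bdry N_0$ is a single simple closed curve representing the commutator $[\bdry D, \alpha]$, and $\chi(N_0) = V - E = 1 - 2 = -1$, so $N_0$ is a once-punctured torus in $\bdry H$. If $\bdry N_0$ were to bound a disk $E \subset \bdry H$, then $\bdry H = N_0 \cup E$ would satisfy $\chi(\bdry H) = \chi(N_0) + \chi(E) = -1 + 1 = 0$, forcing $g = 1$ and contradicting $g \geq 2$. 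I expect the most delicate step to be the band-sum construction itself: verifying that $\tilde D$ is genuinely a disk (rather than an annulus), that $\bdry \tilde D$ is isotopic to $\bdry N_0$, and that $b$ really can be chosen disjoint from $\calC$ off of $I_1, I_2$. Each reduces to a routine local check using orientability of $H$, the Euler characteristic computation, and the disjointness of the components of $\calC$.
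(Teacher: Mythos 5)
Your proof is correct and follows essentially the same route as the paper: the paper likewise bands two parallel copies of $D$ together along the component of $\calC$ meeting $D$ once to produce a disk disjoint from $\calC$ whose boundary is essential when $g\geq 2$, contradicting disk-busting. Your version simply supplies the details the paper leaves implicit (the Euler characteristic check that the band sum is a disk, and the identification of its boundary with the boundary of a once-punctured-torus neighborhood of $\bdry D\cup\alpha$ to see it cannot bound a disk in $\bdry H$), plus the easy $g=0$ case.
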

\begin{proof}
	Assume $D$ is a properly embedded disk in $H$ intersecting $\calC$ just once, then two copies of $D$ can be banded together along the component of $\calC$ that intersects $D$ to form a new disk $D'$ that is disjoint from $\calC$.  If $g\geq 2$, then $\bdry D'$ is essential in $\bdry H$ and so $D'$ cannot be $\bdry$--parallel.  But this contradicts that $\calC$ is disk-busting.   
\end{proof}

A single simple closed curve $C$ in the boundary of a handlebody $H$ is called {\em primitive} if there is some meridional disk of $H$ that it intersects only once.

 Let $\calC$ be a collection of simple closed curves embedded in the boundary of an orientable $3$--manifold $H$. Let $H[\calC]$ be the $3$--manifold obtained by attaching $2$--handles to $H$ along the components of $\calC$ and then attaching $3$--handles to any sphere components of the resulting boundary.

\begin{lemma}\label{lem:handleaddition}
	Let $C$ be a simple closed curve in the boundary of a  handlebody $H$ of genus $g\geq2$.  The curve $C$ is primitive if and only if $H[C]$ is a genus $g-1$ handlebody.  The curve $C$ is disk-busting if and only if $H[C]$ is $\bdry$--irreducible.
\end{lemma}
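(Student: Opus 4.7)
My plan is to prove each biconditional separately, drawing on handle-theoretic arguments and the Handle Addition Lemma of Jaco (also Casson--Gordon, Przytycki).

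For the first biconditional, the forward direction is a handle cancellation argument. Let $D$ be a meridional disk of $H$ with $|D \cap C| = 1$, and view $H$ as $H' \cup (\text{1-handle})$ where $H'$ is a genus $g-1$ handlebody and $D$ is the co-core of the 1-handle; the 2-handle attached along $C$ runs once over this 1-handle and so cancels it, giving $H[C] \cong H'$, a genus $g-1$ handlebody. For the reverse direction, I would pass to fundamental groups: by Van Kampen,
\[
\pi_1(H[C]) \cong \pi_1(H)/\langle\langle [C] \rangle\rangle \cong F_g/\langle\langle [C] \rangle\rangle.
\]
If $H[C]$ is a genus $g-1$ handlebody, this quotient is $F_{g-1}$, and by classical results on one-relator quotients of free groups (e.g.\ Magnus), $[C]$ must be a primitive element of $F_g$. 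Combining this with the theorem that on the boundary of a handlebody a simple loop algebraically primitive in $\pi_1(H)$ is also geometrically primitive yields a meridional disk meeting $C$ exactly once.

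For the second biconditional, the contrapositive of the forward direction is precisely the Handle Addition Lemma: assuming $E$ is a compressing disk of $\bdry H[C]$ in $H[C]$, an innermost-disk/outermost-arc analysis of the intersection of $E$ with the co-core of the attached 2-handle produces a compressing disk for $\bdry H$ in $H$ disjoint from $C$, witnessing that $C$ is not disk-busting. For the reverse direction, suppose $C$ is not disk-busting, so there is a meridional disk $D \subset H$ with $\bdry D$ essential in $\bdry H$ and $D \cap C = \emptyset$. Then $D$ is properly embedded in $H[C]$, and one must check that $\bdry D$ remains essential in $\bdry H[C]$. The only way it could fail is for $\bdry D$ to cobound in $\bdry H \cut \nbhd(C)$ either an annulus with a copy of $C$ (forcing $\bdry D$ to be isotopic to $C$ in $\bdry H$, whence $C$ itself bounds a disk in $H$) or a pair of pants with two copies of $C$ (so $D$ together with that pair of pants yields an annulus in $H$ with both boundary components isotopic to $C$, which after compression shows $C$ is meridional). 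In either exceptional case $C$ is a meridional curve, so $H[C]$ contains a properly embedded $2$--sphere, and since $g \geq 2$ one sees directly that $H[C]$ is still $\bdry$--reducible. Otherwise $D$ is a compressing disk for $\bdry H[C]$, and we conclude $\bdry$--reducibility.

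The main obstacle is the correct invocation of the Handle Addition Lemma for the forward direction of the second biconditional; the passage from algebraic to geometric primitivity is also deep but well-documented, while the case analysis in the reverse direction of the disk-busting statement is elementary but requires attention.
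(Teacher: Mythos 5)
Your treatment of the first biconditional, and of the forward direction of the second, is essentially the paper's: the paper simply cites Gordon for ``$H[C]$ a handlebody implies $C$ primitive'' (your algebraic detour through $F_g/\langle\langle [C]\rangle\rangle\cong F_{g-1}$ plus the algebraic-to-geometric primitivity theorem is one standard way to see that citation), and it invokes the Handle Addition Lemma of \cite{CGLS} exactly as you do. The problem is in the reverse direction of the disk-busting statement, in your pair-of-pants case. There you assert that if $\bdry D$ cobounds a pair of pants $P$ in $\bdry H \cut \nbhd(C)$ with the two boundary circles of $\nbhd(C)$, then the annulus $D\cup P$ compresses and hence $C$ is meridional. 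Neither claim is true in general. Take $H=V_1\natural V_2$, the boundary connected sum of two solid tori along a disk $D$, and let $C$ be a longitude of $V_1$ lying in $\bdry V_1\cut D$. Then $D$ is an essential disk disjoint from $C$, so $C$ is not disk-busting; $\bdry D$ cobounds a pair of pants with the two sides of $C$; the annulus $D\cup P$ is incompressible (its core is a longitude of $V_1$, hence $\pi_1$--injective) and in fact boundary-parallel; and $C$ is primitive, so it certainly bounds no disk in $H$. The desired conclusion still holds (here $H[C]\cong V_2$ is $\bdry$--reducible), but your argument does not reach it, since the premise ``$C$ is meridional'' on which your final step rests is false in this case.

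The repair is the one the paper uses: arrange for $D$ to be non-separating before attaching the $2$--handle. In your pair-of-pants case $\bdry D$ is null-homologous mod $2$ in $\bdry H$, so $D$ separates $H$, and the component of $H\cut D$ disjoint from $C$ is a handlebody of genus at least one; replace $D$ by a non-separating meridian disk $E$ of that component. Then $E$ is an essential non-separating disk disjoint from $C$ whose boundary cannot fall into either of your two exceptional configurations (it is non-separating in $\bdry H$ and not isotopic to $C$), so $\bdry E$ stays essential in $\bdry H[C]$ and $E$ compresses $\bdry H[C]$. Your annulus case, where $\bdry D$ is isotopic to $C$ and $C$ genuinely is meridional, is fine, though ``one sees directly'' deserves a sentence: cutting along the disk bounded by $C$ and reattaching the $2$--handle exhibits $H[C]$ as a connected sum in which at least one summand is a handlebody of positive genus, whence $\bdry H[C]$ compresses.
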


\begin{proof}
	If $C$ is primitive in $H$, then in $H[C]$ the core disk of the attached $2$--handle cancels a meridional disk of $H$ intersecting $C$ once.  Hence $H[C]$ is a handlebody.  Conversely, if $H[C]$ is a handlebody, then $C$ is primitive by \cite{gordon-primitiveloops}.

	If $C$ is disk-busting, then $\bdry H - C$ is incompressible in $H$.  Since $H$ is irreducible and $\bdry H$ is compressible, the Handle Addition Lemma \cite[Lemma~2.1.1]{CGLS} implies that $H[C]$ is irreducible and $\bdry$--irreducible.
	If  $C$ is not disk-busting, then there is a compressing disk $D$ for $\bdry H - C$ in $H$.  We may take $D$ so that it is non-separating in $H$ so that $D$ is non-separating in $H[C]$.    (If $D$ were separating, then the component of $H \cut D$ disjoint from $C$ is a handlebody of genus at least one that contains a non-separating compressing disk.) Hence $D$ is a compressing disk of $H[C]$.
\end{proof}

\subsection{The busting nature of curves in a genus $2$ handlebody}
Throughout this subsection, assume $\calC$ is a triple of simple closed curves in the boundary of a genus $2$ handlebody $H$ bounding two pairs of pants in $\bdry H$.

\begin{lemma}\label{lem:disktoannulusbusting}
	If each curve in $\calC$ is disk-busting, then $\calC$ is both $6$--disk-busting and annulus-busting.
\end{lemma}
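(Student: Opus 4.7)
I would prove the two parts of Lemma \ref{lem:disktoannulusbusting} separately.

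For the $6$--disk-busting claim, Lemma \ref{lem:2diskbusting} applies: each $C_i \in \calC$ is disk-busting in the genus--$2$ handlebody $H$, hence $2$--disk-busting. Thus any non-$\bdry$-parallel properly embedded disk $D$ in $H$ satisfies $|D \cap C_i| \geq 2$ for each $i$, and so $|D \cap \calC| \geq 6$. Contrapositively, any disk meeting $\calC$ fewer than six times is $\bdry$-parallel, so $\calC$ is $6$--disk-busting.

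For annulus-busting, let $A$ be a properly embedded, incompressible annulus in $H$ disjoint from $\calC$; I aim to show $A$ is $\bdry$-parallel. Each boundary component of $A$ lies in $\bdry H \setminus \calC = \Int P_+ \sqcup \Int P_-$, so is either inessential in $\bdry H$ (bounding a disk in the pair of pants containing it) or essential in $\bdry H$ (parallel to some component of $\calC$). If a boundary component of $A$ is inessential, I would cap $A$ along the bounded disk(s): if both components are inessential, this produces a sphere which bounds a ball by irreducibility of $H$, forcing $A$ to be $\bdry$-parallel; if only one component is inessential, capping yields a properly embedded disk in $H$ disjoint from $\calC$ with essential boundary, contradicting the disk-busting of each $C_i$.

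Thus I may assume both components of $\bdry A$ are essential in $\bdry H$, each parallel to some $C_i \in \calC$, and after a small isotopy supported near $\bdry A$ that $\bdry A \subset \calC$. I would then split into subcases: either both components of $\bdry A$ are parallel to the same $C_i$, or to distinct $C_i, C_j$. In the first subcase, the two copies of $\bdry A$ cobound an annulus $B \subset \bdry H$, and $T = A \cup B$ is an embedded closed torus in $H$. Since $\pi_1(H)$ is free and contains no $\Z^2$, $T$ is compressible in $H$ by the Loop Theorem. Any compressing disk $D$ for $T$ must have $\bdry D$ on $B$ rather than $A$ (by incompressibility of $A$), and since $\bdry D$ is essential on $T$ with $\bdry D \subset B$, it is isotopic in $\bdry H$ to $C_i$. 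Tracking on which side of $T$ the disk $D$ lies then shows that the side of $T$ whose $\bdry H$-face is $B$ is a solid-torus parallelism region exhibiting $A$ as $\bdry$-parallel to $B$; otherwise, $D$ (after minor surgery) provides a disk in $H$ disjoint from $\calC$ with essential boundary, contradicting disk-busting of $C_i$. In the second subcase, I would instead cap $A$ by the pair of pants $P_+$ along $C_i \cup C_j$ to obtain an embedded once-holed torus $\Sigma$ in $H$ with $\bdry \Sigma = C_k$ (the remaining component of $\calC$), and run a parallel compression analysis using disk-busting of $C_k$.

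The main obstacle is this Case $2$, particularly tracking which side of the auxiliary torus $T$ (or once-holed torus $\Sigma$) the compressing disk inhabits, so as to correctly identify either the solid-torus region witnessing $\bdry$-parallelism of $A$ or the compressing disk disjoint from $\calC$ contradicting disk-busting of the appropriate $C_i$.
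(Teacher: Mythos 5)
Your first half (the $6$--disk-busting claim) is correct and is exactly the paper's argument: Lemma~\ref{lem:2diskbusting} upgrades each disk-busting component to $2$--disk-busting, and the three contributions sum to $6$.

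The annulus-busting half, however, has genuine gaps, and you have essentially flagged them yourself. In your subcase where $\bdry A$ is parallel to a single $C_i$, the assertion that a compressing disk $D$ for the torus $T=A\cup B$ ``must have $\bdry D$ on $B$ by incompressibility of $A$'' only disposes of compressing disks whose boundary slope on $T$ is that of the core of $A$; it does not rule out the case in which every compressing slope of $T$ meets $\bdry A$ essentially. In that case $T$ bounds a solid torus $V$ in $H$ with $A$ (and $B$) winding $k\ge 2$ times around $V$: such an $A$ is incompressible and \emph{not} $\bdry$--parallel, the solid torus is not a parallelism region, and the meridian disk of $V$ is not properly embedded in $H$, so neither horn of your dichotomy applies. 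Ruling this configuration out is exactly the content that still needs an argument. Your second subcase ($\bdry A$ parallel to distinct $C_i,C_j$) is only a statement of intent: you would first need to justify that $A\cup P_+$ is an orientable once-holed torus rather than a punctured Klein bottle (the normal directions need not match along both gluing circles simultaneously), and then actually produce the contradiction from disk-busting of $C_k$, which you do not do.

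The paper avoids all of this with a much shorter argument you should adopt: an incompressible annulus in a handlebody always admits a $\bdry$--compression, and $\bdry$--compressing an incompressible, non--$\bdry$--parallel $A$ produces a properly embedded disk $D$ with $\bdry D$ disjoint from $\bdry A$ which cannot be $\bdry$--parallel, hence is a meridian disk of $H$. Each component of $\bdry A$ is isotopic in $\bdry H$ to some component of $\calC$, and every component of $\calC$, being disk-busting, must have positive geometric intersection with $\bdry D$; but $\bdry A$ is disjoint from $\bdry D$, a contradiction. This handles both of your subcases at once and requires no analysis of compressing slopes or capped-off surfaces.
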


\begin{proof}
	By Lemma~\ref{lem:2diskbusting}, each curve in $\calC$ is actually $2$--disk-busting.
	Since each curve of $\calC$ intersects the boundary of an essential disk of $H$ at least twice, together they intersect this disk at least $6$ times.  Hence $\calC$ is $6$--disk-busting.
	
	Now let $A$ be a properly embedded, incompressible annulus in $H$ that is not $\bdry$--parallel.   Assume $A$ is disjoint from $\calC$.  Because $\bdry H \cut \calC$ is two pairs of pants and no component of $\bdry A$ bounds a disk in $\bdry H$, the components of $\bdry A$ are each isotopic to some component of $\calC$.   Since $A$ is an incompressible annulus in a handlebody, it must admit a $\bdry$--compression.  The result of such a $\bdry$--compression is a properly embedded disk $D$ whose boundary is disjoint from $\bdry A$. Furthermore, because $A$ is incompressible and not $\bdry$--parallel, this disk $D$ cannot be $\bdry$--parallel; it must be a meridional disk of $H$.  Each component of $\calC$ intersects $\bdry D$ by hypothesis, and yet $\bdry A$ is disjoint from $\bdry D$.  This contradicts that each component of $\bdry A$ is isotopic to some component of $\calC$.
\end{proof}

\begin{lemma}\label{lem:primbustbustis5diskbusting}
	If two of the curves in $\calC$ are disk-busting and the third is primitive, then $\calC$ is $6$--disk-busting.
\end{lemma}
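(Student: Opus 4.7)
I would argue by contradiction.  Suppose $D$ is a non--boundary-parallel properly embedded disk in $H$ with $|D \cap \calC| < 6$.  By Lemma~\ref{lem:2diskbusting}, each of the disk-busting curves $C_1, C_2$ is actually $2$--disk-busting, so $|D \cap C_i| \geq 2$ for $i = 1, 2$, giving $|D \cap \calC| \geq 4$.  Since $\calC$ bounds a pair of pants in $\bdry H$, the relation $[C_1] + [C_2] + [C_\pi] = 0$ in $H_1(\bdry H; \Z/2)$ forces $|D \cap \calC|$ to be even.  Hence $|D \cap \calC| = 4$, and the only distribution consistent with the bound is $|D \cap C_\pi| = 0$ and $|D \cap C_1| = |D \cap C_2| = 2$.

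By Lemma~\ref{lem:handleaddition} the primitivity of $C_\pi$ makes $H' := H[C_\pi]$ a solid torus, and since $D$ is disjoint from $C_\pi$ it sits as a properly embedded disk in $H'$.  I would first rule out that $D$ is $\bdry$-parallel in $H'$: if it were, then $\bdry D$ would bound a disk in $\bdry H' = T^2$, and the description of $\bdry H'$ as $\bdry H$ surgered along $C_\pi$ would force $\bdry D$ on $\bdry H$ to be either isotopic to $C_\pi$ (making $C_\pi$ bound a disk in $H$, contradicting primitivity since $[C_\pi] \neq 0$ in $H_1(H)$) or to separate off a genus-one subsurface of $\bdry H$ containing $C_\pi$.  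The latter subcase I would eliminate by taking a primitive disk $E_\pi$ with $|E_\pi \cap C_\pi| = 1$ and iteratively reducing $|E_\pi \cap D|$ via outermost-arc surgeries on $E_\pi \cap D$ (each preserving the parity of $|E_\pi \cap C_\pi|$ because $D \cap C_\pi = \emptyset$); the resulting essential disk would lie in the solid-torus component containing $C_\pi$ and hence be isotopic there to the meridional disk represented by $D$, forcing $1 = |E_\pi \cap C_\pi| \equiv |D \cap C_\pi| = 0 \pmod 2$, an absurdity.

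Thus $D$ is a meridional disk of the solid torus $H'$.  Because the pairs of pants $P_\pm$ become annuli joining $C_1$ and $C_2$ once $C_\pi$ is capped, the curves $C_1$ and $C_2$ are parallel on $T^2 = \bdry H'$ with common slope $a\mu + b\lambda$, where $\mu$ is the meridian of $H'$; the condition $|\bdry D \cap C_i| = 2$ forces $|b| \leq 2$.  When $|b| = 1$, $H'[C_1]$ is a $3$-ball (attaching a $2$-handle along a longitudinal slope of a solid torus), so $H[C_1] = H'[C_1] \setminus N(\text{arc})$ is a solid torus (the arc is unknotted in a ball); by Lemma~\ref{lem:handleaddition} this would make $C_1$ primitive in $H$, contradicting Lemma~\ref{lem:2diskbusting}.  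I would rule out $|b| = 0$ and $|b| = 2$ by invoking the disk-busting of both $C_1$ and $C_2$ simultaneously together with their shared slope on $\bdry H'$: comparing the handle-addition analysis of $H'[C_1, C_2] = H[C_1, C_2, C_\pi]$ with the required $\bdry$-irreducibility of both $H[C_1]$ and $H[C_2]$ forces one of $C_1, C_2$ to be primitive in $H$, again contradicting Lemma~\ref{lem:2diskbusting}.  The main obstacle will be executing this simultaneous-filling analysis in the case $|b| = 2$, where $H'[C_1]$ has $\pi_1 = \Z/2$ rather than being a ball and so the reduction to primitivity requires the full strength of both disk-busting hypotheses combined with the parallel-slope structure inherited from the pair-of-pants decomposition.
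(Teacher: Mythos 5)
Your opening reduction is the same as the paper's: parity (because $\calC$ separates $\bdry H$) together with the $2$--disk-busting of $C_1,C_2$ forces a putative essential disk $D$ to satisfy $|D\cap\calC|=4$ with $D$ disjoint from $C_\pi$ and meeting each of $C_1,C_2$ exactly twice. After that the proposal does not close. Note first that on $\bdry H[C_\pi]$ the mod~$2$ intersection number of $C_i$ with the meridian $\bdry D$ equals $|\bdry D\cap C_i|=2\pmod 2$, so $|b|$ is even and the $|b|=1$ case you argue in detail is vacuous (and its key step, that the cocore arc is unknotted in the ball $H[C_1,C_\pi]$, is asserted without justification in any event). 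The decisive situation is exactly $|b|=2$, which you label ``the main obstacle'' and resolve only by the hope that a ``simultaneous-filling analysis'' of $H[C_1,C_2,C_\pi]$ will force one of $C_1,C_2$ to be primitive; no mechanism for this is supplied, and this is where the entire content of the lemma lives. The paper's proof at this point uses primitivity in an essential and different way: a dual disk $D'$ with $|D'\cap C_\pi|=1$ can be chosen disjoint from $D$, which shows the cocore arc $C_\pi^*$ is boundary-parallel in the ball $H[C_\pi]-\nbhd(D)$; since the annulus $A=Q[C_\pi]$ winds twice around the solid torus and contains only one endpoint of $C_\pi^*$, sliding $C_\pi^*$ into the boundary across $\bdry A$ once yields a properly embedded disk in $H$ meeting $C_1$ or $C_2$ exactly once, contradicting $2$--disk-busting. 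Nothing in your sketch plays the role of this cocore-arc argument.

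There is also a genuine flaw in your elimination of the subcase where $D$ is trivial in $H[C_\pi]$ with $\bdry D$ cutting off a genus-one subsurface containing $C_\pi$ (equivalently, $D$ separating in $H$). After the outermost-arc surgeries your disk $E^*$ does end up in the solid-torus component $V$ of $H\cut D$ containing $C_\pi$, but $D$ is a subdisk of $\bdry V$, not a properly embedded meridian disk of $V$; a meridian disk of $V$ is not isotopic to $D$, so the congruence $1\equiv|D\cap C_\pi|\pmod 2$ does not follow and no contradiction is reached. The paper disposes of the separating case by a direct Euler characteristic count: the component of $\bdry H\cut\bdry D$ containing $C_\pi$ would have to be a once-punctured torus ($\chi=-1$), yet it is assembled from the two annulus components of $P\cut\bdry D$ and $Q\cut\bdry D$ glued along $C_\pi$ and one arc in each of $C_1$ and $C_2$, which has $\chi=-2$. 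You would need an argument of that kind here; as written, both the separating subcase and the main $|b|\in\{0,2\}$ case are unproved.
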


\begin{proof}
	Assume $\calC$ is not $6$--disk-busting.  Since $\calC$ is separating in $\bdry H$, any curve in $\bdry H$ transversely intersects $\calC$ an even number of times.  Hence if $\calC$ were $5$--disk-busting, it would be $6$--disk-busting.  So we may assume $\calC$ is not $5$--disk-busting.
	
	Let $B$ and $B'$ be the disk-busting curves of $\calC$ and let $C$ be the primitive curve.  Let $P$ and $Q$ be the two pairs of pants in $\bdry H$ bounded by $\calC$.  Since any disk-busting curve is $2$--disk-busting (Lemma~\ref{lem:2diskbusting}), there is a meridional disk $D$ of $H$ that is disjoint from $C$ and intersects each $B$ and $B'$ exactly twice.  Therefore $\bdry D \cap Q$ is a pair of arcs, essential and parallel in $Q$, that join $B$ and $B'$.  Similarly, $\bdry D \cap P$ is a pair of arcs, essential and parallel in $P$, that join $B$ and $B'$. 
	
	If $D$ is separating, then $\bdry H \cut \bdry D$ is two once-punctured tori.  Then the component containing $C$ is the union of the two annulus components of $P \cut \bdry D$ and $Q \cut \bdry D$ along the curve $C$ and an arc in each $B$ and $B'$.  This union however must have $\chi=-2$ and thus is not a once-punctured torus, a contradiction.

If $D$ is non-separating, then it becomes a meridional disk in the solid torus $H[C]$ which is disjoint
from the dual arc $C^*$ (the co-core of the attached $2$--handle).  Because $C$ is primitive in $H$, there is a disk $D'$ in $H$ intersecting $C$ in a single point; furthermore, this disk $D'$ can
be chosen to be disjoint from $D$. Thus $C^*$ is a trivial arc in the ball  $H[C]-\nbhd(D)$.  The pair of pants $Q$ closes off to an annulus $A=Q[C]$ in $\bdry H[C]$ that crosses $D$ twice  and contains only one endpoint of the arc 
$C^*$.  The core of the annulus $A$ winds twice around the solid torus $H[C]$.  Since $A$ contains only one endpoint of $C^*$, $C^*$ is isotopic to an arc in the boundary of $H[C]$ intersecting some component
of $\bdry A$ exactly once.  The trace of this isotopy can be taken to be a disk properly embedded in $H$ and
intersecting $B$ or $B'$ exactly once, a contradiction.
\end{proof}

\begin{lemma}\label{lem:primitivetoannulusbusting}
	If two of the curves in $\calC$ are disk-busting and the third is primitive, then $\calC$ is annulus-busting.
\end{lemma}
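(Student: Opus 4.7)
The plan is to mirror the proof of Lemma~\ref{lem:disktoannulusbusting} through the $\bdry$-compression step, then invoke the primitivity of $C$ to handle the case that argument leaves open. Assume for contradiction that $A \subset H$ is a properly embedded incompressible annulus disjoint from $\calC$ that is not $\bdry$-parallel. Each component of $\bdry A$ is essential in its pair-of-pants component of $\bdry H \setminus \calC$ and hence isotopic on $\bdry H$ to one of $B$, $B'$, or $C$. Because $A$ is incompressible and not $\bdry$-parallel in the handlebody $H$, it admits a $\bdry$-compression disk $\delta$; essential arcs on an annulus connect its two boundary components, so $\delta$ meets $A$ in a spanning arc and $\bdry$-compressing yields a properly embedded disk $D$ in $H$ with $\bdry D$ disjoint from $\bdry A$. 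As in the proof of Lemma~\ref{lem:disktoannulusbusting}, $D$ cannot be $\bdry$-parallel (else a ball of parallelism for $D$ would produce one for $A$), so $D$ is a meridional disk of $H$ and $\bdry D$ is essential on $\bdry H$.

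Since $B$ and $B'$ are disk-busting, the essential curve $\bdry D$ cannot be isotoped on $\bdry H$ to miss either of them. Hence any component of $\bdry A$ isotopic on $\bdry H$ to $B$ or $B'$ must meet $\bdry D$, contradicting that $\bdry A \cap \bdry D = \emptyset$. The one remaining case --- which the previous lemma does not address, since $C$ need not be disk-busting --- is that both components of $\bdry A$ are isotopic on $\bdry H$ to $C$.

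To close this case I would pass to $V := H[C]$, which by the primitivity of $C$ and Lemma~\ref{lem:handleaddition} is a solid torus. Let $\tau \subset V$ be the co-core of the attached $2$-handle, so that $H \cong V \setminus \nbhd(\tau)$. Each $\bdry_i A$, being isotopic on $\bdry H$ to $C$, becomes a trivial curve on $\bdry V$ and bounds a disk $\bar D_i \subset \bdry V$; we arrange $\bar D_1 \cap \bar D_2 = \emptyset$. The sphere $S = A \cup \bar D_1 \cup \bar D_2$ then bounds a $3$-ball $W$ in the irreducible solid torus $V$. A case analysis on whether $\tau \cap W$ is empty or $\tau \subset W$ then produces a $\bdry$-parallelism of $A$ in $H$: when $\tau \cap W = \emptyset$, the ball $W$ sits inside $H$ and gives the parallelism directly; when $\tau \subset W$, the arc $\tau$ is unknotted in $W$ (because it is unknotted in $V$ and $S$ meets $\bdry \nbhd(\tau)$ only in disks lying in $\bar D_1 \cup \bar D_2$), so $W \setminus \nbhd(\tau)$ is a solid torus in $H$ whose boundary is $A$ together with an annulus on $\bdry H$ joining $\bdry_1 A$ to $\bdry_2 A$ around the tube of $\nbhd(\tau)$. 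Either outcome contradicts the assumption that $A$ is not $\bdry$-parallel. The main technical obstacle is this last descent step, which requires care to ensure $\tau$ is unknotted in the smaller ball $W$ and that the complementary subsurface of $\bdry H$ is genuinely an annulus rather than a more complicated region.
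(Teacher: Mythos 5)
Your reduction to the case where both components of $\bdry A$ are isotopic to the primitive curve $C$ is correct and is essentially the paper's (the paper phrases it through the separating/non-separating dichotomy for $A$, but the content --- the meridional disk produced by the $\bdry$-compression is disjoint from $\bdry A$, so no component of $\bdry A$ can be parallel to a disk-busting curve --- is the same). The endgame, however, has a genuine gap. The three disjoint, pairwise parallel, non-separating curves $C$, $\bdry_1 A$, $\bdry_2 A$ cut $\bdry H$ into two annuli and one genus-one piece arranged cyclically, and your capping-off only works when $C$ is the \emph{middle} curve, adjacent to both annuli. If instead $C$ is adjacent to the genus-one piece, then in the torus $\bdry V=\bdry H[C]$ the unique disks $\bar D_1,\bar D_2$ bounded by $\bdry_1 A$ and $\bdry_2 A$ are \emph{nested} rather than disjoint (one of them must run across the other, and across the annulus between $C$ and it, to reach a capping disk of the $2$-handle), so $A\cup\bar D_1\cup\bar D_2$ is not an embedded sphere and the descent never starts. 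You flag ``the complementary subsurface is genuinely an annulus'' as a point needing care, but this is not a checkable detail --- it is a configuration your argument simply does not treat. A second unresolved point, which you also flag, is that $\tau$ being $\bdry$-parallel in $V$ does not formally give $\tau$ unknotted in the sub-ball $W$; one would need, say, to observe that a knotted ball--arc pair $(W,\tau)$ would force an essential torus or annulus in the handlebody $H=V\cut\nbhd(\tau)$. (There is also a minor issue that both endpoints of $\tau$ lie on $S$ itself, so the dichotomy ``$\tau\cap W=\emptyset$ or $\tau\subset W$'' needs restating in terms of $\interior \tau$.)

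The paper closes this case with a much shorter use of primitivity: since $C$ is primitive there is a meridional disk $D'$ with $\Delta(\bdry D', C)=1$, hence $\Delta(\bdry D',\bdry_i A)=1$ for both $i$; after an isotopy removing inessential intersections, $D'\cap A$ is a single spanning arc of $A$, so the two halves of $D'$ are $\bdry$-compressing disks for $A$ on opposite sides, and an incompressible annulus in a handlebody admitting $\bdry$-compressions to both sides is $\bdry$-parallel (\cite[Section~6.3]{BGL-obtaininggenus2}). That argument is uniform in the position of $C$ relative to $\bdry A$ and avoids the ball--arc analysis entirely; I would replace your final step with it, or else supply separate treatments of the nested configuration and of the unknottedness of $\tau$ in $W$.
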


\begin{proof}
	As in the proof of Lemma~\ref{lem:disktoannulusbusting}, let $A$ be a properly embedded, incompressible annulus in $H$ that is not $\bdry$--parallel.   Assume $A$ is disjoint from $\calC$.  Because $\bdry H \cut \calC$ is two pairs of pants and no component of $\bdry A$ bounds a disk in $\bdry H$, the components of $\bdry A$ are each isotopic to some component of $\calC$.   Since $A$ is an incompressible annulus in a handlebody, it must admit a $\bdry$--compression.  The result of such a $\bdry$--compression is a properly embedded disk $D$ whose boundary is disjoint from $\bdry A$. Furthermore, because $A$ is incompressible and not $\bdry$--parallel, this disk $D$ cannot be $\bdry$--parallel; it must be a meridional disk of $H$.  
	
	If $A$ is non-separating, then the two components of $\bdry A$ are non-isotopic.  Hence they are isotopic to distinct components of $\calC$.  However this contradicts that only one component of $\calC$ is not disk-busting.   Therefore $A$ is separating.   Hence the two components of $\bdry A$ are isotopic and must therefore be isotopic to the primitive component of $\calC$.  Since both components of $\bdry A$ are isotopic to a primitive curve, there is a meridional disk $D'$ of $H$ that intersects $A$ in a single spanning arc.  Therefore $D' \cut A$ is a pair of $\bdry$--compression disks for $A$ in $H$, one to each side of $A$.  But this implies that $A$ is $\bdry$--parallel in $H$, e.g.\ see \cite[Section~6.3]{BGL-obtaininggenus2}.
\end{proof}

\begin{lemma}\label{lem:sepdiskbust} 
If each of the curves in $\calC$ is disk-busting and $H[\calC]$ is not a lens space (including   $S^1 \times S^2$), $\R P^3 \# \R P^3$, or a prism manifold, then  $\calC$ is $8$--disk-busting.
\end{lemma}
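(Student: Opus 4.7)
The plan is to prove the contrapositive: assuming $\calC$ is not $8$-disk-busting, I will show that $H[\calC]$ must be a lens space (including $S^1\times S^2$), $\R P^3\#\R P^3$, or a prism manifold. The approach proceeds in three stages: (1) extract a disk $D$ with a specific intersection pattern with $\calC$, (2) realize $H[\calC]$ as a Dehn filling, and (3) use $D$ to constrain that filling.

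First I would fix a non-$\bdry$-parallel properly embedded disk $D\subset H$ with $|\bdry D\cap\calC|$ minimal and bounded above by $7$. Since $\calC$ separates $\bdry H$ into two pairs of pants $P,Q$, the number $|\bdry D\cap\calC|$ is even, and by Lemma~\ref{lem:2diskbusting} each $C_i$ meets $\bdry D$ at least twice. These combine to force $|\bdry D\cap C_i|=2$ for each $i$ and $|\bdry D\cap\calC|=6$, with $\bdry D$ meeting $\calC$ minimally. A short enumeration of essential arc types in a pair of pants then shows that $\bdry D\cap P$ and $\bdry D\cap Q$ each consist of exactly one essential arc of each of the three topological types. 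Next, each $C_i$ is non-separating in $\bdry H$ (since $\calC$ bounds pairs of pants in the genus-$2$ surface $\bdry H$), so $\bdry H[C_1]$ is a torus and, by Lemma~\ref{lem:handleaddition} together with the disk-busting of $C_1$, the manifold $H[C_1]$ is irreducible and $\bdry$-irreducible. In this torus, $C_2$ and $C_3$ become parallel curves of a common slope $\alpha$, realizing $H[\calC]=H[C_1](\alpha)$ as a Dehn filling along $\alpha$.

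The key step uses $D$ to produce a small essential surface in $H[C_1]$ with controlled boundary slope. Banding $D$ through the $2$-handle along $C_1$, connecting the two points of $\bdry D\cap C_1$ via the core disk $\Delta_1$, yields a properly embedded disk or annulus $F$ in $H[C_1]$ with $\bdry F\subset\bdry H[C_1]$ meeting $C_2\cup C_3$ in $4$ points, so $\Delta(\bdry F,\alpha)\leq 2$. The disk case is excluded: $\bdry$-irreducibility of $H[C_1]$ forces $\bdry F$ to bound a disk in the torus, and irreducibility of $H[C_1]$ then produces a ball cobounded by $F$; undoing the band produces a $\bdry$-parallelism for $D$ in $H$, contradicting essentiality. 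Hence $F$ is an essential annulus in $H[C_1]$ with $\Delta(\bdry F,\alpha)\leq 2$. Standard results on exceptional Dehn fillings in the presence of essential annuli (in the spirit of Gordon-Luecke, Wu, and Eudave-Mu\~noz on exceptional annular fillings with small slope-distance) then imply that $H[C_1](\alpha)=H[\calC]$ is reducible, toroidal, or Seifert fibered with small base. The specific slope-distance bound $\Delta\leq 2$ combined with the fact that the filling is achieved by capping off a pair of pants $P$ (inducing a bound on the Heegaard-type complexity of $H[\calC]$) further constrains $H[\calC]$ to the listed family of lens spaces, $\R P^3\#\R P^3$, and prism manifolds.

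The main obstacle is this final classification step: verifying precisely that the essential annulus $F$ in $H[C_1]$ with $\Delta(\bdry F,\alpha)\leq 2$ restricts $H[\calC]$ to exactly the three listed families, excluding other Seifert fibered or toroidal possibilities. This will require engagement with the JSJ/Seifert structure of $H[C_1]$, case analysis on whether $H[C_1]$ is hyperbolic, Seifert fibered, or toroidal, and careful bookkeeping to use the slope-distance bound to rule out larger manifolds. A secondary technical point is ensuring that the banded surface $F$ is essential and not trivially $\bdry$-parallel as a by-product of the banding, which may require explicit analysis of how the three arc-types of $\bdry D\cap P$ and $\bdry D\cap Q$ interact with the $C_1$ 2-handle during the banding.
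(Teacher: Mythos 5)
Your first stage is exactly the paper's opening move: parity plus Lemma~\ref{lem:2diskbusting} forces a meridional disk $D$ with $|\bdry D\cap C_i|=2$ for each $i$. (One small inaccuracy there: the three arcs of $\bdry D\cap P$ need not be one of each of the three $C_iC_j$ types --- a configuration with an arc from some $C_i$ back to itself is not excluded by the endpoint count --- but nothing downstream in either argument depends on this.) After that your route diverges, and the final step is a genuine gap. Even granting that you can produce an essential annulus $F$ in $H[C_1]$ with $\Delta(\bdry F,\alpha)\leq 2$ (itself shaky: pushing the two arcs of $\bdry D\cap\nbhd(C_1)$ across the $2$--handle yields a properly embedded \emph{disk}, not an annulus, and in the disk-exclusion step the ball in $H[C_1]$ may contain the $2$--handle, so ``undoing the band'' does not obviously return a $\bdry$--parallelism of $D$ in $H$), the conclusion you want does not follow from the exceptional-filling literature. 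Those theorems bound the distance between slopes whose fillings are reducible/toroidal/annular; they do not classify the closed manifold $H[C_1](\alpha)$. Indeed the hypothesis ``$H[C_1]$ contains an essential annulus whose boundary is at distance $\leq 2$ from $\alpha$'' is satisfied in situations where the filling is a Seifert fibered space over $S^2$ with three exceptional fibers (e.g.\ a Brieskorn sphere), which is none of a lens space, $\R P^3\connectsum\R P^3$, or a prism manifold. So the ``main obstacle'' you flag is not bookkeeping; the classification simply is not a consequence of the input you have fed into it.

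The missing idea is to use the intersection pattern of $D$ with $\calC$ on the \emph{other} side of the Heegaard surface. The $2$--handles along $\calC$ together with the $3$--handles form a genus $2$ handlebody $H_{\calC}$ with $\bdry H_{\calC}=\bdry H$ in which the $C_i$ are a complete meridian system, and because $\bdry D$ meets each meridian exactly twice, $H_{\calC}$ is an $I$--bundle over a surface $F$ with $\chi(F)=-1$ and one boundary component, with $\nbhd(\bdry D)$ as its vertical boundary: $F$ is a once-punctured torus or once-punctured Klein bottle according to whether $\bdry D$ separates $\bdry H$. Capping the zero-section with $D$ then exhibits, inside $H[\calC]$, a torus (resp.\ Klein bottle) whose exterior is $H\cut\nbhd(D)$, namely two solid tori (resp.\ one solid torus). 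This identifies $H[\calC]$ outright as a lens space in the separating case and as a Dehn filling of the twisted $I$--bundle over the Klein bottle --- a prism manifold, including $\R P^3\connectsum\R P^3$ and $S^1\times S^2$ --- in the non-separating case, with no appeal to Dehn-filling theorems at all.
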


\begin{proof}
Assume each of the curves in $\calC$ is disk-busting and that $\calC$ is not $8$--disk-busting.
Since $\calC$ is separating, its  intersection number with any curve in $\bdry H$ is even; hence $\calC$ cannot be $7$--disk-busting.   
Since each component of  $\calC$ is disk-busting, $\calC$ is necessarily $6$--disk-busting (Lemma~\ref{lem:disktoannulusbusting}). 
So assume $D$ is a meridional disk  of $H$ that $\calC$ intersects $6$ times.  By Lemma~\ref{lem:2diskbusting}
each component intersects $D$ twice.

Observe that $\bdry H$ is a Heegaard surface of $H[\calC]$:  the attached $2$--handles together with the $3$--handles filling up the sphere boundary components define a genus $2$ handlebody $H_{\calC}$ with $\bdry H_{\calC} = \bdry H$ in which the curves $\calC$ are meridians.  Since the curves of $\calC$ each intersect $\bdry D$ twice, there is a homeomorphism from $H_{\calC}$ to an interval bundle over $F$, where $F$ is a surface with boudary, in which an annular collar of $\bdry D$ in $\bdry H$ maps to the corresponding interval bundle over $\bdry F$.  Here $F$ must be a connected surface with one boundary component and $\chi(F) =-1$.  Thus $F$ is either a once-punctured torus or a once-punctured Klein bottle depending on whether $\bdry D$ is separating or not in $\bdry H_{\calC}$.

If $\bdry D$ is separating, then  $H_{\calC} = F \times [-1,1]$, and $D \cup F \times\{0\}$ is a torus
whose exterior in $H[\calC]$ is $H \cut \nbhd(D)$.  Since $\bdry D$ is separating in $\bdry H$, $D$ must be separating in $H$.  Hence $H \cut \nbhd(D)$ must be two solid tori.  Therefore $H[\calC]$ is a lens space.

If $\bdry D$ is non-separating, then capping the $0$-section of the interval bundle $H_{\calC}$ gives a
Klein bottle whose exterior in $H[\calC]$ is $H \cut \nbhd(D)$. Since $\bdry D$ is non-separating in $\bdry H$, $D$ must be non-separating in $H$.  Hence $H \cut \nbhd(D)$ must be one solid torus.  Therefore $H[\calC]$ is a Dehn filling of the twisted $I$--bundle over the Klein bottle.  In other words, $H[\calC]$ is a (generalized) Seifert fibered space $S^2(0; \alpha, 1/2, 1/2)$ for some $\alpha$, a prism manifold (including  $\R P^3 \# \R P^3$ and $S^1 \times S^2$). 
\end{proof}

\section{Tangles} 
For $n\geq 1$, an {\em $n$--strand tangle (in a ball)} is a pair $\tau = (B, \calt)$ of a $3$--ball $B$ with a  properly embedded $1$--manifold $\calt$ that is homeomorphic to $n$ arcs and some number of circles. The $n$--strand tangle $\tau = (B, \calt)$ is called {\em rational} (or also {\em trivial}) if $\calt$ has no closed components and is isotopic rel--$\bdry$ into $\bdry B$.  Here we take a $0$--strand tangle to be (the pair of) a link $\calt$ in $B=S^3$.

\medskip
Let $\tau = (B,\calt)$ be a $n$--strand tangle.
If any properly embedded disk  or sphere  in $B$ that is disjoint from $\calt$ cuts off a ball disjoint from $\calt$, then $\tau$ is {\em non-split}.
If any properly embedded disk in $B$ that $\calt$ transversally intersects just once  cuts off a trivial $1$--strand tangle, then $\tau$ is {\em indivisible}.  
If $\tau$ is  non-split, indivisible, and not the trivial $1$--strand tangle, then $\tau$ is {\em essential}.

If any embedded sphere in $B$ that intersects $\calt$ in two points cuts out a trivial $1$--strand tangle from $B$, then $\tau$ is {\em locally trivial}.  
If $\tau$ is non-split, indivisible, and locally trivial, then $\tau$ is {\em prime}.
So if $\tau$ is prime and not the trivial $1$--strand tangle, then it is essential.  
(Tangles with closed components can be essential without being prime.)

The tangle $\tau$ is {\em toroidal} if some embedded torus in $B - \nbhd(\calt)$  is neither compressible in $B - \nbhd(\calt)$ nor isotopic to a boundary component of $B - \nbhd(\calt)$.   A {\em Conway sphere} for $\tau$ is a sphere $S$ embedded in $B$ that meets $\calt$ transversally in four points so that $S-\nbhd(\calt)$ is incompressible in $B - \nbhd(\calt)$ and $S-\nbhd(\calt)$ is not isotopic rel--$\bdry$ into $\bdry(B-\nbhd(\calt))$.  A {\em Conway disk} for $\tau$ is a disk $D$ properly embedded in $B$ that meets $\calt$ transversally in two points so that $D-\nbhd(\calt)$ is incompressible in $B - \nbhd(\calt)$ and $D-\nbhd(\calt)$ is not isotopic rel--$\bdry$ into $\bdry(B-\nbhd(\calt))$.

A diagram of the tangle $\tau$ is a projection of $B$ to a disk $D$ so that $\bdry \calt$ projects to distinct points in $\bdry D$ while the rest of $\calt$ projects to the interior of $D$ with over-under information at the crossings as in an ordinary link diagram.  A diagram of $\tau$ is {\em alternating} if when following along any arc of $\tau$, the crossings alternate between over and under.   It is {\em locally trivial} if any loop in $D$ meeting the arcs of the diagram bounds a disk in which the diagram consists of a single arc.   It is {\em reduced} if it contains no nugatory crossings.

\subsection{Disk-busting and primitive curves in genus $2$ handlebodies  via tangle quotients}

Since a genus $2$ handlebody $H$ is the double branched cover of a  $3$--strand rational tangle $\tau$, a collection of disjoint, non-separating, pairwise non-parallel, simple closed curves $\calC$ in $\bdry H$ can be described as the preimage of a set of arcs  $\calc$ in the boundary $\tau$.  When presented in this manner, it can be convenient to determine the disk-busting and primitive nature of the components of $\calC$ in $H$ in terms of the arc components of $\calc$ in $\tau$.

Let $\calc$ be collection of simple arcs in the boundary sphere of a tangle $\tau = (B, \calt)$ such that $\bdry \calc = \calc \cap \calt$.   Let $\tau_1 =(B_1,\calt_1)$ be the trivial $1$--strand tangle.   The {\em arc-closure}  of $\tau$ along $\calc$ (or simply the {\em $\calc$--closure} of $\tau$) is the tangle $\tau[\calc] = (B', \calt')$ obtained by attaching a copy of $\tau_1$ to $\tau$ along each disk component of a regular neighborhood of $\calc$ in $\bdry B$ so that the endpoints of the arcs $\calt_1$ correspond to the endpoints  $\bdry \calc \subset \calt$,  $\calt'$ is the union of $\calt$ with the arcs $\calt_1$, and $B'$ is the union of $B$ with the balls $B_1$ with its boundary further filled with a ball (i.e.\ $S^3$) if $\bdry \calt' = \emptyset$.  Equivalently, $\tau[\calc] = (B', \calt')$ where the properly embedded $1$--manifold $\calt'$ is obtained from $\calc \cup \calt$ by pushing the interior of $\calc \cup \calt$ into the interior of $B$ and $B'$ is either $B$ or $B$ with its boundary filled with a ball (i.e.\ $S^3$)  if $\bdry \calt' = \emptyset$.

If $H$ is the double branched cover of $\tau$, then $c$ lifts to a curve $C$ in $\bdry H$ so that $H[C]$ is the double branched cover of $\tau[c]$. When $\tau$ is a rational tangle, so that $H$ is a handlebody, Lemma~\ref{lem:handleaddition} allows us to determine if the curve $C$ in $\bdry H$ is primitive or disk busting in terms of the tangle $\tau[c]$.

\begin{lemma}\label{lem:tanglehandleaddition}
Let $c$ be an arc in the boundary of a rational $3$--strand tangle $\tau$.   Let $H$ be the genus $2$ handlebody that is the double branched cover of $\tau$, and let the curve $C \subset \bdry H$ be the lift of $c$. Then
\begin{itemize}
\item $C$ is primitive in $H$ if and only if $\tau[c]$ is a rational $2$--strand tangle, and
\item $C$ is disk-busting in $H$ if and only if $\tau[c]$ is an essential tangle.
\end{itemize}
\end{lemma}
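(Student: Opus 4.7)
The plan is to apply Lemma~\ref{lem:handleaddition} after identifying $H[C]$ as the double branched cover of the tangle $\tau[c]$, and then to invoke classical results relating the topology of the double branched cover of a $2$--strand tangle to the tangle itself (rational versus essential).

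First I would check that $H[C]$ is the double branched cover of $\tau[c]$. Recall that $\tau[c] = (B', \calt')$ is obtained from $\tau$ by attaching a trivial $1$--strand tangle $\tau_1 = (B_1, \calt_1)$ along a disk neighborhood $N \subset \bdry B$ of $c$ so that the arc $\calt_1$ meets $\bdry c$ at the right endpoints. Since $N$ meets the branch set $\bdry \calt$ in the two points $\bdry c$, it lifts in the double branched cover of $\tau$ to an annular neighborhood of $C$ in $\bdry H$. As the double branched cover of $\tau_1$ is a $3$--ball, the arc-closure lifts to attaching this $3$--ball to $H$ along an annulus on its boundary, i.e., to attaching a $2$--handle to $H$ along $C$. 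Hence $H[C]$ is the double branched cover of $\tau[c]$.

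With this identification, Lemma~\ref{lem:handleaddition} translates primitivity (respectively disk-busting) of $C$ in $H$ into $H[C]$ being a solid torus (respectively $\bdry$--irreducible). To conclude I would invoke the two standard facts that (i) a $2$--strand tangle is rational if and only if its double branched cover is a solid torus, and (ii) a $2$--strand tangle (possibly with closed components) is essential if and only if its double branched cover is irreducible and $\bdry$--irreducible. The forward directions are straightforward: a rational tangle has a disk isotoping its strands into $\bdry B'$, lifting to a meridian disk for the solid torus cover; and a splitting sphere, splitting disk, or indivisibility disk in a non-essential $\tau[c]$ lifts directly to a reducing sphere or compressing disk in the cover.

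The converse directions, where the real content lies, use the equivariant Dehn's lemma and equivariant sphere theorem of Meeks--Yau applied to the involution $\sigma$ on $H[C]$ with quotient $\tau[c]$. A compressing disk or essential sphere in $H[C]$ can be replaced by an equivariant family, either a single $\sigma$--invariant object or a disjoint pair exchanged by $\sigma$. Projecting each case to $B'$, an invariant disk descends to either a splitting disk (if disjoint from the fixed set) or a disk meeting $\calt'$ in exactly one point and cutting off a trivial $1$--strand tangle (if it meets the fixed set), and similarly for spheres; a swapped pair projects to a single disk or sphere realizing the same conclusion. The main obstacle is the bookkeeping when $\tau[c]$ has a closed component, since its preimage is a curve pointwise fixed by $\sigma$ and invariant disks can meet this curve; carefully tracking how an equivariant disk's intersection with the fixed set governs the type of splitting or dividing object it projects to is the crux, and these arguments appear in the double branched cover literature (e.g., Bonahon--Siebenmann).
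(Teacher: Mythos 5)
Your overall strategy matches the paper's at the structural level --- identify $H[C]$ as the double branched cover of $\tau[c]$ and translate through Lemma~\ref{lem:handleaddition} --- but the technical engine for the converse directions is genuinely different. The paper does not invoke the Meeks--Yau equivariant Dehn's lemma or sphere theorem in $H[C]$: when $H[C]$ is boundary-reducible it first uses the Handle Addition Lemma of \cite{CGLS} to produce a compressing disk lying entirely in $H$, and then exploits the special fact that the covering involution of the handlebody $H$ is the hyperelliptic involution, under which every essential properly embedded disk can be made invariant by an isotopy. Your route is viable and more general (it does not need the disk to sit inside a handlebody), but it imports heavier machinery and still leaves you the same endgame case analysis, where two points need care. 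First, an invariant disk may contain an arc of the fixed set (the induced involution is a reflection); its quotient is then not a properly embedded disk in $B'$ at all, and the paper resolves this by taking an equivariant pair of push-offs which descend to a single disk disjoint from $\calt'$ witnessing that $\tau[c]$ is split --- you correctly flag this as the crux but leave it to the literature. Second, in the rotation case the quotient disk meets $\calt'$ once and must \emph{fail} to cut off a trivial $1$--strand tangle in order to violate indivisibility (you wrote the opposite); that failure does hold because the lift is a genuine compression. Finally, your fact (i), that a solid torus double branched cover forces the $2$--strand tangle to be rational, itself needs an equivariant input (the classification of involutions of the solid torus, or an equivariant meridian disk); the paper sidesteps this by making the disk dual to the primitive curve equivariant in $H$ before quotienting. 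With those points filled in, your argument closes the same gaps the paper's parenthetical does.
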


\begin{proof}
Since the involution of $H$ given by the double cover is the hyperelliptic involution,  every simple closed curve in $\bdry H$ and properly embedded disk in $H$ can be made invariant under the involution. Thus this lemma follows directly from Lemma~\ref{lem:handleaddition}. (A disk indicating the failure of $\tau[c]$  to be non-split or indivisible will lift to a boundary reducing disk of $H[C]$.  
If $H[C]$ is boundary reducible, there is a boundary reducing disk that lies in $H$ by the Handle Addition Lemma \cite[Lemma~2.1.1]{CGLS} which may then be made equivariant under the involution by an isotopy.
 Either this disk transversally intersects the fixed set once or it contains an arc of the fixed set.  In the former case, the disk itself quotients to disk indicating the failure of $\tau[c]$ to be indivisible.  In the latter case an equivariant pair of push-offs of the disk descends to show that $\tau[c]$ fails to be non-split.)
\end{proof}

\subsection{Non-simple manifolds  via tangle quotients}

In this section, we collect some well-known results that will allow us to recognize the 
non-simplicity of a  double branched cover from its tangle quotient.

\begin{lemma}\label{lem:primetangles}
Let $X$ be the double branched cover of an $n$--strand tangle $\chi$ in a ball $B$ (or the 
$3$-sphere $B$ when $n=0$).
If $X$ is reducible then  $\chi$ is not prime.
\end{lemma}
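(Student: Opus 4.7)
The plan is to apply the equivariant sphere theorem to the involution $\tau\co X \to X$ defining the double branched cover, and then push the resulting equivariant essential sphere down to $B$ to produce an obstruction to the primality of $\chi$.

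Let $\pi\co X \to B$ denote the quotient map by $\tau$, whose fixed set $\widetilde{\chi} = \fix(\tau)$ is the preimage of $\chi$. Since $X$ is reducible, $\pi_2(X) \neq 0$. The equivariant sphere theorem of Meeks--Simon--Yau applied to the cyclic group $\langle \tau \rangle$ acting on $X$ produces an essential embedded $2$--sphere $S \subset X$ that is either invariant under $\tau$ or disjoint from its image $\tau(S)$.

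Suppose first that $S \cap \tau(S) = \emptyset$. Then $\pi|_S$ is an embedding and $S$ is disjoint from $\widetilde{\chi}$, so $\overline{S} := \pi(S)$ is a $2$--sphere in $B$ disjoint from $\chi$. Since $B$ is a ball, $\overline{S}$ bounds a ball $B_0 \subset B$. If $B_0 \cap \chi$ were empty, then $B_0$ would lift to a ball in $X$ bounded by $S$, contradicting the essentiality of $S$. Hence $B_0$ meets $\chi$, so $\overline{S}$ is a sphere in $B$ disjoint from $\chi$ that fails to cut off a ball disjoint from $\chi$; that is, $\chi$ is split, and in particular not prime.

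Now suppose $\tau(S) = S$. Then $\tau|_S$ is an involution of $S^2$ whose fixed set is the finite transverse intersection $S \cap \widetilde{\chi}$. A fixed-point-free involution of $S^2$ has quotient $\mathbb{R}P^2$, which cannot embed in the ball $B \subset S^3$, so $\tau|_S$ must have fixed points and $|S \cap \widetilde{\chi}| = 2$. Consequently $\overline{S} = \pi(S)$ is a $2$--sphere in $B$ meeting $\chi$ transversely in two points. If $\overline{S}$ were to cut off a trivial $1$--strand tangle from $B$, then the double branched cover of that trivial tangle would be a ball in $X$ bounded by $S$, once again contradicting essentiality. Therefore $\chi$ is not locally trivial, so not prime.

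The main obstacle is the invocation of the equivariant sphere theorem: one must verify that Meeks--Simon--Yau (or an appropriate equivariant replacement such as Dunwoody's equivariant loop/sphere theorem) applies to the compact reducible $3$--manifold $X$, which may have spherical boundary when $n \geq 1$, together with the finite cyclic group $\langle \tau \rangle$, to yield an equivariant essential sphere meeting $\widetilde{\chi}$ transversely. Everything else amounts to bookkeeping: identifying the image sphere in $B$ and matching its intersection pattern with $\chi$ against the non-split or locally trivial conditions in the definition of a prime tangle.
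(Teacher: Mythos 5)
Your proof is correct and takes essentially the same approach as the paper: both invoke the equivariant sphere theorem (Meeks--Simon--Yau/Dunwoody) to produce a reducing sphere that is either disjoint from its image or invariant, then project it to a sphere in $B$ that witnesses either splitness or the failure of local triviality. The only cosmetic difference is that you rule out a free involution on an invariant sphere by the non-embeddability of $\R P^2$ in $B$, where the paper appeals to an Euler characteristic argument.
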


\begin{proof}
Let $\iota$ be the involution on $X$ that is the deck transformation of the branched covering $p \colon X \to \chi$, and let $\fix(\iota)$ be its fixed set. 

Assume $X$ contains a reducing sphere.  By the Equivariant Sphere Theorem \cite{Dunwoody, MSY}, there exists a reducing sphere $S$ such that either $S \cap \iota(S) = \emptyset$ or $S$ is transverse to $\fix(\iota)$ and $S = \iota(S)$.  So first assume $S$ is disjoint from $\fix(\iota)$.  Then $S$ projects to a sphere that doesn't bound a ball in the tangle complement, though it must bound a ball in $B$.  Hence $\chi$ must be split and therefore not prime.  So now if instead $S$ is not disjoint from $\fix(\iota)$, then an Euler characteristic argument shows that they intersect twice.  Hence $S$ projects to a sphere $p(S)$ intersected twice by the tangle.  Since $p(S)$ bounds a ball in $B$, it bounds a $1$--strand tangle.  However since $S$ does not bound a ball in $X$, this $1$--strand tangle cannot be trivial.   Hence $\chi$ is not locally trivial and therefore not prime.
\end{proof}

\begin{lemma}\label{lem:nonsimpletangles}
Let $X$ be the double branched cover of a $2$--strand tangle $\chi$ in a ball $B$.  If $X$ is toroidal, then either $\chi$ contains an essential Conway sphere or $\chi$ is toroidal.  If $X$ is annular but not toroidal, then $\chi$ is the tangle sum of two rational tangles.
\end{lemma}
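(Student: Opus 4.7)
The plan is to mirror the equivariant argument of Lemma~\ref{lem:primetangles}, substituting the Equivariant Torus Theorem and the Equivariant Annulus Theorem for the Equivariant Sphere Theorem. Let $\iota$ be the covering involution on $X$ and $p\colon X \to \chi$ the quotient map, so $\fix(\iota)$ is the preimage of the tangle arcs. For the toroidal statement, assume $X$ is toroidal and apply the Equivariant Torus Theorem to produce an essential torus $T \subset X$ with $T \cap \iota(T) = \emptyset$ or $T = \iota(T)$, made transverse to $\fix(\iota)$. In the disjoint case, $T$ is automatically disjoint from $\fix(\iota)$ and $p(T)$ is an unbranched torus in the tangle complement, essential because a compressing disk or $\bdry$-parallelism downstairs would lift (via the equivariant Dehn's lemma of Meeks--Yau) to one upstairs, contradicting essentialness of $T$. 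In the invariant case, a Riemann--Hurwitz count on the action of $\iota$ on $T$ forces $|T \cap \fix(\iota)| \in \{0, 4\}$; the count $2$ is excluded because it would produce $p(T) = \R P^2$, which does not embed in $B$. Count $0$ gives a torus or Klein bottle in the tangle complement (and a Klein bottle promotes to a torus by doubling the boundary of its twisted $I$-bundle neighborhood), and count $4$ gives a sphere $p(T)$ meeting $\chi$ transversely in four points---exactly the desired essential Conway sphere.

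For the annular statement, assume $X$ is annular but not toroidal, and apply the Equivariant Annulus Theorem to find an essential annulus $A \subset X$ with $A = \iota(A)$ or $A \cap \iota(A) = \emptyset$. An Euler-characteristic count gives $|A \cap \fix(\iota)| \in \{0, 2\}$ in the invariant case. If $|A \cap \fix(\iota)| = 2$, then $p(A)$ is a disk meeting $\chi$ in two points, i.e., an essential Conway disk $D$. Otherwise $A$ is disjoint from $\fix(\iota)$, and $p(A)$ is an annulus in the tangle complement (a M\"obius band image is ruled out, since capping off would again give an embedding of $\R P^2$ in $B$). Each essential boundary curve of $p(A)$ on the $4$--punctured sphere $\bdry B - \chi$ must separate the punctures in pairs, so capping off along disks in $\bdry B$ yields a sphere meeting $\chi$ in four points. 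This sphere is either an essential Conway sphere---whose lift is an essential torus in $X$, contradicting the hypothesis that $X$ is not toroidal---or inessential, in which case $A$ would also fail to be essential upstairs, a contradiction. Thus $\chi$ admits an essential Conway disk $D$, along which $\chi = \chi_1 +_D \chi_2$ as $2$-strand tangles in balls, and $X = X_1 \cup_{\widetilde D} X_2$ where $\widetilde D$ is the annulus lifting $D$ and $X_i$ is the double branched cover of $\chi_i$.

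The main obstacle is the last step: to deduce each $\chi_i$ is rational, equivalently (by the standard correspondence between rational $2$-strand tangles and solid-torus double covers) that each $X_i$ is a solid torus. One may first arrange that each $X_i$ is irreducible by an equivariant-sphere argument within $X_i$ (via Lemma~\ref{lem:primetangles} applied across the Conway-disk split). Then if $X_i$ is not a solid torus, $\bdry X_i$ is incompressible in $X_i$, so pushing $\bdry X_i$ slightly into $X_i$ produces a torus $T' \subset X$ which I would argue is essential in $X$: compressions on the $X_i$ side are ruled out by the incompressibility of $\bdry X_i$ in $X_i$, and compressions on the other side would have to cross $\widetilde D$, which is excluded by a standard minimal-intersection and outermost-disk argument exploiting the essentialness of the gluing annulus $\widetilde D$. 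Essentialness of $T'$ then contradicts $X$ being non-toroidal, forcing each $X_i$ to be a solid torus and each $\chi_i$ to be rational.
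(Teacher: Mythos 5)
Your treatment of the toroidal case is essentially the paper's: an equivariant essential torus (the paper cites Holzmann's equivariant torus theorem for involutions), an Euler characteristic count forcing $|T \cap \fix(\iota)| \in \{0,4\}$, and the two cases yielding a toroidal tangle or an essential Conway sphere. The annular case is where you diverge, and where the real content of the lemma lies. You want an equivariant essential annulus via an ``Equivariant Annulus Theorem''; such a statement can be extracted from equivariant characteristic submanifold machinery, but it is not as off-the-shelf as the sphere and torus versions. The paper sidesteps it entirely: an annular, atoroidal (and, implicitly, irreducible) manifold with a single torus boundary component is a Seifert fibered space over the disk with two exceptional fibers, and Tollefson's theorem makes $\iota$ fiber-preserving. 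Everything is then read off from the induced involution on the base orbifold, and --- crucially --- the two sides of the vertical annulus over the invariant arc separating the two cone points are fibered solid tori, so the rationality of the two sub-tangles comes for free.

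The genuine gap is in your last paragraph: the assertion that each $X_i$ is a solid torus is exactly the conclusion to be proved, and you leave it at ``I would argue.'' Your sketch (push $\bdry X_1$ into $X_1$ to get an essential torus $T'$) does not address the case in which $X_2$ is a solid torus: there $T'$ could a priori compress into the $X_2$ side or be $\bdry$--parallel. One must check that the core of $\widetilde{D}$ is not a meridian of $X_2$ (else $\widetilde{D}$ compresses) and is not longitudinal (else $\widetilde{D}$ is parallel into $\bdry X$, hence inessential), so that it runs at least twice around $X_2$ and the complementary side of $T'$ is a Seifert piece over the annulus with one exceptional fiber; only then is $T'$ incompressible and non-$\bdry$--parallel from that side. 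You also need irreducibility of the pieces $X_i$, which requires irreducibility of $X$ --- not a stated hypothesis of the lemma, though it holds in the paper's application and is equally implicit in the paper's own proof. These points are all fixable, but as written the final step is an outline of the statement rather than a proof of it.
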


\begin{proof}
Let $\iota$ be the involution on $X$ that is the deck transformation of the branched covering $X \to \chi$, and let $\fix(\iota)$ be its fixed set.   

Assume $X$ contains an essential torus.  Then \cite[Corollary 4.6]{Holz} shows that there is an essential torus $T$ such that either  $T \cap \iota(T) = \emptyset$ or $T$ is transverse to $\fix(\iota)$ and $T = \iota(T)$. So first assume that $T$ is disjoint from $\fix(\iota)$.  Then $T$ projects to an incompressible, non--$\bdry$--parallel torus in the tangle complement; i.e.\ $\chi$ is toroidal.   Now if instead $T$ is not disjoint from $\fix(\iota)$, then an Euler characteristic argument shows its the quotient must be a Conway sphere.  Since $T$ is essential, this Conway sphere must be essential.

Assume $X$ contains an essential annulus but no essential torus.
Therefore $X$ is a Seifert fibered space over the disk with two
exceptional fibers.
Taking $X$ with such a Seifert fibration, the main theorem of \cite{Toll}
shows that $\iota$ may be taken to be fiber preserving. 
The involution on $X$ then induces an involution on the orbit surface $D$ that leaves the set of singular points invariant.  Thus it restricts to an involution $\iota'$ on the $3$--punctured sphere $D'$ that is the exterior of these singular points.   Moreover, we know that the involution takes the ``outside'' boundary $\bdry D \subset \bdry D'$ of this $3$--punctured sphere to itself.  By taking an essential arc with endpoints on this outside boundary that intersects itself minimally under the involution on the surface, we find one that is either invariant or taken off itself.  That is, there is such an arc $\alpha \subset D'$ with either $\alpha = \iota'(\alpha)$ or $\alpha \cap \iota'(\alpha) = \emptyset$.

Assume there is an essential arc $\alpha \subset D'$ with $\bdry \alpha \subset \bdry D$ such that $\alpha \cap \iota'(\alpha) = \emptyset$.  Then there is product region $\Delta$ in $D'$ between the arc
$\alpha$ and its image $\iota'(\alpha)$ that is invariant under the involution. Since the involution $\iota'$ switches the arcs,
it is either conjugate to a rotation on the disk or to a reflection across the axis in $\Delta$ between the two arcs.
The first can't happen because then $\fix(\iota)$ would be disjoint from $\bdry X$, which we know is not the case (since $\chi$ is a $2$--strand tangle in a ball). So the latter must occur.  Let $X_\Delta$ be the $S^1$--bundle over $\Delta$ as a Seifert fibered submanifold of $X$.  Then $\iota$ restricted to $X_\Delta$ must be reflection across the annulus $A$ over this axis followed by reflection in the $S^1$ factor since $\iota$ is orientation preserving.  Therefore $\fix(\iota) \cap X_\Delta$ consists of two spanning arcs of $A$.  Since $\fix(\iota)$ is a properly embedded $1$--manifold in $X$ meeting $\bdry X$ in four points, any remaining components of $\fix(\iota)$ must be closed components in the two solid tori of $X - X_\Delta$. 
 However because $\iota$ exchanges these two solid tori, 
the fixed set cannot meet their interior.  Hence $\fix{\iota}$ consists of only these two arcs of $A$. Thus, the annulus $A$ projects to a rectangle giving a parallelism between the two strands of $\chi$. Furthermore the exterior of this rectangle in the tangle ball is a solid torus.  Since $\chi$ is a tangle in a ball, $\chi$ is a trivial tangle. Hence $X$ is a solid torus and therefore contains no essential annulus, a contradiction.

So we assume there is an invariant essential arc $\alpha$. The involution on $\alpha$ is either the
identity or conjugate to reflection in the midpoint of $\alpha$. Look at the essential annulus $A$ sitting above
this arc. Assume $\fix(\iota)$ intersects $A$. If the involution is the identity on $\alpha$ then
it must be conjugate to reflection in the circle fibers. The involution must switch sides of the annulus
and we arrive at the contradiction above. 
So the involution on $\alpha$ must be conjugate to reflection along 
the midpoint.
Since $\fix(\iota)$ intersects
$\bdry X$, $\iota$
must preserve the sides of $A$. Since $\iota$ is
orientation-preserving it must reverse the
orientation of the circle fibers in $A$. Hence $\iota$ must be
conjugate to reflection in the circle fiber over
the midpoint of $\alpha$.  
So the quotient of $A$ is a Conway disk in $\chi$. This Conway disk must be essential since $A$ is.   Since each side of $A$ in $X$ is a solid torus, the Conway disk must split $\chi$ into two rational tangles.

Assume $\fix(\iota)$ is disjoint from $A$. Since $\fix(\iota)$ is not disjoint from $\bdry X$,  $\iota$ must fix the sides of $A$.  
Thus the involution $\iota$ restricts to an involution on each of the solid tori $X'  \cut  A = X_1 \cup X_2$ on the two sides of the annulus. Since $\fix(\iota)$ intersects $\bdry X$ in four points but is disjoint from $A$, the pair of its intersection numbers with the boundaries of these two solid tori is either $\{2,2\}$ or $\{0,4\}$.   Since $\iota$ further restricts to an involution on each $\bdry X_1$ and $\bdry X_2$, the pair $\{2,2\}$ cannot occur; an involution on a torus cannot have a fixed set consisting of just two points.  Therefore $\fix(\iota)$ intersects $\bdry X_1$, say, in four points, and it does so in the annular complement $A_1$ of $A$.  Since $A$ is invariant under $\iota$, so is $A_1$.  Thus $\iota$ restricts to an involution on the annulus $A_1$ with exactly four fixed points which cannot occur. 
\end{proof}

\section{Construction of asymmetric L-space knots}
\label{sec:finalcheck}

Throughout this section we continue with the notation used for the construction of L-space knots in Section~\ref{sec:construction} and its further development in the statement of Theorem~\ref{thm:mainasymmetry}.  In particular, we consider the presentation $J^{\alpha,m}$ of the unknot as shown in the center of Figure~\ref{fig:almostalternating} and its decomposition into tangles $\tau_+ = (B^3, t_+)$ (above) and $\tau_- = (B^3, t_-)$ (below) by the bridge sphere containing arcs $ \calc= \calc^{\alpha,m} = \{c_\nu, c_\mu, c_\lambda\}$. 

The double branched cover of this unknot $J=J^{\alpha,m}$ is the manifold $Y=S^3$. Let $\Sigma$ be the genus $2$ Heegaard surface that is the lift of the bridge sphere, let the curves $\calC=\calC^{\alpha,m} = \{C_\nu, C_\mu, C_\lambda\} \subset \Sigma$ be the lift of the arcs $\calc$, and let $P=P^{\alpha,m}$ and $Q=Q^{\alpha,m}$ be the two pairs of pants in $\Sigma$ bounded by $\calC$.  
The exterior of $P$ is the manifold $M = H_+ \cup_Q H_- = Y\cut H_P$ and $\bdry M \cap H_\pm = P_\pm$.

Since $P_+$ and $P_-$ are isotopic in $H_P$, $H_+$ and $H_-$ may be expanded through $H_P$ to $P$ to be the handlebodies of the Heegaard splitting by $\Sigma$. Hence the tangles $\tau_+$ and $\tau_-$ are also the quotients of handlebodies $H_+$ and $H_-$ by the hyperelliptic involution, and the arcs $\calc$ in each of their boundary spheres are the quotients of the curves $\bdry Q = \calC$ in the boundaries of these handlebodies.

To be clear, let us take  $\alpha$ to be the alternating $3$--braid 
\[\alpha = \prod_{i=n}^1 \sigma_{\bar{i}}^{\epsilon_{i} a_{i}}  =\sigma_{\bar{n}}^{\epsilon_n a_n}  \dots \sigma_1^{-a_3} \sigma_2^{a_2} \sigma_1^{-a_1} \]
 where $n\geq 1$ is an integer, $\epsilon_n=(-1)^n$, $\bar{n}=1$ or $2$ according to the parity of $n$, and $a_i \geq 1$ for $i=1,\dots,n$.

\begin{lemma}\label{lem:Qbusting}

Take $\alpha$ as above with $n\geq 3$ and also  $m\geq 3$.
In $H_+$, each component of $\calC$ is disk-busting, and together $\calC$ is $8$--disk-busting.
In $H_-$, two components of $\calC$ are disk-busting, and one component of $\calC$ is primitive. 
\end{lemma}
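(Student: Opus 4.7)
The plan is to use Lemma~\ref{lem:tanglehandleaddition} to translate both the disk-busting and primitivity claims in $H_+$ and $H_-$ into assertions about the arc-closures $\tau_\pm[c]$ of the $3$--strand tangles $\tau_\pm$ obtained from the bridge decomposition of $J^{\alpha,m}$. The tangle $\tau_+$ is the upper piece of Figure~\ref{fig:almostalternating}, containing the alternating braid $\alpha$ together with its cap, while $\tau_-$ is the lower piece, containing the braid derived from $\alpha$, the $m$--twist box, and the offset capping. For each $c \in \{c_\nu, c_\mu, c_\lambda\}$ I would draw the diagram of $\tau_\pm[c]$ and simplify it.

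For $\tau_+$, under $n \geq 3$ and $a_i \geq 1$, each of the three closures $\tau_+[c_\nu]$, $\tau_+[c_\mu]$, $\tau_+[c_\lambda]$ should simplify to a connected reduced alternating $2$--strand tangle diagram with enough crossings to rule out splitting and indivisibility; a Menasco-style argument for alternating tangle diagrams then yields that each is essential. By Lemma~\ref{lem:tanglehandleaddition}, every component of $\calC$ is therefore disk-busting in $H_+$. The analogous computation in $\tau_-$ shows that two of the three closures $\tau_-[c]$ are similarly essential (yielding two disk-busting components of $\calC$ in $H_-$), while the third closure---namely, the one corresponding to the arc whose addition allows the $m$--box together with the offset capping to untwist the braid in the tangle ball---simplifies to a trivial rational $2$--strand tangle, giving the primitive component of $\calC$ in $H_-$ via Lemma~\ref{lem:tanglehandleaddition}.

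For the $8$--disk-busting claim on $\calC$ in $H_+$, Lemma~\ref{lem:sepdiskbust} reduces the task to ruling out that $H_+[\calC]$ is a lens space, $\R P^3 \# \R P^3$, or a prism manifold. Since $H_+[\calC]$ is the double branched cover of the closed alternating link $\tau_+[\calc]$, whose reduced alternating diagram has growing complexity as $n$ and the $a_i$ grow past the thresholds $n \geq 3$ and $m \geq 3$, the excluded candidates can be eliminated either by a determinant or Heegaard genus computation ruling out small Seifert fibered covers, or by applying Lemma~\ref{lem:nonsimpletangles} to $\tau_+[\calc]$ to produce an essential torus in the double branched cover. The main obstacle I anticipate is the explicit combinatorial bookkeeping: identifying precisely which arc in $\tau_-$ produces the primitive curve, writing down a clean isotopy that exhibits the corresponding arc-closure as a rational $2$--strand tangle, and verifying that each remaining arc-closure simplifies to a reduced alternating diagram with sufficiently many crossings. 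Once these simplifications are in hand, essentiality and rationality follow from the standard arguments for alternating tangle diagrams, and the three conclusions of the lemma fall out by direct application of Lemmas~\ref{lem:tanglehandleaddition} and~\ref{lem:sepdiskbust}.
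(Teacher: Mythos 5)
Your treatment of the first two claims matches the paper's: you apply Lemma~\ref{lem:tanglehandleaddition}, check that $\tau_-[c_\nu]$ simplifies to a rational tangle (giving the primitive curve $C_\nu$ in $H_-$), and check that the remaining five arc-closures have reduced, connected, alternating, locally trivial, indivisible diagrams, so that they are prime by Shimokawa's theorem and hence essential. That part of your sketch is sound, modulo actually drawing and simplifying the six diagrams.

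The genuine gap is in the $8$--disk-busting step. After reducing via Lemma~\ref{lem:sepdiskbust} to showing that $H_+[\calC]$ is not a lens space, $\R P^3 \# \R P^3$, or a prism manifold, neither of your two proposed routes works. A determinant or Heegaard genus computation cannot exclude these candidates: lens spaces realize every finite cyclic first homology, prism manifolds realize $\Z/4n$ and $\Z/2\oplus\Z/2n$, and all of these targets have Heegaard genus at most $2$, which is exactly the bound you get from a $3$--bridge presentation of $\tau_+[\calc]$ --- so no numerical invariant of this kind separates $H_+[\calC]$ from the excluded list. Your other suggestion, applying Lemma~\ref{lem:nonsimpletangles} to $\tau_+[\calc]$ to ``produce an essential torus,'' misreads that lemma: it is stated for $2$--strand tangles (not closed links), and its implication runs from toroidality of the cover to properties of the tangle, not the reverse; moreover the paper never establishes (and does not need) that $H_+[\calC]$ is toroidal. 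What the paper actually does is an indirect surgery-theoretic argument: it first rules out $\R P^3\#\R P^3$ and $S^1\times S^2$ by showing $\tau_+[\calc]$ is a prime link (Lemma~\ref{lem:primetangles} for $0$--tangles); then it performs a distance-$m$ rational tangle replacement in $\tau_+[\calc]$ producing a two-bridge link, so that by the Montesinos Trick $H_+[\calC]$ contains a knot with a lens space surgery whose exterior $X$ is the double branched cover of an explicit $2$--strand tangle $\chi$; it verifies via Thistlethwaite and Shimokawa together with Lemmas~\ref{lem:primetangles} and~\ref{lem:nonsimpletangles} that $X$ is irreducible, atoroidal, not a solid torus, and not Seifert fibered over the disk; and finally it invokes the Cyclic Surgery Theorem and the Finite Surgery Theorems to conclude that if $H_+[\calC]$ were a lens space or prism manifold, $X$ would have to be one of the excluded types. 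This bridge through a lens-space-surgery knot is the essential idea your proposal is missing, and without it the $8$--disk-busting conclusion does not follow.
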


\begin{remark}\label{rem:lowparameters}
As one may observe from the proof of Lemma~\ref{lem:Qbusting}, regardless of choice of  $\alpha$, the component $C_\nu$ of $\calC$ will always be primitive in $H_-$.
One may further check that $C_\lambda$ will be primitive in $H_-$ when $m=1$ and primitive in $H_+$ when $n=2$.  
\end{remark}

\begin{proof}
We use Lemma~\ref{lem:tanglehandleaddition}, the tangle form of Lemma~\ref{lem:handleaddition}, to determine when the curves of $\calC$ are disk-busting or primitive in $H_+$ and $H_-$. Let $c_\nu, c_\mu, c_\lambda$ be the three arcs of $\calc$.

\begin{figure}
\centering
\includegraphics[width=6in]{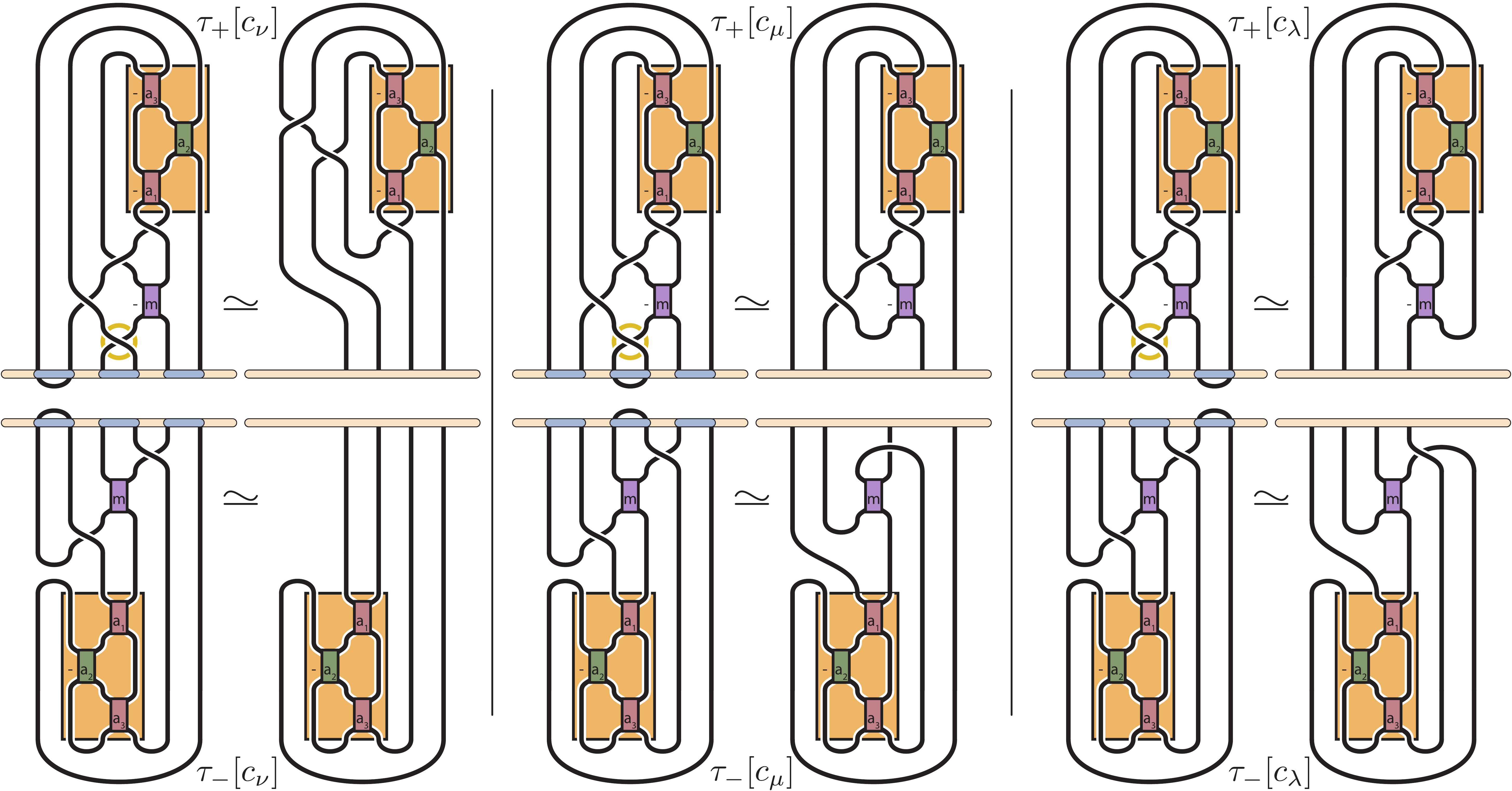}
\caption{Shown in the case that $n=3$, the tangles resulting from the arc-closures of the tangles $\tau_+$ and $\tau_-$ along the components of $\calc$ individually are isotoped to simpler, alternating diagrams. }
\label{fig:arcadditions}
\end{figure}

The bottom left of Figure~\ref{fig:arcadditions} shows that $\tau_-[c_\nu]$ is a rational tangle. (The diagrams shown for $\tau_-[c_\nu]$ are not reduced. Reductions would yield a crossingless diagram since the tangle is rational.)  Hence $C_\nu$ is primitive in $H_-$.   The rest of Figure~\ref{fig:arcadditions} shows that the other five tangles $\tau_+[c_\nu], \tau_+[c_\mu], \tau_+[c_\lambda], \tau_-[c_\mu], \tau_-[c_\lambda]$ have reduced (because $m\geq 2$), connected, alternating diagrams.  Direct inspection further shows that these five diagrams are each locally trivial and indivisible as well.  (While Figure~\ref{fig:arcadditions} is drawn with $n=3$, one may observe that the result continues to hold for larger integers $n$.) Then according to \cite[Theorem~1.2]{shimokawa-alternatingtangles} the tangles are prime.  Hence they are essential.  By Lemma~\ref{lem:tanglehandleaddition}, the corresponding components of $\calC$ in $H_+$ and $H_-$ are disk-busting.

We next show that $H_+[\calC]$ is neither a lens space (including $S^1 \times S^2$),  $\R P^3 \# \R P^3$, nor a prism manifold.

Figure~\ref{fig:closedtauplus} (Left) shows the link $\tau_+[\calc]$ 
with an isotopy to a simpler
configuration in which the diagram is reduced, alternating, and prime.
Since an alternating diagram of a non-prime link is non-prime
\cite{menasco}, $\tau_+[\calc]$ is a prime link. By Lemma 7.2  (for $0$--tangles), $H_+[\calC]$ is neither
$\R P^3 \# \R P^3$ nor $S^1 \times S^2$.

Figure~\ref{fig:closedtauplus} (Center) shows a rational tangle replacement (by setting $m=0$) of distance $\Delta=m$ that produces a reduced, non-split alternating diagram of a two-bridge link. Because the double branched cover of a two-bridge link is a lens space, the Montesinos Trick shows that $H_+[\calC]$ contains a knot with a Dehn surgery to a lens space (other than $S^1 \times S^2$).  In particular the exterior $X$ of this knot is the double branched cover of the  exterior tangle $\chi$ of the $-m$--tangle in $\tau_+[\calc]$ shown in  Figure~\ref{fig:closedtauplus} (Right).

Observe that this diagram of the tangle $\chi$ in  Figure~\ref{fig:closedtauplus} (Right) is reduced, connected, locally trivial, indivisible,  and alternating.  By \cite[Theorem~1.2]{shimokawa-alternatingtangles} again, $\chi$ is a prime tangle.
Since it is not crossingless, $\chi$ is not a rational tangle \cite[Corollary 3.2]{Thistlethwaite}.  Inspection shows there is no ``visible'' essential Conway disk in this diagram of $\chi$, so $\chi$ does not represent the sum of two rational tangles; see the last paragraph of \cite[\S3]{Thistlethwaite}.  
\cite[Theorem 4.2]{shimokawa-parallelstrings} shows that the tangle $\chi$ is atoroidal.  
Further inspection also reveals that there are no ``visible'' or ``hidden'' essential Conway spheres in this diagram of $\chi$, so the tangle has no essential Conway sphere; see \cite[\S3]{Thistlethwaite} or consider alternating rational tangle fillings of $\chi$ and apply \cite{menasco}.  
Therefore with Lemmas~\ref{lem:primetangles} and \ref{lem:nonsimpletangles} this implies that $X$, the double branched cover of $\chi$, is neither a solid torus, a Seifert fibered space over the disk,  a toroidal manifold, nor a reducible manifold.  (Recall that a Seifert fibered space over the disk is either toroidal, annular, or a solid torus.)

On the other hand, together with the Cyclic Surgery Theorem \cite{CGLS} and the Finite Surgery Theorems \cite{BZ}, if $H_+[\calC]$ were a lens space or a prism manifold, $X$ should either be a solid torus, a Seifert fibered space over the disk with two exceptional fibers,  or a union of a cable space and a Seifert fibered space over the disk with at most two exceptional fibers.  In this latter case, $X$ is either toroidal, one of the former two, or a connected sum of a lens space and a solid torus.
This however contradicts our previous determination about $X$.

\begin{figure}
\centering
\includegraphics[width=4in]{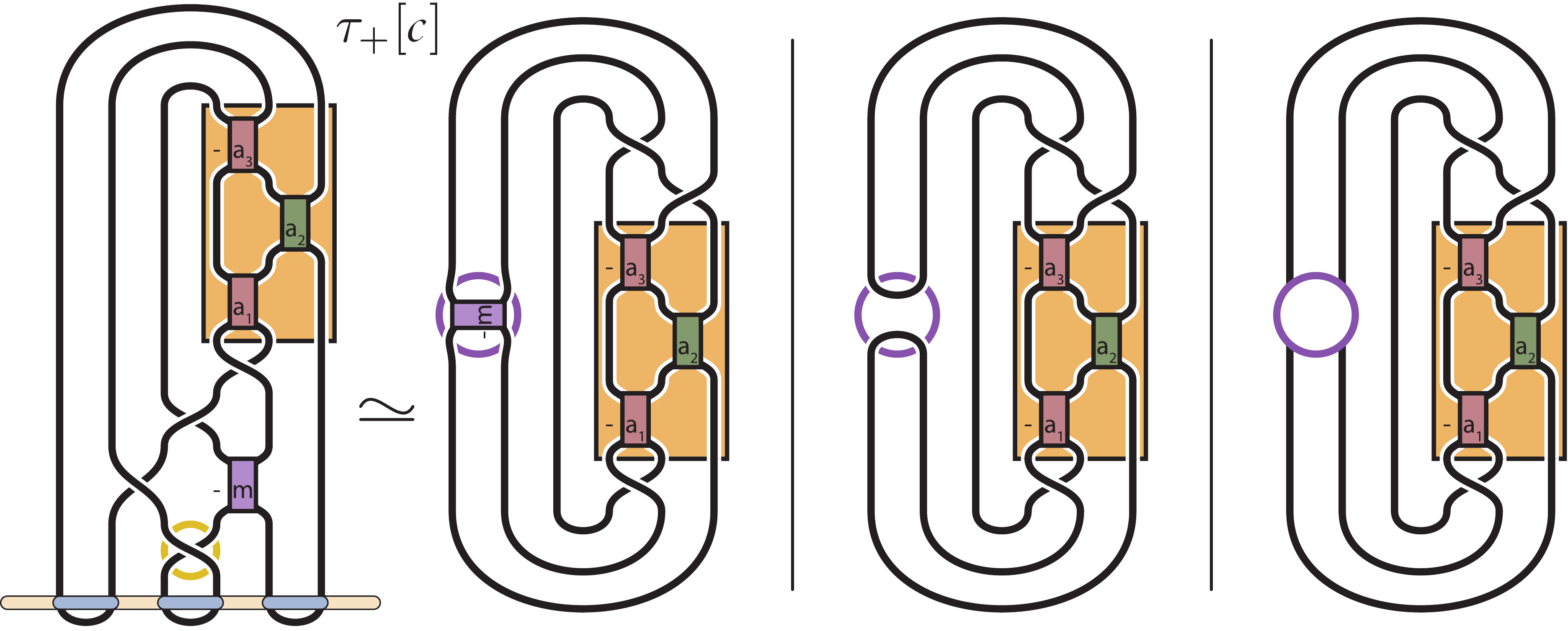}
\caption{(Left) The $\calc$--closure of $\tau_+$ is simplified to a reduced, prime, alternating link. (Center) A rational tangle replacement of distance $\Delta = m$ produces a two-bridge link with a non-split reduced alternating diagram. (Right) The exterior of the $-m$--twist tangle is the tangle $\chi$, shown with a reduced alternating diagram.}
\label{fig:closedtauplus}
\end{figure}

The above goes to imply that $H_+[\calC]$, the double branched cover of $\tau_+[\calc]$, is not a lens space or a prism manifold.   Since we have already shown that the  components of $\calC$ are each disk-busting in $H_+$, Lemma~\ref{lem:sepdiskbust} implies that $\calC$ is then $8$--disk-busting in $H_+$. 
\end{proof}

\begin{lemma}\label{lem:hypotheses}
The manifold $M= H_+ \cup_Q H_-$ is
\begin{enumerate}
	\item  a simple $3$--manifold
\end{enumerate} 
in which 
\begin{enumerate}[resume]
\item $(H_+,Q)$ and $(H_-,Q)$ are not homeomorphic,
\item $Q$ is incompressible and boundary incompressible, and
\item any properly embedded pair of pants in $M$ is either compressible, $\bdry$--parallel, isotopic to $Q$, or non-separating and can be isotoped to be disjoint from some component of $\bdry Q$.
\end{enumerate}
\end{lemma}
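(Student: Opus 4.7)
The plan is to derive this lemma as a direct compilation of the previously established structural results, with Lemma~\ref{lem:Qbusting} providing the needed combinatorial input on $\calC = \bdry Q$. First I would extract from Lemma~\ref{lem:Qbusting} the following: in $H_+$, all three components of $\calC$ are disk-busting and together $\calC$ is $8$--disk-busting, while in $H_-$, two components of $\calC$ are disk-busting and one is primitive. Combined with Lemma~\ref{lem:disktoannulusbusting}, the $H_+$ data gives that $\calC$ is $6$--disk-busting and annulus-busting in $H_+$, and combined with Lemmas~\ref{lem:primbustbustis5diskbusting} and \ref{lem:primitivetoannulusbusting}, the $H_-$ data gives that $\calC$ is $6$--disk-busting and annulus-busting in $H_-$ as well.

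With these busting conditions in hand, items (1), (3), and (4) of the lemma follow immediately. Specifically, Lemma~\ref{lem:Msimple} yields simplicity of $M$, Lemma~\ref{lem:Qess} gives the (boundary) incompressibility of $Q$, and Lemma~\ref{lem:Qunique}, using the additional $8$--disk-busting hypothesis in $H_+$, produces the classification of pairs of pants in $M$.

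For item (2) --- that the pairs $(H_+,Q)$ and $(H_-,Q)$ are not homeomorphic --- I would argue via the topological invariance of the disk-busting/primitive nature. Any homeomorphism of pairs would take $\bdry Q \subset \bdry H_+$ onto $\bdry Q \subset \bdry H_-$, sending disk-busting components to disk-busting components and primitive components to primitive components (both are intrinsic properties of a simple closed curve in the boundary of a handlebody). Since $\bdry Q \subset \bdry H_+$ has three disk-busting components and no primitive component, while $\bdry Q \subset \bdry H_-$ has a primitive component, no such homeomorphism of pairs can exist.

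The main conceptual obstacle has in fact already been resolved in Lemma~\ref{lem:Qbusting} and in Section~\ref{sec:popQ}; the present lemma is essentially a bookkeeping step that packages those results into precisely the form required to invoke Theorem~\ref{thm:mainasymmetry}. The only subtlety to be careful about is the non-homeomorphism argument, where one must verify that ``disk-busting'' and ``primitive'' really are invariants of the pair (handlebody, boundary curve), which follows from their definitions since any homeomorphism of the pair carries properly embedded disks of the handlebody to properly embedded disks and preserves the intersection number with the distinguished curve.
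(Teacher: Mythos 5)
Your proposal is correct and follows essentially the same route as the paper: both compile Lemmas~\ref{lem:Qbusting}, \ref{lem:disktoannulusbusting}, \ref{lem:primbustbustis5diskbusting}, \ref{lem:primitivetoannulusbusting}, \ref{lem:Qess}, \ref{lem:Msimple}, and \ref{lem:Qunique}, and both distinguish $(H_+,Q)$ from $(H_-,Q)$ by the presence of a primitive boundary component in $H_-$ versus none in $H_+$ (the latter because a disk-busting curve in a genus~$2$ handlebody is $2$--disk-busting by Lemma~\ref{lem:2diskbusting}, hence not primitive). No gaps.
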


\begin{proof}
By Lemma~\ref{lem:Qbusting}, one component of $\bdry Q$ is primitive in $H_-$ while no components of $\bdry Q$ are primitive in $H_+$.  Hence $(H_+,Q)$ and $(H_-,Q)$ are not homeomorphic.

Due to Lemma~\ref{lem:Qbusting}, Lemma~\ref{lem:disktoannulusbusting} shows that $\bdry Q$ is $6$--disk-busting and annulus busting in $H_+$ while Lemmas~\ref{lem:primbustbustis5diskbusting} and \ref{lem:primitivetoannulusbusting} show that $\bdry Q$ is  $6$--disk-busting and annulus-busting in $H_-$.  Therefore Lemma~\ref{lem:Qess} shows that $Q$ is incompressible and
boundary incompressible in $M$, and Lemma~\ref{lem:Msimple} shows that $M$ is simple.

Finally because Lemma~\ref{lem:Qbusting} also shows that $\bdry Q$ is $8$--disk-busting in $H_+$, Lemma~\ref{lem:Qunique} gives the final desired property.
\end{proof}

\begin{theorem}\label{thm:asymlspaceknots}
  Take $\alpha$ as above with $n\geq 3$ and also $m\geq 3$.
 Take integers $p,q \geq 0$ and $p',q' \geq 1$ such that $|pq'-p'q|=1$.
  Then for suitably large integers $N$, the $\tfrac{p+Np'}{q+Nq'}$--lashing
  of $P=P^{\alpha,m}$ (with respect to $\bdry P = \calC = \{C_\nu, C_\mu, C_\lambda\}$) is  an asymmetric hyperbolic knot with an longitudinal surgery to the double branched cover of a non-split alternating link.  In particular, such a lashing is an asymmetric L-space knot. 
\end{theorem}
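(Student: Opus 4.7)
The plan is to assemble three earlier results: Theorem~\ref{thm:mainasymmetry} (hyperbolic asymmetry under twisting), Theorem~\ref{thm:lspaceknot} (alternating--surgery L-space knots from lashings), and Lemma~\ref{lem:hypotheses} (verification of the four structural hypotheses of Theorem~\ref{thm:mainasymmetry} for $M=H_+\cup_Q H_-$ when $n,m\geq 3$). The family of lashings of slope $(p+Np')/(q+Nq')$ will be realized as a single basic twist family, so that the two theorems apply to the same knots $K^N$ in parallel.

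First I would set up the twist family. Put $K=K(p/q)$ and $L=K(p'/q')$, oriented so that $L\cdot K=+1$ in $T$. The hypothesis $|pq'-p'q|=1$ gives $|L\cdot K|=1$, so $K$ and $L$ may be isotoped in $T$ to meet in a single transverse point. Hence $\{K^N:=\phi_L^N(K)\}_{N\in\mathbb{Z}}$ is a basic twist family of lashings of $P$ in the sense of Definition~\ref{twistfamily}. Computing $[K^N]=[K]+N[L]=(p+Np')[\mu]+(q+Nq')[\lambda]$ shows $K^N$ is the $(p+Np')/(q+Nq')$--lashing of $P$. Since $p',q'\geq 1$, the slope $p'/q'$ lies in $(0,\infty)$, so $L$ is neither $\mu$ nor $\lambda$, meeting the final hypothesis of Theorem~\ref{thm:mainasymmetry}.

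Next I would deduce the asymmetric hyperbolicity. With $n\geq 3$ and $m\geq 3$, Lemma~\ref{lem:hypotheses} verifies the four conditions of Theorem~\ref{thm:mainasymmetry} for the decomposition $M=H_+\cup_Q H_-$ coming from $P=P^{\alpha,m}$ and $Q=Q^{\alpha,m}$. Theorem~\ref{thm:mainasymmetry} then yields an $N_0$ such that for every $N\geq N_0$ the knot $K^N\subset S^3$ is hyperbolic with trivial symmetry group. Separately, for every $N\geq 1$ both $p+Np'$ and $q+Nq'$ are positive integers, and any common divisor divides $q'(p+Np')-p'(q+Nq')=pq'-p'q=\pm 1$, so they are coprime. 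Since $\alpha$ is a product of positive powers of $\sigma_1^{-1}$ and $\sigma_2$ and $m\geq 0$, Theorem~\ref{thm:lspaceknot} applies to $K^N$ and supplies the desired longitudinal surgery to the double branched cover of a non-split alternating link; this makes $K^N$ an L-space knot by \cite{OS-dbc}. Taking $N\geq \max(N_0,1)$ then combines both conclusions.

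The genuine content has already been carried out upstream in Theorems~\ref{thm:mainasymmetry} and \ref{thm:lspaceknot} and in the disk/annulus-busting analysis compiled in Lemma~\ref{lem:hypotheses}; the present theorem is essentially an assembly. The only small obstacle is the bookkeeping needed to ensure the slope of $K^N$ in $T$ coincides, with the correct sign conventions, with the slope appearing in the hypotheses of Theorem~\ref{thm:lspaceknot} — but since $p,q\geq 0$ and $p',q'\geq 1$, all quantities $p+Np'$, $q+Nq'$ are non-negative and coprime, so no sign issues arise.
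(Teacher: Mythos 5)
Your proposal is correct and follows essentially the same route as the paper: realize the slopes $(p+Np')/(q+Nq')$ as a basic twist family via $|pq'-p'q|=1$, invoke Theorem~\ref{thm:lspaceknot} for the alternating surgery (using non-negativity and coprimality of $p+Np'$, $q+Nq'$), and invoke Theorem~\ref{thm:mainasymmetry} together with Lemma~\ref{lem:hypotheses} and the condition $p',q'\geq 1$ for asymmetric hyperbolicity at large $N$. Your explicit coprimality check is a small addition that the paper leaves implicit, but the argument is otherwise identical.
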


\begin{proof}
	Observe that for fixed integers $p,q,p',q'$ satisfying $|pq'-p'q|=1$ and $N \in \Z$, the knot $K$ of slope $p/q$ and the knot $L$ of slope $p'/q'$ intersect once in the once-punctured torus $T$.  Therefore the knots $K^N$ of slope $\tfrac{p+Np'}{q+Nq'}$ in $T$ are obtained by twisting $K$ along $L$ and hence form a basic twist family as in Section~\ref{sec:twistfamily}. 
	
	For $N\geq 0$, we have that $\tfrac{p+Np'}{q+Nq'}>0$ since $p,p',q,q'\geq0$.  Hence the lashing $K^N$ has a longitudinal surgery to  the double branched cover of a non-split alternating link and is thus an L-space knot by Theorem~\ref{thm:lspaceknot}.

	Lemma~\ref{lem:hypotheses} ensures the four numbered hypotheses of Theorem~\ref{thm:mainasymmetry} are satisfied.   Since $p',q' \geq 1$, $L$ is not isotopic to $\mu$ or $\lambda$ so that the final hypothesis of Theorem~\ref{thm:mainasymmetry} is satisfied. Thus the lashing $K^N$ is an asymmetric hyperbolic knot for suitably large $N$.	
\end{proof}

\section{Asymmetric L-space knots in lens spaces and $S^1 \times S^2$}\label{sec:generalization}
Here we explain how to adapt the above construction of asymmetric L-space knots in $S^3$ to produce asymmetric L-space knots in any lens space, including $S^1 \times S^2$.   Since this ends up being a mild modification, we will only discuss the necessary changes and impacts on relevant lemmas and theorems above and present the result in Theorem~\ref{thm:extension}.

We may generalize Figure~\ref{fig:almostalternating}, by using the $3$--braid $\omega$ in the stead of $\alpha^{-1}$ to form a link diagram $J^{\alpha,\omega,m}$ as depicted in the center of Figure~\ref{fig:Jalphaomega}. Here we take $3$--braids of the form:

\[ 
\alpha = \prod_{i=n}^1 \sigma_{\bar{i}}^{\epsilon_{i} a_{i}}  =\sigma_{\bar{n}}^{\epsilon_n a_n}  \dots \sigma_1^{-a_3} \sigma_2^{a_2} \sigma_1^{-a_1}
\quad  \mbox{ and } \quad
\omega = \prod_{i=1}^r \sigma_{\bar{i}}^{-\epsilon_{i} z_{i}}  =\sigma_1^{z_1} \sigma_2^{-z_{2}}   \sigma_1^{z_3} \dots \sigma_{\bar{r}}^{-\epsilon_r z_{r}} 
\tag{$\ast$}
\]
where $n,r \geq 1$ are integers, $\epsilon_j=(-1)^j$, and $\bar{j}$ is $1$ or $2$ according to the parity of $j$.
 The braid $\bar{\omega}$ is obtained from $\omega$ by swapping $\sigma_1$ and $\sigma_2$.  
When the integers $a_i, z_j$ are non-negative then $\alpha, \bar{\omega}$ are each 
alternating $3$-braids with negative twist boxes on the left and positive on the right. Examples of braids $\alpha$ and $\bar{\omega}$ for odd $n$ and $r$ are shown on the right of Figure~\ref{fig:Jalphaomega}.

Then for any integer $m$ the diagram  $J^{\alpha,\omega,m}$ depicts a two-bridge link that is a plat closure of the $3$--braid $\sigma_1^{-1} \alpha \omega$  as illustrated by the isotopies in Figure~\ref{fig:Jalphaomega} (Center) from its right to left.  (Here we take the closure of any $3$--braid $\eta$ as in Figure~\ref{fig:Jalphaomega} (Left).) Observe that if we further take all the integers $a_i$ and $z_j$ and the integer $m$ to be non-negative, then  $J^{\alpha,\omega,m}$ is an almost alternating diagram of this two-bridge link.

\begin{lemma}\label{lem:platclosureof2bridge}
	Any two bridge link may be expressed as the plat closure of  the $3$--braid $\sigma_1^{-1} \alpha \omega$ as shown in Figure~\ref{fig:Jalphaomega} (Left) with $\alpha$ and $\omega$ of the form ($*$), both $n,r \geq 3$, and all the integers $a_i$ and $z_j$ positive. In particular, $J^{\alpha,\omega,m}$ is an almost alternating diagram of this two-bridge link.
\end{lemma}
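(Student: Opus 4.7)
The plan is to interpret the plat closure of $\sigma_1^{-1}\alpha\omega$ in Figure~\ref{fig:Jalphaomega} as the numerator closure of a rational tangle, read off the associated continued fraction in terms of the parameters $a_i$, $m$, $z_j$, and then invoke the classification of $2$-bridge links as rational knots together with standard continued-fraction identities to realize an arbitrary slope $p/q$ with the prescribed rigid sign pattern and length constraints. Everything else about the statement (the two-bridge identification and the almost-alternating property) is visible from the diagram.

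First I would carry out the sequence of isotopies indicated in Figure~\ref{fig:Jalphaomega} from right to left, re-drawing the plat closure of the $3$-braid $\sigma_1^{-1}\alpha\omega$ as a $4$-plat whose inner tangle is a rational tangle $T$. Reading off the twist regions of $T$, its slope is a continued fraction whose entries are (signed) $a_1,\ldots,a_n$, then $m$, then $z_1,\ldots,z_r$, with the signs $\epsilon_i = (-1)^i$ forced by the alternation of $\sigma_1$ and $\sigma_2$ in $(\ast)$. In particular, when all $a_i,z_j,m$ are positive, this is an alternating rational tangle, the plat closure is a reduced alternating diagram away from the lone $\sigma_1^{-1}$ factor, and that single factor is exactly the circled dealternating crossing.

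Next, given an arbitrary $2$-bridge link $K(p/q)$, I would realize its slope $p/q$ by a continued fraction of the exact alternating-sign shape produced in Step~1, with all entries positive and with $n,r\ge 3$, using the standard identities
\[
[\ldots,x] \;=\; [\ldots,x-1,1],\qquad [\ldots,x,0,y,\ldots] \;=\; [\ldots,x+y,\ldots],
\]
and the Euclidean-algorithm expansion of $p/q$. The Euclidean algorithm provides one positive expansion; the first identity, applied once or twice at either end, lets me enlarge the length of the sub-expansion $a_1,\ldots,a_n$ and the sub-expansion $z_1,\ldots,z_r$ independently until both are at least $3$; the second identity is used to combine or split entries so that the total parity and the position of the central entry $m$ align correctly with the fixed template in $(\ast)$. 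Because the $\epsilon_i$ already encode precisely the sign pattern of the alternating rational continued fraction, only positive magnitudes $a_i, z_j, m$ are needed.

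Finally, I would simply substitute the $a_i, z_j, m$ produced in Step~2 back into the construction of $J^{\alpha,\omega,m}$. By Step~1 the resulting diagram is a diagram of $K(p/q)$, and because every entry is positive it is alternating except at the single $\sigma_1^{-1}$ crossing prepended to $\alpha\omega$, which is the dealternating crossing; hence it is almost alternating. The main obstacle will be the bookkeeping in Step~2: verifying that the rigid alternating-sign template $(\ast)$ truly accommodates \emph{every} rational slope with all-positive magnitudes and with both halves of length at least $3$. This reduces to a short case analysis on the parity and length of the Euclidean expansion of $p/q$, using the two identities above to adjust each half to the required length; the small slopes, whose canonical expansions are very short, are the only cases requiring care.
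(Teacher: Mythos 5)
Your overall strategy --- realize the slope of the given two-bridge link by a continued fraction that fits the template $(\ast)$ --- is genuinely different from the paper's, but as written it rests on a misreading of the template, and the misreading hides exactly the hard point. In $(\ast)$ the braids $\alpha$ and $\omega$ alternate with \emph{opposite} sign conventions: $\alpha$ has its $\sigma_1$-syllables negative and its $\sigma_2$-syllables positive, while $\omega$ has its $\sigma_1$-syllables positive and its $\sigma_2$-syllables negative (the paper is careful to say that $\alpha$ and $\bar{\omega}$ --- not $\omega$ --- are alternating braids). Consequently $\sigma_1^{-1}\alpha\omega$ is \emph{not} an alternating-sign word: the initial $\sigma_1^{-1}$ merges into the first syllable of $\alpha$, and at the junction the last syllable $\sigma_1^{-a_1}$ of $\alpha$ combines with the first syllable $\sigma_1^{z_1}$ of $\omega$ into $\sigma_1^{z_1-a_1}$, producing a sign defect (or a vanishing syllable) in the middle of the word. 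So the continued fraction carried by the template is not ``alternating with all entries positive,'' and your Step 2 --- matching an arbitrary $p/q$ to that nonexistent rigid alternating shape via $[\ldots,x]=[\ldots,x-1,1]$ and splitting/merging of entries --- does not apply. A second, smaller error: $m$ does not appear in the braid $\sigma_1^{-1}\alpha\omega$ at all; the $m$ twists are undone in the isotopy of Figure~\ref{fig:Jalphaomega}, so the link type is independent of $m$ and $m$ cannot serve as an entry of the continued fraction. Likewise, the almost-alternating property of $J^{\alpha,\omega,m}$ does not come from the braid word being alternating away from one letter; it comes from drawing $\omega$ flipped (as $\bar{\omega}$) on the other side of the bridge sphere, a feature of the specific diagram.

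The feature you are missing is that the cancellation at the $\alpha$--$\omega$ junction is not a nuisance to be normalized away but the entire mechanism of the proof. The paper starts from the Conway normal form, writing the two-bridge link as the plat closure of an alternating $3$-braid $\xi$ (normalized, using the move $\eta\mapsto\sigma_2^N\eta$, to begin with a nonzero power of $\sigma_1$), and then pads: choose any $\alpha'$ of the form of $\alpha$ with $n'\geq 3$ positive twist regions and set, for instance, $\alpha=\xi'\alpha'$ and $\omega=(\alpha')^{-1}$ when $\xi=\sigma_1^{-1}\xi'$, so that $\sigma_1^{-1}\alpha\omega=\xi$ as a braid word while both $\alpha$ and $\omega$ have at least three twist regions with the required signs by construction. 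No continued-fraction identities are needed. If you want to salvage your route, you would have to characterize the continued fractions genuinely produced by the template --- two alternating halves of opposite convention joined at a middle syllable $z_1-a_1$ of arbitrary sign --- and show that every $p/q$ arises; carried out honestly, that analysis collapses back to the same cancellation observation.
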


\begin{proof}
	First, take a $3$--braid $\alpha' =\displaystyle \prod_{i=n'}^{1} \sigma_{\bar{i}}^{\epsilon_{i} a'_{i}}$ for some integer $n' \geq 3$, positive coefficients $a'_{i}$, and where $\epsilon_i=(-1)^i$.

	Now, any two bridge link $L$ may be expressed as the plat closure of some alternating $3$--braid as shown in Figure~\ref{fig:Jalphaomega} (Left) using its Conway normal form \cite{conway}.  In particular $L$ is the plat closure of an alternating $3$--braid $\xi$.  
	Because the $3$--braids $\eta$ and $\sigma_2^N \eta$  have plat closures (as in Figure~\ref{fig:Jalphaomega} (Left)) giving isotopic links for any integer $N$, the  $3$--braid $\xi$ may be taken to begin with a non-zero power of $\sigma_1$ unless $\xi$ is the trivial braid. 
	 
	If $\xi$ begins with a negative power of $\sigma_1$, let $\xi'$ be the alternating $3$--braid such that $\xi = \sigma_1^{-1} \xi'$.  Then set $\alpha = \xi' \alpha'$ and $\omega = (\alpha')^{-1}$.  
	
	If $\xi$ begins with a positive power of $\sigma_1$, set $\alpha = \alpha'$ and $\omega = (\alpha')^{-1} \sigma_1 \xi$. 
	
	If $\xi$ is the trivial braid, then set $\alpha = \alpha'$ and $\omega = (\alpha')^{-1} \sigma_1$.

	In each of these cases $\sigma_1^{-1} \alpha \omega = \xi$ as needed.
\end{proof}

\begin{figure}
\centering
\includegraphics[width=4.5in]{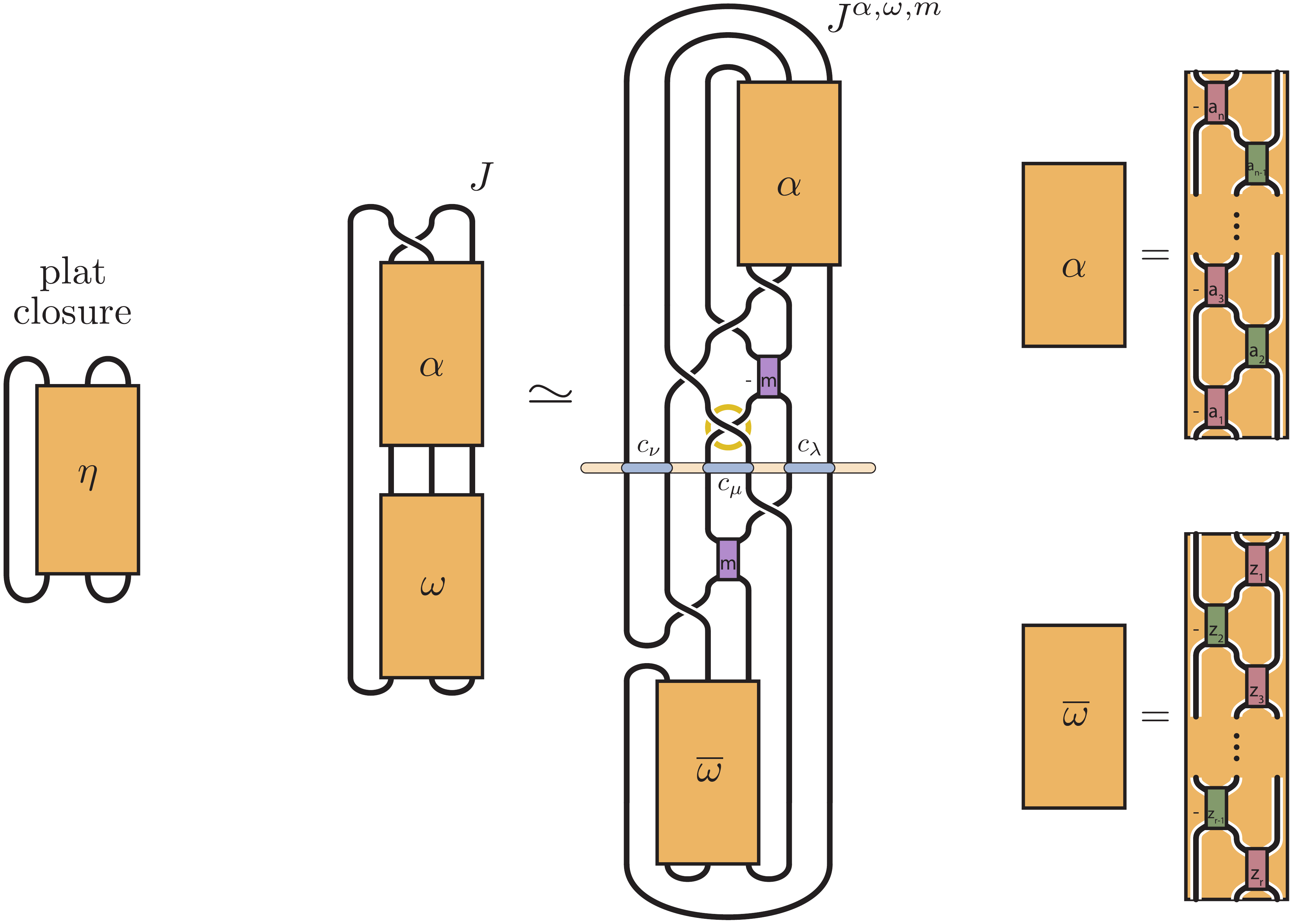}
\caption{(Left) The plat closure of a $3$--braid $\eta$. (Center) Replacing $\alpha^{-1}$ in Figure~\ref{fig:almostalternating} with another braid $\omega$ enables the production of almost alternating diagrams $J^{\alpha,\omega,m}$  of other two-bridge links, analogous to our almost alternating unknot diagrams. (Right) The braids $\alpha$ and $\bar{\omega}$ are shown for $\alpha, \omega$ in form ($*$) with $n,r$ odd.}
\label{fig:Jalphaomega}
\end{figure}

\begin{theorem} \label{thm:extension}
	Let $Y$ be a lens space, including $S^1 \times S^2$.  Then $Y$ contains infinitely many asymmetric hyperbolic L-space knots with a non-trivial alternating surgery.
\end{theorem}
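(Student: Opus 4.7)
My plan is to follow the same three-step outline used to prove Theorem~\ref{thm:asymlspaceknots}, replacing the almost-alternating unknot diagram $J^{\alpha,m}$ with the almost-alternating two-bridge link diagram $J^{\alpha,\omega,m}$ of Figure~\ref{fig:Jalphaomega}. First I would fix the target lens space $Y$. Every lens space, including $S^1\times S^2$, is the double branched cover of some two-bridge link, so by Lemma~\ref{lem:platclosureof2bridge} there exist braids $\alpha,\omega$ of the form $(\ast)$ with $n,r\geq3$ and all coefficients $a_i,z_j\geq1$ and an integer $m\geq 0$ such that $J^{\alpha,\omega,m}$ is an almost-alternating diagram of a two-bridge link whose double branched cover is $Y$. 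The bridge sphere, the three arcs $\calc=\{c_\nu,c_\mu,c_\lambda\}$ in it, and the upper/lower tangles $\tau_\pm$ then lift to a genus 2 Heegaard surface $\Sigma\subset Y$, a triple of curves $\calC=\{C_\nu,C_\mu,C_\lambda\}$ decomposing $\Sigma$ into $P=P^{\alpha,\omega,m}$ and $Q=Q^{\alpha,\omega,m}$, and handlebodies $H_\pm$ with $H_+\cup_Q H_-=M=Y\cut H_P$.

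Next I would reproduce Theorem~\ref{thm:lspaceknot} in this setting. Precisely as in Figure~\ref{fig:replacetoalternating}, the $(+1,-q/(p+q),-p/(p+q))$--rational tangle replacement on $\calc$ in $J^{\alpha,\omega,m}$ yields a reduced connected alternating diagram $J^*$ whenever $m\geq 0$, all $a_i,z_j\geq 0$, and $p/q>0$ with coprime $p,q\geq 0$. By Menasco \cite{menasco} this link is non-split, so by \cite{OS-dbc} its double branched cover is an L-space; Lemma~\ref{lem:lashingsurgery} combined with the Montesinos Trick then shows that the $T$--framed surgery on the $p/q$--lashing of $P$ produces this L-space. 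Thus every such lashing is an alternating-surgery L-space knot in $Y$.

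The heart of the matter is to reprove the busting statement of Lemma~\ref{lem:Qbusting} with the new tangle $\omega$ in place of $\alpha^{-1}$. For each of the six arc-closures $\tau_\pm[c_*]$ I would sketch the diagrams as in Figure~\ref{fig:arcadditions}: the analogue computation gives that $\tau_-[c_\nu]$ is rational (so $C_\nu$ is primitive in $H_-$), and each of the remaining five arc-closures admits a reduced, connected, locally trivial, indivisible, alternating diagram provided $m\geq 3$ and $n,r\geq 3$, which by \cite[Theorem~1.2]{shimokawa-alternatingtangles} makes them prime and hence essential; Lemma~\ref{lem:tanglehandleaddition} then yields the claimed disk-busting pattern. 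For the $8$--disk-busting of $\calC$ in $H_+$, the analogue of Figure~\ref{fig:closedtauplus} exhibits $\tau_+[\calc]$ as a reduced prime alternating link (hence $H_+[\calC]\neq \R P^3\#\R P^3, S^1\times S^2$ by Lemma~\ref{lem:primetangles}) and, after the $-m$--tangle replacement, identifies $H_+[\calC]$ as the double branched cover of an exterior tangle $\chi$ having a reduced alternating diagram, so that \cite[Theorem~4.2]{shimokawa-parallelstrings} and inspection for visible/hidden Conway spheres and disks, together with Lemmas~\ref{lem:primetangles} and~\ref{lem:nonsimpletangles}, the Cyclic Surgery Theorem \cite{CGLS}, and the Finite Surgery Theorem \cite{BZ}, rule out the forbidden possibilities and invoke Lemma~\ref{lem:sepdiskbust}. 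The main obstacle is confirming that none of these tangle-theoretic certificates degenerate when $\alpha^{-1}$ is replaced by a different alternating braid $\omega$; I expect some lower bounds on $n,r,m$ (as in Remark~\ref{rem:lowparameters}) to be necessary, and these may need to be chosen jointly to preserve local triviality and indivisibility across all six pictures simultaneously.

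With the busting properties in hand, Lemma~\ref{lem:hypotheses} goes through verbatim to verify the four hypotheses of Theorem~\ref{thm:mainasymmetry} for the new $M$, $H_\pm$, $Q$. Finally, fixing coprime pairs $(p,q),(p',q')$ with $p,q\geq0$, $p',q'\geq1$, $|pq'-p'q|=1$, the basic twist family $\{K^N\}$ of $\tfrac{p+Np'}{q+Nq'}$--lashings of $P$ consists (for $N\geq 0$) of alternating-surgery L-space knots by the first step, and for sufficiently large $N$ Theorem~\ref{thm:mainasymmetry} delivers asymmetric hyperbolic exteriors; since distinct sufficiently large $N$ produce non-homeomorphic complements (the shortest geodesics $L_\pm^N$ have lengths tending to zero at different rates by Lemma~\ref{lem:shortestgeodesics}, or alternatively because the slopes of the lashings are distinct in $T$), this gives infinitely many such knots in $Y$, completing the proof of Theorem~\ref{thm:extension}.
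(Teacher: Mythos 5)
Your proposal is correct and follows essentially the same route as the paper: express the two-bridge link as the plat closure from Lemma~\ref{lem:platclosureof2bridge}, extend Theorem~\ref{thm:lspaceknot} to $J^{\alpha,\omega,m}$, re-verify Lemma~\ref{lem:Qbusting} with $\omega$ in place of $\alpha^{-1}$, and then apply Lemma~\ref{lem:hypotheses} and Theorem~\ref{thm:mainasymmetry} to the twist family. The only (harmless) difference is that you re-examine the $H_+$ side, whereas the paper notes that $\tau_+$ is untouched by the substitution so only the $H_-$ verification needs the hypothesis $r\geq 3$; your added remark on why large $N$ yield infinitely many distinct knots is a small bonus the paper leaves implicit.
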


\begin{proof}
	A lens space $Y$ is the double branched cover of some two-bridge link.   By Lemma~\ref{lem:platclosureof2bridge}, this two-bridge link may be taken to be the plat closure (as in Figure~\ref{fig:Jalphaomega} (Left)) of an alternating  $3$--braid $\sigma_1^{-1} \alpha \omega$ where $\alpha$ and $\omega$ each have at least $3$ twist regions.  Figure~\ref{fig:Jalphaomega} (Center) shows this link then has an almost alternating $3$--bridge presentation $J^{\alpha,\omega,m}$  for any choice of integer $m$.  Choose $m \geq 2$.  The three arcs $\calc = \{c_\nu, c_\mu, c_\lambda\}$ lift to the triple of curves $\calC = \{C_\nu, C_\mu, C_\lambda\}$ in the genus $2$ Heegaard surface $\Sigma$ that is the lift of the bridge sphere.
	Let $P=P^{\alpha,\omega,m}$ be one of the pairs of pants in $\Sigma$ bounded by $\calC$, and let $Q$ be the complementary pair of pants.
	Theorem~\ref{thm:lspaceknot} now extends directly to show that for any non-negative slope $p/q$, framed surgery on the $p/q$--lashing of $P$ (with respect to $\bdry P = \calC = \{C_\nu, C_\mu, C_\lambda\}$ as shown in Figure~\ref{fig:TinPxIprojection}) produces the double branched cover of an alternating link.  Hence a non-negative sloped $p/q$--lashing is an L-space knot.
	
	As above take the exterior of $P$, $Y \cut H_P$, to be the manifold $M=H_+ \cup_Q H_-$  where  $Q$ separates $M$ into the handlebodies $H_+$ and $H_-$.  Above and below the bridge sphere of the two-bridge link $J^{\alpha,\omega,m}$ as shown in Figure~\ref{fig:Jalphaomega} are two three-strand rational tangles $\tau_+=(B^3,t_+)$ and $\tau_-=(B^3,t_-)$. The handlebodies $H_+$ and $H_-$ are the double branched covers of these tangles and the curves $\calC = \bdry Q$ in their boundaries are the lifts of the arcs $\calc$ in the bridge sphere.  Observe that the proof of Lemma~\ref{lem:Qbusting} applies equally well with $\omega$ in place of $\alpha^{-1}$ as long as $r \geq 3$ (so that $\omega$ also has at least three twist regions) to give the same results about $\calC$ in $H_-$.  Therefore Lemma~\ref{lem:hypotheses} also continues to hold.

	Now we follow Theorem~\ref{thm:asymlspaceknots}.  Let $p,q,p',q'$ be  non-negative integers with $p',q'\geq 1$ such that $|pq'-p'q|=1$, and let $K^N$ be the $\tfrac{p+Np'}{q+Nq'}$--lashing of $P=P^{\alpha,\omega,m}$ (with respect to $\bdry P = \calC = \{C_\nu, C_\mu, C_\lambda\}$).
	 Then  for any $N\geq 0$ then knot $K^N$ is an L-space knot. Since Lemma~\ref{lem:hypotheses} ensures the four numbered hypotheses of Theorem~\ref{thm:mainasymmetry} are satisfied, then for each suitably large integer $N$ the lashing $K^N$ is also an asymmetric hyperbolic knot with a surgery to the double branched cover of an alternating link.	
\end{proof}

\section{Tunnel number}

\begin{proposition}\label{prop:tnofgenus2lashing}
	Let $K$ be a $p/q$--lashing of a genus $2$ Heegaard splitting.  Then the tunnel number of $K$ is at most $3$.
\end{proposition}

\begin{proof}
	Let $\Sigma = P \cup Q$ be the genus $2$ Heegaard surface in which $K$ is a lashing of $P$.  Since
	$K$ is a core of the handlebody $P\times I$, it follows that $K$ is a core of the genus $4$ handlebody $\Sigma' \times I$ where $\Sigma'$ is the Heegaard surface $\Sigma$ punctured once.   The complement of $\Sigma' \times I$ is then the boundary connect sum of the two genus $2$ handlebodies $H_+$ and $H_-$.  This gives a genus $4$ splitting of the exterior of $K$.  Hence the tunnel number of $K$ is at most $3$.
\end{proof}

\begin{proposition}\label{prop:tn2lashing}
	Let $K$ be an asymmetric hyperbolic $p/q$--lashing as constructed here.  Then the tunnel number of $K$ is $2$.
\end{proposition}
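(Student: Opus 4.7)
The plan is to prove equality $tn(K)=2$ by establishing matching lower and upper bounds.

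For the lower bound $tn(K)\ge 2$, I will invoke the classical fact that every tunnel-number-one knot in $S^3$ is strongly invertible: its exterior admits a genus two Heegaard splitting, and the hyperelliptic involution of the splitting surface extends across each genus two handlebody (since any genus two handlebody admits such an extension) to give a strong involution of the knot. Because Theorem~\ref{thm:asymlspaceknots} produces $K$ asymmetric, this rules out $tn(K)=1$.

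For the upper bound $tn(K)\le 2$, I plan to exhibit an explicit two-tunnel system $\{\tau_1,\tau_2\}$ using the decomposition $X_K=M\cup W$ with $W=H_P\setminus\nbhd(K)$ the compression body and $D_K$ the non-separating disk of $H_P$ cutting off $\nbhd(K)$. Take $\tau_1\subset H_P$ to be the arc dual to $D_K$ (equivalently, the co-core of the $2$--handle in $W$); then $\nbhd(K\cup\tau_1)=H_P$, so $X_K\setminus\nbhd(\tau_1)\cong M$. The task reduces to producing a single arc $\tau_2\subset M$ with $\partial\tau_2\subset\partial M$ so that $M\setminus\nbhd(\tau_2)$ is a genus three handlebody.

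The construction of $\tau_2$ will exploit the primitivity of $C_\nu$ in $H_-$ asserted by Lemma~\ref{lem:Qbusting}: there is a meridional disk $D_\nu$ of $H_-$ with $\lvert\partial D_\nu\cap C_\nu\rvert=1$. I plan to take $\tau_2\subset H_-$ to be a properly embedded arc dual to $D_\nu$ and running parallel to $C_\nu$, with endpoints on $P_-\subset\partial M$, so that $\nbhd(D_\nu\cup\tau_2\cup C_\nu)$ is a solid torus in $H_-$ exhibiting $C_\nu$ as its longitude. After drilling $\tau_2$ the configuration $D_\nu$ together with a disk dual to $\tau_2$ produces a compressing disk for $Q$ along $C_\nu$ in $M\setminus\nbhd(\tau_2)$. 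Compressing $Q$ converts it into an annulus $A$ bounded by $C_\mu\cup C_\lambda$. The plan then is to verify that $M\setminus\nbhd(\tau_2)$, which now contains $A$ rather than the essential pair of pants $Q$, is a genus three handlebody; combined with Step~1 this gives the desired two-tunnel system.

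The main obstacle will be the final verification that $M\setminus\nbhd(\tau_2)$ is genuinely a handlebody and not merely a 3-manifold in which $Q$ has been made compressible. I expect this to be handled either by (a) an explicit handle-cancellation producing a rank-three free spine of $M\setminus\nbhd(\tau_2)$, or (b) a Heegaard-structure argument using that $C_\mu,C_\lambda$ are disk-busting in $H_+$ (Lemma~\ref{lem:Qbusting}) so that the amalgamation of $H_+$ along $A$ with the drilled piece of $H_-$ admits a genus three handlebody structure. Combined with the lower bound this yields $tn(K)=2$.
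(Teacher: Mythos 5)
Your lower bound is exactly the paper's: tunnel number one knots are strongly invertible, so asymmetry forces $tn(K)\geq 2$. Your reduction of the upper bound to producing a single properly embedded arc $\tau_2\subset M$ with $M\cut\nbhd(\tau_2)$ a genus three handlebody is also a correct reformulation of what the paper does (the paper phrases it as a genus three Heegaard splitting of the exterior with $K$ a core of one side). The gap is in the choice and verification of $\tau_2$: that verification is the entire content of the upper bound, and you explicitly defer it; worse, the mechanism you sketch for it does not work.

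Concretely, if $\tau_2\subset H_-$ is boundary-parallel into $P_-$, tracing a copy of $C_\nu$, then no curve of $Q$ parallel to $C_\nu$ bounds a disk in $M\cut\nbhd(\tau_2)$. Such a disk would have to lie in $H_-\cut\nbhd(\tau_2)$, since $Q$ remains incompressible in $H_+$ by Lemmas~\ref{lem:Qbusting} and~\ref{lem:Qess}; but then $(H_-\cut\nbhd(\tau_2))[C_\nu]$ would be reducible or have a sphere boundary component, whereas $(H_-\cut\nbhd(\tau_2))[C_\nu]=H_-[C_\nu]\cut\nbhd(\tau_2)$ is a solid torus (Lemma~\ref{lem:handleaddition}, primitivity of $C_\nu$) with a trivial boundary-parallel arc drilled out, i.e.\ an irreducible genus two handlebody. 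So the claimed ``compressing disk for $Q$ along $C_\nu$'' does not exist, $Q$ is not converted into an annulus, and it is not established that your $M\cut\nbhd(\tau_2)$ is a handlebody at all. The paper's (implicit) second tunnel is a different arc that makes the verification immediate: take $\tau_2$ to be the essential arc of $Q$ joining $C_\mu$ to $C_\lambda$, properly embedded in $M$. Drilling it cuts $Q$ down to the annular collar of $C_\nu$, so $M\cut\nbhd(\tau_2)\cong H_+\cup_{\nbhd(C_\nu)}H_-$ --- equivalently, one replaces $H_P=P\times I$ by the genus three handlebody $(\Sigma\cut\nbhd(C_\nu))\times I$, of which $K$ is still a core. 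Primitivity of $C_\nu$ in $H_-$ now finishes directly: $(H_-,\nbhd(C_\nu))$ is a solid torus containing $\nbhd(C_\nu)$ as a longitudinal annulus, boundary-connect-summed with another solid torus, so $H_+\cup_{\nbhd(C_\nu)}H_-\cong H_+\natural(\text{solid torus})$ is a genus three handlebody, with no need to analyze whether $Q$ has become inessential.
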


\begin{proof}
	Recall the construction of our family of $p/q$--lashings in $S^3$ from Section \ref{sec:construction}. The proof of Lemma~\ref{lem:Qbusting} shows that the curve $C_\nu$ of $\bdry Q$ is primitive in $H_-$.  This implies the manifold $M= H_+ \cup_{\nbhd(C_\nu)} H_-$ has Heegaard genus $3$.	As above, since	$K$ is a core of the handlebody $P\times I$, it follows that $K$ is a core of the genus $3$ handlebody $(\Sigma - \nbhd(C_\nu)) \times I$.  Thus we have a genus $3$ splitting of the exterior of $K$.  Hence its tunnel number is at most $2$.  
	Since any knot with tunnel number 1 is strongly invertible, the asymmetry of $K$ implies its tunnel number is exactly $2$.
\end{proof}

\begin{proposition}\label{prop:dbcgenus3}
	The double branched cover of $J^{\alpha,m}_{\calc}(+1, -q/(p+q), -p/(p+q))$  has Heegaard genus at most $3$.  When $p/q = +1$, the double branched cover of $J^{\alpha,m}_{\calc}(+1, -1/2, -1/2)$ has Heegaard genus at most $2$.
\end{proposition}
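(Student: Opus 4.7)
My plan is to prove the two assertions separately. For the general bound $g(Z)\le 3$, I will push a genus-$3$ Heegaard splitting of the knot exterior $X_K$ through the Dehn filling to obtain $Z$. For the refined bound $g(Z)\le 2$ when $p/q=+1$, I will exhibit a $3$-bridge presentation of $J^*$ and lift its bridge sphere through the double branched covering.

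For the general statement, I will use the genus-$3$ splitting of $X_K$ already produced in the proof of Proposition~\ref{prop:tn2lashing}. There, since Lemma~\ref{lem:Qbusting} shows $C_\nu$ is primitive in $H_-$, the lashing $K$ is a core of the genus-$3$ handlebody $V = (\Sigma\setminus\nbhd(C_\nu))\times I$ whose complement in $S^3$ is also a genus-$3$ handlebody. Drilling $K$ out of $V$ gives a genus-$3$ compression body with a torus inner boundary, and together with the complementary handlebody this exhibits a genus-$3$ Heegaard splitting of $X_K$. Since Dehn filling along a single torus boundary component does not raise the Heegaard genus, $g(Z)\le 3$.

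For $p/q=+1$, I will exhibit a $3$-bridge presentation of $J^* = J^{\alpha,m}_\calc(+1,-1/2,-1/2)$. When $p/q=+1$, each of the three inserted rational tangles admits a continued fraction expansion of length one: $+1 = [-1]$ along $c_\nu$, and $-1/2 = [2]$ along both $c_\mu$ and $c_\lambda$. Consequently each tangle is a single twist box, whose two strands can be drawn monotonically with respect to the height function determined by the bridge sphere of $J$. Thus the original $3$-bridge sphere of $J$ remains a $3$-bridge sphere of $J^*$, and its lift through the double branched covering is a genus-$2$ Heegaard surface of $Z$. This gives $g(Z)\le 2$.

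The main technical step is the verification that the single-twist-box tangles can be redrawn monotonically within their tangle balls, which is a straightforward planar isotopy check. This approach does not extend to other $p/q$: for $p/q\neq +1$, at least one of $-p/(p+q)$ and $-q/(p+q)$ requires a continued fraction of length at least two, producing nested twist boxes in alternating directions that cannot be drawn monotonically and hence introduce extra maxima or minima that raise the bridge number of $J^*$.
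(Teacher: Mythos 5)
Your proof is correct, but the first half takes a genuinely different route from the paper's. For the general bound, the paper works entirely downstairs: it exhibits $J^{\alpha,m}_{\calc}(+1,-q/(p+q),-p/(p+q))$ as a union of two trivial $4$--strand tangles (Figure~\ref{fig:4strandrationaltangles}), so the link is at most $4$--bridge and its double branched cover has Heegaard genus at most $3$. You instead work upstairs, reusing the genus--$3$ splitting of the knot exterior from Proposition~\ref{prop:tn2lashing} (one side a compression body with torus inner boundary, the other the genus--$3$ handlebody $H_+\cup_{\nbhd(C_\nu)}H_-$) and filling the torus boundary; since the filling solid torus absorbs into the compression body to give a genus--$3$ handlebody, the surgered manifold inherits a genus--$3$ splitting. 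Both arguments are sound. The paper's version gives slightly more information (a bridge number bound on $J^*$ itself, valid for every choice of parameters by an explicit picture), while yours is shorter given what is already established but rests on the primitivity of $C_\nu$ in $H_-$; note that Lemma~\ref{lem:Qbusting} is stated under the hypotheses $n\geq 3$, $m\geq 3$, so to match the unrestricted statement of the proposition you should cite Remark~\ref{rem:lowparameters}, which records that $C_\nu$ is primitive in $H_-$ regardless of $\alpha$. Your argument for the $p/q=+1$ case --- the three replacement tangles are single vertical twist boxes, so the original $3$--bridge sphere survives and lifts to a genus--$2$ Heegaard surface --- is exactly the paper's argument. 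Your closing remark that the $3$--bridge argument cannot extend to other slopes is a reasonable heuristic but is not needed and is not quite a proof that the bridge number actually increases; it is safer to phrase it as ``the monotonicity argument breaks down'' rather than as an assertion that extra bridges are forced.
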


\begin{proof}
	The link $J^{\alpha,m}_{\calc}(+1, -q/(p+q), -p/(p+q))$ decomposes as the union of two trivial $4$--strand  tangles as indicated in Figure~\ref{fig:4strandrationaltangles}.  Hence its bridge number is at most $4$.  Thus its double branched cover has Heegaard genus at most $3$.
	
	Because the $+1$--tangle and the $-1/2$--tangle are only vertical twists, the link $J^{\alpha,m}_{\calc}(+1, -1/2, -1/2)$ is easily seen to be $3$--bridge.  Hence its double branched cover has Heegaard genus at most $2$.
\end{proof}

\begin{figure}
	\centering
	\includegraphics[width=5in]{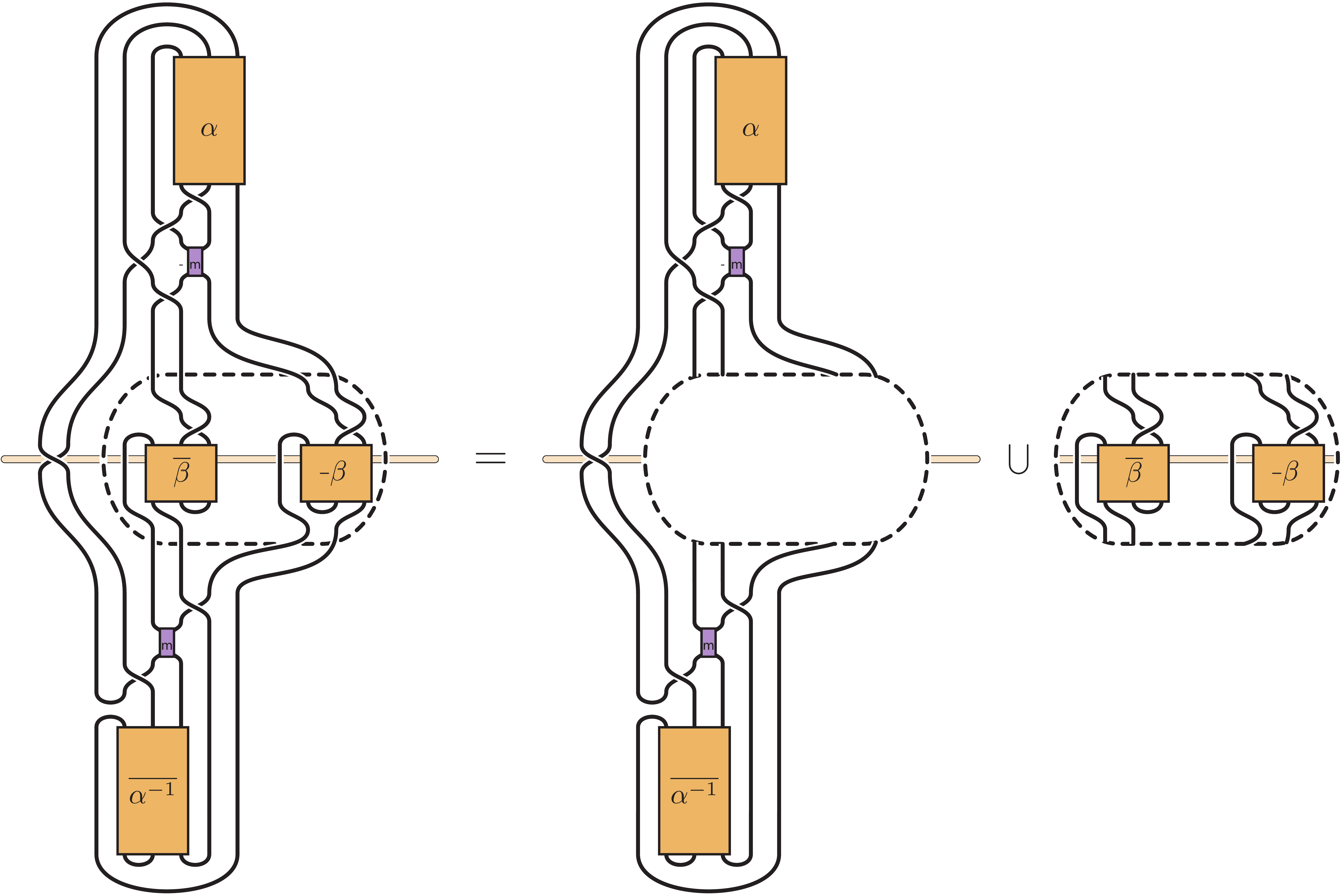}
	\caption{The link $J^{\alpha,m}_{\calc}(+1, -q/(p+q), -p/(p+q))$ splits into two four strand tangles as shown.  One fairly easily sees that each of these two tangles is freely isotopic to the trivial $4$--strand tangle.}
	\label{fig:4strandrationaltangles}
\end{figure}

\section{Surgery descriptions and examples}

Here we first obtain a surgery description of the lashings constructed in Section~\ref{sec:construction},
specifically for when $\alpha$ is of length at most three.  (One may use this as a model for obtaining surgery descriptions when $\alpha$ has length greater than $3$.) These lashings contain a family of asymmetric L-space knots that each have a surgery to the double branched cover of an alternating link.  The surgery description allows us to input these lashings into SnapPy.

  Then, in a different manner, we  obtain a Dehn twist presentation of the pair of pants $P^{\alpha,m}$ in the genus $2$ Heegaard surface for $S^3$. This allows us to obtain a framed oriented train track that carries our L-space knot lashings for any $\alpha$ as in Section~\ref{sec:construction} and $m\geq 0$.  Then we isotop this train track into a form that carries closed positive braids. Using this we are able to calculate the genus and alternating surgery slope of any positive $p/q$--lashing of $P^{\alpha,m}$.  The explicit presentation as a closed braid also gives another method for inputting these lashings into SnapPy.

  Using SnapPy along with Regina and Sage, both the surgery and closed braid descriptions enable us to confirm that many small examples not covered by Theorem~\ref{thm:asymlspaceknots} are also asymmetric hyperbolic L-space knots with the expected surgery to the double branched cover of the correct alternating link.   SnapPy allows us to also confirm that our two surgery descriptions and positive braid descriptions agree for these small examples.   
  Furthermore we explicitly present two small asymmetric examples.

\subsection{Surgery description of lashings}\label{sec:surgdesc}
\begin{figure}
	\centering
	\includegraphics[width=4in]{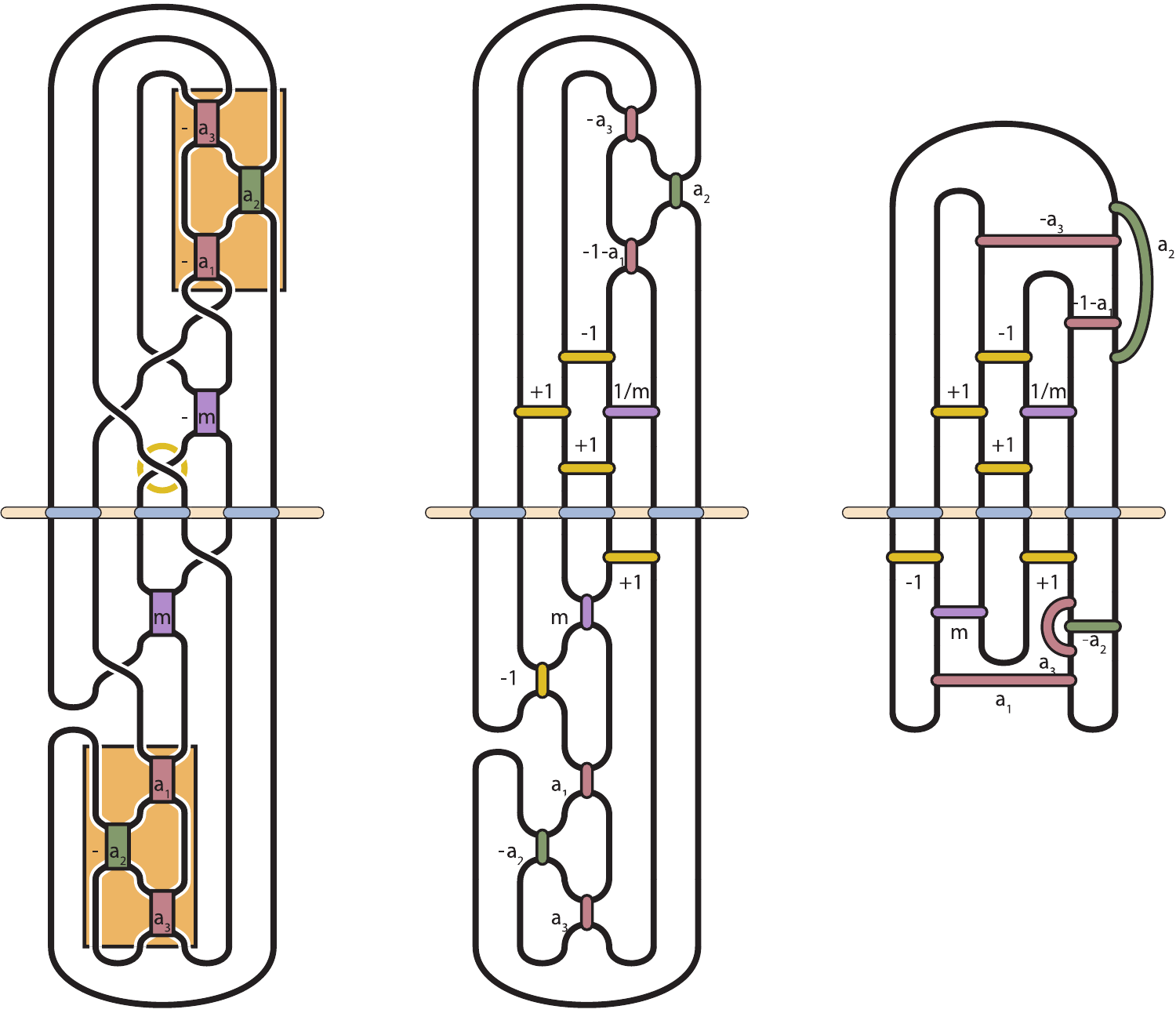}
	\caption{(Left) Twist regions of Figure~\ref{fig:almostalternating} with the choice $\alpha = \sigma_1^{-a_3}\sigma_2^{a_2} \sigma_1^{-a_1}$ are (Center) traded for blackboard framed arcs with rational tangle replacement instructions.   (Right) The diagram is then simplified by a planar isotopy that more clearly exhibits the resulting link as a planar unknot while retaining a sense of the original structure.}
	\label{fig:almostaltRTRdescription}
\end{figure}

\begin{figure}
	\centering
	\includegraphics[width=6in]{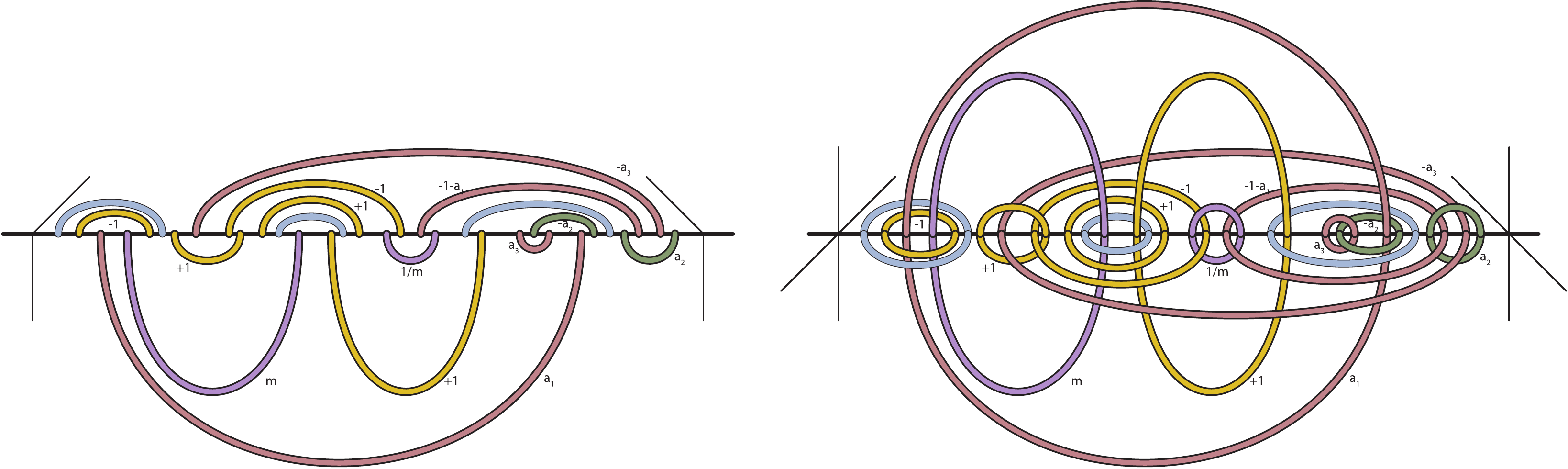}
	\caption{(Left) The planar unknot at the end of Figure~\ref{fig:almostaltRTRdescription} is mapped to a horizonal axis (with a point at infinity) so that its inside becomes a horizontal half-plane into the page while its outside becomes a downward vertical half-plane.  (Right) This facilitates the visualization of the lifts of the arcs in the double cover of $S^3$ branched over the unknot.  The rational tangle replacement instructions lift to Dehn surgery instructions and the arcs $c_\nu, c_\mu, c_\lambda$ lift to the knots $C_\nu, C_\mu, C_\lambda$.  }
	\label{fig:RTRsurgdescDBC-horiz}
\end{figure}

\begin{figure}
	\centering
	\includegraphics[width=6in]{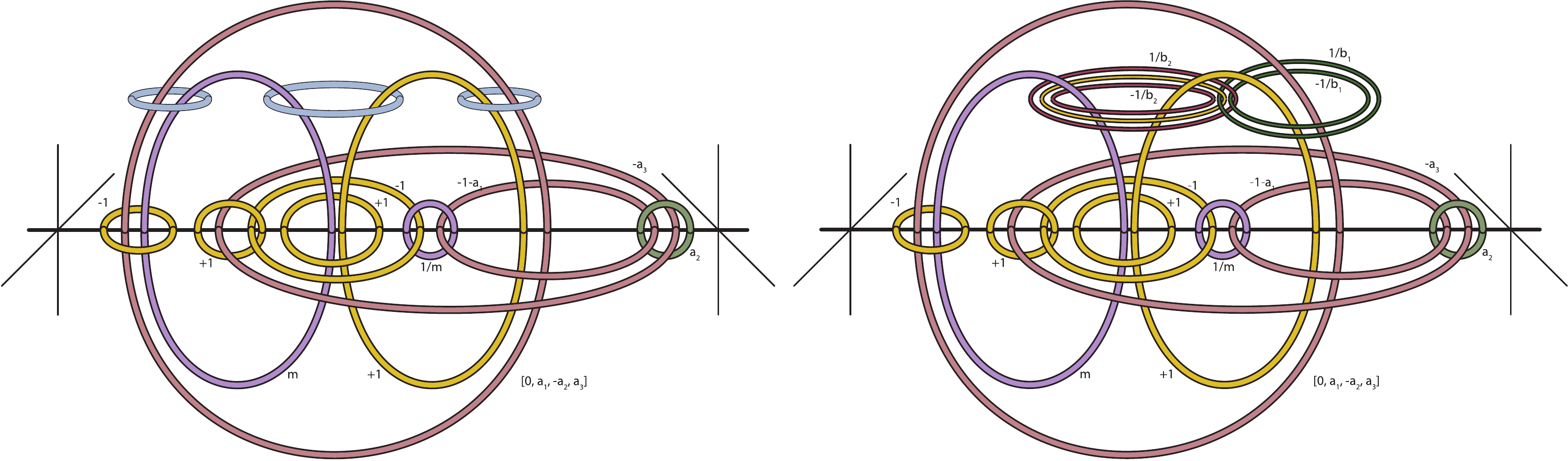}
	\caption{(Left) The genus $2$ Heegaard surface the knots $C_\nu, C_\mu, C_\lambda$ can be traced through its lift of the bridge sphere at the beginning of Figure~\ref{fig:almostaltRTRdescription}.  Here we lift these curves upwards to recognize the punctured horizontal plane  they bound (with the point at infinity) as the pair of pants $P$ in which our lashings occur. (Right) This triple of knots is then traded for the interlaced stack of knots as in Figure~\ref{fig:Kgammasugery} to give surgery instructions for a $p/q$--lashing.  Recall the convention $p/q=[-b_n, b_{n-1}, \dots, b_2, -b_1]$  with $b_j \geq 0$. }
	\label{fig:fullsurgdesc-horiz}
\end{figure}

Consider the case where $\alpha$ is of length at most three, $\alpha = \sigma_1^{-a_3}\sigma_2^{a_2} \sigma_1^{-a_1}$.
Figures~\ref{fig:almostaltRTRdescription}, \ref{fig:RTRsurgdescDBC-horiz}, \ref{fig:fullsurgdesc-horiz} illustrate the passage from the tangle presentation of $\calc$ of Figure~\ref{fig:almostalternating} to a surgery presentation of their lift $\calC$ and finally a surgery presentation of a family of lashings of the pair of pants $P$ that they bound.

This surgery presentation allows us to easily calculate the homology of the result of the framed surgery on the lashings.  Observe that the link on the right of Figure~\ref{fig:fullsurgdesc-horiz} is a union of unknots.  If we orient them all clockwise in the diagram, shown with $b_i=0$ for $i\geq 3$, then we obtain the linking matrix for the $r$--surgery on the knot:

\[
\left(
\begin{array}{CCCCCCCCCCC|CCCCC}
 A & 0 & 0 & -1 & 0 & 0 & 1 & 1 & 0 & 0 & 0 & 0 & 1 & 0 & 1 & 0 \\
 0 & m & 0 & -1 & 1 & 1 & 0 & 1 & 0 & 0 & 0 & 1 & 0 & 1 & 0 & 1 \\
 0 & 0 & 1 & 0 & -1 & -1 & 1 & 0 & 0 & 0 & 0 & -1 & 1 & -1 & 1 & -1 \\
 -1 & -1 & 0 & -1 & 0 & 0 & 0 & 0 & 0 & 0 & 0 & 0 & 0 & 0 & 0 & 0 \\
 0 & 1 & -1 & 0 & 1 & 0 & 0 & 0 & 0 & 0 & 0 & 0 & 0 & 0 & 0 & 0 \\
 0 & 1 & -1 & 0 & 0 & -1 & 0 & 0 & 1 & -1 & 0 & 0 & 0 & 0 & 0 & 0 \\
 1 & 0 & 1 & 0 & 0 & 0 & -a_1-1 & 0 & 0 & 1 & -1 & 0 & 0 & 0 & 0 & 0 \\
 1 & 1 & 0 & 0 & 0 & 0 & 0 & -a_3 & 1 & 0 & -1 & 0 & 0 & 0 & 0 & 0 \\
 0 & 0 & 0 & 0 & 0 & 1 & 0 & 1 & 1 & 0 & 0 & 0 & 0 & 0 & 0 & 0 \\
 0 & 0 & 0 & 0 & 0 & -1 & 1 & 0 & 0 & \frac{1}{m} & 0 & 0 & 0 & 0 & 0 & 0 \\
 0 & 0 & 0 & 0 & 0 & 0 & -1 & -1 & 0 & 0 & a_2 & 0 & 0 & 0 & 0 & 0 \\ \hline
 0 & 1 & -1 & 0 & 0 & 0 & 0 & 0 & 0 & 0 & 0 & -\frac{1}{b_2} & 0 & 0 & 0 & 0 \\
 1 & 0 & 1 & 0 & 0 & 0 & 0 & 0 & 0 & 0 & 0 & 0 & \frac{1}{b_1} & 1 & 0 & 1 \\
 0 & 1 & -1 & 0 & 0 & 0 & 0 & 0 & 0 & 0 & 0 & 0 & 1 & r & 0 & 0 \\
 1 & 0 & 1 & 0 & 0 & 0 & 0 & 0 & 0 & 0 & 0 & 0 & 0 & 0 & -\frac{1}{b_1} & 1 \\
 0 & 1 & -1 & 0 & 0 & 0 & 0 & 0 & 0 & 0 & 0 & 0 & 1 & 0 & 1 & \frac{1}{b_2} \\
\end{array}
\right)
\]
where $A=[0,a_1,-a_2,a_3]=\frac{a_1 a_2 a_3+a_1+a_3}{a_2 a_3+1}$.

Setting $a_1=a_2=a_3=m=1$ and taking $r \in \Z$, the surgered manifold has first homology of order
\[|-389-r - b_1 (563 + 778 b2) - b_1^2 (204 + 563 b_2 + 389 b_2^2)|\]
(which may be calculated from the associated framing matrix, see e.g.\ \cite[Section 1.1.6]{Saveliev-InvtsHomologyS3}).

With $b_2=0$, $b_1= n>0$, and $r \in \Z_+$ this has homology of order $389+r + 563 n + 204 n^2$.  Taking $r=0$ corresponds to the framed surgery of the lashing.

\subsection{Framed train tracks}
Beginning again from the presentation of $J=J^{\alpha,m}$ of Figure~\ref{fig:almostalternating}, we isotop $J$ so that the bridge sphere containing the arcs $\calc$ nearly mirrors the tangle above to the tangle below.  We coalesce the twistings of $\alpha$ and $m$ into rational tangle replacents along horizontal arcs, red and green for the left and right twisting of $\alpha$ and purple for $m$.  Then we flatten $J$ by a height preserving isotopy at the expense of twisting the arcs of $\calc$ in the bridge sphere.  This is done in Figure~\ref{fig:unravelJtounknot}.

\begin{figure}
	\centering
	\includegraphics[width=6in]{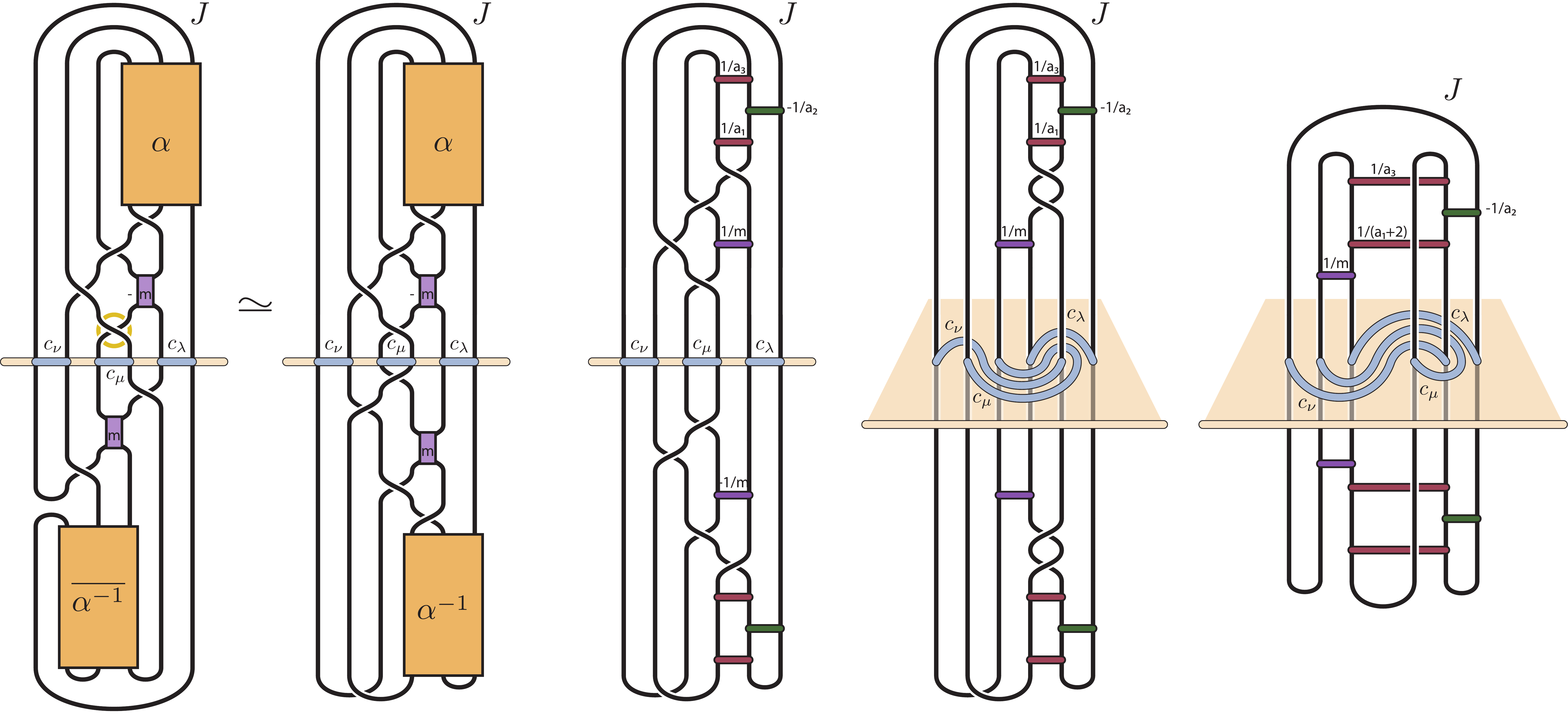}
	\caption{Isotopy, pairs of tangle replacements, and twisting transform the almost alternating unknot $J^{\alpha,m}$ into a planar $3$--bridge unknot where the arcs $c_\nu$, $c_\mu$, and $c_\lambda$ are twisted along the bridge sphere.}
	\label{fig:unravelJtounknot}
\end{figure}

With $J$ flattened and the twisting arcs for $\alpha$ and $m$ slid into the bridge sphere along with $\calc$, we take the double branched cover of $J$ and lift all these arcs to the simple closed curves in the genus $2$ Heegaard surface that is the lift of the bridge sphere.  This is shown on the left side of Figure~\ref{fig:branchcovertoPQ}.  On the right side, we show the result of an isotopy of $\calC$ (the lift of $\calc$) that more obviously bounds a pair of pants $P_0$, along with a purple curve in an annulus (to emphasize its framing) and darker once-punctured torus containing the red and green lifts of the red and green arcs as a basis.  The basis of the once-puncture torus for the lashings from Figure~\ref{fig:TinPxIprojection} is shown above and in the pair of pants $P_0$.

\begin{figure}
	\centering
	\includegraphics[width=6in]{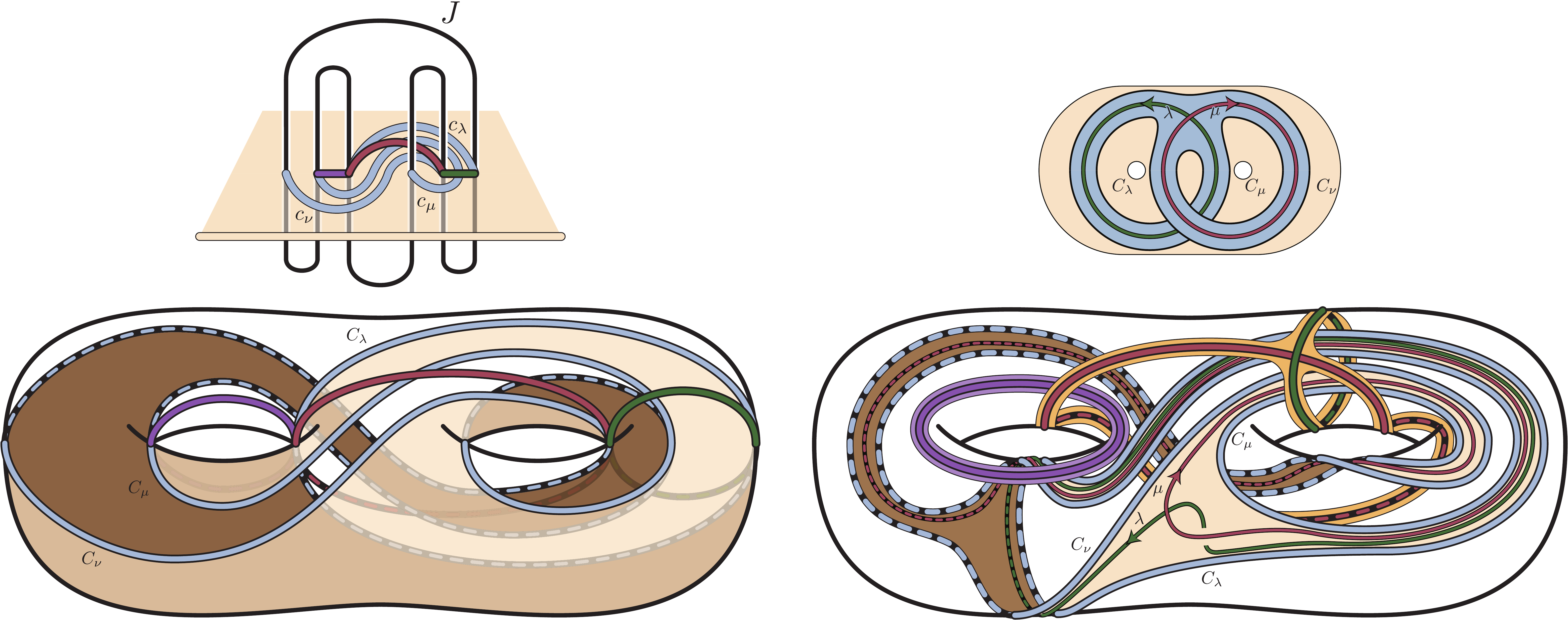}
	\caption{(Left, top and bottom) The double branched cover of the planar $3$--bridge unknot gives a genus $2$ Heegaard surface containing the lifts of the arcs in the bridge sphere.   (Right, bottom) After an isotopy, the curves $C_\nu$, $C_\mu$ and $C_\lambda$ more clearly bound the pair-of-pants $P$.   The basis of curves $\mu, \lambda$ in the once-punctured torus $T$ are shown in  $P$.  (Right, top) Figure~\ref{fig:TinPxIprojection} is included for reference. }
	\label{fig:branchcovertoPQ}
\end{figure}

The pair of pants $P=P^{\alpha, m}$ is then obtained by first performing $m$ left-handed Dehn twists of $P_0$ along the purple curve and then left-handed and right-handed Dehn twist along the red and green curves according to $\alpha$.  In Figure~\ref{fig:framedtraintrack} we show the result of these Dehn twists as an oriented train track for a $p/q$--lashing of $P$ with $p,q\geq 0$.  On the left, we retain the sense of the pair of pants $P_0$, the purple annulus and the once-punctured torus.  On the right, we retain just the framing of this train track and its embedding in the Heegaard surface as well as the weights of the branches in the case $m\geq0$ and $\alpha = \sigma_1^{-a_3} \sigma_2^{a_2} \sigma_1^{-a_1}$ for $a_1, a_2, a_3 \geq 0$.

\begin{figure}
	\centering
	\includegraphics[width=6in]{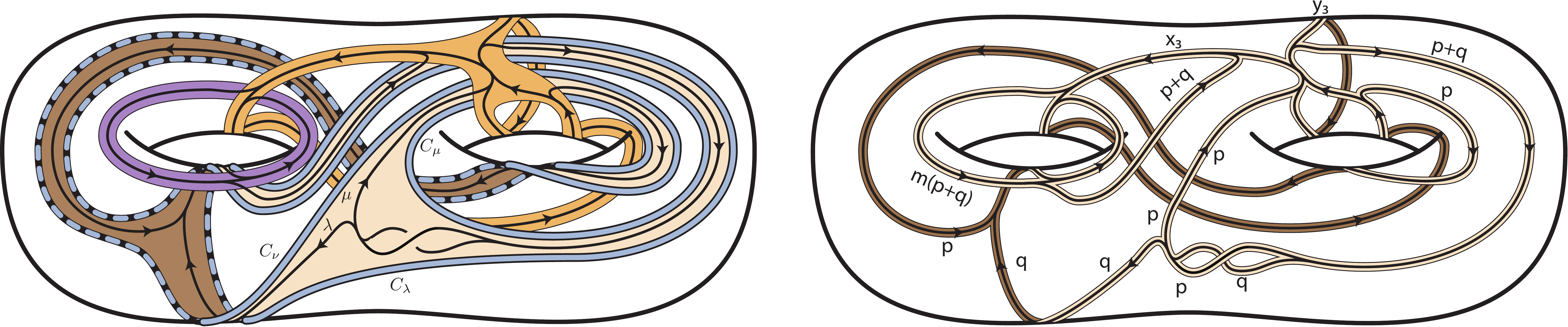}
	\caption{The $p/q$ lashing of $P$ (for $p,q>0$) is shown as a framed train track in the Heegaard surface.  Here $m \geq 0$ and $\alpha = \sigma_1^{-a_3} \sigma_2^{a_2} \sigma_1^{-a_1}$ where $a_1, a_2, a_3 \geq 0$ so that 
	$x_3 = ((a_3 a_2(a_1+2)+a_3+a_1 + 2)(m+2)-2 a_3 a_2)p+((a_3 a_2 (a_1+2)+a_3+a_1+2)(m+1)-a_3 a_2)q$ 
	and 
	$y_3 = (a_2(a_1+2)(m+2)-2a_2)p+(a_2(a_1+2)(m+1)-a_2)q$.
	} 
	\label{fig:framedtraintrack}
\end{figure}

More generally, consider using $\alpha=\sigma_{\bar{n}}^{\epsilon_n a_n}  \dots \sigma_1^{-a_3} \sigma_2^{a_2} \sigma_1^{-a_1}$ for positive integers $n$ and $a_1, \dots, a_n$ (i.e.\ having the form of ($\ast$) from the beginning of Section~\ref{sec:generalization}).  
Then, as one may check, the weights on the train track on the right side of Figure~\ref{fig:framedtraintrack} labeled $x_3,y_3$ become $x_n,y_n$ according to the following recursive formula for $n=2k$ or $2k+1$ with $k\geq 1$:
\[\tag{$\ast\ast$}
\begin{array}{lll}
\begin{cases}x_0= 0\\ y_0=0\end{cases} &
\quad &
\begin{cases}x_1= (a_1+2)(p+(m+1)(p+q))\\ y_1=0\end{cases} \\
\quad \\
  \begin{cases}x_{2k} = x_{2k-1}\\ y_{2k}=  y_{2k-1} + a_{2k}(x_{2k-1}-2p-q) \end{cases}  &
\quad &
  \begin{cases}x_{2k+1} = x_{2k} + a_{2k+1}(y_{2k}+p+(m+1)(p+q))\\ y_{2k+1}= y_{2k} \end{cases}
\end{array}
\]
Note that the switching in the train track of Figure~\ref{fig:framedtraintrack} 
requires that $x_n \geq m(p+q)$ which one easily checks, see the proof of Theorem~\ref{thm:positivebraid}.

\subsection{Positive Braids}

\begin{theorem}\label{thm:positivebraid}
Assume that $p, q, m, a_1, \dots, a_n$ are non-negative integers for some positive integer $n$ and that $p$ and $q$ are coprime.
Then the $p/q$--lashings of $P^{\alpha,m}$ are positive braids.  Furthermore, the $p/q$--lashing has genus 
\begin{align*}
g&= 
1 + (5 + 2 m) p^2 + (1 + 2 m) q^2 - 2 x_n + x_n^2 + x_n y_n \\
 \quad\quad &+ (4 + m - 4 x_n - m x_n - 2 y_n) p + 
  (1 + m - x_n - m x_n - y_n) q + (4 + 4 m) p q .
  \end{align*}
and an alternating surgery of integral slope
\begin{align*}
\lambda_{alt} 
& = (3 + m) p^2 + m q^2 + x_n^2 + x_n y_n \\
& \quad \quad + ( - 4 x_n - m x_n - 2 y_n) p +  (-x_n - m x_n - y_n) q + (1 + 2 m) p q.
\end{align*}
\end{theorem}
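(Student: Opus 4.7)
The plan is to pass from the framed train track of Figure~\ref{fig:framedtraintrack} to an explicit closed positive braid realization of the $p/q$--lashing, and then to read off the genus and alternating surgery slope from that braid. The weights on the train track carry the lashing as an embedded curve in the genus two Heegaard surface; I would first isotope the surface (equivalently, the train track) in $S^3$ so that the purple annulus encircles a vertical braid axis and the two "ears" of the once--punctured torus wrap around that axis, contributing positive crossings only. The recursive formulas ($\ast\ast$) were designed precisely so that, under the Dehn twists in $\alpha$ and the $m$ twists along the purple curve, every switch of the train track is resolved by strands that descend monotonically and cross positively. Feeding the weights through each branch then gives, in a mechanical way, an explicit positive braid word whose closure is the $p/q$--lashing. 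The number of strands $s$ is the total weight on an oriented cross--section of the train track transverse to the braid axis, and the number of crossings $c$ is the sum of the crossings produced at each twist region; both are polynomial expressions in $p,q,m,x_n,y_n$.

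Granting the positive braid presentation, the Seifert genus follows immediately from the Bennequin--Stallings identity for positive braids,
\[ g = \frac{c - s + 1}{2}, \]
since the canonical Seifert surface of a closed positive braid is of minimal genus. Plugging the explicit counts for $s$ and $c$ into this identity should produce the stated quadratic expression for $g$ after simplification; I would verify the identity by induction on $n$, using ($\ast\ast$) for the recursive step (the base case $n=1$ and $n=0$ being direct). The switching conditions for the train track, in particular the inequality $x_n \geq m(p+q)$ needed in Figure~\ref{fig:framedtraintrack}, are easily checked by the same induction.

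For the alternating surgery slope I would combine the positive braid picture with Theorem~\ref{thm:lspaceknot}, which says that the $T$--framed surgery on the $p/q$--lashing equals the double branched cover of the alternating link $J^*$. Hence $\lambda_{alt}$ is just the integer slope of the $T$--framing measured against the Seifert framing in $S^3$. Writing $f_{\mathrm{bb}}$ for the blackboard framing of the train track (which agrees with the $T$--framing, since $T$ is ``flat'' in the train track picture, cf.\ the proof of Lemma~\ref{lem:lashingsurgery}) and using that the closed positive braid diagram has writhe $c$, we have
\[ \lambda_{alt} = f_{\mathrm{bb}} - 0 = (\text{writhe of the braid diagram}) - 2(\text{correction from straightening}) , \]
where the correction term records the difference between the blackboard framing of the isotoped train track in the braid picture and in the original Heegaard picture. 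This correction is a self--linking computation carried out directly on the train track (summing signed crossings of a parallel push--off) and, combined with $g = (c-s+1)/2$, produces the claimed formula for $\lambda_{alt}$. An induction on $n$ using ($\ast\ast$) again confirms that both sides match.

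The main obstacle is the first step: writing down the explicit isotopy that straightens the framed train track of Figure~\ref{fig:framedtraintrack} into braid position with only positive crossings, and bookkeeping the resulting braid word carefully enough that $s$, $c$, and the writhe correction can each be expressed cleanly in the variables $p,q,m,x_n,y_n$. Once that bookkeeping is set up, the genus and slope formulas are quadratic identities that yield to induction on $n$ via ($\ast\ast$); no further geometric input is needed.
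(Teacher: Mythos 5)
Your proposal follows essentially the same route as the paper: braid the framed train track into an explicit closed positive braid (after checking the switching inequality via the recursion $(\ast\ast)$, noting the paper actually needs the stronger bound $x_n \geq (m+2)(p+q)+2p$ for the braiding isotopy), compute the genus from $g=(c-s+1)/2$ for the fiber of a positive braid closure, and read the surgery slope off as the crossing number of a flattened diagram in which the $T$--framing is the blackboard framing. The only cosmetic difference is that the paper eliminates your ``writhe correction'' term by performing one further flattening isotopy so that the slope is literally the crossing count of the final diagram.
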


\begin{proof}

Assuming that $x_n \geq (m+2)(p+q)+2p$, Figure~\ref{fig:TrainTrackBraid} shows a sequence of isotopies that transforms the framed train track of Figure~\ref{fig:framedtraintrack} into one that carries closed positive braids. 

The recursive formula ($\ast\ast$) implies that $\{x_i\}$ is an increasing sequence where $x_1 \geq (m+2)(p+q)+2p$, with equality only when $m=a_1=0$. Thus the isotopies of Figure~\ref{fig:TrainTrackBraid} apply to show that the lashing is a closed positive braid.  The resulting braided train track is shown again at the top of Figure~\ref{fig:BraidedAndFlattened}.

Using this presentation as a closed positive braid, we can easily count the Euler characteristic of the fiber of the knot as the braid index minus the length of the positive braidword.   From this one calculates that the lashing has the genus stated.

We can further flatten this presentation of the train track, as shown in the bottom of Figure~\ref{fig:BraidedAndFlattened}, to have the blackboard framing while all crossings are still positive crossings.  The slope of the framing then equals the crossing number of this diagram. One also computes it to be as stated.
\end{proof}

\begin{figure}
	\centering
	\includegraphics[width=6in]{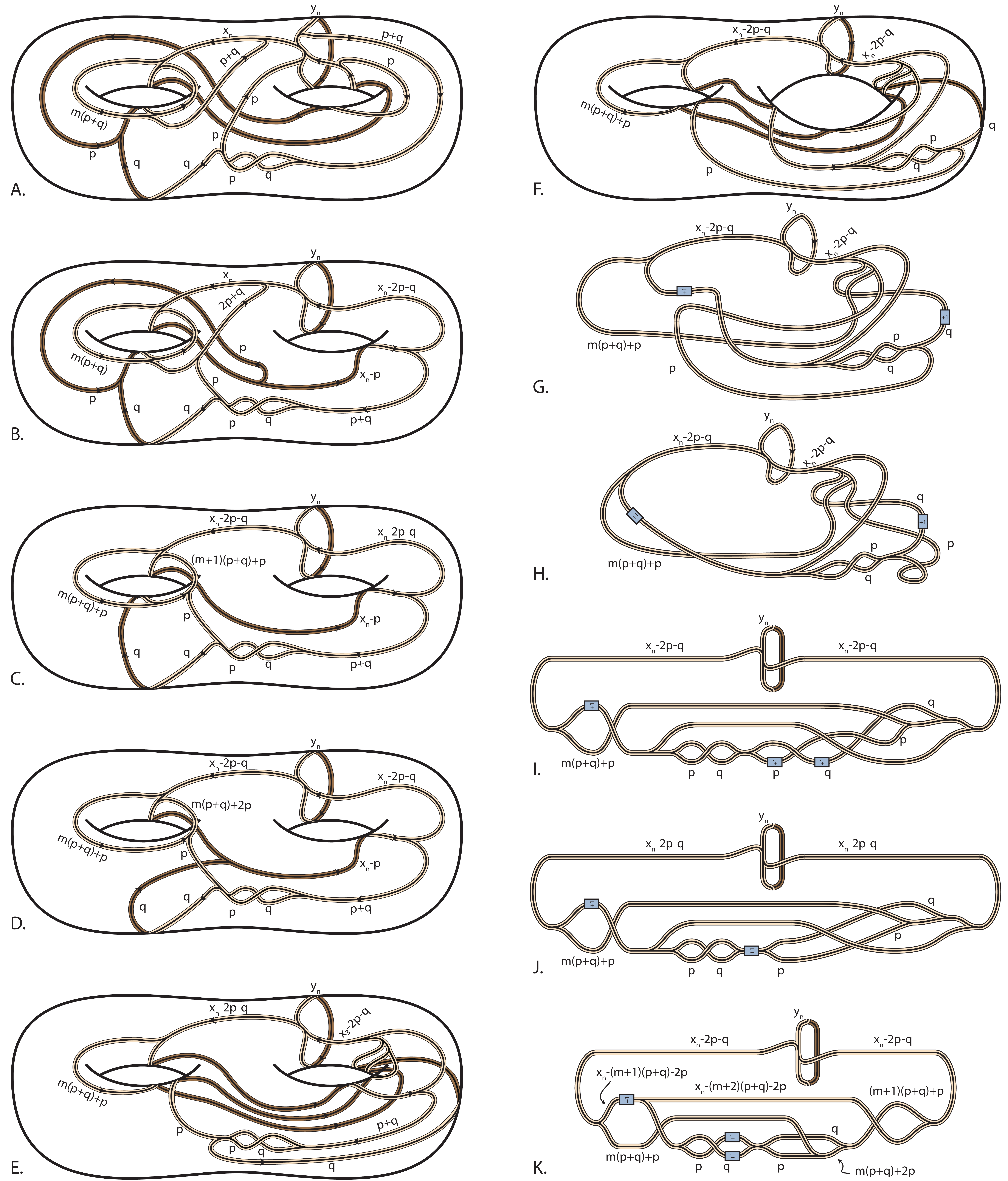}
	\caption{A sequence of isotopies shows the framed train track of Figure~\ref{fig:framedtraintrack} carries closed positive braids. 
	}
	\label{fig:TrainTrackBraid}
\end{figure}

\begin{figure}
	\centering
	\includegraphics[width=4in]{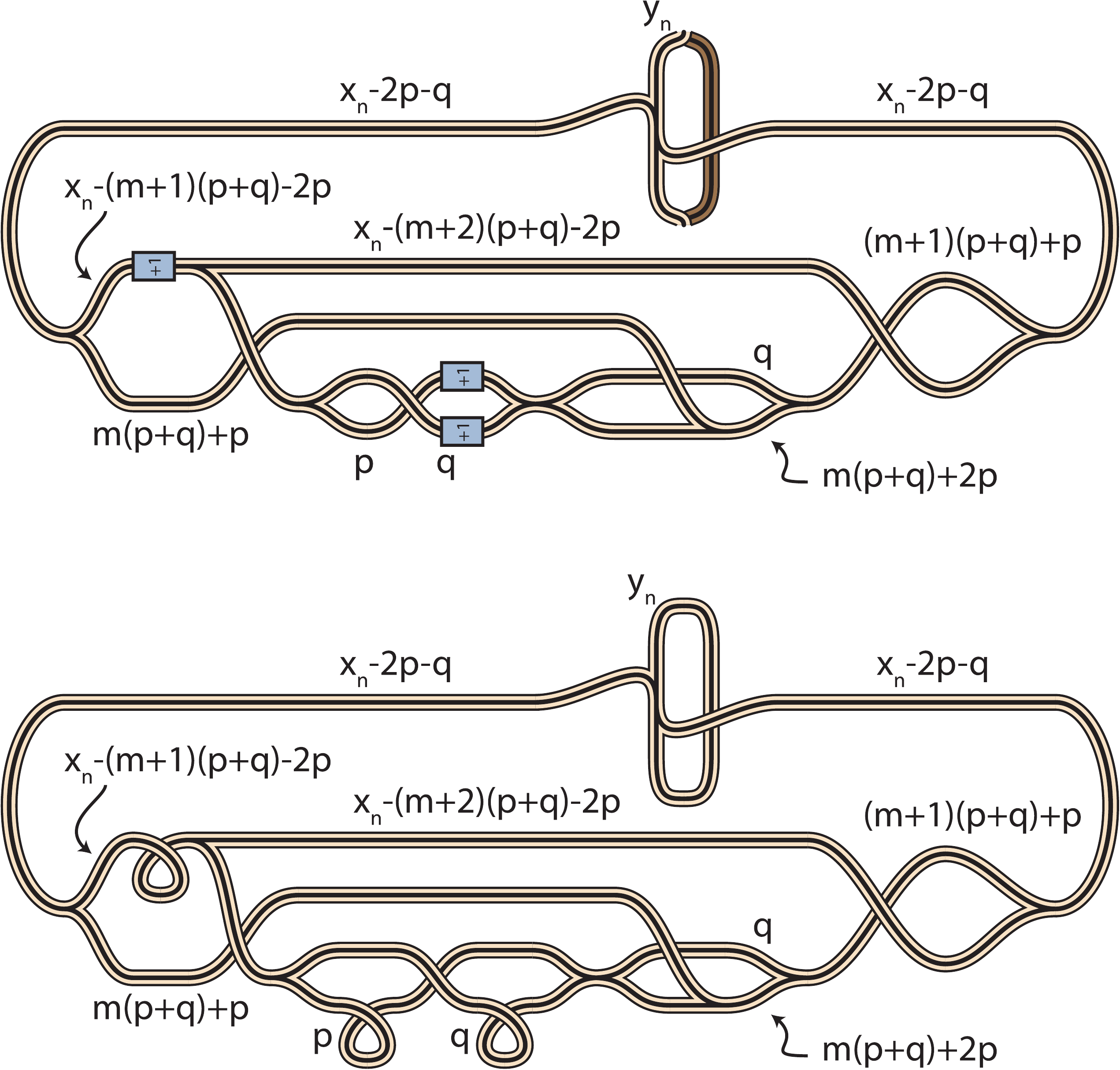}
	\caption{The final braided train track of Figure~\ref{fig:TrainTrackBraid} is shown along with a flattening of its twists.}
	\label{fig:BraidedAndFlattened}
\end{figure}

\subsection{Small examples in $S^3$}
 Let $K(a_3,a_2,a_1,m,b_1)$ denote the $1/b_1$--lashings of $P^{\alpha,m}$ constructed here  where $\alpha = \sigma_1^{-a_3} \sigma_2^{a_2} \sigma_1^{-a_1}$ for  non-negative integers $a_3, a_2, a_1, m$ and $b_1$.   For Theorem~\ref{thm:asymlspaceknots} to apply, we need each $a_1,a_2,a_3$ to be positive, $m\geq 3$, and 
to instead take, for instance, the $1+1/b_1$--lashings where $b_1$ is ``sufficiently large''. (The $b_1$--lashings correspond to taking $(p',q')=(0,1)$, hence Theorem~\ref{thm:asymlspaceknots} does not apply.)
 In practice, we find by computer verification that these conditions may be relaxed to still produce many asymmetric hyperbolic L-space knots.  Indeed, in general it appears considering the $1/b_1$--lashings with simply $b_1>0$ is suitable to find asymmetric hyperbolic L-space knots among the knots $K(a_3,a_2,a_1,m,b_1)$.  Furthermore we may also use $a_3=0$ and $m=1$ to produce examples.  

 For small parameter values, we use the surgery description above to input the knot into SnapPy, find its hyperbolic structure, and calculate its hyperbolic invariants.\footnote{ In the table, $\#$ Tetr. is the number of tetrahedra reported after using the {\tt canonize()} function. We have observed occurrences where this has increased the number of tetrahedra.}  These calculations are presented in Table~\ref{table:smallexamples}. We calculate the knot genus and slope of the alternating surgery using the formulae in Theorem~\ref{thm:positivebraid}.  
(Using SnapPy within SageMath, we can also determine the genus of the lashing $K$ from its Alexander polynomial -- being an L-space knot, $K$ is fibered.  The  calculation of homology of the alternating surgery  given at the end of Section~\ref{sec:surgdesc} also determines the surgery slope of the alternating surgery.)

Letting $J=J^{\alpha,m}(1,-b_1/(1+b_1),-1/(1+b_1))$ be the associated alternating link, by doing $(2,0)$ orbifold filling on all components of $J$ followed by taking the $2$--fold cover, we should obtain the result of the alternating surgery on $K$.  Indeed, in each of these examples SnapPy (with some help from Regina \cite{regina}) finds a hyperbolic structure on the resulting $2$--fold cover and identifies it as isometric to the alternating surgery on $K$.

\begin{table*}\centering
\caption{Examples of hyperbolic L-space knots $K(a_3,a_2,a_1,m,b_1)$ in $S^3$ with alternating surgeries}
\label{table:smallexamples}
\begin{tabular}{@{}rrrrrcrrcrrrrr@{}}
	\toprule
	\multicolumn{5}{c}{Parameters} & \phantom{abc} &\multicolumn{2}{c}{Top data} & \phantom{abc} & \multicolumn{4}{c}{Hyperbolic data}\\
	\cmidrule{1-5} \cmidrule{7-8} \cmidrule{10-13}
	$a_3$ & $a_2$ & $a_1$ & $m$ & $b_1$ && Genus & Alt.\ Surg.  && Sym & Volume & \# Tetr. & Cusp Shape \\ 
	\midrule
	$0$ & $1$ & $1$ & $1$ & $1$ && $119$ & $\Z/272$ && $\mathds{1}$ & $10.20098$ & $13$ &  $0.41433 + 1.19820 i$ \\
	$1$ & $1$ & $0$ & $1$ & $1$ && $214$ & $\Z/471$ && $\mathds{1}$ & $14.76163$ & $21$ &  $0.34127 + 1.50327 i$ \\		
	$0$ & $1$ & $1$ & $1$ & $2$ && $253$ & $\Z/555$ && $\mathds{1}$ & $12.39382$ & $16$ &  $0.47070 + 1.06723 i$ \\
	$0$ & $1$ & $1$ & $2$ & $1$ && $269$ & $\Z/588$ && $\mathds{1}$ & $11.26105$ & $15$ &  $0.28229 + 1.21171 i$ \\
	$1$ & $1$ & $0$ & $2$ & $1$ && $501$ & $\Z/1067$ && $\mathds{1}$ & $17.24796$ & $22$ & $0.28244 + 1.43145 i$ \\
	$1$ & $1$ & $1$ & $1$ & $1$ && $544$ & $\Z/1156$ && $\mathds{1}$ & $16.05972$ & $22$ &  $0.36009 + 1.53081 i$ \\	
	$0$ & $1$ & $1$ & $2$ & $2$ && $583$ & $\Z/1239$ && $\mathds{1}$ & $13.59287$ & $20$ &  $0.38818 + 1.09450 i$ \\		
	$1$ & $1$ & $1$ & $1$ & $2$ && $1117$ & $\Z/2331$ && $\mathds{1}$ & $18.24257$ & $22$ & $0.47741 + 1.38316 i$ \\

	\cmidrule{10-10}
	$0$ & $0$ & $1$ & $2$ & $2$ && $258$ & $\Z/563$ && $\Z/2$ & $9.12009$ & $17$ &  $0.07382 + 1.16144 i$ \\
	$1$ & $1$ & $1$ & $0$ & $2$ && $274$ & $\Z/597$ && $\Z/2$ & $7.47528$ & $13$ &  $0.31371 + 0.90262 i$ \\	
	\bottomrule
\end{tabular}

\end{table*}

\medskip

We now explicitly present the examples $K(0,1,1,1,1)$ and $K(1,1,1,1,1)$ as closures of positive braids using the train track of Figure~\ref{fig:TrainTrackBraid}.

The knot $K(0,1,1,1,1)$ (where $p=q=1$, $m=1$, and $a_1=a_2=1$) is the closure of the positive $12$--strand braid of length $249$ shown in Figure~\ref{fig:smallestAsymLSK-posbraid}.

The knot $K(1,1,1,1,1)$ (where $p=q=1$, $m=1$, and $a_1=a_2=a_3=1$) is the closure of the positive $29$--strand braid of length $1116$ shown in Figure~\ref{fig:smallAsymLSK-posbraid}.

\begin{figure}
	\centering
	\includegraphics[width=6in]{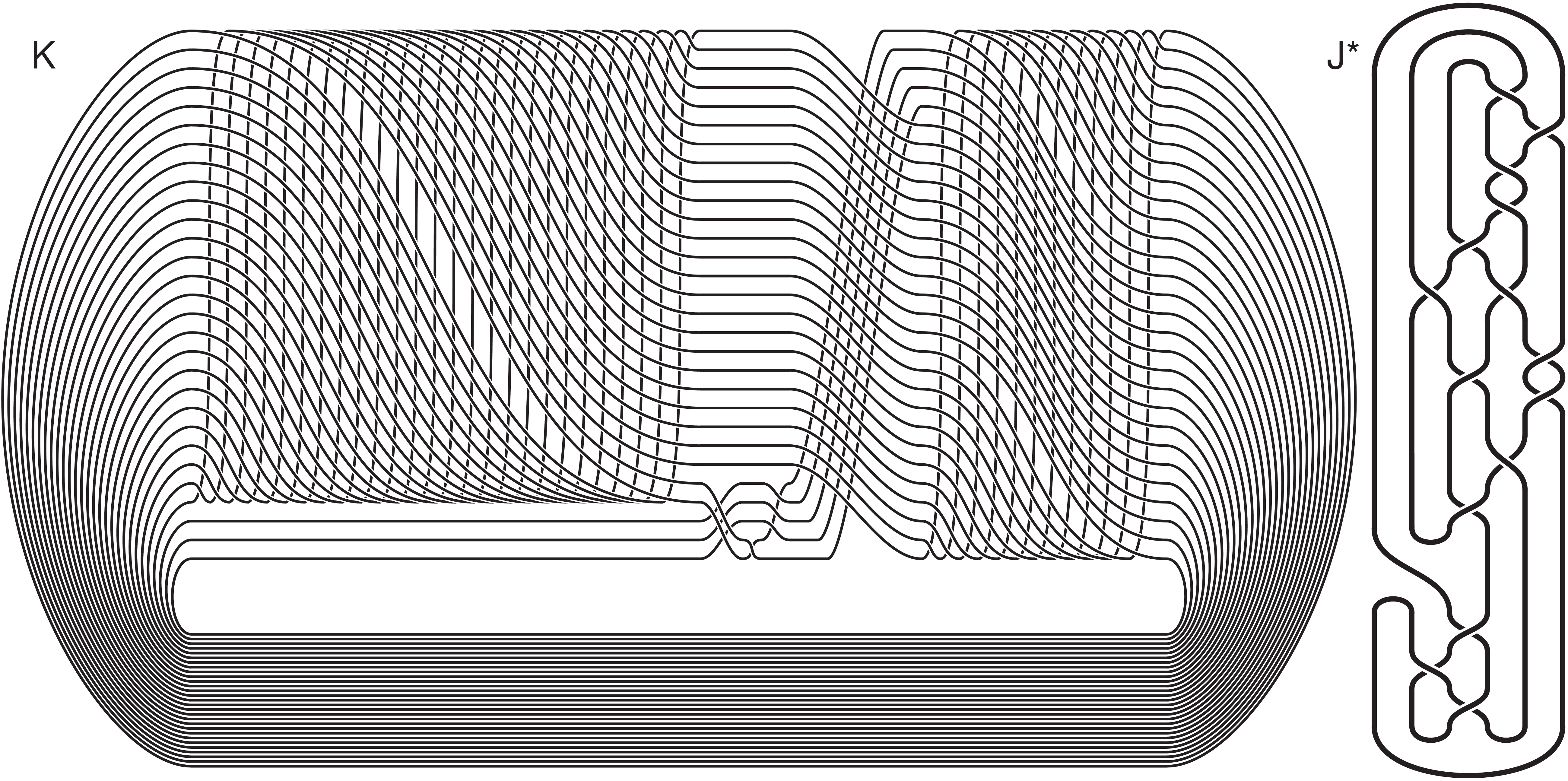}
	\caption{The knot $K=K(1,1,1,1,1)$ (Left) is the closure of the $29$--strand positive braid shown. 
		It is an asymmetric hyperbolic L-space knot.  The result of $1156/1$--surgery on $K$ knot produces the double branched cover of the $15$--crossing alternating link $J^*$ (Right).  }
	\label{fig:smallAsymLSK-posbraid}
\end{figure}

\subsection{Strongly invertible examples and bandings of unknots.} 
Table~\ref{table:smallexamples} shows that the two small examples $K(1,1,1,0,2)$ and $K(0,0,1,2,2)$ of our lashings are strongly-invertible hyperbolic L-space knots in $S^3$ with alternating surgeries. 
Using the surgery description of these knots, and simplifications given by SnapPy, one can
write the quotient of these surgeries as a band surgery to the unknot of the corresponding alternating
knots. In Figures~\ref{fig:K11102} and \ref{fig:K00122} we show the alternating knots $J^*(1,1,1,0,2)$ and $J^*(0,0,1,2,2)$ whose double branched covers are obtained by the framed surgeries on $K(1,1,1,0,2)$ and $K(0,0,1,2,2)$.  In each of these alternating diagrams a blackboard framed arc is also shown followed by a banding along the arc that produces an almost alternating diagram of the unknot.  Observe that a flype of the alternating diagram $J^*(0,0,1,2,2)$ to another alternating diagram of the knot is needed so that the banding produces an almost alternating diagram.  As shown, a further flype in fact allows the arc to be isotopic to a proper arc in a region of the alternating diagram, and the banding is dual to a smoothing of the dealternation crossing in an almost alternating unknot diagram. We propose the following
modification of Conjecture~\ref{conj:alternatingmccoy}.

\begin{figure}
	\centering
	\includegraphics[height=3in]{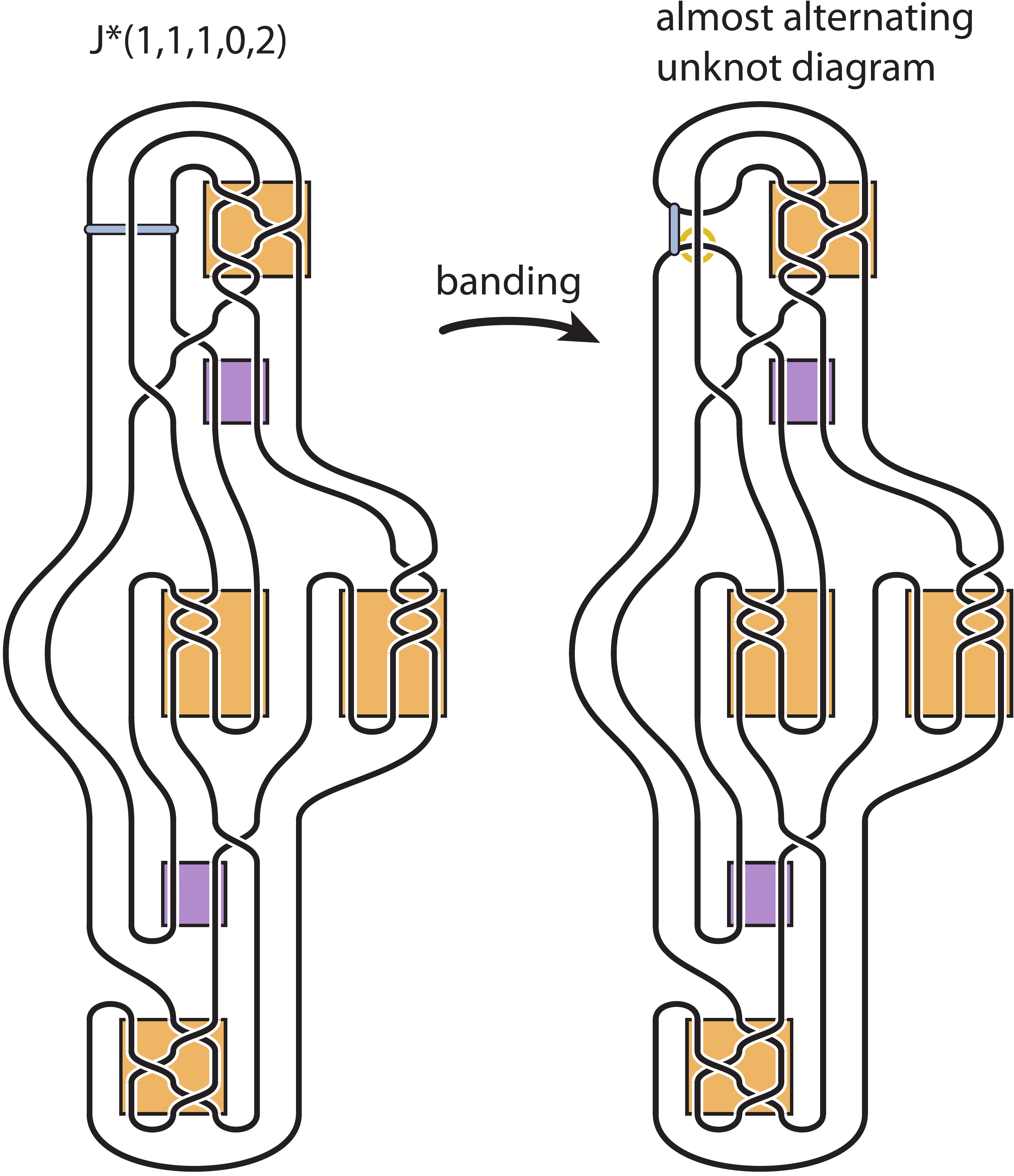}
	\caption{The alternating knot $J^*(1,1,1,0,2)$ admits a banding along an arc that crosses an alternating diagram once to produce an almost alternating diagram of the unknot.     }
	\label{fig:K11102}
\end{figure}

\begin{figure}
	\centering
	\includegraphics[width=6in]{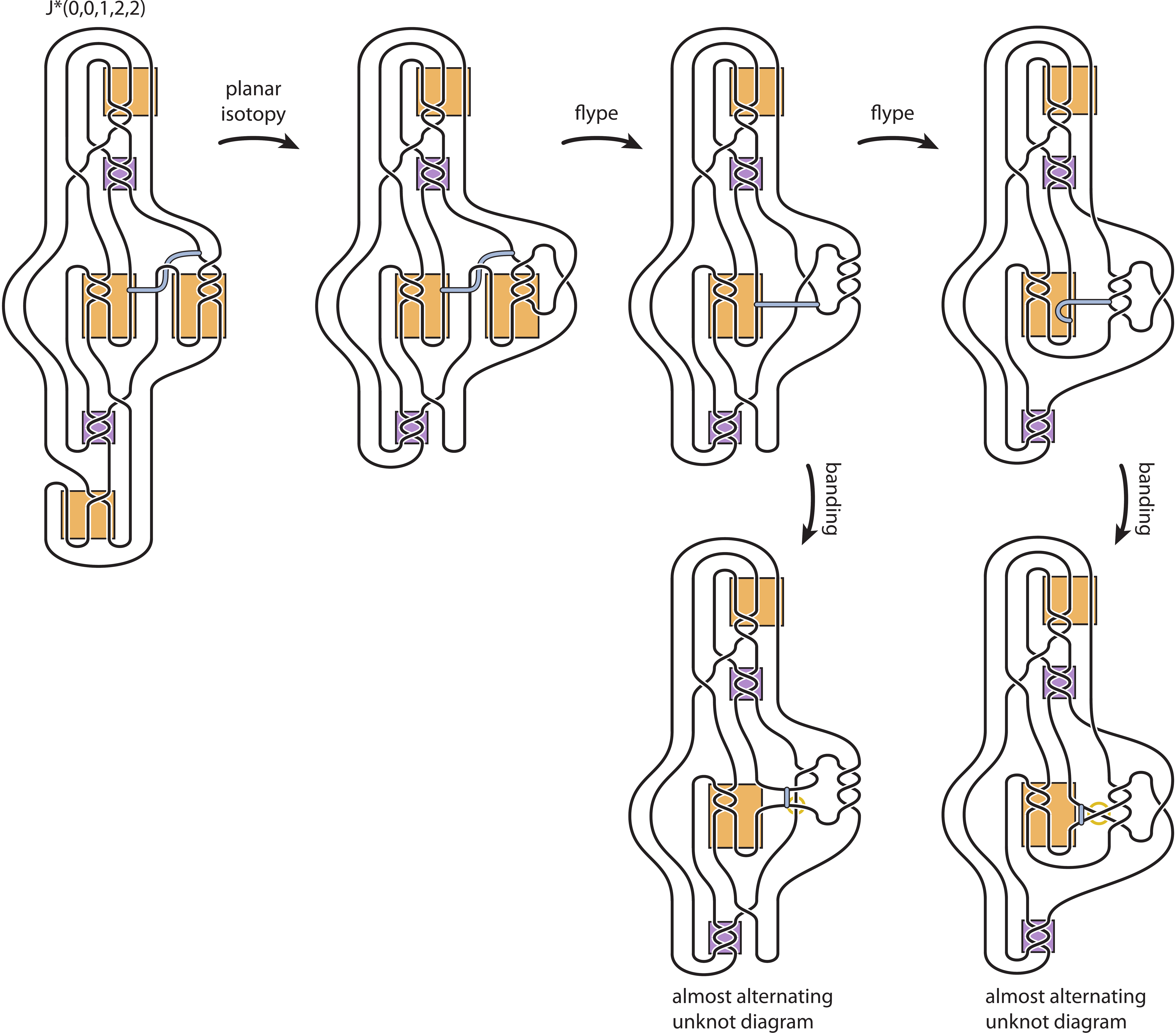}
	\caption{After planar isotopy and a flype, the alternating knot $J^*(0,0,1,2,2)$ admits a banding along a flat arc crossing an alternating diagram once to produce an almost alternating diagram of the unknot.  After a further flype, this banding is dual to the smoothing of the dealternation crossing in an almost alternating diagram of the unknot.   }
	\label{fig:K00122}
\end{figure}

\begin{conjecture}\label{conj:alternating}
 If a banding of the unknot produces an alternating link, then there is an almost alternating diagram of the unknot in which the banding is either 
 \begin{enumerate}
 \item a smoothing of the dealternation crossing or
 \item a flat banding from the dealternation crossing to an adjacent crossing. 
See Figure~\ref{fig:dealternationbandings}(Left).
\end{enumerate}
\end{conjecture}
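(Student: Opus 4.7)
The plan is to view the hypothesis dually. Instead of a band $b$ attached to $U$ producing $L$, work with the banding arc $\alpha$ embedded in the complement of $L$ such that band surgery on $L$ along $\alpha$ recovers the unknot. Taking double branched covers places us in the setting of a strongly invertible L-space knot $K$ in $S^3$ (the lift of $\alpha$) with a surgery to the L-space $\Sigma_2(L)$. Thus the conjecture refines Conjecture~\ref{conj:alternatingmccoy} in exactly the strongly invertible direction; the counterexamples produced in this paper, being asymmetric, cannot arise from bandings of the unknot and so do not obstruct this refinement.

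First I would fix a reduced alternating diagram $D$ of $L$; by Menasco--Thistlethwaite such $D$ is unique up to flypes, and both sides of the conjecture are invariant under replacing $D$ by a flype-equivalent diagram. The goal becomes: after some flyping of $D$, one may properly isotope $\alpha$ in the complement of $D$ so that the resulting almost alternating diagram of $U=L\cup_\alpha$ has its dealternation crossing in the prescribed relationship with $\alpha$. The natural strategy is to put $\alpha$ into a normal form with respect to the projection plane of $D$, following the paradigm used by Thistlethwaite and Shimokawa to analyze tangles and arcs in alternating links, and then to argue that band surgery to the unknot forces one of the two standard local configurations (smoothing, or flat arc joining two adjacent crossings).

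The key technical step would be a combinatorial ``unknotting band'' lemma, asserting that if an arc $\alpha$ with $\bdry \alpha \subset D$ realizes $U$ upon band surgery on $L$, then up to flypes and to isotopy in the link complement, $\alpha$ may be isotoped to the interior of a single complementary region of $D$ or to a transverse arc through a single crossing of $D$. A plausible induction would proceed on a pair of complexities (for instance, on the minimal number of over/under strands $\alpha$ passes through in its diagram, together with the crossing number of $D$), using surgery arguments on essential tangle-decomposing spheres in the complement of $L \cup \alpha$ together with the fact that the dual banding $L \to U$ takes an alternating link to a trivial one (and so kills a strong geometric invariant such as the determinant or signature by a controlled amount).

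The principal obstacle is precisely that a classification of unknotting bands of alternating links is an essentially open problem; only partial results exist for rational and low-crossing cases. Overcoming it either requires new tools specific to alternating diagrams, or a bypass via Heegaard Floer homology in the spirit of McCoy's work on Conjecture~\ref{conj:alternatingmccoy}, sharpened to use the strong invertibility of the knot $K$ in $S^3$ — yielding rigidity of its $d$-invariants and genus constraints that then get translated back to a combinatorial statement about $\alpha$. I expect this Heegaard Floer + equivariant-surgery route to be the most promising, with the main difficulty being to extract a local diagrammatic constraint from the global homological one.
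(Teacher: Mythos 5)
This statement is a conjecture, not a theorem: the paper proposes Conjecture~\ref{conj:alternating} as a replacement for McCoy's Conjecture~\ref{conj:alternatingmccoy} (which the paper disproves), and offers no proof of it --- only two motivating examples, the bandings of $J^*(1,1,1,0,2)$ and $J^*(0,0,1,2,2)$ worked out in Figures~\ref{fig:K11102} and~\ref{fig:K00122}, where flypes and isotopies are found by hand (with computer assistance) that put the banding arc into one of the two advertised normal forms. So there is no proof in the paper to compare against, and your proposal cannot be judged "correct": it does not establish the statement either.

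The concrete gap in your writeup is that the entire content is deferred to the "combinatorial unknotting band lemma," which you state but do not prove, and which is essentially a reformulation of the conjecture itself rather than a reduction of it to something known. Asserting that $\alpha$ can be isotoped (after flypes) into a single complementary region of $D$ or transversally through a single crossing \emph{is} the conclusion of the conjecture in dual form; the proposed induction on complexity and the appeal to tangle-decomposing spheres are not carried far enough to see why band surgery to the unknot forces either local configuration, and you yourself note that the classification of unknotting bands of alternating links is open. The suggested Heegaard Floer route has the same character: strong invertibility of the lifted knot $K$ and rigidity of $d$-invariants may constrain the surgery, but no mechanism is given for translating such global homological data back into the local diagrammatic statement about where $\alpha$ sits relative to a crossing of $D$. (A smaller point: the two cases of the conjecture are genuinely different in consequence --- as the paper observes, case (1) yields infinitely many alternating surgeries on the lifted knot while case (2) need not --- so any Floer-theoretic approach would also have to distinguish these, which your sketch does not address.) In short, your proposal is a reasonable research outline consistent with how the paper arrives at the conjecture, but it is not a proof, and the paper does not contain one either.
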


Observe that if the first case of Conjecture~\ref{conj:alternating} occurs (see Figure~\ref{fig:dealternationbandings}(Right)), then the other smoothing of the dealternation crossing is obtained by a banding along the same core arc but with a framing that is twisted by half.  In particular, via the Montesinos Trick, this case corresponds to a knot in $S^3$ with two consecutive integral alternating surgeries.  Indeed this knot will have infinitely many alternating surgeries corresponding to alternating rational tangle replacements of the dealternation crossing. However in the second case it is not diagramatically apparent that the knot arising from the Montesinos Trick would have any other alternating surgeries.  

The links resulting from the two bandings of $J^*(1,1,1,0,2)$ that differ from the unknotting banding of Figure~\ref{fig:K11102} by a half-twist both have Jones polynomials (as one may calculate) with a span of $15$.  Thus if either were alternating, then its crossing number would be $15$ \cite{kauffman}.  However both of these links appear to have crossing number $18$.  We do not know if either of these links is alternating.  Crowell's condition \cite{crowell} that (reduced) Alexander polynomials of alternating links are alternating gives no obstruction.  Furthermore, since for each of the two links the two smoothings of the crossing arising from the half-twisted band produce the unknot and the alternating knot $J^*(1,1,1,0,2)$ of determinant $597$, the link with determinant $598$ is  necessarily {\em quasi-alternating}, see \cite{OS-dbc}.  We do not know if the one with determinant $596$ is quasi-alternating.

\begin{figure}
	\centering
	\includegraphics[width=6in]{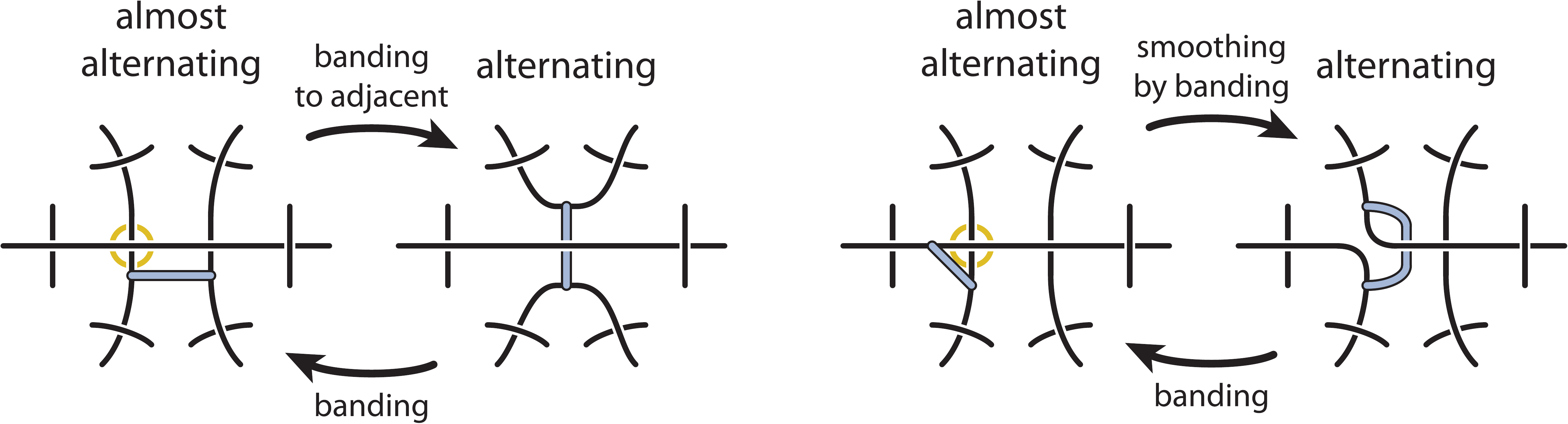}
	\caption{(Left) In an almost alternating diagram, a banding of the dealternation crossing to an adjacent crossing produces an alternating diagram.  The dual blackboard framed arc in the alternating diagram crosses the diagram once.  (Right) In an almost alternating diagram, a smoothing of the dealternation crossing by a banding is shown, though the intermediate removal of a nugatory crossing is not shown.  The dual blackboard framed arc in the alternating diagram is shown.}
	\label{fig:dealternationbandings}
\end{figure}

\subsection{Small examples in $S^1 \times S^2$}
Let $\alpha = \sigma_1^{-a_3} \sigma_2^{a_2} \sigma_1^{-a_1}$ and $\omega = \sigma_1^{a_1} \sigma_2^{-a_2} \sigma_1^{a_3+1}$.
 Then for any non-negative integers $a_1, a_2, a_3, m$, $J^{\alpha,\omega,m}$ is an almost alternating diagram of the two component unlink.  Hence for non-negative slope $p/q$, the $p/q$--lashing of $P^{\alpha,\omega,m}$ is a knot in $S^1\times S^2$ with an alternating surgery.  
Let $K'(a_3,a_2,a_1,m,b_1)$ denote the $1/b_1$--lashing in $S^1 \times S^2$.  
Using that $\omega = \alpha^{-1} \sigma_1$, one may readily adapt the surgery description obtained in Section~\ref{sec:surgdesc} to produce a surgery description of the knots $K'(a_3,a_2,a_1,m,b_1)$: change the surgery coefficient of the largest red circle in Figure~\ref{fig:fullsurgdesc-horiz} from $[0,a_1,-a_2,a_3]$ to $[0,a_1,-a_2,a_3+1]$.  Table~\ref{table:simpleS1xS2} collects the results of calculations using SnapPy of basic hyperbolic data of some of these knots and the homology of their alternating surgery.   According to \cite{NiVafaee}, knots in $S^1 \times S^2$ are spherical braids, they may be isotoped to be transverse to the $S^2$ fibers.  So in lieu of the knot genus, we also include the braid index in Table~\ref{table:simpleS1xS2}.  This is quickly calculated since the square of the winding number (with respect to the $S^1$ factor of $S^1 \times S^2$) of a non-null-homologous knot in $S^1 \times S^2$ is the order of the first homology of its framed surgeries.

\begin{table*}\centering
\caption{Examples of hyperbolic L-space knots $K'(a_3,a_2,a_1,m,b_1)$ in $S^1 \times S^2$ with alternating surgeries}
\label{table:simpleS1xS2}
\begin{tabular}{@{}rrrrrcrrcrrrrr@{}}
	\toprule
	\multicolumn{5}{c}{Parameters} & \phantom{abc} &\multicolumn{2}{c}{Top data} & \phantom{abc} & \multicolumn{4}{c}{Hyperbolic data}\\
	\cmidrule{1-5} \cmidrule{7-8} \cmidrule{10-13}
	$a_3$ & $a_2$ & $a_1$ & $m$ & $b_1$ && Braid Index & Alt.\ Surg.  && Sym & Volume & \# Tetr. & Cusp Shape \\ 
	\midrule
	$0$ & $1$ & $0$ & $1$ & $1$ && $16$ & $\Z/256$ && $\mathds{1}$ & $10.76121$ & $13$ & $0.38759 + 1.27100 i$ \\
	$0$ & $2$ & $0$ & $1$ & $1$ && $23$ & $\Z/529$ && $\mathds{1}$ & $12.58840$ & $17$ & $0.42497 + 1.42566 i$ \\
	$0$ & $1$ & $0$ & $1$ & $2$ && $23$ & $\Z/23+\Z/23$ && $\mathds{1}$ & $12.96886$ & $17$ & $0.44340 + 1.15926 i$ \\	
	$0$ & $1$ & $1$ & $1$ & $1$ && $26$ & $\Z/2 + \Z/338$ && $\mathds{1}$ & $11.29252$ & $15$ & $0.38693 + 1.26481 i$ \\
	$1$ & $1$ & $0$ & $1$ & $1$ && $28$ & $\Z/784$ && $\mathds{1}$ & $15.20006$ & $23$ & $0.35616 + 1.54654 i$ \\	
	$1$ & $1$ & $0$ & $1$ & $2$ && $40$ & $\Z/2 + \Z/800$ && $\mathds{1}$ & $17.47078$ & $23$ & $0.46933 + 1.38764 i$ \\
	\bottomrule
\end{tabular}
\end{table*}

The knot $K'(0,1,0,1,1)$ has surgery to the double branched cover of an alternating link which SnapPy identifies as the $3$--bridge link $L12a1091$.

\bibliographystyle{halpha}
\bibliography{asymmetric}

\end{document}